\documentclass[12pt]{article}

\usepackage[utf8]{inputenc}
\usepackage[T1]{fontenc}
\usepackage{lmodern}
\usepackage{amsmath,amssymb,amsthm,mathtools}
\usepackage{geometry}
\usepackage{enumitem}
\usepackage{booktabs}
\usepackage{array}
\usepackage{tikz}
\usetikzlibrary{arrows.meta,positioning,shapes.geometric,calc,fit}
\usepackage{algorithm}
\usepackage{algpseudocode}
\usepackage{hyperref}
\usepackage[nameinlink,capitalise]{cleveref}
\usepackage{tabularx}
\usepackage{booktabs}
\usepackage{makecell}

\hypersetup{colorlinks=true,linkcolor=blue,citecolor=blue,urlcolor=blue}
\setlist[itemize]{leftmargin=2em}
\setlist[enumerate]{leftmargin=2em}

\newtheorem{theorem}{Theorem}[section]
\newtheorem{lemma}[theorem]{Lemma}
\newtheorem{proposition}[theorem]{Proposition}
\newtheorem{corollary}[theorem]{Corollary}

\newtheorem{openproblem}[theorem]{Open Problem}
\theoremstyle{definition}
\newtheorem{definition}[theorem]{Definition}
\newtheorem{example}[theorem]{Example}
\newtheorem{remark}[theorem]{Remark}

\renewenvironment{proof}{{\bf \noindent Proof.}}{\qed}
\everymath{\displaystyle}

\title{Betti numbers for cochordal zero-divisor graphs of commutative rings}
\author{Bilal Ahmad Rather\\[2mm]
        \small School of Mathematics and Statistics, Shandong University of Technology, Zibo 255049, China\\
        \small School of Mechanical Engineering, Shandong University of Technology, Zibo 255049, China\\
        \texttt{\href{mailto:bilalahmadrr@gmail.com}{bilalahmadrr@gmail.com}}
			}
\date{}
\begin{document}
\maketitle

\begin{abstract}
	This paper studies the zero-divisor graphs attached to several finite chain-ring families and computes the homological invariants of their edge ideals by using cochordal constructible systems.  We begin with a general layered graph $C(q,L)$, whose vertices are arranged according to valuation layers and whose adjacency is governed by the single rule $k+\ell\ge L$, form some integers $k$ and $\ell$.  This graph models the zero-divisor structure of a finite chain ring with residue field of order $q$ and nilpotency index $L$.  We prove that $C(q,L)$ is cochordal, determine its type sequence, then correct and refine the Betti formula of its edge ideal [Dung and Vu, Cochordal zero divisor graphs and Betti numbers of their edge ideals, Comm. Algebra  54(2) (2026) 736--744]. The results are then specialized to the Gaussian quotient rings $\mathbb Z_{2^m}[i]$ and to the truncated polynomial rings $\mathbb Z_p[x]/(x^c)$.  We compute projective dimension, regularity, independence number, height, Hilbert series, and Cohen--Macaulay behavior.  The computations show that these quotient rings have $2$-linear resolutions, while Cohen--Macaulayness occurs only in the expected degenerate or complete-graph cases.
\end{abstract}

\noindent \textbf{2020 Mathematics Subject Classification.} Primary 13D02; Secondary 05C25, 05E40, 13F55, 13A70.

\medskip

\noindent \textbf{Keywords.} Zero-divisor graph; cochordal graph; edge ideal; Betti splitting; type sequence; projective dimension.

\medskip
\tableofcontents
\section{Introduction}\label{sec:introduction}

The zero-divisor graph of a commutative ring records multiplication-to-zero by a simple graph.  Since Beck's original study of colorings of commutative rings \cite{Beck1988} and the subsequent structural formulation of Anderson and Livingston \cite{AndersonLivingston1999}, this graph has served as a useful interface between ring theory and finite graph theory.  For the residue class ring $\mathbb{Z}/n\mathbb{Z}$, the graph is especially concrete, as its vertices are the nonzero residues whose greatest common divisor with $n$ is not one, and two distinct vertices $x,y$ are adjacent precisely when $n$ divides $xy$. The study of zero-divisor graphs began with Beck’s work on coloring problems for commutative rings \cite{Beck1988}, and was later developed systematically by Anderson and Livingston, who introduced the standard zero-divisor graph associated with a commutative ring \cite{AndersonLivingston1999}. Subsequent work investigated structural properties of these graphs, for example, Akbari, Maimani and Yassemi characterized important cases in which zero-divisor graphs are planar \cite{AkbariMaimaniYassemi2003}, while Anderson and Badawi further refined the theory by studying zero-divisor graphs from the perspective of ring-theoretic annihilation and adjacency relations \cite{AndersonBadawi2008}. In \cite{ArunkumaraCameronKavaskarTamizh2023}, the authors showed that zero divisor graphs are universal graphs.

\medskip

The graph-theoretic simplicity of $\Gamma(\mathbb{Z}/n\mathbb{Z})$ hides a rather rich homological problem.  If $G$ is a finite simple graph on vertices $x_1,\ldots,x_s$, its edge ideal is the squarefree quadratic monomial ideal $I(G)=(x_ix_j:\{x_i,x_j\}\in E(G))$ in a standard graded polynomial ring.  The minimal free resolution of $S/I(G)$ measures the arrangement of edges, induced subgraphs, and vertex covers of $G$ in a way that is sensitive enough to detect induced matchings and chordality, see the standard references \cite{HerzogHibi2011,Villarreal2015}.

\medskip

A decisive point is Fröberg's theorem: the edge ideal $I(G)$ has a $2$-linear resolution if and only if the complement graph $\overline{G}$ is chordal \cite{Froberg1990}.  Thus, a cochordal graph is not only a graph with a forbidden-cycle condition in its complement, but also a graph whose edge ideal has the cleanest possible homological shape.  For such a graph all nonzero graded Betti numbers of $S/I(G)$ lie on the linear strand $\beta_{i,i+1}$, and hence the total Betti number $\beta_i$ is already the relevant graded invariant.

\medskip

The residue class graphs $\Gamma(\mathbb{Z}/n\mathbb{Z})$ therefore provide a natural test family for the interaction between arithmetic and homological algebra.  The arithmetic supplies valuation classes, and the edge relation becomes an inequality on the exponents of prime factors.  The homological task is to convert this valuation data into a free resolution.  Earlier work by Rather, Imran and Diene \cite{RatherImranDiene2024} and by Rather, Imran and Pirzada \cite{RatherImranPirzada2024} computed parts of the linear strand in several small arithmetic families, including cases such as $p^4$, $p^2q$, and $pqr$. Pirzada and Rather \cite{PirzadaRather} studied  the linear strand of edge ideals of some zero divisor graphs.

\medskip

A recent manuscript of Dung and Vu \cite{DungVu2026} proposed a broader method based on cochordal constructible systems and type sequences.  The manuscript proves, in essence, that $\Gamma(\mathbb{Z}/n\mathbb{Z})$ is cochordal exactly when $n$ has one of the forms $p^a$, $p^a q$, or $pqr$, where the primes appearing are distinct.  It also proposes closed Betti-number formulae for the corresponding edge ideals. The present paper validates the useful part of that approach and corrects the parts that are not compatible with the actual edge relation.  The first correction is global and affects every family: the type-sequence Betti formula contains a subtractive term $\binom{k}{i+1}$, where $k$ is the number of center steps in the construction.  When the arithmetic type sequence is evaluated, this term does not disappear.  Omitting it already gives the wrong answer for mentioned values in $\Gamma(\mathbb{Z}/n\mathbb{Z})$, whose edge ideal is principal but for which the uncorrected expression gives two first syzygy generators.
 The second correction is structural.  In the family $n=p^a q$, a class represented by $p^i q$ is not a clique unless $2i\geq a$.  Similarly, in the square-free family $n=pqr$, the classes represented by $pq$, $pr$, and $qr$ are independent sets, not cliques, because the square of such a representative misses one prime factor.  Any constructible system that incorporates previously introduced vertices from the same independent class in the cover generates edges absent from the graph.  This changes the type sequences and, consequently, the Betti formulae.

\medskip

The corrected formulae require a mild refinement of the type calculus.  A clique block contributes repeated identical binomial terms after the type entries are shifted by their positions.  An independent repeated block contributes a hockey-stick sum (binomial-coefficient summation formula like $
\sum_{j=r}^{m} \binom{j}{r}=\binom{m+1}{r+1}$) of binomial coefficients, because the type entries are constant while the positional shifts increase.  This distinction is the technical source of the new formulae in Sections \ref{sec:two-prime}, and \ref{sec:three-prime}.

\medskip

The problem studied here can now be stated precisely.  Let $n\geq 2$, let $R=\mathbb{Z}/n\mathbb{Z}$, let $G_n=\Gamma(R)$, and let $I_n=I(G_n)\subseteq S_n$, where $S_n$ is the polynomial ring whose variables correspond to the nonzero zero-divisors of $R$.  We ask: for which $n$ is $G_n$ cochordal, and, in those cases, what are the exact graded Betti numbers of $S_n/I_n$?

\medskip

Our answer is complete for the cochordal cases.  The classification theorem remains $n=p^a$, $n=p^a q$, or $n=pqr$.  For $p^a$ the type blocks from the valuation classes with exponent at least $\lceil a/2\rceil$ are clique blocks, so the corrected Betti formula is the earlier block sum minus the missing global binomial term.  For $p^a q$ the high valuation blocks are clique blocks but the low valuation blocks are independent repeated blocks, giving a mixed formula.  For $pqr$ all three center classes are independent repeated blocks, and the formula is a sum of three hockey-stick differences minus the same global correction.

\medskip

The corrected results also clarify projective dimension.  In the prime-power case, the projective dimension is still $p^{a-1}-2$ for $a\geq 2$, with the convention that the graph is edgeless when this number is negative.  In the two-prime case, it is the maximum of the high-clique and low-independent block endpoints, and therefore it is not generally $q p^{a-1}-2$.  In the square-free three-prime case, for $p<q<r$, the projective dimension remains $qr+p-3$, but the full Betti table differs from the uncorrected one.

\medskip

The purpose of this study is to carry out an analogous program for other zero divisor graphs: (1) $\Gamma(R_m)$ of the	Gaussian integer ring modulo powers of $2$, that is, $R_m = \mathbb Z_{2^{m}}[i] \cong \mathbb Z[i]/(2^{m}),$	where $m$ is a positive integer, and (2) for zero divisor graph $\Gamma(R_{p,c}) $ of the truncated polynomial rings 	$R_{p,c}=\mathbb Z_p[x]/\langle x^c\rangle$, where $p$ is prime and $c\ge2$ is a positive integer. Our goals are the following:\\
(1)  Determine the type sequence of $\Gamma(R_m)$ explicitly and, compute
all Betti numbers of the edge ideal $I(\Gamma(R_m))$, which gives a closed
formulas for the projective dimension and regularity. \\
(2) We study the Cohen--Macaulay property	and the Hilbert series of $S/I(\Gamma(R_m))$.  We show that $S/I(\Gamma(R_m))$
is Cohen--Macaulay only for $m=1$, while for $m\ge2$ the depth is $1$ and the
dimension is at least~$2$.  We further give a combinatorial description of
the Hilbert series of $S/I(\Gamma(R_m))$ in terms of independent sets in
$\Gamma(R_m)$, viewed as the Stanley–Reisner ring of the independence
complex, in the spirit of Stanley \cite{Stanley1996}. A similar type of study is carried for the zero divisor graph $\Gamma(R_{p,c})$.\medskip

\medskip
Several foundational and recent works provide the algebraic and homological background for the present study. Auslander and Buchsbaum established important homological methods in local algebra \cite{AuslanderBuchsbaum1957}, while Matsumura and Stanley developed standard tools from commutative algebra and combinatorics that are frequently used in the study of monomial and edge ideals \cite{Matsumura1989,Stanley1996}. Hochster’s work connected Cohen--Macaulay theory with simplicial complexes, giving fundamental techniques for studying graded Betti numbers \cite{Hochster1977}. In the graph-theoretic and combinatorial direction, Chen studied minimal free resolutions of linear edge ideals \cite{Chen2010}, and Dochtermann analyzed chordal clutters through exposed circuits and linear quotients \cite{Dochtermann2021}. Other recent papers on edge ideals are those of Eisenbud on syzygies \cite{Eisenbud2005}, Fernandez-Ramos and Gimenez on regularity (3) for bipartite graphs \cite{FernandezRamosGimenez2014}, Fröberg on Betti numbers and edge rings \cite{Froberg2022,Froberg2023}, Jacques on Betti numbers of graph ideals \cite{Jacques2004}, Mohammadi and Moradi on resolutions of unmixed bipartite graphs \cite{MohammadiMoradi2015}, Singh and Verma on Betti numbers of split graphs \cite{SinghVerma2020} and Wang on zero-divisor graphs of finite commutative rings \cite{Wang2006}. Banerjee \cite{Banerjee2015} gave results on the regularity of powers of edge ideals. Rather studied the homological invariants of edge ideals of graphs that are Wollastonite, providing more examples of graph ideals and their associated algebraic properties \cite{Rather2026,Rather2024}. Related developments on edge ideals of power graphs of finite groups and comaximal graphs of commutative rings, with particular attention to Betti numbers and linear strands can be seen in \cite{RatherWang2026,RatherImranDiene2024}. The independence polynomial of zero divisor graphs which is related to Hilbert series  is discussed in \cite{bilalac,bilaltcs,bilalsc}.

\medskip

The paper is organized as: Section \ref{sec:preliminaries} fixes notation, recalls zero-divisor graphs, chordal and cochordal graphs, edge ideals, Betti splittings, and the type-sequence formula used later.  Section \ref{sec:type-calculus} develops a corrected type-sequence calculus.  It proves the block-evaluation rules used in all later formulae and gives an algorithm that turns a valid constructible system into Betti numbers. Section \ref{sec:classification} proves the arithmetic classification of cochordal zero-divisor graphs.  The necessity is obtained from induced matching obstructions, while sufficiency follows from explicit corrected constructible systems in the three surviving arithmetic families.
Section  \ref{sec:prime-powers} treats $n=p^a$.  We prove the corrected type sequence, the corrected Betti formula, projective dimension, and several small numerical checks. Section \ref{sec:two-prime} treats $n=p^a q$.  The main point is the mixed nature of the type sequence: high $p$-valuation $q$-classes are clique blocks, whereas low ones are independent repeated blocks.
 Section \ref{sec:three-prime} treats $n=pqr$.  We prove that all three center blocks are independent repeated blocks and derive the corrected closed formula for the whole linear strand.
Section \ref{sec:hilbert-series} derives Hilbert series from the corrected linear resolutions.  It gives a uniform numerator formula, family-specific vertex counts, numerical Hilbert-series examples.
Section \ref{sec:cohen-macaulay} studies Cohen--Macaulayness.  It proves that, among the cochordal residue-class cases, the quotient $S/I(\Gamma(\mathbb{Z}/n\mathbb{Z}))$ is Cohen--Macaulay only in the trivial prime case and in the complete-graph case $n=p^2$.
 Section~\ref{sec:chain-template} develops a uniform template for finite chain rings.  Sections~\ref{sec:gaussian}--\ref{sec:gaussian-hilbert} specialize the template to \(\mathbb{Z}_{2^m}[i]\).  Sections~\ref{sec:poly-ring}--\ref{sec:pc-CM-Hilbert} treat the truncated polynomial rings \(\mathbb{Z}_p[x]/(x^c)\).  The final section summarizes the results and points to possible extensions.
\section{Preliminaries}\label{sec:preliminaries}

Throughout the paper all graphs are finite and simple.  If $G$ is a graph, then $V(G)$ and $E(G)$ denote its vertex set and edge set. For a vertex $v\in V(G)$, the \emph{degree} of $v$ in $G$, denoted by
$\deg_G(v)$, is the number of vertices adjacent to $v$.
The minimum and maximum degrees of $G$ are denoted by
$ \delta(G)=\min\{\deg_G(v):v\in V(G)\} $
and
$
\Delta(G)=\max\{\deg_G(v):v\in V(G)\}.
$
  For a vertex $v\in V(G)$, its open neighborhood is
 $N_G(v)=\{u\in V(G):\{u,v\}\in E(G)\}.$  For a set $A\subseteq V(G)$, we use
 $N_G(A)=\left(\bigcup_{v\in A}N_G(v)\right)\setminus A$ 
for the open neighborhood of $A$, and
 $N_G[A]=A\cup N_G(A)$ 
for its closed neighborhood. Let $G$ be a simple graph with vertex set $V(G)$ and edge set $E(G)$.
 The \emph{complement} of $G$, denoted by $\overline{G}$, is the graph with the same
vertex set $V(\overline{G})=V(G)$, where two distinct vertices $u,v\in V(G)$ are
adjacent in $\overline{G}$ if and only if they are not adjacent in $G$. Equivalently,
\[
E(\overline{G})=\bigl\{\{u,v\}:u,v\in V(G),\ u\ne v,\ \{u,v\}\notin E(G)\bigr\}.
\]
 A subset $A\subseteq V(G)$ is called an \emph{independent set} of $G$ if no two
distinct vertices of $A$ are adjacent in $G$, or equivalently,
 $\{u,v\}\notin E(G)$ for all distinct $u,v\in A.$ A subset $C\subseteq V(G)$ is called a \emph{clique} of $G$ if every two distinct
vertices of $C$ are adjacent in $G$. 
A simple graph $G$ is called \emph{chordal} if every cycle of length at least $4$
has a chord. Equivalently, $G$ is chordal if there is no induced cycle of length
greater than $3$. Here, a \emph{chord} of a cycle is an edge joining two non-consecutive vertices
of the cycle. A simple graph $G$ is called \emph{cochordal} if its complement $\overline{G}$ is
chordal. That is, $G$ is cochordal if every induced cycle of length at least $4$
is absent from $\overline{G}$.
 A subset $U\subseteq V(G)$ is called a \emph{vertex cover} of $G$ if every edge
of $G$ has at least one endpoint in $U$. Equivalently, for every edge
$\{u,v\}\in E(G)$, one has $u\in U$ or $v\in U$. The minimum cardinality of a
vertex cover of $G$ is called the \emph{vertex cover number} of $G$ and is
denoted by $\tau(G)$. The \emph{independence number} of $G$, denoted by $\alpha(G)$, is the maximum
cardinality of an independent set of $G$. Thus
$$
\alpha(G)=\max\{|A|:A\subseteq V(G)\text{ is independent}\}.
$$
 The \emph{clique number} of $G$, denoted by $\omega(G)$, is the maximum
cardinality of a clique of $G$. Thus
$$
\omega(G)=\max\{|C|:C\subseteq V(G)\text{ is a clique}\}.
$$
 A \emph{matching} in $G$ is a set of pairwise disjoint edges, that is, no two
edges in a matching share a common endpoint. The maximum cardinality of a
matching in $G$ is called the \emph{matching number} of $G$ and is denoted by
$\nu(G)$.
 An \emph{induced matching} in $G$ is a matching $M$ such that the subgraph of
$G$ induced by the vertices appearing in the edges of $M$ has edge set exactly
$M$. Equivalently, if $\{u,v\}$ and $\{x,y\}$ are two distinct edges in $M$,
then there is no edge of $G$ joining a vertex of $\{u,v\}$ to a vertex of
$\{x,y\}$. The maximum cardinality of an induced matching in $G$ is called the
\emph{induced matching number} of $G$ and is denoted by $\operatorname{im}(G)$.
A graph $G$ is called a \emph{threshold graph} if there exist real numbers
$w(v)$ for $v\in V(G)$ and a real number $T$ such that, for any two distinct
vertices $u,v\in V(G)$,
$$
\{u,v\}\in E(G)\quad\Longleftrightarrow\quad w(u)+w(v)\ge T.
$$
The numbers $w(v)$ are called threshold weights, and $T$ is called a threshold. 

Let $R$ be a commutative ring with unity, and let $Z(R)$ denote the set of zero
divisors of $R$. The \emph{zero-divisor graph} of $R$, denoted by $\Gamma(R)$,
is the simple graph whose vertex set is the set of nonzero zero divisors of $R$,
that is,
$ V(\Gamma(R))=Z(R)\setminus\{0\}. $
Two distinct vertices $x,y\in V(\Gamma(R))$ are adjacent if and only if their
product is zero in $R$. Equivalently,
$$
\{x,y\}\in E(\Gamma(R))
\quad\Longleftrightarrow\quad
xy=0.
$$
Thus, $\Gamma(R)$ records the multiplicative annihilation relation among the
nonzero zero divisors of $R$. Since loops are not allowed, a vertex $x$ is not
joined to itself, even when $x^2=0$.

The \emph{independence polynomial} of $G$ is the generating function
$$
F_G(y)=\sum_{A\in \operatorname{Ind}(G)} y^{|A|},
$$
where $\operatorname{Ind}(G)$ denotes the set of all independent sets of $G$.
Equivalently, if $f_{r-1}$ denotes the number of independent sets of cardinality
$r$, then
$$
F_G(y)=\sum_{r\ge 0} f_{r-1}y^r.
$$

The \emph{independence complex} of $G$, denoted by $\operatorname{Ind}(G)$, is
the simplicial complex whose faces are precisely the independent sets of $G$.
Thus
$$
\operatorname{Ind}(G)=\{A\subseteq V(G):A\text{ is an independent set of }G\}.
$$

A \emph{facet} of $\operatorname{Ind}(G)$ is a maximal independent set of $G$,
where maximal means maximal with respect to inclusion. A maximum independent set
is an independent set of cardinality $\alpha(G)$. Every maximum independent set
is maximal, but a maximal independent set need not have maximum cardinality.

\medskip

Let $S=\mathbb{K}[x_v:v\in V(G)]$ be a standard graded polynomial ring over a field $\mathbb{K}$.  The edge ideal of $G$ is
$$
I(G)=(x_u x_v:\{u,v\}\in E(G))\subseteq S.
$$
For a finitely generated graded $S$-module $M$, the graded Betti numbers are
$$
\beta_{i,j}^{S}(M)=\dim_{\mathbb{K}}\operatorname{Tor}_i^S(\mathbb{K},M)_j.
$$
We write $\beta_i(M)=\sum_j\beta_{i,j}(M)$, $\operatorname{pd}(M)=\sup\{i:\beta_i(M)\neq 0\}$, and $\operatorname{reg}(M)=\sup\{j-i:\beta_{i,j}(M)\neq 0\}$.

For an edge ideal $I(G)$, the height of $I(G)$ is equal to the vertex cover
number of $G$, we have 
$
\operatorname{height} I(G)=\tau(G).
$
 and $
\dim S/I(G)=\alpha(G).
$
Indeed, the minimal primes of $I(G)$ correspond to the minimal vertex covers of
$G$, while the faces of the Stanley--Reisner complex of $S/I(G)$ are the
independent sets of $G$.
\medskip
A minimal graded free resolution of $S/I$ records the syzygies among the generators of $I$ and encodes the homological invariants of the quotient through its graded Betti numbers.

A minimal graded free resolution of $S/I$ has the form
$$
0\longrightarrow
\bigoplus_j S(-j)^{\beta_{p,j}(S/I)}
\longrightarrow
\cdots
\longrightarrow
\bigoplus_j S(-j)^{\beta_{1,j}(S/I)}
\longrightarrow
S
\longrightarrow
S/I
\longrightarrow 0,
$$
where $p=\operatorname{pd}_S(S/I)$ and $\beta_{r,j}(S/I)$ are the graded Betti numbers.

\medskip

If $I$ is generated in degree $2$ and has a $2$-linear resolution, then the resolution of $S/I$ is
$$
0\longrightarrow
S(-(p+1))^{\beta_p(S/I)}
\longrightarrow
\cdots
\longrightarrow
S(-3)^{\beta_2(S/I)}
\longrightarrow
S(-2)^{\beta_1(S/I)}
\longrightarrow
S
\longrightarrow
S/I
\longrightarrow 0.
$$
Equivalently, the minimal graded free resolution of $I$ itself is
{\footnotesize $$
0\longrightarrow
S(-(p+2))^{\beta_{p+1}(S/I)}
\longrightarrow
\cdots
\longrightarrow
S(-4)^{\beta_3(S/I)}
\longrightarrow
S(-3)^{\beta_2(S/I)}
\longrightarrow
S(-2)^{\beta_1(S/I)}
\longrightarrow
I
\longrightarrow 0.
$$} 
The ideal $I$ has a $2$-linear resolution if all its nonzero graded Betti numbers occur in degree $r+2$, equivalently $\beta_{r,j}(I)=0$ for $j\ne r+2$. For the quotient $S/I$, this corresponds to nonzero Betti numbers only in degree $r+1$.
\medskip

For a positive integer $n$, the zero-divisor graph $\Gamma(\mathbb{Z}/n\mathbb{Z})$ has as vertices the nonzero zero-divisors of $\mathbb{Z}/n\mathbb{Z}$.  Two distinct vertices $x,y$ are adjacent when $xy\equiv 0\pmod n$.  Equivalently, if representatives are chosen in $\{1,\ldots,n-1\}$, then $\{x,y\}$ is an edge exactly when $n\mid xy$.

\medskip

If $R$ is a commutative ring and $x\in R$, then $\operatorname{ann}(x)=\{r\in R:rx=0\}$.  The compressed zero-divisor graph $\Gamma_E(R)$ has vertices as the equivalence classes of nonzero zero-divisors with respect to equality of annihilators, and two classes are adjacent if the product of their representatives is zero.  This graph is useful because, for $R=\mathbb{Z}/n\mathbb{Z}$, the classes are described by the divisor type of $\gcd(x,n)$.

\medskip

For $n=p^a$, the nonzero zero-divisors are partitioned into classes
$$
V_i=\{x\in \mathbb{Z}/n\mathbb{Z}:[x]=[p^i]\},\qquad 1\leq i\leq a-1,
$$
and $|V_i|=\varphi(p^{a-i})=p^{a-i-1}(p-1)$.  If $x\in V_i$ and $y\in V_j$, then $x$ and $y$ are adjacent exactly when $i+j\geq a$.

\medskip

For $n=p^a q$, where $p$ and $q$ are distinct primes, we use the classes
$$
A_t=\{x:[x]=[p^t]\},\quad 1\leq t\leq a,
\qquad
B_i=\{x:[x]=[p^i q]\},\quad 0\leq i\leq a-1.
$$
The sizes are $|A_a|=q-1$, $|A_t|=p^{a-t-1}(p-1)(q-1)$ for $1\leq t\leq a-1$, and $|B_i|=p^{a-i-1}(p-1)$.  The edges satisfy $A_tA_u\notin E$, while $A_tB_i\in E$ exactly when $t+i\geq a$, and $B_iB_j\in E$ exactly when $i+j\geq a$ for distinct vertices.

\medskip

For $n=pqr$ with distinct primes, we use the classes represented by $p,q,r,pq,pr,qr$.  Edges occur precisely when the product of the two labels contains all three primes.  In particular, the classes represented by $pq$, $pr$, and $qr$ are independent classes, even though different such classes are mutually adjacent.

A \textit{valuation class} is the set of residue classes in $\mathbb{Z}/n\mathbb{Z}$ having the same divisibility pattern with respect to the prime powers dividing $n$. If $n=p_1^{a_1}p_2^{a_2}\cdots p_t^{a_t},$ then for a nonzero residue $x\in \mathbb{Z}/n\mathbb{Z}$, define the $p_i$-adic valuation truncated at $a_i$ by
 $$
v_{p_i}(x)=\max\{e: p_i^e\mid x\},\qquad 0\leq e\leq a_i,
$$
where $v_{p_i}(0)$ is treated as $a_i$ modulo $p_i^{a_i}$. Then the valuation vector of $x$ is
 $$
\nu(x)=\bigl(v_{p_1}(x),\ldots,v_{p_t}(x)\bigr).
$$
The valuation class corresponding to a vector $\alpha=(\alpha_1,\ldots,\alpha_t)$ is
$$
C_\alpha=\{x\in \mathbb{Z}/n\mathbb{Z}: \nu(x)=\alpha\}.
$$
So two elements $x,y$ lie in the same valuation class exactly when they are divisible by the same powers of each prime divisor of $n$. For example, if $n=p^aq$, the relevant classes include
$
C_{(i,0)}=\{x:v_p(x)=i,\ v_q(x)=0\},
$
and
$
C_{(i,1)}=\{x:v_p(x)=i,\ v_q(x)=1\}.
$
 in our case, these correspond roughly to classes represented by divisors such as $[p^i]$ and $[p^iq]$.

The reason valuation classes are useful is that adjacency in the zero-divisor graph can be read directly from valuations. In $\Gamma(\mathbb{Z}/n\mathbb{Z})$, two nonzero zero divisors $x,y$ are adjacent precisely when
$ xy\equiv 0 \pmod n, $ which is equivalent to
$$
v_{p_i}(x)+v_{p_i}(y)\geq a_i
\quad\text{for every }i.
$$
Thus, all vertices in the same valuation class have the same external adjacency behavior. This makes valuation classes a natural way to organize the zero-divisor graph.

\medskip

A substantial body of work examines the construction and properties of these algebraic invariants, for example, see \cite{FernandezRamosGimenez2014, Froberg1990, Jacques2004, Villarreal2015}.  
Fat forests were studied by Fröberg \cite{Froberg2022}, who described their Betti numbers and Alexander duals.  
Minimal free resolutions for unmixed bipartite graphs were established by Mohammadi and Moradi \cite{MohammadiMoradi2015}.  
Several families of split graphs have been analyzed in \cite{SinghVerma2020}.
In a related direction, Corso and Nagel \cite{CorsoNagel2008,CorsoNagel2009} proved that Ferrers graphs have edge ideals with $2$-linear resolutions, and Chen \cite{Chen2010} gave an explicit description of the minimal $2$-linear resolutions for all such ideals.  
Finally, Fröberg \cite{Froberg2023} explored several conjectures about $2$-linear resolutions in edge rings.

\medskip

We now recall the external results used later.  The following known result characterizes graphs whose edge ideals have a linear resolution.
\begin{theorem}[Fröberg, see \cite{Froberg1990}]\label{thm:froberg}
For a finite simple graph $G$, the edge ideal $I(G)$ has a $2$-linear resolution if and only if $\overline{G}$ is chordal.  Equivalently, $G$ is cochordal if and only if $I(G)$ has a $2$-linear resolution.
\end{theorem}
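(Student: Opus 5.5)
The route I would take is the standard one through Hochster's formula, which reduces linearity of the resolution to a topological statement about induced subgraphs of $\overline{G}$. Write $S/I(G)=\mathbb{K}[\Delta]$ with $\Delta=\operatorname{Ind}(G)$. Since an independent set of $G$ is a clique of $\overline{G}$, $\Delta$ is the clique complex of $\overline{G}$. Hochster's formula \cite{Hochster1977} gives
\[
\beta_{i,j}(S/I(G))=\sum_{W\subseteq V,\ |W|=j}\dim_{\mathbb{K}}\widetilde H_{j-i-1}(\Delta_W;\mathbb{K}),
\]
where $\Delta_W$ is the clique complex of the induced subgraph $\overline{G}[W]$. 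Having a $2$-linear resolution means $\beta_{i,j}=0$ unless $j=i+1$. Equivalently, every induced subgraph of $\overline{G}$ must have a clique complex with reduced homology only in degree $0$, i.e.\ only "disconnectedness" homology.

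For the forward direction, suppose $\overline{G}$ is chordal. Induced subgraphs of chordal graphs are chordal, so it suffices to show that the clique complex of a chordal graph $H$ has each connected component contractible. I would argue by induction on $|V(H)|$ using a simplicial vertex $v$, one whose neighborhood is a clique; Dirac's theorem guarantees such a vertex exists. The closed star of $v$ in the clique complex is then the full simplex on $N_H[v]$, and it meets $\Delta(H-v)$ in the simplex on $N_H(v)$. Gluing a simplex along a (possibly empty) simplex does not change homotopy type, except that it adds a new component when $N_H(v)=\emptyset$. So $\widetilde H_k=0$ for all $k\ge1$, and linearity follows.

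For the converse, suppose $\overline{G}$ is not chordal. Then it has an induced cycle $C$ on a vertex set $W$ with $|W|=m\ge4$. The clique complex of an induced $m$-cycle with $m\ge4$ is the cycle itself, since it has no triangles, so it is homeomorphic to $S^1$ and $\widetilde H_1\ne0$. Hochster's formula then yields $\beta_{m-2,m}(S/I(G))\ne0$. This is a nonlinear entry because $m\ne(m-2)+1$, so the resolution is not $2$-linear. The equivalence with cochordality is just the definition. I expect the only real work to be the Mayer--Vietoris / gluing step in the chordal direction, which relies on the existence of simplicial vertices in chordal graphs; everything else is bookkeeping in Hochster's formula.
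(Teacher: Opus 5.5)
The paper gives no proof of this statement: it is quoted from Fr\"oberg and used as a black box, so there is no in-paper argument to match. Your argument is the standard Hochster-formula proof, and it is correct and complete in every characteristic. The simplicial-vertex gluing handles both $N_H(v)\neq\emptyset$ and $N_H(v)=\emptyset$ correctly. An induced $m$-cycle in $\overline{G}$ with $m\ge4$ does give $\beta_{m-2,m}(S/I(G))\neq0$, which lies off the linear strand.

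It is worth comparing this with what the paper's own toolkit can deliver. The ``if'' direction can be obtained inside the splitting framework the paper relies on. Let $v$ be a simplicial vertex of $\overline{G}$. Its non-neighbours in $G$ form an independent set of $G$, so $N_G(v)$ is a vertex cover of $G-v$. With $P=(x_u:u\in N_G(v))$ and $J=I(G-v)$, this gives $J\subseteq P$, hence $x_vP\cap J=x_vJ$. The exact sequence $0\to x_vJ\to x_vP\oplus J\to I(G)\to0$ (or the splitting formula of Theorem \ref{thm:betti-splitting-known}) then yields $\operatorname{reg} I(G)\le 2$ by induction on the cochordal graph $G-v$. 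Since $I(G)$ is generated in degree $2$, this is linearity.

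This step is exactly one step of a constructible system as in Definition \ref{def:constructible}. Its advantage over your route is that it computes the Betti numbers themselves (Theorem \ref{thm:type-formula}), not merely their vanishing off the linear strand. Its limitation is that it gives nothing for the converse. The obstruction the paper actually uses, Theorem \ref{thm:induced-matching-bound}, detects an induced matching of size two in $G$, i.e.\ an induced $4$-cycle in $\overline{G}$, but misses longer induced cycles. For example, $G=C_5$ has $\overline{G}\cong C_5$ non-chordal, yet $\operatorname{im}(G)=1$. Your Hochster computation treats every induced cycle length $m\ge4$ uniformly, which is precisely what the ``only if'' direction requires.
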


\medskip

The following known form of Dirac's theorem is used only through its consequence for chordal elimination.
\begin{theorem}[Dirac, see \cite{Dirac1961,HerzogHibiZheng2004}]\label{thm:dirac}
Every noncomplete chordal graph has at least two nonadjacent simplicial vertices.  Consequently, chordal graphs admit perfect elimination orderings.
\end{theorem}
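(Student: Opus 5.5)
The statement is Dirac's theorem: every noncomplete chordal graph $G$ has two nonadjacent simplicial vertices, and consequently every chordal graph admits a perfect elimination ordering. I will prove it by strong induction on $|V(G)|$, using minimal vertex separators. The base cases are immediate: a graph on at most two vertices that is not complete consists of two nonadjacent vertices, and both are simplicial because their neighborhoods are empty.

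For the inductive step, let $G$ be chordal and noncomplete, and choose nonadjacent vertices $a,b$. Let $S$ be a minimal $a$--$b$ separator, and let $A$ and $B$ be the components of $G-S$ containing $a$ and $b$.

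\emph{Key lemma: $S$ is a clique.} Take distinct $x,y\in S$. By minimality of $S$, every vertex of $S$ has a neighbour in $A$ and a neighbour in $B$. Hence there is a shortest $x$--$y$ path $P_A$ whose interior lies in $A$, and likewise a shortest such path $P_B$ with interior in $B$. Suppose $x$ and $y$ were nonadjacent. Then $P_A\cup P_B$ is a cycle of length at least $4$. It has no chord: there are no edges between $A$ and $B$, each path is shortest, and $xy\notin E(G)$. This contradicts chordality, so $xy\in E(G)$. I expect this lemma to be the main obstacle, since the no-chord verification has to be checked carefully.

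\emph{Finding the simplicial vertices.} Consider $G_A=G[A\cup S]$, which is chordal and has fewer vertices than $G$ because $b\notin A\cup S$. There are two cases.
\begin{itemize}
\item If $G_A$ is complete, then $a$ is simplicial in $G_A$.
\item Otherwise, by induction $G_A$ has two nonadjacent simplicial vertices. Since $S$ is a clique, they cannot both lie in $S$, so at least one of them lies in $A$.
\end{itemize}
In either case we obtain a vertex $u\in A$ that is simplicial in $G_A$. Because $N_G(u)\subseteq A\cup S$, the vertex $u$ is also simplicial in $G$. The same argument applied to $G_B=G[B\cup S]$ produces a vertex $v\in B$ that is simplicial in $G$. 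Finally, $u$ and $v$ are nonadjacent, because $S$ separates $A$ from $B$.

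\emph{Perfect elimination ordering.} Every chordal graph has a simplicial vertex: a complete graph has one trivially, and a noncomplete one has one by the above. Remove such a vertex $v_1$. The induced subgraph that remains is again chordal, since induced cycles of $G-v_1$ are induced cycles of $G$. Iterating gives an ordering $v_1,\dots,v_n$ in which each $v_i$ is simplicial in $G[v_i,\dots,v_n]$, which is a perfect elimination ordering.
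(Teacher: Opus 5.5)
The paper does not prove this statement. It quotes it as a known result from Dirac and Herzog--Hibi--Zheng and uses it only as background for chordal elimination, so there is no internal argument to compare against.

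Your proof is the classical minimal-separator argument, and it is correct and complete:
\begin{itemize}
\item a minimal $a$--$b$ separator $S$ in a chordal graph is a clique;
\item induction on $G[A\cup S]$ and $G[B\cup S]$ gives a simplicial vertex in each of $A$ and $B$;
\item such a vertex stays simplicial in $G$, because its entire neighbourhood lies in $A\cup S$ (respectively $B\cup S$);
\item the two vertices are nonadjacent, since $A$ and $B$ are different components of $G-S$;
\item the perfect elimination ordering follows because induced subgraphs of chordal graphs are chordal.
\end{itemize}

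The one step you assert without justification is that minimality of $S$ gives every $s\in S$ a neighbour in both $A$ and $B$. It deserves one line. Suppose $s$ had no neighbour in $A$. Any $a$--$b$ path in $G-(S\setminus\{s\})$ must meet $s$, and the vertex just before its first visit to $s$ lies in $A$ and is adjacent to $s$. Hence $S\setminus\{s\}$ would still separate $a$ from $b$, contradicting minimality. The case of $B$ is symmetric, using the vertex just after the last visit to $s$.

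Your chordlessness check is also right, since a shortest $x$--$y$ path in $G[A\cup\{x,y\}]$ is an induced path. The case $S=\emptyset$ (disconnected $G$) is covered by the same argument, with the clique lemma holding vacuously.
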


\medskip

The following result is the standard Betti-splitting principle for monomial ideals.
\begin{theorem}[Francisco--H\`a--Van Tuyl, see \cite{FranciscoHaVanTuyl2009}]\label{thm:betti-splitting-known}
Let $I=J+K$ be a splitting of a monomial ideal satisfying the usual splitting-function hypotheses.  Then, for all $i,j$,
$$
\beta_{i,j}(I)=\beta_{i,j}(J)+\beta_{i,j}(K)+\beta_{i-1,j}(J\cap K).
$$
In particular, the total Betti numbers satisfy the corresponding ungraded equality.
\end{theorem}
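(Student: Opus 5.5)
The plan is to read the formula off the long exact sequence in $\operatorname{Tor}$ attached to the standard short exact sequence of graded $S$-modules
$$
0\longrightarrow J\cap K\xrightarrow{\ \varphi\ } J\oplus K\xrightarrow{\ \psi\ } I\longrightarrow 0,
\qquad \varphi(f)=(f,-f),\quad \psi(g,h)=g+h .
$$
Exactness is immediate: $\psi$ is onto because $I=J+K$, and $\ker\psi=\{(f,-f):f\in J\cap K\}$. All maps are homogeneous of degree $0$, so applying $\operatorname{Tor}^S_\bullet(\mathbb{K},-)$ and taking the degree-$j$ component gives a long exact sequence of $\mathbb{K}$-vector spaces
$$
\cdots\to \operatorname{Tor}_i(J\cap K)_j\xrightarrow{\varphi_i}\operatorname{Tor}_i(J)_j\oplus\operatorname{Tor}_i(K)_j\to\operatorname{Tor}_i(I)_j\xrightarrow{\delta}\operatorname{Tor}_{i-1}(J\cap K)_j\xrightarrow{\varphi_{i-1}}\cdots .
$$
If every connecting map $\varphi_i$ (in every degree $j$) is zero, the sequence breaks into short exact pieces
$$
0\to\operatorname{Tor}_i(J)_j\oplus\operatorname{Tor}_i(K)_j\to\operatorname{Tor}_i(I)_j\to\operatorname{Tor}_{i-1}(J\cap K)_j\to 0 .
$$
Taking dimensions then gives exactly $\beta_{i,j}(I)=\beta_{i,j}(J)+\beta_{i,j}(K)+\beta_{i-1,j}(J\cap K)$. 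Summing over $j$ yields the ungraded statement.

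So the whole content lies in showing that the ``usual splitting-function hypotheses'' force $\operatorname{Tor}_i(\mathbb{K},J\cap K)\to\operatorname{Tor}_i(\mathbb{K},J)\oplus\operatorname{Tor}_i(\mathbb{K},K)$ to vanish. Recall those hypotheses:
\begin{enumerate}
\item the minimal monomial generating set satisfies $\mathcal{G}(I)=\mathcal{G}(J)\sqcup\mathcal{G}(K)$;
\item there are maps $\mathcal{G}(J\cap K)\to\mathcal{G}(J)$, $w\mapsto g(w)$, and $\mathcal{G}(J\cap K)\to\mathcal{G}(K)$, $w\mapsto f(w)$, with $w=\operatorname{lcm}(g(w),f(w))$;
\item for every $S'\subseteq\mathcal{G}(J\cap K)$, both $\operatorname{lcm}(g(S'))$ and $\operatorname{lcm}(f(S'))$ strictly divide $\operatorname{lcm}(S')$.
\end{enumerate}

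I would work multidegree by multidegree, since all three modules are $\mathbb{Z}^N$-graded and the map $\varphi$ is multigraded. I would compute $\operatorname{Tor}_i(\mathbb{K},-)$ using the Taylor resolutions $T(J\cap K)$ and $T(J)$. The inclusion $J\cap K\hookrightarrow J$ lifts to a comparison map. Sending a Taylor basis element $e_{S'}$ to $e_{g(S')}$, up to sign and a monomial coefficient $\operatorname{lcm}(S')/\operatorname{lcm}(g(S'))$, gives a chain map over the inclusion. This needs care when $g$ is not injective on $S'$: in that case one sends $e_{S'}$ to $0$, and the chain-map identity must be verified.

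Condition (3) makes that coefficient a monomial of positive degree, so the lifted map lands in $\mathfrak{m}\,T(J)$. Tensoring with $\mathbb{K}$ therefore gives zero on $T(J\cap K)\otimes\mathbb{K}$, hence zero on homology. The same argument applies to $K$ via $f$.

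The main obstacle is that Taylor resolutions are not minimal, so ``zero on $T\otimes\mathbb{K}$'' has to be transferred to the induced map on $\operatorname{Tor}$. This step is fine: $\operatorname{Tor}$ is the homology of $T\otimes\mathbb{K}$ for any free resolution, and the induced map on $\operatorname{Tor}$ does not depend on the chosen lift. The genuinely delicate point is checking that the proposed assignment really commutes with the Taylor differentials. If that check fails, I would fall back on a reformulation of the hypotheses in terms of upper Koszul simplicial complexes. One shows that for each multidegree $\mathbf{b}$ the inclusion of the complex computing $\operatorname{Tor}_i(J\cap K)_{\mathbf{b}}$ into the one for $J$ factors through a cone. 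Condition (3) provides this, since every face is dominated by a strictly smaller lcm. Hence the map on reduced homology is zero.

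Since this theorem is quoted from \cite{FranciscoHaVanTuyl2009}, in the paper I would cite it rather than reproduce this verification. Its only role later is the displayed formula, applied to the splittings $I=J+K$ coming from cochordal constructible systems.
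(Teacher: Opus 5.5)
The paper gives no proof of this statement. It is quoted from Francisco--H\`a--Van Tuyl and used as a black box, so your sketch does more than the paper, and your route is the standard one. The $\operatorname{Tor}$ long exact sequence of $0\to J\cap K\to J\oplus K\to I\to 0$ reduces the formula to the vanishing of $\operatorname{Tor}_i(\mathbb{K},J\cap K)\to\operatorname{Tor}_i(\mathbb{K},J)\oplus\operatorname{Tor}_i(\mathbb{K},K)$. This is exactly the Francisco--H\`a--Van Tuyl characterization of Betti splittings. Reading the vague phrase ``splitting-function hypotheses'' as an Eliahou--Kervaire splitting is the right interpretation. The vanishing then comes, as in Eliahou--Kervaire and Fatabbi, from a lift of the inclusions whose coefficients lie in $\mathfrak m$.

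The argument is correct, and the step you flagged as delicate does go through, so the upper-Koszul fallback is unnecessary. For $S'=\{w_1,\ldots,w_k\}$, set $\Phi(e_{w_1}\wedge\cdots\wedge e_{w_k})=\frac{\operatorname{lcm}(S')}{\operatorname{lcm}(g(S'))}\,e_{g(w_1)}\wedge\cdots\wedge e_{g(w_k)}$, with the exterior convention that a wedge with a repeated factor is $0$.
\begin{itemize}
\item The coefficient is a monomial because each $g(w)\mid w$.
\item The Taylor differential $e_{u_1}\wedge\cdots\wedge e_{u_k}\mapsto\sum_t(-1)^{t-1}\frac{\operatorname{lcm}(U)}{\operatorname{lcm}(U\setminus u_t)}e_{u_1}\wedge\cdots\widehat{e_{u_t}}\cdots\wedge e_{u_k}$ is alternating, so it is compatible with reordering.
\end{itemize}
Now check the chain-map identity in three cases.
\begin{itemize}
\item If $g$ is injective on $S'$, both $d\Phi(e_{S'})$ and $\Phi(de_{S'})$ equal $\sum_t(-1)^{t-1}\frac{\operatorname{lcm}(S')}{\operatorname{lcm}(g(S'\setminus w_t))}e_{g(w_1)}\wedge\cdots\widehat{e_{g(w_t)}}\cdots\wedge e_{g(w_k)}$.
\item If $|g(S')|\le|S'|-2$, every term of $\Phi(de_{S'})$ already vanishes.
\item If $|g(S')|=|S'|-1$, say $g(w_a)=g(w_b)$ with $a<b$, only the terms $t=a$ and $t=b$ survive. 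Both have coefficient $\frac{\operatorname{lcm}(S')}{\operatorname{lcm}(g(S'))}$. After moving the repeated factor across $b-a-1$ others, they give the same wedge with signs $(-1)^{a-1}$ and $(-1)^{a}$, so they cancel.
\end{itemize}
Together with $\frac{w}{g(w)}\cdot g(w)=w$ on generators, $\Phi$ is a multigraded chain map over the inclusion. Hypothesis (3) puts every coefficient in $\mathfrak m$. Since any two lifts are homotopic, the induced map on $\operatorname{Tor}$ is zero in every multidegree. Incidentally, hypothesis (1) is never used; it follows from the case $i=0$.
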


\medskip

The following known specialized splitting result is the form needed for quadratic edge ideals.
\begin{lemma}[Nguyen--Vu, see \cite{NguyenVu2016}]\label{lem:x-partition}
Let $I$ be a quadratic monomial ideal and let $x$ be a variable.  If $I=xP+J$ is the $x$-partition of $I$, where $P$ is generated by variables and $J$ is generated by the quadratic monomials not divisible by $x$, then this decomposition is a Betti splitting whenever the hypotheses of the quadratic splitting criterion hold; in particular, it applies in the cochordal elimination steps used below.
\end{lemma}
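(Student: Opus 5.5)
The plan is to deduce the lemma from the Tor-vanishing characterization behind \cref{thm:betti-splitting-known}. Recall the criterion of Francisco--H\`a--Van Tuyl: for monomial ideals with $I=J+K$ and $G(I)=G(J)\sqcup G(K)$, the decomposition is a Betti splitting if and only if, for all $i,j$, the maps
$$
\operatorname{Tor}_i(\mathbb K,J\cap K)_j\longrightarrow \operatorname{Tor}_i(\mathbb K,J)_j\oplus\operatorname{Tor}_i(\mathbb K,K)_j
$$
induced by the two inclusions are zero. This criterion comes from the long exact sequence of $0\to J\cap K\to J\oplus K\to I\to 0$. I would take the ``quadratic splitting criterion'' of the statement to be precisely the requirement that $J$ and $xP$ have $2$-linear resolutions.

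First I would set up the $x$-partition. Write $I=xP+J$, where $P$ is generated by the variables $y$ with $xy\in I$, and $J$ is generated by the quadratic generators of $I$ not divisible by $x$. Then the minimal generating set of $I$ is the disjoint union of those of $xP$ and $J$. Next I would check linearity of $xP$. The ideal $P$ is generated by variables, so it is resolved by a Koszul complex, which is linear. Multiplication by the nonzerodivisor $x$ shifts all degrees by one, so $xP$ has a $2$-linear resolution, with $\beta_{i,i+2}(xP)=\binom{|P|}{i+1}$.

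For $J$ in the cochordal elimination steps, I would use that $J=I(G\setminus x)$ up to isolated vertices. An induced subgraph of a cochordal graph is cochordal, because the complement of an induced subgraph is an induced subgraph of the complement, and chordality is hereditary. Hence \cref{thm:froberg} gives $J$ a $2$-linear resolution.

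The key step is a degree argument. The intersection $J\cap xP$ is generated by least common multiples of generators of $J$ and $xP$. Since $x$ divides no generator of $J$, every such lcm has degree at least $3$. Therefore $\operatorname{Tor}_i(\mathbb K,J\cap xP)_j=0$ unless $j\ge i+3$. On the other hand, $\operatorname{Tor}_i(J)_j$ and $\operatorname{Tor}_i(xP)_j$ vanish unless $j=i+2$. So the maps in the criterion have zero source or zero target in every degree, hence are zero, and the splitting formula of \cref{thm:betti-splitting-known} holds.

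The main obstacle I expect is not the degree argument but making the hypotheses precise. The phrase ``quadratic splitting criterion'' is vague, and I must make sure that at each elimination step the remaining ideal $J$ is again the edge ideal of a cochordal induced subgraph. This is where I would verify that deleting $x$ does not create a non-chordal complement, which holds because deleting a vertex preserves chordality of the complement. If a stronger form is wanted, namely that $J\cap xP=x(J\cap P)$ has a $3$-linear resolution, I would try to get it by identifying $J\cap P$ with the edge ideal of an induced subgraph restricted to $N(x)$. That form is not needed for the splitting itself.
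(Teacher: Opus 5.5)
The paper gives no proof of this lemma; it is quoted from Nguyen--Vu, so there is no internal argument to compare with. Your argument is a correct, self-contained proof of the version you formulate. The pieces are:
\begin{itemize}
\item the Mayer--Vietoris criterion;
\item $2$-linearity of $xP\cong P(-1)$;
\item hereditary cochordality plus Fr\"oberg's theorem, giving $2$-linearity of $J=I(G\setminus x)$;
\item the fact that $J\cap xP$ is generated in degree at least $3$.
\end{itemize}
Together these kill both components of the map in every degree.

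The standard argument, namely the Francisco--H\`a--Van Tuyl corollary for $x_i$-partitions, differs from yours at one point and proves more. It kills the component $\operatorname{Tor}_i(\mathbb K,J\cap xP)\to\operatorname{Tor}_i(\mathbb K,J)$ by multidegree rather than by linearity of $J$. Every monomial of $J\cap xP$ is divisible by $x$, whereas by the Taylor complex $\operatorname{Tor}(\mathbb K,J)$ lives only in multidegrees with $x$-exponent $0$. Your degree argument is needed only for the component into $\operatorname{Tor}(\mathbb K,xP)$, and $xP$ is always $2$-linear. Consequently the $x$-partition of \emph{every} quadratic monomial ideal is a Betti splitting, with no hypothesis on $J$. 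Your reading of the ``quadratic splitting criterion'' as ``$J$ is $2$-linear'' is more restrictive than necessary. The detour through hereditary cochordality and Fr\"oberg's theorem can be dropped. Your route stays purely $\mathbb Z$-graded; the multigraded route gives the unconditional statement.

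On your final aside: $J\cap P$ is not in general the edge ideal of an induced subgraph on $N(x)$, even for cochordal graphs. For the path with edges $xy$ and $yu$, one has $J\cap P=(yu)$, while the induced subgraph on $N(x)=\{y\}$ has no edges. In the cochordal case the $3$-linearity of $J\cap xP$ instead follows at once from the splitting formula. Indeed $\beta_{i-1,j}(J\cap xP)\le\beta_{i,j}(I)$, and the right side vanishes unless $j=i+2$.
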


\medskip

The following known observation identifies a useful colon ideal at a cochordal elimination step.
\begin{proposition}[Jaramillo--Villarreal, see \cite{JaramilloVillarreal2021}]\label{prop:colon-prime-known}
If $G$ is cochordal, then the edge ideal $I(G)$ admits an elimination step $I(G)=xP+J$ in which $P=I(G):x$ is a monomial prime generated by variables corresponding to a vertex cover of the remaining graph.
\end{proposition}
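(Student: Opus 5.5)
The plan is to argue from the structure of cochordal graphs via complementation and a simplicial vertex, using \Cref{thm:froberg} and \Cref{thm:dirac}. Let $G$ be cochordal, so $\overline{G}$ is chordal. If $G$ has no edges there is nothing to prove, so assume $I(G)\neq 0$. Then $\overline{G}$ is not complete. By \Cref{thm:dirac}, $\overline{G}$ has a simplicial vertex $x$; among the (at least two nonadjacent) simplicial vertices we choose one that is not isolated in $G$. Such a choice is possible unless every simplicial vertex of $\overline{G}$ is adjacent in $\overline{G}$ to all other vertices. We handle that degenerate case by restricting to the induced subgraph on the non-isolated vertices of $G$; this is again cochordal, since induced subgraphs of chordal graphs are chordal.

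Write the $x$-partition $I(G)=xP+J$ as in \Cref{lem:x-partition}. Here $P=(x_u:u\in N_G(x))$ and $J=I(G\setminus x)$, the edge ideal of the graph obtained by deleting $x$. A direct monomial computation shows $I(G):x=P+(J:x)=P+J$. Since $J\subseteq I(G)$, we have $I(G):x\supseteq P+J$. Conversely, a monomial $m$ with $xm\in I(G)$ is divisible by an edge monomial $x_ux_v$: either the edge contains $x$, giving $x_u\mid m$ with $u\in N_G(x)$, or it avoids $x$, giving $m\in J$.

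The key step is to show that every edge of $G\setminus x$ meets $N_G(x)$. Then $J\subseteq P$, so $I(G):x=P$ is the prime generated by the variables of $N_G(x)$, and $N_G(x)$ is a vertex cover of the remaining graph. Suppose instead an edge $\{u,v\}$ of $G$ has $u,v\notin N_G[x]$. Then $u,v$ are both neighbours of $x$ in $\overline{G}$. Because $x$ is simplicial in $\overline{G}$, its $\overline{G}$-neighbourhood is a clique, so $u$ and $v$ are adjacent in $\overline{G}$, i.e.\ nonadjacent in $G$, a contradiction. This step is the heart of the argument; it is exactly the translation of simpliciality in the complement into a covering property in $G$.

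It remains to check that $xP+J$ is an elimination step in the sense used by \Cref{lem:x-partition}, and that the choice of $x$ can be iterated. The first point holds because $P$ is generated by variables. The second holds because $G\setminus x$ is again cochordal: its complement is $\overline{G}\setminus x$, which is chordal. The main obstacle I expect is the bookkeeping needed to guarantee $P\neq 0$, i.e.\ that $x$ is not isolated in $G$. I would handle it by first deleting isolated vertices of $G$, which changes neither $I(G)$ nor its colon ideals, and then applying Dirac's theorem to the complement of the reduced graph. In that graph a simplicial vertex $x$ of $\overline{G}$ that is isolated in $G$ would be $\overline{G}$-adjacent to every vertex, and no such vertices remain after the deletion.
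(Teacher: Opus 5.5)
Your proof is correct. The paper gives no proof of this proposition: it quotes it from Jaramillo--Villarreal and uses it as a black box. So there is no internal argument to compare with, and what you supply is a self-contained replacement.

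Its content is an equivalence valid for every vertex $x$. The set $N_G(x)$ covers all edges of $G\setminus x$ if and only if $V(G)\setminus N_G[x]=N_{\overline{G}}(x)$ is independent in $G$. That happens if and only if this set is a clique in $\overline{G}$, that is, if and only if $x$ is simplicial in $\overline{G}$. Your colon computation $I(G):x=P+J$ actually gives more. It shows $I(G):x=P+I(G\setminus N_G[x])$, which is prime precisely when $x$ is simplicial in $\overline{G}$. So cochordality enters only through the existence of one such vertex, supplied by \Cref{thm:dirac}.

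Two small remarks.

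First, the isolated-vertex bookkeeping is unnecessary. Suppose $x$ is simplicial in $\overline{G}$ and isolated in $G$. Then $N_{\overline{G}}(x)=V(G)\setminus\{x\}$ is a clique of $\overline{G}$, so $G$ has no edges. Hence whenever $I(G)\neq 0$, every simplicial vertex of $\overline{G}$ already has $P\neq 0$. Alternatively, the two nonadjacent simplicial vertices from \Cref{thm:dirac} are adjacent in $G$. So the degenerate case you plan to treat by deleting isolated vertices never occurs, and a single simplicial vertex suffices.

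Second, your sentence that $xP+J$ is an elimination step ``because $P$ is generated by variables'' does not by itself justify any Betti-splitting property. The proposition as stated only asserts the description of the colon ideal, and that you have proved. If you also want the splitting, the relevant reason is different. The $x$-divisible part $xP\cong P(-1)$ has a linear (Koszul) resolution. That is the hypothesis of the Francisco--H\`a--Van Tuyl criterion for $x$-partitions, and it applies when the remaining part $J$ is nonzero.
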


\medskip

The following known lower bound explains why induced matchings obstruct cochordality.
\begin{theorem}[H\`a--Van Tuyl and Woodroofe, see \cite{HaVanTuyl2008,Woodroofe2014}]\label{thm:induced-matching-bound}
If $G$ is a finite simple graph, then the induced matching number of $G$ gives a lower bound on the regularity of $I(G)$.  In particular, if $I(G)$ has a $2$-linear resolution, then $G$ has no induced matching of size two.
\end{theorem}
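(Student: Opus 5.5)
Since \cref{thm:induced-matching-bound} is a known result quoted from \cite{HaVanTuyl2008,Woodroofe2014}, I only need to recover the ``in particular'' clause from the general bound $\operatorname{reg}(I(G))\ge \operatorname{im}(G)+1$, equivalently $\operatorname{reg}(S/I(G))\ge \operatorname{im}(G)$. The plan is to reduce to an induced subgraph and compute there.

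\textbf{Step 1: restriction to induced subgraphs.} Let $H$ be an induced subgraph of $G$ on a vertex set $W$. Then $I(H)=I(G)\cap \mathbb{K}[x_w:w\in W]$, i.e.\ $I(H)$ is the restriction of $I(G)$ to the variables of $W$. By Hochster's formula, or by the standard fact that restriction of a monomial ideal to a subset of variables yields a direct summand of the multigraded Tor (take multidegrees supported on $W$), we get $\beta_{i,j}(I(H))\le\beta_{i,j}(I(G))$ for all $i,j$. Hence $\operatorname{reg}I(H)\le\operatorname{reg}I(G)$.

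\textbf{Step 2: the matching itself.} Take an induced matching $M=\{e_1,\dots,e_m\}$ and let $H$ be the induced subgraph on its $2m$ vertices. Then $H$ is $m$ disjoint edges, so $I(H)=(x_{u_1}x_{v_1},\dots,x_{u_m}x_{v_m})$ is generated by a regular sequence of $m$ quadrics in disjoint variables. Its Koszul complex is the minimal resolution of $S/I(H)$, with top term $S(-2m)$ in homological degree $m$. Therefore $\operatorname{reg}S/I(H)=2m-m=m$, i.e.\ $\operatorname{reg}I(H)=m+1$. Combined with Step 1 this gives $\operatorname{reg}I(G)\ge\operatorname{im}(G)+1$.

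\textbf{Step 3: consequence.} If $I(G)$ has a $2$-linear resolution, then $\operatorname{reg}I(G)=2$, so $\operatorname{im}(G)\le1$. In particular there is no induced matching of size two. Concretely, for $m=2$ the Koszul resolution contains $\beta_{2,4}(S/I(H))=1$ (that is, $\beta_{1,4}(I(H))=1$), which Step 1 transports into $I(G)$ as a nonlinear syzygy. Equivalently, $\overline{G}$ then contains an induced $4$-cycle, which matches \cref{thm:froberg}.

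The only point requiring care is Step 1, namely the monotonicity of Betti numbers under passage to induced subgraphs. I would justify it through Hochster's formula, $\beta_{i,\sigma}(S/I(G))=\dim\tilde H_{|\sigma|-i-1}(\operatorname{Ind}(G)|_\sigma)$, where $\sigma$ ranges over squarefree multidegrees. The multidegrees $\sigma\subseteq W$ are exactly those for $H$, since $\operatorname{Ind}(G)|_W=\operatorname{Ind}(H)$.
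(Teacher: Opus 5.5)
Your proof is correct. The paper gives no proof of this statement: it is quoted from \cite{HaVanTuyl2008,Woodroofe2014} and used only through its ``in particular'' clause in the proof of Theorem~\ref{thm:classification}. Your argument is the standard proof behind those references. You restrict to the induced subgraph on the matching via Hochster's formula, so Betti numbers can only drop, and then read off $\operatorname{reg}(S/I(H))=m$ from the Koszul resolution of $m$ quadrics in disjoint variables.

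For the only clause the paper needs, your closing remark is enough by itself. An induced $2K_2$ in $G$ is an induced $C_4$ in $\overline{G}$, so Theorem~\ref{thm:froberg} already excludes a $2$-linear resolution without the general regularity bound.
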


\medskip

We shall use the following constructible-system formulation.  It is essentially the cochordal construction of \cite{DungVu2026}, but the proof below keeps the global subtraction term visible.

\medskip

\noindent The following definition records the construction of a graph by repeatedly adding a star whose leaf set is a vertex cover of the graph already built.
\begin{definition}[Cochordal constructible system, \cite{DungVu2026}]\label{def:constructible}
Let $P=(u_k,U_k,u_{k-1},U_{k-1},\ldots,u_1,U_1)$ be an ordered list such that $u_j\notin U_{\ell}$ whenever $\ell\leq j$.  Set $G_1=K_{\{u_1\},U_1}$ and, recursively,
$$
V(G_j)=V(G_{j-1})\cup \{u_j\}\cup U_j,
\qquad
E(G_j)=E(G_{j-1})\cup E(K_{\{u_j\},U_j}).
$$
The list $P$ is a cochordal constructible system for $G$ if $G=G_k$ and $U_j$ is a vertex cover of $G_{j-1}$ for $j\geq 2$.  Its type is the sequence $(a_k,\ldots,a_1)$, where $a_j=|U_j|$.
\end{definition}

\medskip

\noindent The following known theorem links constructible systems with cochordal graphs.
\begin{theorem}[Constructible characterization, see \cite{DungVu2026}]\label{thm:constructible-characterization}
A finite simple graph is cochordal if and only if it admits a cochordal constructible system.
\end{theorem}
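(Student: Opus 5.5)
We prove both directions of \cref{thm:constructible-characterization} by induction on the number of vertices. Throughout, we use the equivalence that $\overline{G}$ is chordal if and only if $\overline{G}$ has a perfect elimination ordering (\cref{thm:dirac}). A simplicial vertex $v$ of $\overline{G}$ is a vertex whose non-neighbours in $G$ (other than $v$) form an independent set of $G$. Equivalently, $N_G(v)$ is a vertex cover of $G-v$ restricted to the edges among non-neighbours of $v$; since every edge of $G-v$ either meets $N_G(v)$ or lies among the non-neighbours, this says exactly that $N_G(v)$ is a vertex cover of $G-v$. This is the bridge between the two notions. Isolated vertices of $G$ need no stars and may be adjoined freely, so we may discard them.

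For sufficiency, suppose $G$ is cochordal with at least one edge. Since induced subgraphs of chordal graphs are chordal, $\overline{G}$ has a simplicial vertex $v$ with $\deg_G(v)\geq 1$. To see this, remove isolated vertices of $G$. If $\overline{G}$ is complete then $G$ is edgeless. Otherwise \cref{thm:dirac} provides two nonadjacent simplicial vertices of $\overline{G}$; these are adjacent in $G$, so both have positive $G$-degree. Set $u_k=v$ and $U_k=N_G(v)$. The graph $G'=G-v$ is again cochordal, so by induction it has a system $(u_{k-1},U_{k-1},\ldots,u_1,U_1)$ with $G_{k-1}=G'$ after discarding isolated vertices. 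By the observation above, $U_k$ is a vertex cover of $G'$. The condition $u_k\notin U_\ell$ for $\ell\leq k$ holds because $v\notin V(G')$ and $v\notin N_G(v)$. Adjoining the star $K_{\{u_k\},U_k}$ gives $G$ exactly. The base case is a single star $K_{1,m}$, whose complement $K_m$ plus an isolated vertex is chordal.

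For necessity, we show by induction on $j$ that each $G_j$ is cochordal. The graph $G_1$ is a star, so its complement is a clique together with an isolated vertex, which is chordal. For the step, let $G_j$ be obtained from $G_{j-1}$ by adding $u_j$, the new vertices of $U_j$, and the star edges. Consider an induced cycle $C$ of length at least $4$ in $\overline{G_j}$.
\begin{itemize}
\item Any vertex $w\in U_j\setminus V(G_{j-1})$ is adjacent in $G_j$ only to $u_j$. Hence in $\overline{G_j}$ it is adjacent to everything except $u_j$, and an induced cycle of length at least $5$ cannot contain a vertex missing only one neighbour. The case of length $4$ needs separate checking.
\item The vertex $u_j$ is simplicial in $\overline{G_j}$, because its $\overline{G_j}$-neighbours form the complement of $U_j$, which is independent in $G_j$ since $U_j$ covers $G_{j-1}$. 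So $u_j\notin C$.
\end{itemize}
Once new vertices are excluded, $C$ lives in $\overline{G_{j-1}}$ plus isolated-vertex joins, and we conclude by induction. A cleaner route avoids the case analysis entirely: $u_j$ is simplicial in $\overline{G_j}$, and $\overline{G_j}-u_j$ is $\overline{G_{j-1}}$ joined with a clique on the new leaves. A join of a chordal graph with a clique is chordal. Hence the elimination ordering extends, and $\overline{G_j}$ is chordal.

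The main obstacle is bookkeeping with vertices of $U_j$ that are not in $G_{j-1}$, which the hypothesis ``vertex cover of $G_{j-1}$'' does not control. The join-with-clique observation handles it. We also have to make sure isolated vertices of $\overline{G}$, that is, universal vertices of $G$, are treated consistently in the sufficiency direction. This is automatic, since choosing a simplicial vertex with positive degree works for them as well.
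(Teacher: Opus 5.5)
The paper does not prove this statement; it is quoted from Dung--Vu, so there is no in-paper argument to compare against. Your proof is correct and self-contained. Its key observation is that $N_G(v)$ is a vertex cover of $G-v$ exactly when $v$ is simplicial in $\overline{G}$. This shows that a constructible system is a perfect elimination ordering of $\overline{G}$ in disguise: the centers $u_k,u_{k-1},\ldots$ are successive simplicial vertices of the complement, and $U_j$ is the $G$-neighbourhood of $u_j$ among the vertices not yet eliminated. Sufficiency then follows from Theorem~\ref{thm:dirac} by peeling off a simplicial vertex as the last center. Necessity follows because $u_j$ is simplicial in $\overline{G_j}$, while $\overline{G_j}-u_j$ is $\overline{G_{j-1}}$ joined with a clique on the new leaves. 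Compared with the bare citation, this also explains what the paper's later checks (that each $U_{i,j}$ covers the graph built so far) are really verifying.

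Some bookkeeping should be tightened.
\begin{itemize}
\item Under Definition~\ref{def:constructible}, every vertex of $G_k$ is a center or a leaf, so isolated vertices cannot be ``adjoined freely''. They must enter as empty stars $(u_j,\emptyset)$ at the very start of the construction. This is legal because $\emptyset$ covers an edgeless graph, and the paper uses such a zero entry in Remark~\ref{rem:zero-type}.
\item In the inductive step, say explicitly that every vertex isolated in $G-v$ is a neighbour of $v$, since $G$ itself has no isolated vertices. This is why such vertices may appear as new leaves in $U_k=N_G(v)$, and why $G_k=G$.
\item The base case of that induction is ``$G-v$ is edgeless, so $G=K_{\{v\},N_G(v)}$''. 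The chordality of the complement of a star belongs to the other direction.
\item Drop the unfinished bullet-point case analysis in favour of the join argument. The length-$4$ case there is immediate anyway: a new leaf's only non-neighbour in $\overline{G_j}$ is $u_j$, and $u_j\notin C$.
\item The definition only forbids $u_j$ from lying in earlier covers. If it is read as allowing $u_j$ to repeat an earlier center, use $\overline{G_{j-1}}-u_j$ in place of $\overline{G_{j-1}}$. Since $u_j\notin U_j$ and $U_j$ covers $G_{j-1}$, one still has $N_{G_j}(u_j)=U_j$, and the same argument goes through.
\end{itemize}
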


\medskip

\noindent The following formula is the central Betti-number identity used in the paper \cite{DungVu2026}.
\begin{theorem}[Type-sequence Betti formula, see \cite{DungVu2026}]\label{thm:type-formula}
Let $G$ be a cochordal graph with a constructible system of type $(a_k,\ldots,a_1)$.  Then $S/I(G)$ has a linear resolution and, for every $i\geq 1$,
$$
\beta_{i,i+1}(S/I(G))=\beta_i(S/I(G))=\binom{a_k}{i}+\binom{a_{k-1}+1}{i}+\cdots+\binom{a_1+k-1}{i}-\binom{k}{i+1}.
$$
Consequently,
$$
\operatorname{pd}(S/I(G))=\max\{a_k,a_{k-1}+1,\ldots,a_1+k-1\}.
$$
\end{theorem}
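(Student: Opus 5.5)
We will prove Theorem~\ref{thm:type-formula} by induction on the number $k$ of center steps, using the $x$-partition splitting at the last added center $u_k$. Write $G=G_k$, $H=G_{k-1}$, and $U=U_k$, where $U$ is a vertex cover of $H$ with $|U|=a_k$ and $u_k\notin V(H)$ (by the condition $u_j\notin U_\ell$ for $\ell\le j$, and since $u_k$ is new). Then
$$
I(G)=x_{u_k}P+J,\qquad P=(x_v:v\in U),\qquad J=I(H).
$$
Linearity of the resolution follows at once: by Theorem~\ref{thm:constructible-characterization} the graph $G$ is cochordal, so Theorem~\ref{thm:froberg} applies. We will therefore only track the total Betti numbers $\beta_i(I)=\beta_{i+1}(S/I)$. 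By Lemma~\ref{lem:x-partition}, applicable at this cochordal elimination step, and Theorem~\ref{thm:betti-splitting-known}, we obtain
$$
\beta_i(I(G))=\beta_i(x_{u_k}P)+\beta_i(J)+\beta_{i-1}(x_{u_k}P\cap J).
$$

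The three terms are computed as follows. First, $x_{u_k}P$ is a shifted copy of the ideal generated by $a_k$ variables, so its Koszul resolution gives $\beta_i(x_{u_k}P)=\binom{a_k}{i+1}$. Second, since $x_{u_k}$ does not occur in $J$ and $U$ covers $H$, we have $J\subseteq P$, and hence
$$
x_{u_k}P\cap J=x_{u_k}(P\cap J)=x_{u_k}J.
$$
Therefore $\beta_{i-1}(x_{u_k}P\cap J)=\beta_{i-1}(J)$. Combining these gives the recursion
$$
\beta_i(I(G))=\binom{a_k}{i+1}+\beta_i(I(H))+\beta_{i-1}(I(H)).
$$
For the base case $k=1$, the graph $G_1$ is a star, $I(G_1)=x_{u_1}P_1$, and $\beta_i(I)=\binom{a_1}{i+1}$. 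This matches the formula, since the correction term $\binom{1}{i+2}$ vanishes.

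The key step is to verify that the closed form solves this recursion. We index the formula in terms of $S/I$, writing $F_k(i)=\beta_i(S/I(G_k))$. For $H$ the type is $(a_{k-1},\dots,a_1)$, so the positional shifts run $0,\dots,k-2$; in $G$ the same vertices receive shifts $1,\dots,k-1$. Pascal's rule $\binom{m}{i}+\binom{m}{i-1}=\binom{m+1}{i}$ applied term by term turns $F_{k-1}(i)+F_{k-1}(i-1)$ into the shifted block sum $\sum_{j<k}\binom{a_j+k-j}{i}$, and turns the correction terms into $\binom{k-1}{i+1}+\binom{k-1}{i}=\binom{k}{i+1}$. Adding the new term $\binom{a_k}{i}$ reproduces $F_k(i)$. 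The main obstacle is the index bookkeeping between $I$ and $S/I$ (the shift $i\mapsto i+1$) and the edge cases $i=1$ and $i-1=0$. There we set $\beta_0(S/I(H))=1$, and must check that the Pascal step on the constant $1$ is absorbed correctly by the correction term, using $\binom{k-1}{1}+\binom{k-1}{0}=\binom{k}{1}$. We will verify $i=1$ directly: $\beta_1$ equals the number of edges, which is $\sum_j a_j$, and this should equal $\sum_j(a_j+k-j)-\binom{k}{2}$.

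For the projective dimension, the term $\binom{a_j+k-j}{i}$ is nonzero exactly when $i\le a_j+k-j$. We must show that the subtraction never cancels the top value $M=\max_j(a_j+k-j)$. Since each $a_j\ge1$ and $a_1\ge1$, we have $M\ge k$. Then $\binom{k}{M+1}=0$, while some binomial term in the sum is positive at $i=M$, so $\beta_M\ne0$. All terms vanish for $i>M$, which gives $\operatorname{pd}(S/I(G))=M$.
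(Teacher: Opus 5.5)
The paper gives no proof of this theorem; it is quoted from Dung--Vu. Your Betti-number induction is sound, but the projective-dimension step rests on an assumption that the definition does not supply.

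\textbf{The gap in the projective-dimension step.} You assert that each $a_j\ge 1$. This is not part of Definition~\ref{def:constructible}, and the paper itself uses types with a zero entry (Remark~\ref{rem:zero-type}), e.g.\ $(1,0)$ for $\Gamma(\mathbb{Z}/9\mathbb{Z})$. There $M=\max\{1,0+1\}=1<k=2$, so $\binom{k}{M+1}=\binom{2}{2}=1$ does not vanish and your step fails. The conclusion survives only because two indices attain $M$, giving $\beta_1=1+1-1=1$.

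With more zeros the projective-dimension clause is actually false. Take centers $u_1,u_2$ with empty covers, followed by $u_3$ with $U_3=\{u_1\}$. This is a valid system of type $(1,0,0)$ for the cochordal graph $K_2$ plus an isolated vertex. Its quotient has projective dimension $1$, while $\max\{1,0+1,0+2\}=2$; the Betti formula itself correctly gives $\beta_2=0+0+1-1=0$. So a nondegeneracy hypothesis has to be stated, not derived.

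Your argument only uses $a_1\ge 1$, which gives $M\ge a_1+k-1\ge k$. To cover the terminal zero that occurs in the paper's chain-ring types, argue as follows:
\begin{itemize}
\item $M\ge a_1+k-1\ge k-1$ always, so the only problematic value is $M=k-1$.
\item When $M=k-1$, $\beta_{k-1}$ equals the number of indices $j$ with $a_j+k-j=M$, minus $1$.
\item If $a_1=0$ and $a_2\ge 1$, then $M=k-1$ forces $a_2=1$. Hence both $j=1$ and $j=2$ attain $M$, and $\beta_{k-1}\ge 1$.
\end{itemize}
Alternatively, delete the terminal zero as in Remark~\ref{rem:zero-type}.

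\textbf{The Betti-number part.} The induction works as written:
\begin{itemize}
\item The cover condition gives $J=I(G_{k-1})\subseteq P$.
\item Since $u_k$ is a new vertex, $J:x_{u_k}=J$, and hence $x_{u_k}P\cap J=x_{u_k}J$.
\item The splitting is best justified by the Francisco--H\`a--Van Tuyl criterion: an $x_{u_k}$-partition whose $x_{u_k}$-part has a linear resolution is a Betti splitting, and $x_{u_k}P$ visibly has one. This is cleaner than citing the loosely stated Lemma~\ref{lem:x-partition}.
\end{itemize}

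\textbf{A slip in your edge-case discussion.} In quotient indexing, the last term of the recursion at $i=1$ is $\beta_{-1}(I(G_{k-1}))=0$, not $\beta_0(S/I(G_{k-1}))=1$. Inserting $1$ would make $\beta_1$ one too large. The uniform way to see that nothing breaks is that the closed form at $i=0$ equals $k-\binom{k}{1}=0$ for every number $k$ of steps. So Pascal's rule applies verbatim at $i=1$, and your direct edge count $\sum_j a_j$ confirms this.
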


\medskip

The literature supplies the construction principle and the general type formula, but it does not automatically validate a proposed arithmetic type sequence.  The point that must be checked is whether every vertex placed in a cover is truly adjacent to the new center.  In $n\in {p^a q, pqr}$ case, some valuation classes in $\Gamma(\mathbb{Z}/n\mathbb{Z})$ are independent rather than cliques, and including earlier vertices from the same class creates false edges.  Moreover, the term $-\binom{k}{i+1}$ in Theorem \ref{thm:type-formula} is essential and cannot be dropped when evaluating arithmetic blocks. Furthermore, we extend theory for new classes of zero divisor graphs.

\medskip

\section{Corrected type calculus and diagnostic principles}\label{sec:type-calculus}

The results in this section do not change the abstract constructible-system (Theorem \ref{thm:constructible-characterization}, \cite{DungVu2026}). Instead, we give the corrected arithmetic evaluation rules needed for zero-divisor graphs.  The distinction between decreasing clique blocks and constant independent blocks is absent from the formulae in \cite{DungVu2026}.

\medskip

Before applying the type formula form Definition \ref{def:constructible}, we shall often group the type sequence
$(a_k,\ldots,a_1)$ into consecutive blocks.  If $(a_k,\ldots,a_1)=(T_s,T_{s-1},\ldots,T_1),$ then each $T_i$ is called a \emph{type block}.  Thus $T_i$ is  only a convenient notation for the
subsequence of type entries arising from one valuation layer or one arithmetic
class.  If the vertices in a layer $V_i$ are ordered as
$V_i=\{v_{i,1},\ldots,v_{i,n_i}\}$ with corresponding cover sets
$U_{i,1},\ldots,U_{i,n_i}$, then the associated type block is
\[
T_i=(|U_{i,n_i}|,|U_{i,n_i-1}|,\ldots,|U_{i,1}|),
\]
because the type sequence is written in reverse order from the construction
order.

\medskip

\noindent The following result gives the corrected evaluation of a constructible type sequence in block form.
\begin{theorem}\label{prop:block-evaluation}
Let a valid constructible system have type obtained by concatenating blocks $T_s,\ldots,T_1$.  Suppose block $T_h$ has length $\ell_h$, and let $K_{>h}=\ell_s+\cdots+\ell_{h+1}$.  If the entries of $T_h$ are $c_h,c_h-1,\ldots,c_h-\ell_h+1$, then its contribution to $\beta_i(S/I(G))$ before the global correction is
$ \ell_h\binom{c_h+K_{>h}}{i}. $
If the entries of $T_h$ are all equal to $d_h$, then its contribution before the global correction is
$$
\sum_{t=0}^{\ell_h-1}\binom{d_h+K_{>h}+t}{i}
=\binom{d_h+K_{>h}+\ell_h}{i+1}-\binom{d_h+K_{>h}}{i+1}.
$$
The total Betti number is the sum of all block contributions minus $\binom{k}{i+1}$, where $k=\ell_s+\cdots+\ell_1$.
\end{theorem}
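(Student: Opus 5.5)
The plan is to derive the statement directly from Theorem~\ref{thm:type-formula} by reindexing. That theorem is applied to a type sequence $(a_k,\ldots,a_1)$: the entry in position $r$ from the left, with $r=0,1,\ldots,k-1$, receives the positional shift $r$ and contributes $\binom{a+r}{i}$. After the sum over positions, the single term $\binom{k}{i+1}$ is subtracted. Since the blocks $T_s,\ldots,T_1$ are consecutive subsequences written in this left-to-right order, the entries of $T_h$ occupy positions $K_{>h},K_{>h}+1,\ldots,K_{>h}+\ell_h-1$. This holds because exactly $\ell_s+\cdots+\ell_{h+1}$ entries precede $T_h$. The sum in Theorem~\ref{thm:type-formula} therefore splits as a sum over blocks, plus the global term $-\binom{k}{i+1}$ with $k=\ell_s+\cdots+\ell_1$. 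This already gives the last assertion of the statement.

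For a decreasing block, the $t$-th entry of $T_h$ is $c_h-t$ and sits at position $K_{>h}+t$, for $t=0,\ldots,\ell_h-1$. Its contribution is $\binom{(c_h-t)+K_{>h}+t}{i}=\binom{c_h+K_{>h}}{i}$, independent of $t$. Summing the $\ell_h$ identical terms gives $\ell_h\binom{c_h+K_{>h}}{i}$.

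For a constant block, the $t$-th entry is $d_h$ and contributes $\binom{d_h+K_{>h}+t}{i}$, which gives the displayed sum. To close it up, I will use Pascal's rule in the form $\binom{m}{i}=\binom{m+1}{i+1}-\binom{m}{i+1}$ with $m=d_h+K_{>h}+t$. The sum over $t$ then telescopes to $\binom{d_h+K_{>h}+\ell_h}{i+1}-\binom{d_h+K_{>h}}{i+1}$. This is the hockey-stick identity, and it needs no restriction on $i$ beyond $i\ge 1$ and nonnegative tops.

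The only real obstacle is bookkeeping about orientation. I must confirm that within $T_h$ the first-listed entry, $|U_{i,n_i}|$ in the block notation, is the one with the smallest shift $K_{>h}$. Then the decreasing entries $c_h, c_h-1,\ldots$ are paired with increasing shifts, so that the shifts cancel the decrements rather than doubling them. The statement lists the entries as $c_h,c_h-1,\ldots$ in the written order of the type sequence, and Theorem~\ref{thm:type-formula} attaches shift $r$ to the $r$-th written entry. With both conventions taken from the written order, the pairing is exactly as used above and the proof is complete.
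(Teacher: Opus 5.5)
Your proposal is correct and follows the paper's proof essentially step for step. You reindex Theorem~\ref{thm:type-formula} by left-to-right position, note that $T_h$ occupies positions $K_{>h},\ldots,K_{>h}+\ell_h-1$, let the decrement cancel the shift in a decreasing block, sum a constant block by the hockey-stick identity (which you derive by telescoping Pascal's rule), and subtract $\binom{k}{i+1}$; your orientation check also matches the paper's convention $T_i=(|U_{i,n_i}|,\ldots,|U_{i,1}|)$.
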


\begin{proof}
	Let the type sequence of the constructible system be
	$ (a_k,a_{k-1},\ldots,a_1), $
	where $k=\ell_s+\ell_{s-1}+\cdots+\ell_1$.  By Theorem
	\ref{thm:type-formula}, the uncorrected part of the $i$-th Betti number is
	$ \sum_{j=1}^{k}\binom{a_j+k-j}{i}. $
	Equivalently, if we read the type sequence from left to right, the entry in
	the first position contributes with shift $0$, the entry in the second
	position contributes with shift $1$, and so on.  Thus, if the entry in the
	$t$-th position from the left is denoted by $b_t$, where
	$0\leq t\leq k-1$, then its contribution before the global correction is
	$ \binom{b_t+t}{i}. $
	This is the same formula as
	$ \binom{a_j+k-j}{i}, $ since the entry $a_j$ occupies the position $t=k-j$ from the left.
	
	Now fix a block $T_h$.  Since the full type sequence is obtained by
	concatenating the blocks
	$ T_s,T_{s-1},\ldots,T_1, $
	the number of entries lying strictly to the left of $T_h$ is
	$ K_{>h}=\ell_s+\ell_{s-1}+\cdots+\ell_{h+1}. $
	Therefore the entries of $T_h$ occupy the left-to-right positions
	$ K_{>h},\ K_{>h}+1,\ \ldots,\ K_{>h}+\ell_h-1. $
	 Suppose that
	$ T_h=(c_h,c_h-1,\ldots,c_h-\ell_h+1). $
	The $r$-th entry of this block, counted from $r=0$ to $r=\ell_h-1$, is
	$ c_h-r, $
	and its position from the left is
	$ K_{>h}+r. $
	Hence its contribution before the global correction is
	$$
	\binom{(c_h-r)+(K_{>h}+r)}{i}
	=
	\binom{c_h+K_{>h}}{i}.
	$$
	The dependence on $r$ cancels, as the entry decreases by $1$ at each step,
	while the positional shift increases by $1$ at each step.  Since there are
	$\ell_h$ entries in the block, the whole block contributes
	$$
	\sum_{r=0}^{\ell_h-1}\binom{c_h+K_{>h}}{i}
	=
	\ell_h\binom{c_h+K_{>h}}{i}.
	$$
	
	 Next suppose that
	$ T_h=(d_h,d_h,\ldots,d_h) $ is a constant block of length $\ell_h$.  The $r$-th entry of this block is
	$d_h$, and again its position from the left is $K_{>h}+r$.  Therefore its
	contribution is $
	\binom{d_h+K_{>h}+r}{i}. $
	Now, summing over all entries of the block, we have
	$$
	\sum_{r=0}^{\ell_h-1}\binom{d_h+K_{>h}+r}{i}.
	$$
	Let $
	N=d_h+K_{>h}. $
	Then this sum becomes
	$$
	\sum_{r=0}^{\ell_h-1}\binom{N+r}{i}
	=
	\sum_{m=N}^{N+\ell_h-1}\binom{m}{i}.
	$$
	By the hockey-stick identity, we have
	$$
	\sum_{m=N}^{N+\ell_h-1}\binom{m}{i}
	=
	\binom{N+\ell_h}{i+1}-\binom{N}{i+1}.
	$$
	Substituting back $N=d_h+K_{>h}$, we derive
	$$
	\sum_{t=0}^{\ell_h-1}\binom{d_h+K_{>h}+t}{i}
	=
	\binom{d_h+K_{>h}+\ell_h}{i+1}
	-
	\binom{d_h+K_{>h}}{i+1}.
	$$
	 Finally, Theorem \ref{thm:type-formula} states that the complete Betti
	number is obtained by subtracting the global correction term
	$ \binom{k}{i+1}, $
	where
	$ k=\ell_s+\ell_{s-1}+\cdots+\ell_1. $
	Hence the total value of $\beta_i(S/I(G))$ is the sum of the block
	contributions described above, minus $\binom{k}{i+1}$.  This proves the
	claim.
\end{proof}

\medskip

The above proposition is new in its use here, as it separates the two arithmetic phenomena that were previously conflated.  Ferrers graphs studied by Corso and Nagel \cite{CorsoNagel2009,CorsoNagel2008} naturally lead to decreasing blocks. Zero-divisor graphs also produce constant blocks when a valuation class is independent.

\medskip

Let $n=\prod_{t=1}^s p_t^{\alpha_t}$ and let $R=\mathbb{Z}/n\mathbb{Z}$.  For a residue $x$, define
$ \nu_t(x)=\min\{v_{p_t}(x),\alpha_t\}. $
Thus $\nu_t(x)=\alpha_t$ means that $p_t^{\alpha_t}$ divides the chosen representative of $x$.

\medskip

\noindent The following lemma identifies the ordinary zero-divisor vertices with  annihilators  and valuation classes.
\begin{lemma}\label{lem:valuation-annihilator}
	Let $x,y\in \mathbb{Z}/n\mathbb{Z}$ be nonzero zero divisors.  Then $\operatorname{ann}(x)=\operatorname{ann}(y)$ if and only if $\nu_t(x)=\nu_t(y)$ for every $t$.  Moreover, the number of residues in the valuation class $\mathbf e=(e_1,\ldots,e_s)$ is $|C(\mathbf e)|=\prod_{t=1}^s c_t(e_t),$ where
	$$
	 c_t(e_t)=
	\begin{cases}
		p_t^{\alpha_t-e_t-1}(p_t-1),&0\leq e_t<\alpha_t,\\
		1,&e_t=\alpha_t.
	\end{cases}
	$$
	The all-maximal vector $(\alpha_1,\ldots,\alpha_s)$ corresponds to the zero residue and is omitted from the vertex set.
\end{lemma}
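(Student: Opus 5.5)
The plan is to use the Chinese Remainder Theorem to reduce both claims to the prime-power factors. The isomorphism $\mathbb{Z}/n\mathbb{Z}\cong\prod_{t=1}^s \mathbb{Z}/p_t^{\alpha_t}\mathbb{Z}$ sends $x\mapsto (x_1,\ldots,x_s)$. It is compatible with annihilators: $\operatorname{ann}(x)=\prod_t \operatorname{ann}_{\mathbb{Z}/p_t^{\alpha_t}}(x_t)$. Also $\nu_t(x)$ equals the truncated $p_t$-adic valuation of $x_t$ in $\mathbb{Z}/p_t^{\alpha_t}\mathbb{Z}$. This is well defined: changing the representative by a multiple of $n$ does not change $\min\{v_{p_t}(x),\alpha_t\}$, since $p_t^{\alpha_t}\mid n$.

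For the annihilator statement, first treat a single prime power. In $\mathbb{Z}/p^{\alpha}\mathbb{Z}$, write $x=p^{e}u$ with $u$ a unit and $e=\nu(x)$; here $e=\alpha$ means $x=0$. Then $rx=0$ if and only if $p^{\alpha-e}\mid r$, so $\operatorname{ann}(x)=(p^{\alpha-e})$. These ideals are distinct for distinct $e\in\{0,\ldots,\alpha\}$, so in this case $\operatorname{ann}(x)$ determines $e$ and conversely. Since a product of ideals in a product ring determines each factor, $\operatorname{ann}(x)=\operatorname{ann}(y)$ holds if and only if $\operatorname{ann}(x_t)=\operatorname{ann}(y_t)$ for all $t$, hence if and only if $\nu_t(x)=\nu_t(y)$ for all $t$. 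One can also argue directly without the CRT: $\operatorname{ann}(x)=(n/\gcd(x,n))$, and $\gcd(x,n)=\prod_t p_t^{\nu_t(x)}$.

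For the count, the CRT bijection shows that $C(\mathbf e)$ corresponds to $\prod_t\{x_t\in\mathbb{Z}/p_t^{\alpha_t}\mathbb{Z}:\nu(x_t)=e_t\}$. So it suffices to count residues mod $p^\alpha$ of exact valuation $e$. If $e=\alpha$, only $0$ qualifies, giving $1$. If $e<\alpha$, the residues are $p^e u$ with $u$ running over units modulo $p^{\alpha-e}$, since $p^eu\equiv p^eu'\pmod{p^\alpha}$ exactly when $u\equiv u'\pmod{p^{\alpha-e}}$. This gives $\varphi(p^{\alpha-e})=p^{\alpha-e-1}(p-1)$. Multiplying over $t$ yields $|C(\mathbf e)|=\prod_t c_t(e_t)$.

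Finally, the all-maximal vector means $p_t^{\alpha_t}\mid x$ for every $t$, i.e.\ $n\mid x$, i.e.\ $x=0$, which is excluded. The all-zero vector gives units, which are also not vertices. Every other vector gives nonzero zero divisors, so the vertex set is the union of the $C(\mathbf e)$ with $\mathbf e$ neither all-zero nor all-maximal. The only delicate point is the bijection $u\mapsto p^eu$ modulo $p^{\alpha-e}$ in the counting step; the rest is routine.
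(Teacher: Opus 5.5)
Your proof is correct and follows essentially the same route as the paper: the paper proves the annihilator equivalence globally via $\operatorname{ann}(x)=(n/\gcd(x,n))$ with $\gcd(x,n)=\prod_t p_t^{\nu_t(x)}$, which is exactly your closing alternative, and then counts each class through the Chinese remainder theorem. Your differences are cosmetic: you run the annihilator argument componentwise through the CRT, and you count the elements $p^{e}u$ with $u$ a unit modulo $p^{\alpha-e}$ rather than subtracting $p^{\alpha-e}-p^{\alpha-e-1}$ (multiples of $p^{e}$ minus multiples of $p^{e+1}$). You also check that $\nu_t$ is well defined, a point the paper leaves implicit.
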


\begin{proof}
	Choose integer representatives of $x$ and $y$, with 
	$ n=\prod_{t=1}^{s}p_t^{\alpha_t}. $
	For a residue $x$, let
	$ d_x=\gcd(x,n). $
	Then
	$ d_x=\prod_{t=1}^{s}p_t^{\nu_t(x)}. $
	Clearly, the exponent of $p_t$ in $\gcd(x,n)$ is exactly the truncated
	$p_t$-adic valuation of $x$ modulo $n$. We first determine the annihilator of $x$.  A residue $z\in \mathbb{Z}/n\mathbb{Z}$ belongs
	to $\operatorname{ann}(x)$ if and only if
	 $xz\equiv 0 \pmod n, $
	or equivalently,
	$ n\mid xz,$  which is equivalent to
	$ \frac{n}{d_x}\mid z. $
	Hence
	$ \operatorname{ann}(x)=\left(\frac{n}{d_x}\right)
	\subseteq \mathbb{Z}/n\mathbb{Z}. $
	Therefore $\operatorname{ann}(x)$ is determined by $d_x$, and $d_x$ is determined by the
	valuation vector
	$ \nu(x)=(\nu_1(x),\ldots,\nu_s(x)). $
	Consequently, if $\nu_t(x)=\nu_t(y)$ for every $t$, then $d_x=d_y$, and hence
	$\operatorname{ann}(x)=\operatorname{ann}(y)$.
	
	 Conversely, suppose that $\operatorname{ann}(x)=\operatorname{ann}(y)$.  Since
	$ \operatorname{ann}(x)=\left(\frac{n}{d_x}\right)$ and $ \operatorname{ann}(y)=\left(\frac{n}{d_y}\right), $ so
	the two principal ideals generated by $n/d_x$ and $n/d_y$ in $\mathbb{Z}/n\mathbb{Z}$ are
	equal.  In $\mathbb{Z}/n\mathbb{Z}$, ideals are determined by divisors of $n$, so we have 
	$ \frac{n}{d_x}=\frac{n}{d_y}, $
	and hence $d_x=d_y$.  Comparing the prime-power decompositions of $d_x$ and
	$d_y$ gives
	$ \nu_t(x)=\nu_t(y) $ for every $t. $
	This proves the equivalence between equality of annihilators and equality of
	valuation vectors.
	
	 For counting the size of a valuation class, we use Chinese remainder theorem,
	$ \mathbb{Z}/n\mathbb{Z}\cong \prod_{t=1}^{s}\mathbb{Z}/p_t^{\alpha_t}\mathbb{Z}.$
	Thus the choices in different prime-power components are independent.  In the
	component $\mathbb{Z}/p_t^{\alpha_t}\mathbb{Z}$, the number of residues with exact
	valuation $e_t<\alpha_t$ is the number of multiples of $p_t^{e_t}$ which are
	not multiples of $p_t^{e_t+1}$, that is, such a number is
	$$
	p_t^{\alpha_t-e_t}-p_t^{\alpha_t-e_t-1}
	=
	p_t^{\alpha_t-e_t-1}(p_t-1).
	$$
	If $e_t=\alpha_t$, then there is only one residue with that valuation, namely
	the zero residue modulo $p_t^{\alpha_t}$. Therefore, for
	$ \mathbf e=(e_1,\ldots,e_s), $
	the number of residues with valuation vector $\mathbf e$ is
	$ |C(\mathbf e)|
	=
	\prod_{t=1}^{s}c_t(e_t), $
	where
	$$
	c_t(e_t)=
	\begin{cases}
		p_t^{\alpha_t-e_t-1}(p_t-1),&0\leq e_t<\alpha_t,\\
		1,&e_t=\alpha_t.
	\end{cases}
	$$
	Finally, the vector
	$ (\alpha_1,\ldots,\alpha_s) $
	means that every prime-power component is zero, and hence it represents the
	zero residue modulo $n$.  Since the zero-divisor graph uses only nonzero zero
	divisors as vertices, this class is omitted from the vertex set.
\end{proof}

\medskip

The above lemma explains why the same notation can be used for compressed classes and for ordinary vertex blocks.  The homological calculation, however, must use the size and internal adjacency of the lifted block.

\medskip

\noindent The following result gives a quick test for whether same-class vertices may be inserted into a cover.
\begin{lemma}\label{lem:same-class-test}
Let $n=\prod_{t=1}^s p_t^{\alpha_t}$, and let $C(\mathbf e)$ be the class of residues whose representative has valuation vector $\mathbf e=(e_1,\ldots,e_s)$ with respect to the primes $p_t$.  Two distinct vertices in $C(\mathbf e)$ are adjacent in $\Gamma(\mathbb{Z}/n\mathbb{Z})$ only if $2e_t\geq \alpha_t$ for every $t$.
\end{lemma}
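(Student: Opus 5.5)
The plan is to prove this directly from the valuation criterion for adjacency recalled in Section~\ref{sec:preliminaries}. Two nonzero residues $x,y$ are adjacent in $\Gamma(\mathbb{Z}/n\mathbb{Z})$ exactly when $n\mid xy$. This holds if and only if, for every $t$, the $p_t$-adic valuation of $xy$ is at least $\alpha_t$.

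Let $x,y\in C(\mathbf e)$ be distinct and adjacent. For each $t$ we use the truncated valuations $\nu_t(x)=\nu_t(y)=e_t$. If $e_t<\alpha_t$, then $e_t$ is the exact $p_t$-adic valuation of the chosen representative, so $v_{p_t}(xy)=2e_t$. The condition $p_t^{\alpha_t}\mid xy$ then forces $2e_t\ge\alpha_t$. If $e_t=\alpha_t$, the inequality $2e_t\ge\alpha_t$ is automatic. Hence $2e_t\ge\alpha_t$ for all $t$, which is the stated necessary condition.

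The only point that needs care is the case $e_t=\alpha_t$. There the untruncated valuation of a representative can exceed $\alpha_t$, so we cannot write $v_{p_t}(xy)=2e_t$ literally. This causes no difficulty, since the required inequality holds trivially in that case. The argument should also note that the criterion does not depend on the choice of representatives, because $\nu_t$ is defined modulo $p_t^{\alpha_t}$.

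We may also record the converse: if $2e_t\ge\alpha_t$ for all $t$, then $v_{p_t}(xy)\ge\min(2e_t,\alpha_t)=\alpha_t$ for every $t$, so any two distinct members of $C(\mathbf e)$ are adjacent. The class $C(\mathbf e)$ is therefore a clique under this condition and an independent set otherwise. This is the dichotomy used later for the clique and independent blocks.
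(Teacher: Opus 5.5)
Your proof is correct and is essentially the paper's argument: for each $t$ with $e_t<\alpha_t$ the representatives have exact $p_t$-valuation $e_t$, so $v_{p_t}(xy)=2e_t$ and $p_t^{\alpha_t}\mid xy$ forces $2e_t\ge\alpha_t$, while the case $e_t=\alpha_t$ is automatic. Your added converse (if $2e_t\ge\alpha_t$ for every $t$, then $C(\mathbf e)$ is a clique) is also correct, and it supplies the ``if'' direction that the later Corollary \ref{cor:clique-independent-block} relies on, although the lemma itself only states necessity.
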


\begin{proof}
	Choose two distinct vertices $x,y\in C(\mathbf e)$.  By the definition of the
	valuation class, both $x$ and $y$ have the same valuation vector
	$ \mathbf e=(e_1,\ldots,e_s), $
	that is,
	$ v_{p_t}(x)=e_t$ and $	v_{p_t}(y)=e_t
	$
	for every $t=1,\ldots,s$, where the valuations are understood in the usual
	truncated sense modulo $p_t^{\alpha_t}$. 
	As
	$
	n=\prod_{t=1}^{s}p_t^{\alpha_t},
	$
	the congruence of adjacency in $\Gamma(\mathbb{Z}/n\mathbb{Z})$ is equivalent to the divisibility condition
	$ p_t^{\alpha_t}\mid xy $ for every $t=1,\ldots,s. $
	In terms of $p_t$-adic valuations, this means
	$ v_{p_t}(xy)\geq \alpha_t $ for every $t. $ Now fix an index $t$.  If $e_t<\alpha_t$, then the representatives of $x$ and
	$y$ may be written in the form
	$ x=p_t^{e_t}u,$ and $ y=p_t^{e_t}v, $
	where neither $u$ nor $v$ is divisible by $p_t$.  Hence,
	$ xy=p_t^{2e_t}uv, $
	with $p_t\nmid uv$, and 
	$ v_{p_t}(xy)=2e_t. $
	Thus, in order that $p_t^{\alpha_t}$ divide $xy$, it is necessary that
	$ 2e_t\geq \alpha_t. $ If $e_t=\alpha_t$, then the same inequality is automatic.  Consequently, for
	every prime factor $p_t$ of $n$, adjacency of two distinct vertices in the
	same valuation class forces
	$ 2e_t\geq \alpha_t. $
\end{proof}

\medskip

Compared with the compressed-graph viewpoint of Mulay \cite{Mulay2002} and Spiroff--Wickham \cite{SpiroffWickham2011}, Lemma \ref{lem:same-class-test} is deliberately elementary.  Its role is to prevent a compressed adjacency from being mistakenly interpreted as a clique inside a lifted equivalence class.

\medskip

\noindent The following result identifies precisely when a repeated block is decreasing or constant in the families studied later.
\begin{corollary}\label{cor:clique-independent-block}
In $\Gamma(\mathbb{Z}/p^a\mathbb{Z})$, the class $[p^i]$ is a clique if and only if $2i\geq a$.  In $\Gamma(\mathbb{Z}/p^a q\mathbb{Z})$, the class $[p^i q]$ is a clique if and only if $2i\geq a$.  In $\Gamma(\mathbb{Z}/pqr\mathbb{Z})$, the classes $[pq]$, $[pr]$, and $[qr]$ are independent sets.
\end{corollary}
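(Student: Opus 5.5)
The plan is to treat the three assertions one at a time, using Lemma \ref{lem:same-class-test} for the ``only if'' directions and a direct valuation computation for the ``if'' directions. In each case the question reduces to a single inequality. Two distinct elements $x,y$ of a class $C(\mathbf e)$ are adjacent exactly when $v_{p_t}(xy)\ge \alpha_t$ for every $t$. Since $v_{p_t}(xy)\ge 2e_t$, and equality holds whenever $e_t<\alpha_t$, the class is a clique precisely when $2e_t\ge\alpha_t$ for all $t$. If this fails for some $t$, then $v_{p_t}(xy)=2e_t<\alpha_t$ for every pair, so the class is independent.

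\textbf{Case $n=p^a$.} The class $[p^i]$ with $1\le i\le a-1$ has valuation vector $(i)$. If $2i\ge a$, any $x=p^iu$ and $y=p^iv$ satisfy $p^{2i}\mid xy$, so $xy\equiv 0$ and every two distinct members are adjacent. If $2i<a$, Lemma \ref{lem:same-class-test} rules out any edge. A degenerate point needs a remark: when the class has a single element, it is trivially a clique. The condition ``if and only if'' should then be read for classes with at least two vertices, or it should be observed that the equivalence still holds as stated. For instance, $|V_i|=p^{a-i-1}(p-1)$ equals $1$ only when $p=2$ and $i=a-1$, and in that case $2i\ge a$ holds anyway as long as $a\ge2$.

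\textbf{Case $n=p^aq$.} The class $[p^iq]$ has vector $(i,1)$. The $q$-coordinate satisfies $2\cdot1\ge 1$ automatically, so adjacency depends only on $2i\ge a$, and the argument is the same as in the previous case. The same caveat applies to singleton classes: $|B_i|=1$ only when $p=2$ and $i=a-1$, which again has $2i\ge a$ for $a\ge 2$. The edge case $a=1$, $i=0$ is covered, since $2i=0<1$ and the class is independent whenever it has at least two elements.

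\textbf{Case $n=pqr$.} Each of $[pq]$, $[pr]$, $[qr]$ has one zero coordinate; for $[pq]$ the vector is $(1,1,0)$. For that prime, $2\cdot 0<1$, so by Lemma \ref{lem:same-class-test} no two distinct members are adjacent, and each class is independent.

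The only delicate step is the bookkeeping for singleton classes described above. Everything else is immediate from Lemma \ref{lem:same-class-test} together with the converse computation $v_{p_t}(xy)\ge 2e_t$.
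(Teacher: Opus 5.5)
Your proposal is correct and follows the paper's route. The paper also derives all three assertions from Lemma \ref{lem:same-class-test}, noting that the $q$-coordinate satisfies $2\cdot 1\ge 1$ and that $[pq]$ has a zero $r$-coordinate. Your added computation that $2e_t\ge\alpha_t$ forces $p_t^{\alpha_t}\mid xy$ supplies the ``if'' direction, which the lemma alone does not give and which the paper leaves implicit.

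One sharpening is needed. For $n=2q$ (that is, $p=2$, $a=1$), the class $[q]=\{q\}$ is a singleton with $2i<a$. There the statement is literally false, so your ``at least two vertices'' reading is required and is not merely one of two optional readings.
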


\begin{proof}
By Lemma \ref{lem:same-class-test}, for $p^a$ the condition is $2i\geq a$.  For $p^a q$, the $q$-exponent in $[p^i q]$ already satisfies $2\geq 1$, so the only condition is again $2i\geq a$.  For $pqr$, the class $[pq]$ has valuation vector $(1,1,0)$, whose square still has $r$-valuation zero, and the other two cases are identical by symmetry.
\end{proof}

\medskip

The above corollary is the first point at which the present  paper departs from the invalid type sequences in the \cite{DungVu2026}.  It implies, for example, that two distinct vertices in the class $[q]$ of $\Gamma(\mathbb{Z}/pq\mathbb{Z})$ are not adjacent, so a cover for one such vertex cannot contain another such vertex.

\medskip

\noindent The following theorem gives a corrected validation rule for proposed constructible systems in zero-divisor graphs.
\begin{theorem}\label{thm:validity-criterion}
Consider a proposed constructible step in $\Gamma(\mathbb{Z}/n\mathbb{Z})$ with center in valuation class $C(\mathbf e)$ and cover equal to a union of previously available valuation classes and possibly previous vertices from $C(\mathbf e)$.  The step is valid only if every included class $C(\mathbf f)$ satisfies $e_t+f_t\geq \alpha_t$ for all $t$, and previous vertices from $C(\mathbf e)$ may be included only if $2e_t\geq \alpha_t$ for all $t$.
\end{theorem}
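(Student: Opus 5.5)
The statement is a necessary condition, so the plan is to unpack what validity of a constructible step means in Definition \ref{def:constructible} and then translate each requirement into valuations. In a cochordal constructible system $P=(u_k,U_k,\ldots,u_1,U_1)$ for $G=\Gamma(\mathbb{Z}/n\mathbb{Z})$, the graph $G_k$ must equal $G$. Every edge $\{u_j,w\}$ with $w\in U_j$ is added at step $j$, so each such pair must already be an edge of $G$. Hence the first reduction is: a step with center $u_j$ and cover $U_j$ is admissible only if $U_j\subseteq N_G(u_j)$. Otherwise $E(G_j)$ would contain an edge absent from $G$, and since edges are never deleted, $G_k\neq G$. The vertex-cover requirement is an additional condition, but it is not needed for the necessity direction asserted here.

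The first case is $U_j\supseteq C(\mathbf f)$ for some class $C(\mathbf f)$ different from $C(\mathbf e)$, where $u_j\in C(\mathbf e)$. Pick any $w\in C(\mathbf f)$. Adjacency $u_jw=0$ in $\mathbb{Z}/n\mathbb{Z}$ is equivalent to $p_t^{\alpha_t}\mid u_jw$ for all $t$, as recalled in the preliminaries. By Lemma \ref{lem:valuation-annihilator} the truncated valuations of $u_j$ and $w$ are $e_t$ and $f_t$, so this divisibility reads $\min\{e_t+f_t,\alpha_t\}\ge \alpha_t$, that is, $e_t+f_t\ge\alpha_t$ for every $t$. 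This holds for any single representative, so it is forced as soon as even one vertex of $C(\mathbf f)$ lies in $U_j$, and a fortiori when the whole class does. In fact the condition is independent of the representative chosen, because valuation classes have uniform external adjacency.

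The second case is a previous vertex $w\in C(\mathbf e)$ with $w\neq u_j$ placed in $U_j$. Then $\{u_j,w\}$ must be an edge of $G$ joining two distinct vertices of one valuation class, and Lemma \ref{lem:same-class-test} gives directly $2e_t\ge\alpha_t$ for all $t$. This is simply the first computation with $\mathbf f=\mathbf e$. I would also remark that the ordering constraint $u_j\notin U_\ell$ for $\ell\le j$ rules out $u_j$ itself lying in $U_j$, so "previous vertices" means distinct ones.

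There is no real obstacle here. The only point needing care is the first reduction: justifying that an illegitimate cover element produces a spurious edge that can never be removed, because $E(G_j)$ grows monotonically. I would state this explicitly, and then combine the two cases to conclude.
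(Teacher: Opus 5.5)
Your proposal is correct and follows essentially the same route as the paper: each cover vertex $y$ must be a genuine neighbor of the center $x$, so $n\mid xy$, which translates via additivity of valuations into $e_t+f_t\ge\alpha_t$ for all $t$; the same-class case is the specialization $\mathbf f=\mathbf e$, giving the criterion of Lemma~\ref{lem:same-class-test}. Your explicit remark that a spurious edge can never be deleted, because $E(G_j)$ only grows, fills in the point the paper leaves implicit when it says that the construction ``asserts'' $\{x,y\}$ is an edge.
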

\begin{proof}
	Let $
	n=p_1^{\alpha_1}\cdots p_m^{\alpha_m}, $
	and let the chosen center be a vertex $x\in C(\mathbf e)$, where
	$\mathbf e=(e_1,\ldots,e_m)$.  By definition of the valuation class, this means
	that $
	v_{p_t}(x)=e_t $ or every $t=1,\ldots,m. $
	
	 In a constructible step with center $x$, one attaches the complete bipartite
	piece between $x$ and the prescribed cover.  Thus, for each vertex $y$ placed
	in the cover, the construction asserts that $\{x,y\}$ is an edge of
	$\Gamma(\mathbb{Z}/n\mathbb{Z})$.  Since the zero-divisor graph contains an edge precisely
	when the product is zero modulo $n$, this forces
	$ xy\equiv 0 \pmod n,$ or  equivalently,
	$
	n\mid xy.
	$
	 
	 Now suppose that $y\in C(\mathbf f)$, where
	$\mathbf f=(f_1,\ldots,f_m)$.  Then
	$ v_{p_t}(y)=f_t $ for every $t.$ 	The divisibility condition $n\mid xy$ is equivalent to saying that each prime
	power $p_t^{\alpha_t}$ divides $xy$.  In valuation form this is
	$ v_{p_t}(xy)\geq \alpha_t$  for every$t.$
	Since valuations add under multiplication, we have
	$$
	v_{p_t}(xy)=v_{p_t}(x)+v_{p_t}(y)=e_t+f_t.
	$$
	Therefore, every class $C(\mathbf f)$ included in the cover must satisfy
	$ e_t+f_t\geq \alpha_t $  for all $t=1,\ldots,m. $
	
	 It remains only to discuss the case where the cover contains earlier vertices
	from the same valuation class as the center.  If $y$ is another vertex of
	$C(\mathbf e)$, then
	$ v_{p_t}(y)=e_t $ for every $t. $
	Thus the edge condition between $x$ and $y$ becomes
	$$
	v_{p_t}(xy)=e_t+e_t=2e_t\geq \alpha_t
	\qquad\text{for every }t.
	$$
	Hence previous vertices from $C(\mathbf e)$ can be included in the cover only
	when
	$ 2e_t\geq \alpha_t$  for all $t. $ This is exactly the same-class adjacency criterion stated in Lemma
	\ref{lem:same-class-test}. Consequently, a proposed constructible step cannot be valid unless all
	valuation classes placed in its cover are genuine neighbor classes of the
	center, and unless any same-class vertices included in the cover are adjacent
	to the center.  These are precisely the two stated conditions.
\end{proof}

\medskip

The above theorem is a diagnostic tool.  It does not replace chordal-elimination theory. It checks whether the arithmetic data used in such an elimination is faithful to the graph before any Betti computation is attempted.

\medskip

\noindent The following corollary records the corrected global subtraction in a form useful for examples.
\begin{corollary}\label{cor:global-correction}
Let a valid constructible system have $k$ center steps.  Any formula obtained by summing only the shifted cover-size terms overcounts the $i$th total Betti number by $\binom{k}{i+1}$.
\end{corollary}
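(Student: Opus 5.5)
The corollary is a direct consequence of Theorem \ref{thm:type-formula}, read through the reindexing already used in the proof of Theorem \ref{prop:block-evaluation}. I would fix a valid constructible system $P=(u_k,U_k,\ldots,u_1,U_1)$ for $G$ with type $(a_k,\ldots,a_1)$, where $k$ is the number of center steps. I would then define the naive \emph{shifted cover-size sum}
$$
\Sigma_i=\sum_{j=1}^{k}\binom{a_j+k-j}{i},
$$
which is exactly what one obtains by summing only the shifted cover-size terms. Equivalently, by the left-to-right reindexing in the proof of Theorem \ref{prop:block-evaluation}, $\Sigma_i=\sum_{t=0}^{k-1}\binom{b_t+t}{i}$. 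If the sum is organized by blocks, $\Sigma_i$ is the total of the block contributions computed there.

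The key step is to invoke Theorem \ref{thm:type-formula} verbatim. It gives $\beta_i(S/I(G))=\Sigma_i-\binom{k}{i+1}$ for every $i\geq 1$. Rearranging yields $\Sigma_i-\beta_i(S/I(G))=\binom{k}{i+1}$, which is the asserted overcount. I would stress that this holds for every valid system, whatever its block structure (decreasing clique blocks, constant independent blocks, or mixtures), because Theorem \ref{thm:type-formula} depends only on the type sequence and on $k$. Validity of the system, guaranteed in the arithmetic setting by Theorem \ref{thm:validity-criterion}, is what makes Theorem \ref{thm:type-formula} applicable in the first place.

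To show the correction is genuinely nonzero, I would record the sanity check from the introduction. Take a single-edge graph built with $k=2$ steps, namely $u_1$ with $U_1=\{v\}$ and then $u_2=v$ with $U_2=\{u_1\}$, if such a system is admissible; the simpler choice $k=1$, $U_1=\{v\}$ also works. Then $\Sigma_1$ counts two generators while $\beta_1=1$, and the difference is exactly $\binom{2}{2}$. More generally, $\binom{k}{i+1}\neq 0$ precisely when $i\leq k-1$, so the overcount is strict in those homological degrees.

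The only real obstacle is interpretive. One must make precise that ``summing only the shifted cover-size terms'' means the expression $\Sigma_i$ with positional shifts $k-j$, and not some other normalization. Once that identification is fixed, the statement is a one-line consequence of Theorem \ref{thm:type-formula}. Beyond that, I would only check that the conventions $\binom{m}{r}=0$ for $r>m$ keep the identity valid in high degrees, where both sides vanish.
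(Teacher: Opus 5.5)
Your core argument is the paper's own, and it is correct and complete: define $\Sigma_i=\sum_{j=1}^{k}\binom{a_j+k-j}{i}$, read $\beta_i(S/I(G))=\Sigma_i-\binom{k}{i+1}$ off Theorem~\ref{thm:type-formula}, and rearrange.

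Your illustrative paragraph is wrong, however, and should be fixed or dropped.
\begin{itemize}
\item Your $k=2$ system has $u_2=v\in U_1$. This violates the admissibility requirement $u_j\notin U_\ell$ for $\ell\le j$. It also has type $(1,1)$, so it would give $\Sigma_1=3$, not $2$.
\item Your $k=1$ system has $\Sigma_1=1=\beta_1$ and overcount $\binom{1}{2}=0$, so it shows no discrepancy at all.
\item A correct witness is the type $(1,0)$ system with $U_1=\emptyset$ and $U_2=\{u_1\}$. It gives $\Sigma_1=2$ and $\beta_1=2-\binom{2}{2}=1$, exactly as in the paper's $\Gamma(\mathbb{Z}/9\mathbb{Z})$ remark.
\end{itemize}

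Also, Theorem~\ref{thm:validity-criterion} gives only necessary conditions, so it does not ``guarantee'' validity. This does not matter here, since validity is a hypothesis of the corollary.
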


\begin{proof}
This is immediate from Theorem \ref{thm:type-formula}.  The term $\binom{k}{i+1}$ depends only on the number of center steps and is independent of the cover sizes.  It is therefore still present after all arithmetic simplifications.
\end{proof}

\medskip

The correction is already visible for a single edge.  The graph $\Gamma(\mathbb{Z}/9\mathbb{Z})$ is $K_2$, so $I(G)$ is principal and $\beta_1(S/I(G))=1$.  The raw shifted block sum gives $2$, and the missing term $\binom{2}{2}=1$ restores the correct value.

\medskip

\begin{example}\label{ex:diagnostic}
Let $n=15$.  The class $[5]$ has two vertices, namely $5$ and $10$, and $(5)(10)=50\not\equiv 0\pmod {15}$.  Thus $[5]$ is independent.  The graph is the complete bipartite graph $K_{4,2}$ with parts $[3]$ and $[5]$, see Figure \ref{fig:z15}.  A valid construction may use the two $[5]$-vertices as centers, but each cover has only the four vertices in $[3]$, and the second cover cannot include the first $[5]$-vertex.
\end{example}

\medskip

The example demonstrates the two errors simultaneously: same-class vertices from $[5]$ cannot be inserted, and the two center steps still produce the global subtraction $\binom{2}{i+1}$.

\medskip

\begin{figure}[h]
\centering
\begin{tikzpicture}[scale=0.9, every node/.style={circle,draw,inner sep=2pt,minimum size=5mm}]
\node (a1) at (0,2.1) {$3$};
\node (a2) at (0,0.7) {$6$};
\node (a3) at (0,-0.7) {$9$};
\node (a4) at (0,-2.1) {$12$};
\node (b1) at (4,0.8) {$5$};
\node (b2) at (4,-0.8) {$10$};
\foreach \a in {a1,a2,a3,a4}{\draw (\a)--(b1); \draw (\a)--(b2);}
\draw[dashed,red] (b1)--(b2) node[midway,above] {not an edge};
\end{tikzpicture}
\caption{The graph $\Gamma(\mathbb{Z}/15\mathbb{Z})=K_{4,2}$.  The dashed segment marks the false same-class edge that an invalid constructible cover would create.}
\label{fig:z15}
\end{figure}

\medskip

Figure \ref{fig:z15} shows that compressed classes must be lifted with care.  A class can contribute many vertices without becoming a clique in the ordinary zero-divisor graph.

\medskip

\begin{table}[H]
	\begin{center}
\begin{tabular}{cccc}
\toprule
Graph & valid type & raw shifted sum for $\beta_1$ & corrected $\beta_1$\\
\midrule
$\Gamma(\mathbb{Z}/9\mathbb{Z})$ & $(1,0)$ & $2$ & $2-\binom{2}{2}=1$\\[2mm]
$\Gamma(\mathbb{Z}/15\mathbb{Z})$ & $(4,4)$ & $9$ & $9-\binom{2}{2}=8$\\
$\Gamma(\mathbb{Z}/8\mathbb{Z})$ & $(2)$ & $2$ & $2$\\
\bottomrule
\end{tabular}
\end{center}
\label{tab 1}
\caption{Raw block contribution with the corrected first Betti number}
\end{table}
\medskip

Table \ref{tab 1} compares the raw block contribution with the corrected first Betti number.  The corrected value equals the actual number of edges in each graph.

\section{Classification of cochordal zero-divisor graphs}\label{sec:classification}

The classification below agrees with the recent classification in \cite{DungVu2026}, but the proof is rewritten to avoid an invalid induced-matching choice in the two-prime obstruction and to make the dependence on induced matching number explicit.

\medskip

\noindent The following theorem gives the complete arithmetic classification.
\begin{theorem}\label{thm:classification}
Let $n\geq 2$.  Then $\Gamma(\mathbb{Z}/n\mathbb{Z})$ is cochordal if and only if $n$ has one of the following forms:
\begin{enumerate}[label=\textup{(\roman*)}]
\item $n=p^a$;
\item $n=p^a q$;
\item $n=pqr$;
\end{enumerate}
where $p,q,r$ are distinct primes and $a\geq 1$.
\end{theorem}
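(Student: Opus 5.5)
The proof has two directions. For necessity we exhibit an induced matching of size two whenever $n$ is not of the listed forms, and use Theorem~\ref{thm:induced-matching-bound}. For sufficiency we write down explicit cochordal constructible systems and invoke Theorem~\ref{thm:constructible-characterization}. The bridge for necessity is elementary: an induced $2K_2$ in $G$ is exactly an induced $4$-cycle in $\overline{G}$. So, by Theorem~\ref{thm:froberg} or directly, it rules out cochordality. Adjacency is always read through valuations: $x\sim y$ iff $v_{p_t}(x)+v_{p_t}(y)\ge\alpha_t$ for all $t$.

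\textbf{Necessity.} If $n$ is not of the forms (i)--(iii), then one of three things happens: $n$ has at least four distinct prime factors; $n$ has at least three prime factors and some exponent is $\ge 2$; or $n=p^aq^bM$ with $a,b\ge 2$.
\begin{itemize}
\item \emph{Four primes.} Write $n=p^aq^br^cs^dM$. Take the edges $\{p^aq^b,\ r^cs^dM\}$ and $\{p^ar^c,\ q^bs^dM\}$. Each of the four cross products misses the full power of some prime (respectively $r$, $s$, $q$, or $p$), so the two edges form an induced matching.
\item \emph{Three primes with $a\ge 2$.} Write $n=p^aqrM$. Take the edges $\{p^a,\ qrM\}$ and $\{pq,\ p^{a-1}rM\}$. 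The cross products $p^{a+1}q$ and $p^{2a-1}rM$ lack $r$ and $q$ respectively. The cross products $pq^2rM$ and $p^{a-1}qr^2M^2$ have $p$-exponent below $a$.
\item \emph{Two exponents $\ge 2$.} Write $n=p^aq^bM$ with $a,b\ge2$. Take the edge $\{p^a,\ q^bM\}$ together with two distinct vertices $u,v$ of the class of $p^{\lceil a/2\rceil}q^{\lceil b/2\rceil}M$. Here $uv=0$ because $2\lceil a/2\rceil\ge a$ and $2\lceil b/2\rceil\ge b$. On the other hand, $p^a u$ has $q$-exponent $\lceil b/2\rceil<b$, and $q^bM u$ has $p$-exponent $\lceil a/2\rceil<a$; the same holds for $v$. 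The example $n=36$ with edges $4$--$9$ and $6$--$30$ is the model.
\end{itemize}
By Lemma~\ref{lem:valuation-annihilator}, this class has size at least $2$ unless it is a singleton, which happens only in a few configurations with $p=2$ or $q=2$. In those configurations I would instead take $v$ from the adjacent class of $p^{a-\lceil a/2\rceil}q^{b-\lceil b/2\rceil}M$ and recheck the cross products. I expect this degenerate bookkeeping to be the main obstacle on the necessity side.

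\textbf{Sufficiency.}
\begin{itemize}
\item For $n=p^a$, adjacency is $i+j\ge a$ on the layers $V_i$. So $\Gamma$ is a threshold graph with weights $w=i$ and threshold $T=a$. We build it by processing layers from high valuation to low. Each new center $x\in V_i$ receives as cover all vertices already built in layers $V_j$ with $j\ge a-i$, together with the earlier vertices of $V_i$ when $2i\ge a$. This cover is a vertex cover of $G_{j-1}$: any earlier edge has an endpoint of valuation $\ge\lceil a/2\rceil\ge a-i$, or else $i$ is small and the edge's larger endpoint meets the threshold. Validity of each step is checked with Theorem~\ref{thm:validity-criterion}.
\item For $n=p^aq$, the $A_t$ classes are pairwise nonadjacent. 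Adjacency between $A_t$ and $B_i$, and within the $B$ classes, is again a threshold rule in the $p$-valuation. So the same descending-valuation ordering works. The $B_i$ with $2i\ge a$ are inserted as clique blocks, and the $B_i$ with $2i<a$ as independent blocks whose covers exclude same-class vertices, as dictated by Corollary~\ref{cor:clique-independent-block}. The $A_t$ vertices are added last, each with cover $\bigcup_{i\ge a-t}B_i$. The key check, and the main difficulty on this side, is that this union is a vertex cover of the current graph. Every built edge contains a $B$-vertex of valuation at least $\lceil a/2\rceil$ or one matching the threshold.
\item For $n=pqr$, the $[pq],[pr],[qr]$ vertices are used as centers. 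Their covers are the opposite singleton-prime class together with the already placed double classes of other types. Finally the classes $[p],[q],[r]$ are attached with covers $[qr],[pr],[pq]$ respectively.
\end{itemize}
Theorem~\ref{thm:constructible-characterization} then gives cochordality in all three families.
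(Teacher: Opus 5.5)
Your necessity half is fine: the witnesses differ from the paper's, but all the cross products check out. The sufficiency half, however, has a genuine gap for $n=p^a$ and $n=p^aq$. The systems you describe violate the requirement in Definition~\ref{def:constructible} that $U_j$ be a vertex cover of $G_{j-1}$. Theorem~\ref{thm:validity-criterion}, which you cite, only certifies that covers consist of true neighbours; it does not check this condition.

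\begin{itemize}
\item \emph{Prime powers.} If layers are processed from high to low valuation, a center in $V_i$ with $2i<a$ has all its neighbours in $V_{\ge a-i}$. Your justification breaks because $\lceil a/2\rceil\ge a-i$ holds only for $i\ge\lfloor a/2\rfloor$. For $i=1$ and $a\ge 4$, the graph already built contains an edge between $V_2$ and $V_{a-2}$ (inside $V_2$ if $a=4$), and this edge avoids $V_{\ge a-1}$. Concretely, for $n=16$ you build $8$, then $4$ and $12$, which creates the edge $\{4,12\}$. The next center $2$ then gets cover $\{8\}$, which misses that edge.
\item \emph{The $B$-classes in $p^aq$.} The descending order fails already at $n=18$. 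The centers $6,12\in B_1$ create the edge $\{6,12\}$. The next center $2\in B_0$ has no neighbour among the $B$-classes, so its cover is empty.
\item \emph{The $A$-classes in $p^aq$.} Independently of how the $B$'s are ordered, adding the $A_t$ last as centers cannot work once $a\ge4$. For $n=48$, the center $2\in A_1$ receives the cover $B_3=\{24\}$, which misses the edge $\{12,36\}$ inside the clique $B_2$.
\end{itemize}

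What is missing is the right direction and the right choice of centers, as in Theorems~\ref{thm:prime-power-type} and~\ref{thm:two-prime-type}. Process centers in \emph{increasing} valuation, use as centers only the $V_i$ with $i\ge\lceil a/2\rceil$ (respectively all $B_i$), and let the low layers and every $A_t$ enter only as leaves. A center in $B_i$ then gets the cover $A_{a-i}\cup\cdots\cup A_a\cup B_{a-i}\cup\cdots\cup B_{i-1}$, plus the earlier vertices of $B_i$ when $2i\ge a$. Analogously, a center in $V_i$ gets $V_{a-i}\cup\cdots\cup V_{i-1}$ plus the earlier vertices of $V_i$. Every earlier edge meets this cover: an edge $B_s$--$A_t$ with $s\le i$ has $t\ge a-s\ge a-i$, and an edge $B_s$--$B_t$ with $s\le t\le i$ has $a-i\le s\le i$.

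For $p^a$ alone, your threshold remark would already suffice, since threshold graphs are cochordal (cf.\ Corollary~\ref{cor:CqL-threshold-cochordal}). But $\Gamma(\mathbb{Z}/p^aq\mathbb{Z})$ is not threshold in general: $3,4,6,2$ induce a $P_4$ in $\Gamma(\mathbb{Z}/12\mathbb{Z})$. So for $p^aq$ a correct constructible system is needed.

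Two smaller points:
\begin{itemize}
\item Your $pqr$ system coincides with the paper's Theorem~\ref{thm:three-prime-type}. Delete the last sentence about attaching $[p],[q],[r]$ with covers $[qr],[pr],[pq]$. Read as new centers, these vertices would already have appeared as leaves, violating $u_j\notin U_\ell$ for $\ell\le j$. Moreover, $[qr]$ does not cover the $[pq]$--$[pr]$ edges.
\item The degenerate case you anticipated in the two-exponent obstruction never arises. By Lemma~\ref{lem:valuation-annihilator}, the class of $p^{\lceil a/2\rceil}q^{\lceil b/2\rceil}M$ has $p^{\lfloor a/2\rfloor-1}(p-1)\,q^{\lfloor b/2\rfloor-1}(q-1)$ elements. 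This is at least $2$ because one of the distinct primes $p,q$ is odd.
\end{itemize}
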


\begin{proof}
Assume first that $\Gamma(\mathbb{Z}/n\mathbb{Z})$ is cochordal.  By Theorem \ref{thm:froberg}, the edge ideal has a $2$-linear resolution, and by Theorem \ref{thm:induced-matching-bound} the graph has no induced matching of size two.

\medskip

If $n$ has at least four distinct prime divisors, write $n=p_1^{\alpha_1}p_2^{\alpha_2}p_3^{\alpha_3}p_4^{\alpha_4}m$, where $m$ is divisible by no $p_i$ necessarily after absorbing the remaining prime powers. Let $x=p_1^{\alpha_1}p_2^{\alpha_2}m$, $y=p_3^{\alpha_3}p_4^{\alpha_4}m$, $u=p_1^{\alpha_1}p_3^{\alpha_3}m$, and $v=p_2^{\alpha_2}p_4^{\alpha_4}m$.  Then $xy$ and $uv$ are divisible by $n$, while the four cross products miss one of the prime powers $p_i^{\alpha_i}$.  Hence $\{x,y\}$ and $\{u,v\}$ form an induced matching, a contradiction.

\medskip

If $n$ has exactly three distinct prime divisors and some exponent exceeds one, say $n=p^a q^b r^c$ with $a>1$, choose $x=p^a$, $y=q^b r^c$, $u=p^{a-1}q^b$, and $v=pr^c$.  Then $xy$ and $uv$ are zero modulo $n$, while every cross product misses one required prime power.  This again gives an induced matching of size two.  Therefore a cochordal case with three prime divisors must be square-free, namely $pqr$.

\medskip

If $n$ has exactly two distinct prime divisors and both exponents exceed one, write $n=p^a q^b$ with $a,b\geq 2$.  The edge $\{p^a,q^b\}$ is present.  If $(a,b)\neq (2,2)$, then $u=p^{a-1}q^{b-1}$ and $v=pq$ are distinct vertices and $uv$ is zero modulo $n$. So all cross products with $p^a$ and $q^b$ miss one full prime power.  If $(a,b)=(2,2)$, choose two distinct unit multiples $u=pq$ and $v=cpq$, where $c$ is coprime to $pq$ and $c\not\equiv 1\pmod {pq}$, such a $c$ exists because $\varphi(pq)\geq 2$.  The same cross-product check applies.  Thus both exponents cannot exceed one.  Hence a cochordal two-prime case has the form $p^a q$.

\medskip

It remains to prove sufficiency.  For $p^a$, $p^a q$, and $pqr$, explicit constructible systems are given in Theorems \ref{thm:prime-power-type}, \ref{thm:two-prime-type} and \ref{thm:three-prime-type}.  By Theorem \ref{thm:constructible-characterization}, each corresponding zero-divisor graph is cochordal.
\end{proof}

\medskip

The proof validates the classification but corrects the two-prime obstruction by using vertices that are always residues of the ring and always distinct after the stated small exception is handled.  This avoids the unsupported use of an auxiliary prime divisor of $pq-1$.

\medskip

\noindent The following lemma isolates the four-prime obstruction used in the classification.
\begin{lemma}\label{lem:four-prime}
If $n$ has at least four distinct prime divisors, then $\Gamma(\mathbb{Z}/n\mathbb{Z})$ contains an induced matching of size two.
\end{lemma}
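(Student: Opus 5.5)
The plan is to build the induced matching explicitly from the prime factorization, in the spirit of the four-prime paragraph in the proof of Theorem \ref{thm:classification}, while taking care that all four chosen elements are genuine distinct vertices. Write $n=p_1^{\alpha_1}p_2^{\alpha_2}p_3^{\alpha_3}p_4^{\alpha_4}m$ with $\gcd(m,p_1p_2p_3p_4)=1$, and put $Q_t=p_t^{\alpha_t}$. Rather than multiplying every vertex by $m$ (which breaks the cross-product check when $m$ has other primes), I will use the Chinese remainder description behind Lemma \ref{lem:valuation-annihilator}. Regard $\mathbb{Z}/n\mathbb{Z}\cong\prod_t\mathbb{Z}/Q_t\times\mathbb{Z}/m$ and choose residues by prescribing each component. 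Define
\[
x=(0,0,1,1,0),\quad y=(1,1,0,0,0),\quad u=(0,1,0,1,0),\quad v=(1,0,1,0,0),
\]
where the first four coordinates are the $Q_1,\dots,Q_4$ components and the last is the $\mathbb{Z}/m$ component (absent if $m=1$). Equivalently $x$ is divisible by $Q_1Q_2m$ and is a unit modulo $Q_3,Q_4$, and so on.

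First I will check that these are nonzero zero divisors and pairwise distinct. Each has a zero coordinate, so it is a zero divisor, and each has a unit coordinate, so it is nonzero. Their component patterns differ, so they are distinct. Next, the products $xy$ and $uv$ vanish in every component, so $\{x,y\}$ and $\{u,v\}$ are edges. For the cross products I will compare coordinates. The product $xu$ has $Q_4$-coordinate $1\cdot1=1\neq0$, $xv$ has $Q_3$-coordinate $1$, $yu$ has $Q_2$-coordinate $1$, and $yv$ has $Q_1$-coordinate $1$. Hence none of the four cross pairs is an edge, and the matching is induced. This reduces to the valuation criterion $v_{p_t}(x)+v_{p_t}(y)\ge\alpha_t$ from the preliminaries.

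The only delicate point is making the residues legitimate and avoiding the naive choice $x=p_1^{\alpha_1}p_2^{\alpha_2}m$. With that choice, extra primes in $m$ cause no harm, since $m$ divides every product, but one must still confirm that such integers are distinct residues. They are, because their gcds with $n$ differ. So one can either stay with the paper's explicit integers, verifying via gcd patterns that each cross product misses the full power of exactly one $p_t$, or use the CRT choice above. I expect this bookkeeping to be the main, though mild, obstacle.
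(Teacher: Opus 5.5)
Your argument is correct and is essentially the paper's proof: your CRT vectors have exactly the zero/unit coordinate pattern of the paper's integers $p_1^{\alpha_1}p_2^{\alpha_2}m$, $p_3^{\alpha_3}p_4^{\alpha_4}m$, $p_1^{\alpha_1}p_3^{\alpha_3}m$, $p_2^{\alpha_2}p_4^{\alpha_4}m$, and the verification is the same (both matching products vanish, and each cross product misses exactly one $p_t^{\alpha_t}$). One correction: your parenthetical claim that multiplying by $m$ ``breaks the cross-product check'' is false, since $\gcd(m,p_1p_2p_3p_4)=1$ means $m$ can never supply a missing $p_t^{\alpha_t}$; as your own last paragraph concedes, the paper's integer choice works verbatim.
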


\begin{proof}
The first paragraph of the proof of Theorem \ref{thm:classification} gives explicit vertices.  The two displayed products $xy$ and $uv$ contain every prime power dividing $n$, whereas each cross product omits one of $p_1^{\alpha_1}$, $p_2^{\alpha_2}$, $p_3^{\alpha_3}$, or $p_4^{\alpha_4}$.  Thus exactly the two matching edges occur among those four vertices.
\end{proof}

\medskip

This lemma is the arithmetic reason that a cochordal zero-divisor graph cannot encode too many independent prime directions.  It is stronger than merely saying that the complement has an induced cycle, as it exhibits the homological obstruction detected by regularity.

\medskip

\noindent The following lemma treats the three-prime nonsquare-free obstruction.
\begin{lemma}\label{lem:three-prime-obstruction}
If $n=p^a q^b r^c$ and at least one of $a,b,c$ is larger than one, then $\Gamma(\mathbb{Z}/n\mathbb{Z})$ is not cochordal.
\end{lemma}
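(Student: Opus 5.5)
The plan is to exhibit an explicit induced matching of size two and then invoke the homological obstruction, as in the first part of the proof of Theorem~\ref{thm:classification}. By symmetry among the three primes we may assume $a\geq 2$. I would take the four residues
$$
x=p^a,\qquad y=q^b r^c,\qquad u=p^{a-1}q^b,\qquad v=p\,r^c .
$$
Their truncated valuation vectors, in the sense of Lemma~\ref{lem:valuation-annihilator}, are
$$
(a,0,0),\qquad (0,b,c),\qquad (a-1,b,0),\qquad (1,0,c).
$$

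\textbf{Step 1: these are four distinct vertices.} None of the vectors is the all-maximal vector $(a,b,c)$, so each residue is nonzero. Each has a positive exponent somewhere, so each is a zero divisor. The four vectors are pairwise distinct: the first coordinate separates $x$ (value $a$) and $y$ (value $0$) from $u$ and $v$, and $u\neq v$ because they differ in the $q$- and $r$-coordinates. This holds even when $a-1=1$. Hence the residues are distinct vertices of $\Gamma(\mathbb{Z}/n\mathbb{Z})$.

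\textbf{Step 2: the edges and non-edges.} I would use the valuation criterion for adjacency: $v_p\geq a$, $v_q\geq b$ and $v_r\geq c$ for the product.
\begin{itemize}
\item $xy=n$ and $uv=p^a q^b r^c=n$, so $\{x,y\}$ and $\{u,v\}$ are edges.
\item $xu$ has $r$-valuation $0<c$, and $xv$ has $q$-valuation $0<b$, so neither is an edge.
\item $yu$ has $p$-valuation $a-1<a$, so it is not an edge.
\item $yv$ has $p$-valuation $1<a$. This is exactly where the hypothesis $a>1$ is used.
\end{itemize}
So the subgraph induced on $\{x,y,u,v\}$ has edge set exactly $\{\{x,y\},\{u,v\}\}$, which is an induced matching of size two.

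\textbf{Step 3: conclusion.} By Theorem~\ref{thm:induced-matching-bound}, $I(\Gamma(\mathbb{Z}/n\mathbb{Z}))$ does not have a $2$-linear resolution. By Fr\"oberg's Theorem~\ref{thm:froberg}, the graph is therefore not cochordal.

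There is no real obstacle here; the only delicate point is the non-edge $yv$ in Step 2, which is where $a\geq 2$ enters.
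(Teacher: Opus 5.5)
Your proof is correct and follows the paper's argument: the same four vertices $p^a$, $q^b r^c$, $p^{a-1}q^b$, $pr^c$ form an induced matching of size two, and you conclude via Theorem~\ref{thm:induced-matching-bound} and Fr\"oberg's theorem. Your explicit distinctness check and cross-product verification, including the observation that $yv$ is a non-edge exactly because $a\geq 2$, supply details that the paper delegates to the proof of Theorem~\ref{thm:classification}.
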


\begin{proof}
By symmetry assume $a>1$, so the vertices $p^a$, $q^b r^c$, $p^{a-1}q^b$, and $pr^c$ induce exactly two disjoint edges, as verified in the second paragraph of Theorem \ref{thm:classification}.  Hence the graph has an induced matching of size two and cannot be cochordal.
\end{proof}

\medskip

The above lemma explains why $pqr$ is the only surviving three-prime case.  Any repeated exponent supplies enough room to split the prime powers into two disjoint zero products.

\medskip

\noindent The following lemma treats the two-prime obstruction.
\begin{lemma}\label{lem:two-prime-obstruction}
If $n=p^a q^b$ with distinct primes and $a,b\geq 2$, then $\Gamma(\mathbb{Z}/n\mathbb{Z})$ is not cochordal.
\end{lemma}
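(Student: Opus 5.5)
We will exhibit an induced matching of size two in $\Gamma(\mathbb{Z}/n\mathbb{Z})$ and then invoke Theorem \ref{thm:froberg} together with Theorem \ref{thm:induced-matching-bound}. A graph with such a matching has $\operatorname{reg} I(G)\geq 3$, so $I(G)$ has no $2$-linear resolution, and therefore $G$ is not cochordal. Equivalently, the four vertices induce two disjoint edges, whose complement is an induced $4$-cycle in $\overline{G}$. The whole proof thus reduces to choosing four distinct residues and checking six products.

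The first edge is $\{x,y\}=\{p^a,q^b\}$. Both are nonzero zero-divisors, they are distinct, and $xy=n\equiv 0$. For the second edge we split into two cases.

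\emph{Case $(a,b)\neq(2,2)$.} Take $u=p^{a-1}q^{b-1}$ and $v=pq$. Then $uv=p^aq^b\equiv 0$. These vertices are distinct: if $u=v$, then $a=b=2$. Neither equals $p^a$ or $q^b$, since each is divisible by both primes while $p^a$ and $q^b$ are each coprime to the other prime. Now check the four cross products via valuations:
\begin{itemize}
\item $p^a\cdot u$ has $q$-valuation $b-1<b$;
\item $p^a\cdot v$ has $q$-valuation $1<b$;
\item $q^b\cdot u$ has $p$-valuation $a-1<a$;
\item $q^b\cdot v$ has $p$-valuation $1<a$.
\end{itemize}
All four use $a,b\geq 2$, so none of these products is zero. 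The induced subgraph on $\{x,y,u,v\}$ therefore has exactly the edges $xy$ and $uv$.

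\emph{Case $(a,b)=(2,2)$.} Here $pq\cdot pq=n\equiv 0$, but we need two distinct vertices. Take $u=pq$ and $v=cpq$, where $c$ is coprime to $pq$ and $c\not\equiv 1\pmod{pq}$; such a $c$ exists since $\varphi(pq)\geq 2$. Then $v\ne u$ in $\mathbb{Z}/p^2q^2\mathbb{Z}$, because $cpq\equiv pq$ would force $pq\mid c-1$. Also $v\ne 0$, and $v$ is not $p^2$ or $q^2$ by valuation. Its valuation vector is $(1,1)$, so $uv$ has valuations $(2,2)$ and is zero. The cross products with $p^2$ and $q^2$ have valuation vectors $(3,1)$ truncated to $(2,1)$, or $(1,3)$ truncated to $(1,2)$, so each misses a full prime power.

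The only delicate points are the distinctness and nonzeroness of the chosen residues, which is exactly why the exceptional case $(2,2)$ is split off. After that the valuation bookkeeping, in the manner of Lemma \ref{lem:same-class-test} and Theorem \ref{thm:validity-criterion}, is routine. Since $G$ contains an induced matching of size two, it is not cochordal.
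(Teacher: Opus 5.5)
Your proof is correct and follows the paper's argument essentially verbatim. It uses the same induced matching $\{p^a,q^b\}$, $\{p^{a-1}q^{b-1},pq\}$ for $(a,b)\neq(2,2)$, the same two unit multiples $pq$ and $cpq$ with $c\not\equiv 1 \pmod{pq}$ in the exceptional case, and the same appeal to Theorems \ref{thm:froberg} and \ref{thm:induced-matching-bound}. Your explicit checks of distinctness and of the four cross products fill in details the paper leaves to the reader. So does your remark that the two edges give an induced $C_4$ in $\overline{G}$, which shows non-cochordality directly.
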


\begin{proof}
The third paragraph of Theorem \ref{thm:classification} constructs an induced matching of size two.  The exceptional case $a=b=2$ is handled by taking two distinct unit multiples of $pq$ in the class $[pq]$.
\end{proof}

\medskip

\noindent The following proposition summarizes the sufficiency mechanism.
\begin{proposition}\label{prop:sufficiency-summary}
Each graph $\Gamma(\mathbb{Z}/p^a\mathbb{Z})$, $\Gamma(\mathbb{Z}/p^a q\mathbb{Z})$, and $\Gamma(\mathbb{Z}/pqr\mathbb{Z})$ admits a valid cochordal constructible system.
\end{proposition}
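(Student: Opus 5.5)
The plan is to construct, for each of the three families, an explicit ordered list $P=(u_k,U_k,\ldots,u_1,U_1)$ and check the two requirements of Definition~\ref{def:constructible}: every edge of the graph is produced, and each new cover $U_j$ is a vertex cover of $G_{j-1}$. Legitimacy of each star (that all claimed edges really exist) will be checked with Theorem~\ref{thm:validity-criterion}. The guiding principle is the threshold structure visible in the prime-power case: adjacency is $i+j\ge a$, which is a threshold rule. Threshold graphs are exactly those built by repeatedly adding isolated or dominating vertices, and such graphs are split, hence cochordal. So I will order centers by \emph{decreasing} valuation and let each center's cover consist of all its neighbors among vertices not yet used as centers.

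For $n=p^a$ I will process the classes $V_{a-1},V_{a-2},\ldots,V_{\lceil a/2\rceil}$ in that order, and the vertices inside each class one at a time. The first center $u_1\in V_{a-1}$ takes $U_1$ equal to all other vertices, since $V_{a-1}$ is adjacent to everything. A later center $v\in V_i$ takes as cover $\bigcup_{j\ge a-i}V_j$ with all earlier centers removed. Same-class vertices are allowed in the cover exactly when $2i\ge a$, which is precisely the range of centers, so Corollary~\ref{cor:clique-independent-block} guarantees no false edges.

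The vertex-cover property follows from a simple observation. Every edge of $G_{j-1}$ has an earlier center as one endpoint, and such a center has valuation $\ge i$. Hence the other endpoint $w$ has valuation $\ge a-(\text{that center's valuation})$, which is at least $a-i$ whenever it could fail. I expect to need a small exchange argument here: if $w$ was itself an earlier center, the edge still contains an endpoint of valuation $\ge a-i$, because one of the two centers has valuation $\ge a/2\ge a-i$. Finally, every edge $xy$ with $i+j\ge a$ has $\max(i,j)\ge\lceil a/2\rceil$, so it is created when the higher-valuation endpoint becomes a center.

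For $n=p^aq$ I will use the same scheme with a linear weight. The rule $A_tB_i$ adjacent iff $t+i\ge a$, $B_iB_j$ adjacent iff $i+j\ge a$, and no $A$--$A$ edges is again a threshold-type rule once the $A_t$ are treated as lying strictly below all $B$'s in the ordering. Concretely, I will take the centers to be all $B_i$ vertices, processed in decreasing $i$. For $i\ge\lceil a/2\rceil$, same-class earlier vertices enter the cover. For $i<a/2$ they do not; these are the constant blocks, and the cover is the neighbor classes $A_t$ with $t\ge a-i$ together with the unused $B_j$ with $j\ge a-i$.

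For $n=pqr$ the centers are the vertices of $[pq],[pr],[qr]$, each class an independent block. A $[pq]$-center gets cover $[r]\cup[pr]\cup[qr]$ minus used centers, and so on. Because the high classes are mutually adjacent and each singleton class $[p]$ is adjacent only to $[qr]$, the vertex-cover check reduces to the same observation as before: every earlier edge has a center endpoint, and the new center is adjacent to every vertex adjacent to an earlier center, up to the previously used centers.

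\textbf{Main obstacle.} The hard step is this vertex-cover verification in the mixed $p^aq$ case. There, a low $B_i$ center may be preceded by edges $A_tB_j$ with $t<a-i$, and such an edge must be covered by $B_j$ itself. This forces the ordering of $B$'s to be by decreasing $i$, and forces the convention that earlier centers not lying in the cover are still covered through their partners. I will handle this by checking that every edge of $G_{j-1}$ has an endpoint in a class $C(\mathbf f)$ with $f$ satisfying the Theorem~\ref{thm:validity-criterion} inequality relative to the new center. This is a finite case check on pairs of classes.
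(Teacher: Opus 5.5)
\textbf{There is a genuine gap: your construction runs in the wrong direction, so as written it is not a constructible system.}

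In Definition~\ref{def:constructible} the star of $u_1$ is built first. Each new center must satisfy $u_j\notin U_\ell$ for all $\ell\le j$, and $U_j$ itself must meet every edge of $G_{j-1}$. If $u_1$ is dominating with $U_1=V\setminus\{u_1\}$, then every later center already lies in $U_1$. Since you delete earlier centers from later covers, the edge $\{u_1,u_j\}$ of $G_{j-1}$ is never covered.

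Concretely, for $n=16$ your list begins $u_1=8$, $U_1=\{2,4,6,10,12,14\}$, $u_2=4$, $U_2=\{12\}$. Then $4\in U_1$, and the edges $\{8,4\}$ and $\{8,2\}$ of $G_1$ are uncovered.

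Your verification hides this behind a sign error. If the earlier center has valuation $s\ge i$, its partner has valuation $\ge a-s$, which is \emph{at most} $a-i$, not at least. So the partner need not lie in $\bigcup_{j\ge a-i}V_j$. The ``exchange argument'' also fails, because it appeals to an earlier center, which is precisely the vertex you removed from the cover.

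The same failure occurs in the other two families:
\begin{itemize}
\item For $p^aq$ with $a\ge 2$, take an earlier $B_j$-center and its neighbour in $A_{a-j}$. Both are absent from the cover of a $B_i$-center with $i<j$, since $(a-j)+i<a$.
\item For $pqr$, a $[pr]$-center gets cover $[q]\cup[qr]$, which misses the earlier $[pq]$--$[r]$ edges.
\end{itemize}
Your ``main obstacle'' paragraph has in fact located the problem but drawn the opposite conclusion. The definition has no provision for an edge being ``covered through its partner'', so the obstacle rules out decreasing order rather than forcing it.

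\textbf{The repair is to reverse the order}, as the paper does in Theorems~\ref{thm:prime-power-type}, \ref{thm:two-prime-type} and \ref{thm:three-prime-type}. Build upward from $V_{\lceil a/2\rceil}$ (resp.\ $B_0$), and give each center its neighbours among the vertices that are not \emph{later} centers. For $v\in V_i$ this is $V_{a-i}\cup\cdots\cup V_{i-1}$ together with the earlier vertices of $V_i$. That is exactly your set read backwards: your list becomes valid once it is read as $(u_k,U_k,\ldots,u_1,U_1)$, with the dominating vertex as $u_k$.

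The check then goes through:
\begin{itemize}
\item An earlier edge joins layers $s\le t\le i$ with $s+t\ge a$. Hence $s\ge a-i$, and the lower endpoint lies in the cover. When $t=i$, the earlier same-class endpoint also lies in the cover, because $2i\ge a$.
\item An earlier edge $A_tB_s$ with $s\le i$ has $t\ge a-s\ge a-i$, so $A_t$ is in the cover.
\item The new center lies in no earlier cover.
\end{itemize}
For $pqr$, your reversed class order $[qr],[pr],[pq]$, with covers $[p]$, then $[q]\cup[qr]$, then $[r]\cup[pr]\cup[qr]$, is the reverse of the paper's order and equally valid.
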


\begin{proof}
For prime powers, the system is built from classes $[p^i]$ with $i\geq\lceil a/2\rceil$, and same-class vertices may be used because those classes are cliques.  For $p^a q$, the system uses the classes $[p^i q]$ in increasing construction order, and same-class previous vertices are included only when $i\geq\lceil a/2\rceil$.  For $pqr$, the system uses the classes $[pq]$, then $[pr]$, then $[qr]$ in construction order, and no same-class previous vertices are included.  The detailed verification appears in Sections \ref{sec:prime-powers}, \ref{sec:two-prime} and \ref{sec:three-prime}.
\end{proof}

\medskip

Proposition \ref{prop:sufficiency-summary} highlights that the corrected construction is not a cosmetic modification.  The validity of sufficiency depends exactly on the same-class rule of Theorem \ref{cor:clique-independent-block}.

\medskip

\begin{example}[Classification at small values]\label{ex:classification-small}
For $n\leq 30$, the non-prime values producing cochordal zero-divisor graphs include $4,8,9,12,16,18,20,25,27,28$ and $30$, together with products $pq$ such as $6,10,14,15,21,22,26$.  The value $36=2^2\cdot 3^2$ is excluded by Theorem \ref{lem:two-prime-obstruction}, as $\{4,9\}$ and $\{6,30\}$ form an induced matching.
\end{example}

\medskip

The example shows how quickly the obstruction appears.  The excluded value $36$ is the smallest case in which both exponents in a two-prime factorization exceed one.

\medskip

\begin{figure}[h]
\centering
\begin{tikzpicture}[>=Latex,node distance=7mm]
\tikzstyle{box}=[rectangle,draw,rounded corners,align=center,minimum width=31mm,minimum height=8mm]
\node[box] (n) {factor $n$};
\node[box,below left=of n,xshift=-7mm] (four) {$\geq4$ primes\ induced matching};
\node[box,below=of n] (three) {$3$ primes\ square-free?};
\node[box,below right=of n,xshift=7mm] (two) {$2$ primes\ one exponent $1$?};
\node[box,below=of three,yshift=-5mm] (yes) {$p^a$, $p^a q$, $pqr$};
\draw[->] (n)--(four);
\draw[->] (n)--(three);
\draw[->] (n)--(two);
\draw[->] (three)--(yes);
\draw[->] (two)--(yes);
\end{tikzpicture}
\caption{Arithmetic decision tree for cochordality of $\Gamma(\mathbb{Z}/n\mathbb{Z})$.}
\label{fig:classification-tree}
\end{figure}

Figure \ref{fig:classification-tree} gives the block diagram for the cochordality of $\Gamma(\mathbb{Z}/n\mathbb{Z})$. The block diagram separates necessity from sufficiency.  The left branches are ruled out by induced matchings, while the surviving leaves are later handled by constructible systems. Table \ref{tab 2} gives the numerical validation for cochordality of $\Gamma(\mathbb{Z}/n\mathbb{Z})$ with  $n\in \{18,30,36,210\}.$

\medskip
\begin{table}[H]
\begin{center}
\begin{tabular}{cccl}
\toprule
$n$ & factorization & is cochordal? & reason\\
\midrule
$18$ & $2\cdot 3^2$ & yes & $p^a q$ form\\
$30$ & $2\cdot 3\cdot 5$ & yes & square-free three-prime form\\
$36$ & $2^2\cdot 3^2$ & no & two-prime exponent obstruction\\
$60$ & $2^2\cdot 3\cdot 5$ & no & three-prime exponent obstruction\\
$210$ & $2\cdot 3\cdot 5\cdot 7$ & no & four-prime obstruction\\
\bottomrule
\end{tabular}
\end{center}
\caption{Classification of cochordality of $\Gamma(\mathbb{Z}/n\mathbb{Z})$.}
\label{tab 2}
\end{table}

\medskip

Table \ref{tab 2} compares the classification across the first genuinely different arithmetic patterns.  It is a useful check because all four obstruction types appear among small integers.

\section{The prime-power case}\label{sec:prime-powers}
 The constructible system for $p^a$ is valid as in \cite{DungVu2026}, but the Betti formula must retain the global subtraction.  The corrected formula below is the first closed expression in this paper and already changes numerical values for $p>2$ and $a=2$.

\medskip

Let $n=p^a$ with $a\geq 2$.  For $1\leq i\leq a-1$, let
$ V_i=\{x\in \mathbb{Z}/p^a\mathbb{Z}:[x]=[p^i]\}, $ with $\phi_i=|V_i|=p^{a-i-1}(p-1)$ and let $m=\lceil a/2\rceil$.

\medskip

\noindent The following theorem gives the valid type sequence for prime powers.
\begin{theorem}\label{thm:prime-power-type}
The graph $\Gamma(\mathbb{Z}/p^a\mathbb{Z})$ is cochordal.  A valid constructible system has one decreasing block for each $i=m,m+1,\ldots,a-1$, namely
$$
T_i=\bigl(p^i-p^{a-i-1}-1,\ p^i-p^{a-i-1}-2,\ \ldots,\ p^i-p^{a-i}\bigr),
$$
of length $\phi_i$.  The type sequence is $T_{a-1},T_{a-2},\ldots,T_m$.
\end{theorem}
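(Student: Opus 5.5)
We construct the system explicitly and check the conditions of Definition \ref{def:constructible}; cochordality then follows from Theorem \ref{thm:constructible-characterization}. Throughout, recall that $x\in V_i$ and $y\in V_j$ are adjacent exactly when $i+j\ge a$. The key observation is that every edge has an endpoint in $V_{\ge m}=V_m\cup\cdots\cup V_{a-1}$: if $i+j\ge a$ then $\max(i,j)\ge\lceil a/2\rceil=m$. By Corollary \ref{cor:clique-independent-block}, every $V_i$ with $i\ge m$ is a clique. So the vertices of $V_{\ge m}$ will be the centers, and the remaining vertices will occur only inside covers.

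\emph{Construction order.} Since the type is written as $(a_k,\ldots,a_1)$ and $T_m$ is the rightmost block, the construction starts with the layer $V_m$, then $V_{m+1}$, and so on up to $V_{a-1}$. Within each layer the vertices are taken in an arbitrary fixed order. For a center $x$, its cover is defined as
$$U(x)=N_G(x)\setminus\{\text{centers constructed after }x\}.$$
Equivalently, $U(x)$ consists of the earlier centers, $x$'s neighbors in $V_{<m}$, and nothing else.

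\emph{Size count.} Let $x\in V_i$ be the $(r+1)$-st vertex of $V_i$ to be constructed, with $0\le r\le\phi_i-1$.
\begin{enumerate}
\item The neighbors of $x$ are the nonzero multiples of $p^{a-i}$ other than $x$ itself ($x$ is one of them because $2i\ge a$). There are $p^i-2$ of them.
\item The later centers adjacent to $x$ are the $\phi_i-1-r$ later vertices of $V_i$, together with all of $V_{i+1}\cup\cdots\cup V_{a-1}$, since $i+j\ge a$ for these. The latter union has $\sum_{j>i}\phi_j=p^{a-i-1}-1$ elements.
\end{enumerate}
Hence
$$|U(x)|=p^i-2-(\phi_i-1-r)-(p^{a-i-1}-1)=p^i-p^{a-i}+r,$$
using $\phi_i+p^{a-i-1}=p^{a-i}$. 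As $r$ runs from $0$ to $\phi_i-1$, this value runs from $p^i-p^{a-i}$ up to $p^i-p^{a-i-1}-1$. Read in reverse, left to right, this is exactly the decreasing block $T_i$ of length $\phi_i$.

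\emph{Validity.} We check the three requirements in turn.
\begin{enumerate}
\item \emph{No center appears in an earlier cover or its own.} Each cover excludes all later centers, and $x\notin N_G(x)$.
\item \emph{Each cover is a vertex cover of the graph built so far.} Every edge of $G_{j-1}$ contains an earlier center. Every earlier center lies in some $V_{i'}$ with $m\le i'\le i$, so $i'+i\ge 2m\ge a$ and it is adjacent to $x$. It therefore lies in $U(x)$, which is thus a vertex cover of $G_{j-1}$.
\item \emph{The final graph is $G$.} No false edges arise, since $U(x)\subseteq N_G(x)$; this is Theorem \ref{thm:validity-criterion}, with same-class inclusion allowed because $2i\ge a$. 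Conversely, take any edge $\{x,y\}$ with $x$ a center. If $y$ is not a center, then $y\in U(x)$. If $y$ is also a center, the edge is added at the later of the two steps, because the earlier center lies in the later center's cover. So $G_k=G$.
\end{enumerate}

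The main obstacle is purely bookkeeping. One must match the reversed orientation of the type sequence with the construction order, and verify the identity $\phi_i+\sum_{j>i}\phi_j=p^{a-i}$ so that the block endpoints come out exactly as stated. The degenerate case $a=2$ ($m=1$, where $V_1$ is a clique of size $p-1$ with all covers drawn from $V_1$ itself) should be checked separately against the formula.
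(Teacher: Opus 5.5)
Your proposal is correct and follows the paper's proof: you construct the layers $V_m,\ldots,V_{a-1}$ in the same order with the same covers, since $N_G(x)$ minus the later centers is exactly the paper's $U_{i,j}=V_{a-i}\cup\cdots\cup V_{i-1}\cup\{v_{i,1},\ldots,v_{i,j-1}\}$. The only differences are that you count $|U(x)|$ by complement instead of by the telescoping sum, and you check the vertex-cover condition by noting that every earlier edge contains an earlier center and all earlier centers lie in $U(x)$, which is a slightly cleaner form of the paper's valuation argument.
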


\begin{proof}
Order the vertices of each $V_i$ as $v_{i,1},\ldots,v_{i,\phi_i}$.  For $i\geq m$ and $1\leq j\leq\phi_i$, let
$$
U_{i,j}=V_{a-i}\cup V_{a-i+1}\cup\cdots\cup V_{i-1}\cup\{v_{i,1},\ldots,v_{i,j-1}\}.
$$
A vertex in $V_i$ is adjacent to every vertex in $V_t$ with $t\geq a-i$, and, because $2i\geq a$, it is also adjacent to earlier vertices of $V_i$.  Thus each listed cover contains only true neighbors of the new center.  The same valuation inequality shows that $U_{i,j}$ covers the graph constructed before that step.  More explicitly, if an earlier edge has endpoints in $V_s$ and $V_t$ with $s\leq t$, then $s+t\geq a$ forces $t\geq a-i$ whenever both endpoints have already appeared before the $V_i$-step, so at least one endpoint lies in the displayed union or among the earlier same-class vertices.  Hence, these data form a cochordal constructible system.
 The cover size is
$$
|U_{i,j}|=\sum_{t=a-i}^{i-1}p^{a-t-1}(p-1)+j-1=p^i-p^{a-i}+j-1.
$$
When the vertices in $V_i$ are listed in reverse order in the type sequence, the block is exactly $T_i$.
\end{proof}

\medskip

The above theorem agrees with the valid part of the earlier construction.  Its importance here is that it supplies decreasing blocks, so Theorem \ref{prop:block-evaluation} applies in its first case.

\medskip

\noindent The following theorem gives the corrected Betti numbers for the prime-power case.
\begin{theorem}\label{thm:prime-power-betti}
Let $a\geq 2$, $n=p^a$, $G=\Gamma(\mathbb{Z}/n\mathbb{Z})$, $I=I(G)$, $m=\lceil a/2\rceil$, and let
$$
K_{p,a}=\sum_{j=m}^{a-1}p^{a-j-1}(p-1)=p^{a-m}-1.
$$
Then $S/I$ has a linear resolution and, for every $i\geq 1$,
$$
\beta_{i,i+1}(S/I)=\beta_i(S/I)=\sum_{j=m}^{a-1}p^{a-j-1}(p-1)\binom{p^j-2}{i}-\binom{K_{p,a}}{i+1}.
$$
\end{theorem}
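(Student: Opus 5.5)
The plan is to read the Betti numbers off the constructible system of Theorem~\ref{thm:prime-power-type} using the block calculus of Theorem~\ref{prop:block-evaluation}. The linearity claim $\beta_{i,i+1}(S/I)=\beta_i(S/I)$ then follows at once: Theorem~\ref{thm:prime-power-type} shows that $G$ is cochordal, so Theorem~\ref{thm:froberg} (equivalently Theorem~\ref{thm:type-formula}) gives a $2$-linear resolution. To avoid clashing with the homological index $i$, I will index the valuation layers by $j\in\{m,\dots,a-1\}$ throughout.

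\textbf{Step 1: the blocks are decreasing.} The type sequence is the concatenation $T_{a-1},T_{a-2},\dots,T_m$. Block $T_j$ has length $\ell_j=\phi_j=p^{a-j-1}(p-1)$, and its entries decrease by one from
\[
c_j=p^j-p^{a-j-1}-1 .
\]
This starting value is consistent with the last entry $p^j-p^{a-j}$, since $\phi_j-1=p^{a-j}-p^{a-j-1}-1$. Hence every block falls under the first (decreasing) case of Theorem~\ref{prop:block-evaluation}, and $T_j$ contributes $\phi_j\binom{c_j+K_{>j}}{i}$.

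\textbf{Step 2: the positional shift.} The blocks lying to the left of $T_j$ are $T_{a-1},\dots,T_{j+1}$, so the shift telescopes:
\[
K_{>j}=\sum_{t=j+1}^{a-1}p^{a-t-1}(p-1)=p^{a-j-1}-1 .
\]
Therefore $c_j+K_{>j}=p^j-2$, independently of the block length. This produces exactly the summand $p^{a-j-1}(p-1)\binom{p^j-2}{i}$ in the statement.

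\textbf{Step 3: the global correction.} The total number of center steps is
\[
k=\sum_{j=m}^{a-1}\phi_j=p^{a-m}-1=K_{p,a},
\]
by the same telescoping computation. Subtracting $\binom{K_{p,a}}{i+1}$, as required by Theorem~\ref{prop:block-evaluation} and Corollary~\ref{cor:global-correction}, yields the stated formula.

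I expect the only delicate point to be bookkeeping rather than any conceptual step. It must be checked that the ordering convention in Theorem~\ref{thm:prime-power-type}, with the type listed from $T_{a-1}$ down to $T_m$ and each block reversed, matches the left-to-right positional convention in the proof of Theorem~\ref{prop:block-evaluation}. Under that convention the block with the largest valuation receives shift $0$, which is what makes the telescoping in Step 2 correct. I would also sanity-check degenerate cases. For $a=2$ there is the single block $(p-2,\dots,0)$, and the formula gives $\beta_1=(p-1)\binom{p-2}{1}-\binom{p-1}{2}=\binom{p-1}{2}$, the number of edges of $K_{p-1}$. For $p=3,a=2$ this gives $\beta_1=1$, matching $\Gamma(\mathbb{Z}/9\mathbb{Z})=K_2$. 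As a further check, for $p=2,a=3$ the type $(2)$ of Table~\ref{tab 1} is recovered.
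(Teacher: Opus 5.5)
Your proposal is correct and follows the paper's proof essentially verbatim. Both compute $K_{>j}=p^{a-j-1}-1$, note that the shifted value $p^j-2$ is constant along each decreasing block $T_j$, apply the first case of Theorem~\ref{prop:block-evaluation}, subtract $\binom{K_{p,a}}{i+1}$, and take linearity from cochordality via Fr\"oberg. Your explicit check of the ordering convention, under which the highest-valuation block sits at shift $0$, is a useful addition that the paper leaves implicit.
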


\begin{proof}
For the block $T_j$ in Theorem \ref{thm:prime-power-type}, the number of earlier blocks to its left in the type sequence is
$$
K_{>j}=\sum_{\ell=j+1}^{a-1}p^{a-\ell-1}(p-1)=p^{a-j-1}-1.
$$
The first entry of $T_j$ is $p^j-p^{a-j-1}-1$, so the shifted value is $p^j-2$.  Since $T_j$ is decreasing, Theorem \ref{prop:block-evaluation} gives the contribution $\phi_j\binom{p^j-2}{i}$.  Now, summing over $j=m,\ldots,a-1$ and subtracting the global term $\binom{K_{p,a}}{i+1}$ proves the formula.  The linearity of the resolution follows from cochordality and Theorem \ref{thm:froberg}.
\end{proof}

\medskip

The novelty compared with \cite{DungVu2026} is the final term $-\binom{K_{p,a}}{i+1}$.  Without it the formula overcounts, beginning with the graph $K_{p-1}$ obtained from $n=p^2$.

\medskip

\noindent The following corollary gives projective dimension and regularity.
\begin{corollary}\label{cor:prime-power-pd}
If $a\geq 2$ and $G=\Gamma(\mathbb{Z}/p^a\mathbb{Z})$, then
$ \operatorname{pd}(S/I(G))=p^{a-1}-2,$ and $
\operatorname{reg}(S/I(G))=1, $
whenever $G$ has at least one edge.  If $p=2$ and $a=2$, then $G$ is edgeless and $\operatorname{pd}(S/I(G))=0$.
\end{corollary}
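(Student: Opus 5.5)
The plan is to read off both invariants from the Betti formula of Theorem \ref{thm:prime-power-betti}, together with the linearity statement. Regularity is the easy part. Since $G$ is cochordal (Theorem \ref{thm:prime-power-type}), Theorem \ref{thm:froberg} says every nonzero $\beta_{i,j}(S/I)$ with $i\ge1$ has $j=i+1$. Hence $\operatorname{reg}(S/I)\le 1$, with equality exactly when $\beta_1\neq0$, that is, when $G$ has an edge.

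For the projective dimension I would use the pd clause of Theorem \ref{thm:type-formula}. This gives $\operatorname{pd}=\max_j\{a_j+k-j\}$, the largest shifted type entry. By the proof of Theorem \ref{thm:prime-power-betti}, every entry of block $T_j$ has shifted value $p^j-2$. This is increasing in $j$, so the maximum is attained on the block $T_{a-1}$, giving $p^{a-1}-2$.

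Since the pd clause is stated loosely (the subtraction $\binom{k}{i+1}$ could in principle kill the top term), I would double-check it directly from the formula for $\beta_i$. Set $P=p^{a-1}-2$ and take $i\ge P+1$. Then every binomial $\binom{p^j-2}{i}$ vanishes. The correction term also vanishes, because $K_{p,a}=p^{a-m}-1\le p^{a-1}-1=P+1\le i$, so $\binom{K_{p,a}}{i+1}=0$. Hence $\beta_i=0$ for $i>P$.

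At $i=P$ the $j=a-1$ term contributes $\phi_{a-1}\binom{P}{P}=p-1$, while all other binomials in the sum vanish. The correction $\binom{K_{p,a}}{P+1}$ is nonzero only if $K_{p,a}\ge P+1$, i.e.\ $p^{a-m}\ge p^{a-1}$, i.e.\ $m=1$. Since $m=\lceil a/2\rceil$, this means $a=2$. So for $a\ge3$ we get $\beta_P=p-1>0$.

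The main obstacle is this boundary case $a=2$, which I would handle separately. Here $G=K_{p-1}$, $P=p-2$ and $K_{p,a}=p-1$, so $\beta_{p-2}=(p-1)-\binom{p-1}{p-1}=p-2$. This is nonzero when $p\ge3$, matching the known resolution of a complete graph on $p-1$ vertices, whose projective dimension is $p-2$. When $p=2$, $a=2$, the graph is a single vertex with no edges, so $I=0$ and $\operatorname{pd}=0$ (the formula would give $p^{a-1}-2=0$ anyway). This is also the only edgeless case: for $a\ge3$ the class $V_{a-1}$ is adjacent to $V_1$, and for $a=2$, $p\ge3$ the graph is $K_{p-1}$ with $p-1\ge2$. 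That justifies the phrase ``whenever $G$ has at least one edge''.
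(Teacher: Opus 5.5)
Your proposal is correct and follows the same route as the paper: both take $\operatorname{pd}$ to be the largest shifted type value, which is $p^{j}-2$ on each block $T_j$ and hence $p^{a-1}-2$, and both get $\operatorname{reg}(S/I)=1$ from the linearity of the resolution. Your extra direct check from Theorem~\ref{thm:prime-power-betti} is a useful addition the paper omits. It shows that $\beta_i=0$ for $i>p^{a-1}-2$, and that the correction $\binom{K_{p,a}}{i+1}$ can touch the top degree only when $a=2$, where $\beta_{p-2}=p-2$ still survives for $p\ge 3$; this closes the point the paper leaves implicit when it invokes the loosely stated pd clause of Theorem~\ref{thm:type-formula}.
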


\begin{proof}
The projective dimension is the maximum shifted value in Theorem \ref{thm:type-formula}.  By the proof of Theorem  \ref{thm:prime-power-betti}, these shifted values are $p^j-2$ for $j=m,\ldots,a-1$, whose maximum is $p^{a-1}-2$.  The regularity statement follows, since the resolution is linear for $S/I(G)$, so the nonzero Betti numbers lie in degrees $i+1$.
\end{proof}

\medskip

The above corollary confirms the projective dimension stated in the earlier manuscript for prime powers, but the agreement of projective dimension does not imply agreement of the full Betti table.

\medskip

\noindent The following proposition gives a sharp small-case $a=2$.
\begin{proposition}\label{prop:prime-power-a2}
For $n=p^2$, the graph $\Gamma(\mathbb{Z}/p^2\mathbb{Z})$ is the complete graph $K_{p-1}$, and
$$
\beta_i(S/I(\Gamma(\mathbb{Z}/p^2\mathbb{Z})))=i\binom{p-1}{i+1}
$$
for $i\geq 1$.
\end{proposition}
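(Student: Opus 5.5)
I will first identify the graph directly. For $n=p^2$ the only valuation class is $V_1=[p]$, of size $\phi_1=p-1$. Any two distinct multiples of $p$ have product divisible by $p^2$, so $V_1$ is a clique and $\Gamma(\mathbb{Z}/p^2\mathbb{Z})=K_{p-1}$. This agrees with Corollary~\ref{cor:clique-independent-block}, since $2\cdot 1\geq 2$.

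For the Betti numbers, I will specialize Theorem~\ref{thm:prime-power-betti} with $a=2$, so $m=1$. The sum has the single term $j=1$, with coefficient $p^{0}(p-1)=p-1$ and binomial $\binom{p-2}{i}$. The global correction is $\binom{K_{p,2}}{i+1}$ with $K_{p,2}=p^{1}-1=p-1$. This gives
$$
\beta_i=(p-1)\binom{p-2}{i}-\binom{p-1}{i+1}.
$$

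It then remains to show that this equals $i\binom{p-1}{i+1}$. I will use the absorption identity $(p-1)\binom{p-2}{i}=(i+1)\binom{p-1}{i+1}$. Subtracting $\binom{p-1}{i+1}$ leaves exactly $i\binom{p-1}{i+1}$.

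As an independent check, I will recall the known Betti numbers of the edge ideal of $K_N$, namely $\beta_i(S/I(K_N))=i\binom{N}{i+1}$ (Eagon--Northcott type, or by Hochster's formula, since the induced subgraphs on $i+1$ vertices have complement consisting of $i+1$ isolated points). With $N=p-1$ this matches the formula above.

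The only subtle step is the degenerate case $p=2$. There $K_1$ is edgeless and the type sequence is $(0)$. The formula gives $0$ for all $i\geq1$, which is consistent because $\binom{1}{i+1}=0$. I expect no real obstacle beyond confirming that Theorem~\ref{thm:prime-power-betti} applies with these conventions.
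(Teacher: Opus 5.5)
Your proposal is correct and follows the paper's proof essentially step for step. You identify $\Gamma(\mathbb{Z}/p^2\mathbb{Z})$ as $K_{p-1}$, specialize Theorem~\ref{thm:prime-power-betti} at $a=2$ to get $(p-1)\binom{p-2}{i}-\binom{p-1}{i+1}$, and finish with the absorption identity $(p-1)\binom{p-2}{i}=(i+1)\binom{p-1}{i+1}$; your Hochster-formula cross-check and the $p=2$ remark are sound additions that the paper omits.
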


\begin{proof}
The vertices are the nonzero multiples of $p$, and any two distinct such vertices multiply to zero modulo $p^2$.  Hence the graph is $K_{p-1}$.  Substituting $a=2$ into Theorem \ref{thm:prime-power-betti}, we have
$$
\beta_i=(p-1)\binom{p-2}{i}-\binom{p-1}{i+1}.
$$
With $(p-1)\binom{p-2}{i}=(i+1)\binom{p-1}{i+1}$, we get $i\binom{p-1}{i+1}$ after subtracting $\binom{p-1}{i+1}$.
\end{proof}

\medskip

The above proposition makes the correction transparent.  For $p=3$, the graph is one edge, and the corrected formula gives $\beta_1=1$ rather than $2$.

\medskip

\noindent The following corollary gives the first noncomplete prime-power family (case $a=3$)
\begin{corollary}\label{cor:prime-power-a3}
For $n=p^3$,
$$
\beta_i(S/I(\Gamma(\mathbb{Z}/p^3\mathbb{Z})))=(p-1)\binom{p^2-2}{i}-\binom{p-1}{i+1}.
$$
In particular, for $p=2$ one obtains $\beta_1=2$ and $\beta_2=1$.
\end{corollary}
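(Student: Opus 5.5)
The formula is a direct specialization of Theorem~\ref{thm:prime-power-betti} to $a=3$, so the plan is to substitute and simplify, and then check the $p=2$ values against the explicit graph.

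First I would compute the parameters. With $a=3$ we get $m=\lceil 3/2\rceil=2$, so the index $j$ in Theorem~\ref{thm:prime-power-betti} runs only over $j=2=a-1$. The single summand is
$$p^{a-j-1}(p-1)\binom{p^j-2}{i}=(p-1)\binom{p^2-2}{i}.$$
The global term is $K_{p,3}=p^{a-m}-1=p-1$, which gives the subtraction $\binom{p-1}{i+1}$ and hence the displayed formula.

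Structurally, Theorem~\ref{thm:prime-power-type} gives only one decreasing block $T_2$, coming from the class $V_2=[p^2]$ of size $p-1$. Its entries are $p^2-2,\ldots,p^2-p$ and its cover sets are $V_1\cup\{v_{2,1},\ldots,v_{2,j-1}\}$. So the constructible system has $k=p-1$ center steps, which is consistent with the subtracted term $\binom{p-1}{i+1}$.

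For $p=2$, the formula gives $\beta_1=1\cdot\binom{2}{1}-\binom{1}{2}=2$ and $\beta_2=\binom{2}{2}-\binom{1}{3}=1$. I would cross-check this directly. The vertices of $\Gamma(\mathbb{Z}/8\mathbb{Z})$ are $2,4,6$. The vertex $4$ is adjacent to $2$ and $6$, while $2\cdot 6=12\not\equiv 0\pmod 8$. So the graph is the path $P_3$ and $I=x_4(x_2,x_6)$. This ideal has two generators and one linear syzygy, and $\beta_i=0$ for $i\geq 3$, which agrees with the formula and with Table~\ref{tab 1}.

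There is no real obstacle here. The only points needing care are the boundary conventions: $m=2$ means exactly one block contributes, and $\binom{1}{i+1}=0$ for $i\ge1$ when $p=2$.
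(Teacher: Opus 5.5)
Your proposal is correct and follows essentially the same route as the paper. The paper also specializes Theorem~\ref{thm:prime-power-betti} at $a=3$: there $m=2$, so there is a single block $T_2$ of length $p-1$, and the global term is $\binom{p-1}{i+1}$. It then confirms the $p=2$ values $(2,1)$ directly from the star $\Gamma(\mathbb{Z}/8\mathbb{Z})$ with $I=x_4(x_2,x_6)$.
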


\begin{proof}
Here $m=2$, so there is only the block $T_2$, of length $p-1$.  The formula follows directly from Theorem \ref{thm:prime-power-betti}.  For $p=2$, the graph is the star with center $4$ and leaves $2,6$ in $\mathbb{Z}/8\mathbb{Z}$ (see Figure \ref{fig:z8}), so $I=x_4(x_2,x_6)$ and the Betti numbers are $2,1$.
\end{proof}

\begin{figure}[H]
	\centering
	\begin{tikzpicture}[scale=1, every node/.style={circle,draw,minimum size=5mm,inner sep=1pt}]
		\node (v2) at (0,0) {$2$};
		\node (v6) at (2,0) {$6$};
		\node (v4) at (1,1.5) {$4$};
		\draw (v4)--(v2);
		\draw (v4)--(v6);
	\end{tikzpicture}
	\caption{The graph $\Gamma(\mathbb{Z}/8\mathbb{Z})$, a two-edge star.}
	\label{fig:z8}
\end{figure}
\medskip

This small case is useful because it is not complete but is still simple enough to verify by inspection. Figure \ref{fig:z8} illustrates the case $p=2$, $a=3$.  It also shows why the edge ideal has the form $x_4(x_2,x_6)$ and hence a two-step linear resolution.

\medskip

\begin{example}\label{ex:prime-power-numerics}
For $n=16$, one has $p=2$, $a=4$, $m=2$, and $K_{2,4}=3$. The graph $\Gamma(\mathbb{Z}/16\mathbb{Z})$ is shown in Figure \ref{fig 0}. 
The formula gives
$$
\beta_i=2\binom{2}{i}+\binom{6}{i}-\binom{3}{i+1}.
$$
Thus $\beta_1=2\cdot 2+6-3=7$, $\beta_2=2\cdot 1+15-1=16$, $\beta_3=20$, $\beta_4=15$, $\beta_5=6$, and $\beta_6=1$.
\end{example}
Theorem 1.8 \cite{DungVu2026} gives $(1,\ 10,\ 17,\ 20,\ 15,\ 6,\ 1)$, which differs from above calculation as well from Macaulay2  calculation, see Table \ref{tab:betti_numbers 0}.  The graph $\Gamma(\mathbb{Z}/16\mathbb{Z})$ has $7$ edges not $10$ as per Theorem 1.8 \cite{DungVu2026}.
\begin{table}[H]
	\centering
	\ttfamily 
	\begin{tabular}{r c c c c c c c}
		& 0 & 1 & 2  & 3  & 4  & 5 & 6 \\
		total: & 1 & 7 & 16 & 20 & 15 & 6 & 1 \\
		0: & 1 & . & .  & .  & .  & . & . \\
		1: & . & 7 & 16 & 20 & 15 & 6 & 1 \\
	\end{tabular}
	\normalfont 
	\caption{Graded Betti numbers for the ideal.}
	\label{tab:betti_numbers 0}
\end{table}
\begin{figure}[H]
	\centering
	\begin{tikzpicture}[scale=1, every node/.style={circle,draw,minimum size=5mm,inner sep=1pt}]
	
	\node (8) at (0, 0) {8};
	
	\node  (4)  at (-1.5, 2) {4};
	\node  (12) at ( 1.5, 2) {12};
	
	\node  (2)  at (-3, -1.5) {2};
	\node  (6)  at (-1, -2)   {6};
	\node  (10) at ( 1, -2)   {10};
	\node  (14) at ( 3, -1.5) {14};
	
	\draw (4) -- (12);
	
	\foreach \v in {2, 4, 6, 10, 12, 14} {
		\draw (8) -- (\v);
	}
	
\end{tikzpicture}
\caption{Zero divisor graph of $\Gamma(\mathbb{Z}/16\mathbb{Z})$.}
\label{fig 0}
\end{figure}
\medskip

The example displays a substantial linear strand even for a small ring.  The largest nonzero Betti number occurs before the last homological degree, while the projective dimension is $6$.

\medskip

\begin{table}[H]
	\begin{center}
\begin{tabular}{ccccc}
\toprule
$n$ & type blocks & $K$ & corrected Betti sequence & $\operatorname{pd}$\\
\midrule
$8$ & $(2)$ & $1$ & $(2,1)$ & $2$\\
$9$ & $(1,0)$ & $2$ & $(1)$ & $1$\\
$16$ & $(6),(2,1)$ & $3$ & $(7,16,20,15,6,1)$ & $6$\\
\bottomrule
\end{tabular}
\end{center}
\caption{Betti numbers and projective dimension for some $n$ in $\Gamma(\mathbb{Z}/n\mathbb{Z})$}
\label{tab 3}
\end{table}

\medskip

Table \ref{tab 3} confirms that the correction is not optional.  The case $n=9$ would be wrong without subtracting $\binom{2}{2}$ from the raw first Betti number (Theorem 1.8 \cite{DungVu2026}).
\section{The two-prime case $p^a q$}\label{sec:two-prime}
 In this section, we give the principal correction to the two-prime formula (Theorem 1.9 \cite{DungVu2026}).  The low blocks $[p^i q]$ with $i<\lceil a/2\rceil$ are independent repeated blocks, not decreasing clique blocks, so their Betti contributions are hockey-stick differences rather than repeated identical binomial terms.

\medskip

Let $n=p^a q$, where $p$ and $q$ are distinct primes and $a\geq 1$.  Use the classes $A_t=[p^t]$ for $1\leq t\leq a$ and $B_i=[p^i q]$ for $0\leq i\leq a-1$.  Let 
$ \phi_i=|B_i|=p^{a-i-1}(p-1),$ with $ m=\lceil a/2\rceil. $
The total number of center vertices in the construction below is
$$
K_{p,a}^{(q)}=\sum_{i=0}^{a-1}\phi_i=p^a-1.
$$

\medskip

\noindent The following theorem gives the corrected type sequence for $p^a q$.
\begin{theorem}\label{thm:two-prime-type}
The graph $\Gamma(\mathbb{Z}/p^a q\mathbb{Z})$ is cochordal.  A valid constructible system has, for each $i\geq m$, a decreasing block
$$
T_i=\bigl(qp^i-p^{a-i-1}-1,\ qp^i-p^{a-i-1}-2,\ \ldots,\ qp^i-p^{a-i}\bigr)
$$
of length $\phi_i$, and, for each $0\leq i<m$, a constant block
$$
T_i=\bigl((q-1)p^i,\ (q-1)p^i,\ \ldots,\ (q-1)p^i\bigr)
$$
of length $\phi_i$.  The type sequence is $T_{a-1},T_{a-2},\ldots,T_0$.
\end{theorem}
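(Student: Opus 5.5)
We build the constructible system explicitly, in the same way as for Theorem~\ref{thm:prime-power-type}. The centers are exactly the vertices of the classes $B_i=[p^iq]$. The type sequence $T_{a-1},\ldots,T_0$ is written in reverse construction order, so we introduce the classes in the order $B_0,B_1,\ldots,B_{a-1}$. Within $B_i$ we order the vertices as $v_{i,1},\ldots,v_{i,\phi_i}$. For the center $v_{i,j}$ we take the cover
$$
U_{i,j}=\bigcup_{t=a-i}^{a}A_t\ \cup\ \bigcup_{a-i\le l\le i-1}B_l\ \cup\ \varepsilon_i\{v_{i,1},\ldots,v_{i,j-1}\},
$$
where $\varepsilon_i=1$ if $2i\ge a$ and $\varepsilon_i=0$ otherwise.

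\textbf{Validity of each star.} We apply Theorem~\ref{thm:validity-criterion}. In valuation form, $A_t$ has $(t,0)$ and $B_l$ has $(l,1)$. The center has $q$-valuation $1$, so every listed class satisfies the $q$-condition. The $p$-condition reads $t+i\ge a$ and $l+i\ge a$, which holds by the choice of indices. Same-class vertices are inserted only when $2i\ge a$, in agreement with Corollary~\ref{cor:clique-independent-block}. The requirement $u_j\notin U_\ell$ for $\ell\le j$ is automatic: a center is placed in a cover only at a later step, and no $A$-vertex is ever a center.

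\textbf{Each $U_{i,j}$ covers the previously built graph.} Since the $A$-classes are pairwise nonadjacent, every earlier edge has an endpoint that is an earlier center $x\in B_l$ with $l\le i$. There are two cases.
\begin{itemize}
\item If the edge is $x$--$A_t$ with $t+l\ge a$, then $t\ge a-l\ge a-i$, so $A_t\subseteq U_{i,j}$.
\item If the edge is $B_l$--$B_{l'}$ with $l\le l'\le i$ and $l+l'\ge a$, then $l'\ge a-i$. If $l'<i$, the class $B_{l'}$ lies in the cover. If $l'=i$ and $l<i$, then $l\ge a-i$, so $B_l$ lies in the cover. If $l=l'=i$, then $2i\ge a$ and both endpoints are earlier same-class vertices, which are in the cover.
\end{itemize}
Finally, $G_k$ is the whole graph. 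Every $A_t$ first appears in the cover of the $B_{a-t}$-centers (for $A_a$, in the cover of $B_0$), and every edge of $\Gamma(\mathbb{Z}/p^aq\mathbb{Z})$ meets some $B_l$. Hence each edge appears in the star of whichever endpoint is introduced later as a center. This gives cochordality via Theorem~\ref{thm:constructible-characterization}.

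\textbf{Cover sizes.} Using the class sizes from the preliminaries,
$$
\sum_{t=a-i}^{a-1}p^{a-t-1}(p-1)(q-1)+(q-1)=(q-1)p^i .
$$
If $i<m$, then $2i<a$ forces $a-i>i-1$, so no $B_l$ and no same-class vertices occur. The block is then constant with value $(q-1)p^i$. If $i\ge m$, we add
$$
\sum_{l=a-i}^{i-1}p^{a-l-1}(p-1)=p^i-p^{a-i}
$$
together with $j-1$ same-class vertices, giving $|U_{i,j}|=qp^i-p^{a-i}+j-1$. Listing $j=\phi_i,\ldots,1$ reproduces $T_i$ exactly, since $qp^i-p^{a-i}+\phi_i-1=qp^i-p^{a-i-1}-1$.

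The main obstacle is the covering verification in the $B$--$B$ case with one endpoint in the current class. That is precisely where the independent classes ($2i<a$) must not contribute same-class vertices, and where the case analysis above has to be carried out carefully.
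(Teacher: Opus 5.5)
Your proposal is correct and follows the paper's proof essentially step for step. You use the same construction order $B_0,\ldots,B_{a-1}$ and the same covers: all $A_t$ with $t\ge a-i$, the classes $B_l$ with $a-i\le l\le i-1$, and earlier same-class vertices only when $2i\ge a$. Your size count is also the same. Your $B$--$B$ case analysis and your check that the final graph is all of $\Gamma(\mathbb{Z}/p^aq\mathbb{Z})$ are spelled out more carefully than in the paper.

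One small point: Theorem~\ref{thm:validity-criterion} only states a necessary condition. The fact that each star edge is a genuine edge should instead be attributed to the valuation adjacency rule in the preliminaries ($A_tB_i\in E$ exactly when $t+i\ge a$, and $B_iB_j\in E$ exactly when $i+j\ge a$), which is an equivalence.
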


\begin{proof}
We order $B_i$ as $b_{i,1},\ldots,b_{i,\phi_i}$.  In construction order, the blocks are added from $B_0$ up to $B_{a-1}$.  For a center $b_{i,j}$, the true neighbors among the $A$-classes are exactly $A_{a-i}\cup A_{a-i+1}\cup\cdots\cup A_a$, whose total size is $(q-1)p^i$.

\medskip

Among previously constructed $B$-classes, the center $b_{i,j}$ is adjacent exactly to the classes $B_t$ with $a-i\leq t\leq i-1$.  This range is empty when $i<m$.  When $i\geq m$, it has total size $p^i-p^{a-i}$.  Finally, previous vertices of the same class $B_i$ may be included exactly when $2i\geq a$, again equivalent to $i\geq m$.

\medskip

Thus for $i<m$ the cover size is the constant $(q-1)p^i$ for all $j$.  For $i\geq m$ the cover size is
$$
(q-1)p^i+(p^i-p^{a-i})+j-1=qp^i-p^{a-i}+j-1.
$$
Listing the vertices inside each block in reverse order gives the displayed type sequence.  The same valuation inequalities show that each cover is a vertex cover of the previously constructed graph.  Indeed, every earlier edge either joins an earlier $B$-center to an eligible $A$-class already listed in the cover, or joins two earlier $B$-classes whose valuation sum is at least $a$. In the latter case the endpoint with larger index lies among the $B_t$ satisfying $t\geq a-i$ and is therefore included.  Hence the system is valid.
\end{proof}

\medskip

The above theorem corrects the low block in the earlier proposed type sequence in \cite{DungVu2026}.  The expression that decreases through a low block would add same-class edges that do not exist when $2i<a$.

\medskip

\noindent The following theorem gives the corrected two-prime Betti formula.
\begin{theorem}\label{thm:two-prime-betti}
Let $G=\Gamma(\mathbb{Z}/p^a q\mathbb{Z})$  with  $I=I(G)$, and for every $h\geq 1$,
$$
\begin{aligned}
\beta_{h,h+1}(S/I)=\beta_h(S/I)
=&\sum_{j=m}^{a-1}p^{a-j-1}(p-1)\binom{qp^j-2}{h}  +\sum_{j=0}^{m-1}\Bigg[\binom{(q-1)p^j+p^{a-j}-1}{h+1}\\
&-\binom{(q-1)p^j+p^{a-j-1}-1}{h+1}\Bigg]-\binom{p^a-1}{h+1}.
\end{aligned}
$$
Here the first sum is empty if $a=1$.
\end{theorem}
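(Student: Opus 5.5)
The plan is to feed the type sequence of Theorem~\ref{thm:two-prime-type} into the block-evaluation rule of Theorem~\ref{prop:block-evaluation}. No new homological input is needed. Cochordality, and hence $\beta_h=\beta_{h,h+1}$ by Theorem~\ref{thm:froberg}, is already given by Theorem~\ref{thm:two-prime-type}. The validity of the constructible system was also checked there, so Theorem~\ref{thm:type-formula} applies directly. What remains is pure bookkeeping of positions and shifts.

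\textbf{Step 1: the offset $K_{>j}$.} The type sequence is $T_{a-1},T_{a-2},\ldots,T_0$, so the blocks to the left of $T_j$ are $T_{a-1},\ldots,T_{j+1}$. This gives
$$K_{>j}=\sum_{\ell=j+1}^{a-1}p^{a-\ell-1}(p-1)=p^{a-j-1}-1,$$
by telescoping, exactly as in the proof of Theorem~\ref{thm:prime-power-betti}. The same telescoping gives the total number of centers $k=\sum_{i=0}^{a-1}\phi_i=p^a-1$. This produces the global term $-\binom{p^a-1}{h+1}$.

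\textbf{Step 2: the decreasing blocks, $j\ge m$.} The first entry of $T_j$ is $qp^j-p^{a-j-1}-1$. Adding the offset $K_{>j}$ gives the shifted value $qp^j-2$. By the first case of Theorem~\ref{prop:block-evaluation}, the block contributes $\phi_j\binom{qp^j-2}{h}$, which is the first sum.

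\textbf{Step 3: the constant blocks, $j<m$.} Here $d_j=(q-1)p^j$, so $N=d_j+K_{>j}=(q-1)p^j+p^{a-j-1}-1$. The block length is $\phi_j=p^{a-j}-p^{a-j-1}$, so $N+\phi_j=(q-1)p^j+p^{a-j}-1$. The hockey-stick case of Theorem~\ref{prop:block-evaluation} then yields exactly the bracketed difference in the second sum.

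Summing the contributions from Steps 2 and 3 and subtracting $\binom{p^a-1}{h+1}$ gives the stated formula.

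\textbf{Main obstacle.} There is no conceptual difficulty; the delicate point is the position bookkeeping. One must be sure that the constant blocks, which sit at the right end of the sequence, receive the offset $p^{a-j-1}-1$ coming from all higher blocks, both decreasing and constant. One must also check the degenerate case $a=1$: then $m=1$, the first sum is empty, and only $T_0$ remains. As a sanity check, take $p=2$, $a=1$. The formula reduces to $\binom{q}{h+1}-\binom{q-1}{h+1}-\binom{1}{h+1}$, which equals $\binom{q-1}{h}$ for $h\ge1$. This agrees with the star $K_{1,q-1}=\Gamma(\mathbb{Z}/2q\mathbb{Z})$.
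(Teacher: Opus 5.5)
Your proposal is correct and is essentially the paper's own proof: compute the left offset $K_{>j}=p^{a-j-1}-1$ for each block of the type sequence in Theorem~\ref{thm:two-prime-type}, apply the decreasing-block case of Theorem~\ref{prop:block-evaluation} for $j\ge m$ and its hockey-stick case for $j<m$, and subtract $\binom{p^a-1}{h+1}$ for the $p^a-1$ center steps. Your point that the offset counts entries rather than blocks, and your check that the case $p=2$, $a=1$ reproduces the star $K_{1,q-1}$, are accurate and consistent with that argument.
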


\begin{proof}
For a high block $j\geq m$, the first entry is $qp^j-p^{a-j-1}-1$, and the number of blocks to its left is
$$
K_{>j}=\sum_{\ell=j+1}^{a-1}p^{a-\ell-1}(p-1)=p^{a-j-1}-1.
$$
Hence the shifted value is $qp^j-2$, and Theorem \ref{prop:block-evaluation} gives the first sum.

\medskip

For a low block $0\leq j<m$, the repeated value is $d_j=(q-1)p^j$.  The number of entries to its left is again $K_{>j}=p^{a-j-1}-1$, and the length is $\phi_j=p^{a-j-1}(p-1)$.  Thus its contribution is
$$
\binom{d_j+K_{>j}+\phi_j}{h+1}-\binom{d_j+K_{>j}}{h+1}
=\binom{(q-1)p^j+p^{a-j}-1}{h+1}-\binom{(q-1)p^j+p^{a-j-1}-1}{h+1}.
$$
The total number of center steps is $p^a-1$, so Theorem \ref{prop:block-evaluation} gives the final subtraction.
\end{proof}

\medskip

The formula is new relative to \cite{DungVu2026} in two ways, as the low part is a hockey-stick difference, and the global correction remains.  In the special case $a=1$ it reduces to the known complete bipartite formula.

\medskip

\noindent The following corollary records the projective dimension.
\begin{corollary}\label{cor:two-prime-pd}
With the notation of Theorem \ref{thm:two-prime-betti},
$$
\operatorname{pd}(S/I)=\max\left(\{qp^j-2:m\leq j\leq a-1\}\cup\{(q-1)p^j+p^{a-j}-2:0\leq j<m\}\right),
$$
where the first set is omitted when $a=1$.  Also $\operatorname{reg}(S/I)=1$ whenever the graph has an edge.
\end{corollary}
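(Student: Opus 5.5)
The plan is to read the projective dimension off the type-sequence formula of Theorem~\ref{thm:type-formula}. There, $\operatorname{pd}(S/I)$ equals the largest shifted type entry $a_j+(k-j)$. Equivalently, it is the largest value of $b_t+t$, where $b_t$ is the entry in left-to-right position $t$. This is the reading used in the proof of Theorem~\ref{prop:block-evaluation}. We apply it to the valid type sequence $T_{a-1},\ldots,T_0$ of Theorem~\ref{thm:two-prime-type}, and proceed block by block, using the positional offsets $K_{>j}=p^{a-j-1}-1$ computed in the proof of Theorem~\ref{thm:two-prime-betti}.

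For a high block $T_j$ with $m\le j\le a-1$, the entries decrease by one while the positions increase by one. Hence every entry has the same shifted value, $(qp^j-p^{a-j-1}-1)+(p^{a-j-1}-1)=qp^j-2$, and the block contributes exactly this one candidate.

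For a low block $T_j$ with $0\le j<m$, the entries are constant, equal to $(q-1)p^j$. The shifted values therefore increase along the block, so the maximum is attained at the last entry. That entry sits at position $K_{>j}+\phi_j-1=p^{a-j-1}-1+p^{a-j-1}(p-1)-1=p^{a-j}-2$, which gives the candidate $(q-1)p^j+p^{a-j}-2$. Taking the maximum over all blocks yields the displayed formula. When $a=1$ we have $m=1$, so there are no high blocks and the first set is omitted, consistent with the convention in Theorem~\ref{thm:two-prime-betti}.

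The step needing the most care is justifying that the maximal shifted value really is the projective dimension. I would not rely only on the consequence clause of Theorem~\ref{thm:type-formula}. Instead I would cross-check it directly against the Betti formula of Theorem~\ref{thm:two-prime-betti}: for $h$ larger than every candidate, every binomial term vanishes, and the global term $\binom{p^a-1}{h+1}$ also vanishes because $p^a-2$ is at most the low-block candidate with $j=0$. At $h$ equal to the maximum $M$, I would check that $\beta_M\ne 0$. Here I would argue from Theorem~\ref{thm:type-formula} in its stated form rather than by term cancellation, since the subtraction $-\binom{k}{M+1}$ vanishes whenever $M\ge k-1$. That inequality holds because $M\ge q-1+p^a-2\ge k-1=p^a-2$. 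The top surviving binomials are then positive and uncancelled, so $\beta_M>0$.

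Finally, for the regularity: when $G$ has an edge, $I\neq 0$, and the resolution is $2$-linear by cochordality and Theorem~\ref{thm:froberg}. Hence $\beta_{i,j}(S/I)\ne0$ only for $j=i+1$ with $i\ge1$, together with $\beta_{0,0}=1$. This gives $\operatorname{reg}(S/I)=1$.
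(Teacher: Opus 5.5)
Your proposal is correct and follows the same route as the paper. You read $\operatorname{pd}(S/I)$ off as the largest shifted type value in the sequence of Theorem~\ref{thm:two-prime-type}: each decreasing block contributes the constant value $qp^j-2$, each constant block peaks at its last position $p^{a-j}-2$, and regularity then follows from linearity. Your extra top-degree check has one off-by-one slip: $\binom{k}{M+1}$ vanishes only when $M\ge k$, not when $M\ge k-1$. This is harmless, because your own bound gives $M\ge p^a+q-3\ge p^a-1=k$, since $q\ge 2$.
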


\begin{proof}
By Theorem \ref{thm:type-formula}, the projective dimension is the maximum shifted type value.  High blocks give shifted values $qp^j-2$.  Low constant blocks have shifted values from $(q-1)p^j+p^{a-j-1}-1$ up to $(q-1)p^j+p^{a-j}-2$, so their maximum is the second displayed family.  The linearity is a consequence of cochordality.
\end{proof}

\medskip

This statement corrects the claim that the projective dimension is always $qp^{a-1}-2$ for $a\geq 2$.  That value can fail when a low independent block is larger, for instance, $n=18$ gives projective dimension $8$, not $4$.

\medskip

\noindent The following corollary recovers the complete bipartite case (case $a=1$).
\begin{corollary}\label{cor:bipartite-case}
If $n=pq$, then $\Gamma(\mathbb{Z}/pq\mathbb{Z})=K_{q-1,p-1}$ and
$$
\beta_h(S/I)=\binom{p+q-2}{h+1}-\binom{q-1}{h+1}-\binom{p-1}{h+1}.
$$
\end{corollary}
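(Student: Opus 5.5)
The plan is to first identify the graph explicitly from the valuation classes, and then obtain the Betti formula by specializing Theorem \ref{thm:two-prime-betti} to $a=1$.

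\textbf{Identifying the graph.} With $a=1$, the classes from Section \ref{sec:preliminaries} reduce to two.
\begin{itemize}
\item $A_1=[p]$. Since $t=a$, its size is $|A_a|=q-1$.
\item $B_0=[q]$, of size $\phi_0=p^{0}(p-1)=p-1$.
\end{itemize}
No other nonzero zero-divisors exist, since every such residue has $\gcd$ with $pq$ equal to $p$ or $q$. The adjacency rules then give three facts.
\begin{itemize}
\item $A_tA_u\notin E$, so $A_1$ is independent.
\item $B_0$ is independent by Lemma \ref{lem:same-class-test}, because its $p$-exponent $0$ satisfies $2\cdot 0<1$.
\item $A_1B_0\in E$ for all pairs, because $t+i=1+0\geq a=1$. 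Equivalently, $p\cdot q$ divides the product of any two such representatives.
\end{itemize}
Hence $\Gamma(\mathbb{Z}/pq\mathbb{Z})=K_{q-1,p-1}$.

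\textbf{The Betti formula.} Substitute $a=1$ into Theorem \ref{thm:two-prime-betti}. Then $m=\lceil 1/2\rceil=1$, so the high sum over $j=m,\dots,a-1$ is empty. The low sum has only the term $j=0$, where $(q-1)p^0=q-1$, $p^{a-j}-1=p-1$ and $p^{a-j-1}-1=0$. This gives the contribution
$$\binom{p+q-2}{h+1}-\binom{q-1}{h+1}.$$
The global correction is $\binom{p^a-1}{h+1}=\binom{p-1}{h+1}$. Combining the two yields the stated formula.

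\textbf{Independent check.} As a consistency check, I would run the constructible system of Theorem \ref{thm:two-prime-type} directly. It has $p-1$ centers from $B_0$, each with cover $A_1$, giving the constant type $(q-1,\dots,q-1)$ of length $p-1$. Theorem \ref{prop:block-evaluation} with the hockey-stick identity reproduces the same expression. For $h=1$ it gives
$$\binom{p+q-2}{2}-\binom{q-1}{2}-\binom{p-1}{2}=(p-1)(q-1),$$
which is the number of edges, as it should be.

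\textbf{Main obstacle.} The only delicate point is the degenerate bookkeeping at $a=1$. The size formula $|A_t|=p^{a-t-1}(p-1)(q-1)$ does not apply, since $t=a$, so the size $q-1$ must be taken from the $A_a$ case. One also has to confirm that the vertex-cover condition in Theorem \ref{thm:two-prime-type} still holds for the repeated $B_0$ centers. It does: every earlier edge joins an earlier $B_0$ vertex to a vertex of $A_1$, and $A_1$ is the whole cover.
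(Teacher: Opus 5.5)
Your proposal is correct and matches the paper's proof. The paper likewise reads $\Gamma(\mathbb{Z}/pq\mathbb{Z})=K_{q-1,p-1}$ directly off the edge relation and sets $a=1$ in Theorem~\ref{thm:two-prime-betti}: the high sum is empty, and only the $j=0$ low term and the global correction $\binom{p-1}{h+1}$ remain. Your extra checks (the direct constant-block evaluation and the edge count at $h=1$) are consistent with this but not needed.
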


\begin{proof}
When $a=1$, the first sum in Theorem \ref{thm:two-prime-betti} is empty, and the second sum has only $j=0$.  Now, substitution gives
$$
\binom{p+q-2}{h+1}-\binom{q-1}{h+1}-\binom{p-1}{h+1}.
$$
The graph description is immediate from the edge relation, as every nonzero multiple of $p$ is adjacent to every nonzero multiple of $q$, and there are no edges inside either part.
\end{proof}

\medskip

The above corollary is a useful consistency check because the Betti numbers of complete bipartite edge ideals are classical and follow from the product ideal $(X)(Y)$.

\medskip

\noindent The following example gives a concrete correction test.
\begin{example}\label{prop:z18-test}
For $n=18=3^2\cdot 2$, the corrected Betti sequence of $S/I(\Gamma(\mathbb{Z}/18\mathbb{Z}))$ is
$$
(13,\ 39,\ 64,\ 72,\ 56,\ 28,\ 8,\ 1).
$$
In particular, $\operatorname{pd}(S/I)=8$.
\end{example}

As $p=3$, $q=2$, and $a=2$.  Then $m=1$.  The high contribution is $2\binom{4}{h}$, the low contribution is
$$
\binom{9}{h+1}-\binom{3}{h+1},
$$
and the global subtraction is $\binom{8}{h+1}$.  Evaluating for $h=1,\ldots,8$, we obtain the Betti sequence (13,\ 39,\ 64,\ 72,\ 56,\ 28,\ 8,\ 1).

\medskip

This example shows why low independent blocks cannot be ignored.  The six vertices in $[2]$ are not mutually adjacent, but their repeated covers still create high projective dimension through the positional shifts.

\medskip

\begin{example}\label{ex:z12}
Let $n=12=2^2\cdot 3$.  The classes are $A_2=[4]=\{4,8\}$, $A_1=[2]=\{2,10\}$, $B_0=[3]=\{3,9\}$, and $B_1=[6]=\{6\}$, see Figure \ref{fig:z12}.  The corrected formula gives
$$
\beta_h=\binom{4}{h}+\binom{5}{h+1}-\binom{3}{h+1}-\binom{3}{h+1}.
$$
Hence the Betti sequence is $(1, 8,14,9,2)$, which verifies computer calculation by Macaulay2 \cite{Macaulay2} (see Table \ref{tab:betti_numbers}).
\end{example}
\begin{table}[H]
	\centering
	\ttfamily 
	\begin{tabular}{r c c c c c}
		& 0 & 1 & 2  & 3 & 4 \\
		total: & 1 & 8 & 14 & 9 & 2 \\
		0: & 1 & . & .  & . & . \\
		1: & . & 8 & 14 & 9 & 2 \\
	\end{tabular}
	\normalfont 
	\caption{Graded Betti numbers for the ideal.}
	\label{tab:betti_numbers}
\end{table}
While Theorem 1.9 \cite{DungVu2026} gives $(1,\ 12,\ 18,\ 12,\ 3)$, which is not correct at first place as there are $8$ edges not $12$.
\medskip

The example has both a high clique block and a low independent block.  The class $[3]$ contributes through a hockey-stick difference rather than through a decreasing type block.

\medskip

\begin{figure}[H]
\centering
\begin{tikzpicture}[scale=0.95, every node/.style={circle,draw,minimum size=5mm,inner sep=1pt}]
\node (a21) at (0,1) {$4$};
\node (a22) at (0,-1) {$8$};
\node (a11) at (2.2,1.5) {$2$};
\node (a12) at (2.2,-1.5) {$10$};
\node (b01) at (4.4,1) {$3$};
\node (b02) at (4.4,-1) {$9$};
\node (b1) at (1,0) {$6$};
\draw (b01)--(a21); \draw (b01)--(a22); \draw (b02)--(a21); \draw (b02)--(a22);
\draw (b1)--(a21); \draw (b1)--(a22); \draw (b1)--(a11); \draw (b1)--(a12);
\draw[dashed,red] (b01)--(b02);
\end{tikzpicture}
\caption{The graph $\Gamma(\mathbb{Z}/12\mathbb{Z})$.  The dashed segment between $3$ and $9$ is not an edge.}
\label{fig:z12}
\end{figure}

\medskip

Figure \ref{fig:z12} displays the mixed structure of the two-prime case.  The low class $[3]$ is independent, while the high class $[6]$ is a singleton clique block.

\medskip

\begin{table}[H]
	\begin{center}
\begin{tabular}{cccccc}
\toprule
$n$ & $(p,a,q)$ & high part & low part & correction & Betti sequence\\
\midrule
$12$ & $(2,2,3)$ & $\binom{4}{h}$ & $\binom{5}{h+1}-\binom{3}{h+1}$ & $-\binom{3}{h+1}$ & $(8,14,9,2)$\\[4mm]
$18$ & $(3,2,2)$ & $2\binom{4}{h}$ & $\binom{9}{h+1}-\binom{3}{h+1}$ & $-\binom{8}{h+1}$ & $(13,39,64,72,56,28,8,1)$\\ [4mm]
$15$ & $(3,1,5)$ & none & $\binom{6}{h+1}-\binom{4}{h+1}$ & $-\binom{2}{h+1}$ & $(8,16,14,6,1)$\\
\bottomrule
\end{tabular}
\end{center}
\label{tab 4}
\caption{Betti sequence of $\Gamma(\mathbb{Z}/p^a q\mathbb{Z})$ for some values.}
\end{table}
Table \ref{tab 4}  compares three two-prime rings.  The row $n=15$ confirms the complete bipartite specialization, while $n=18$ shows the failure of the shorter projective-dimension formula.

\medskip

Algorithm \ref{alg:two-prime} is about calculating Betti sequence for two prime cases in $\Gamma(\mathbb{Z}/p^a q\mathbb{Z})$.
\begin{algorithm}[H]
\caption{Two-prime Betti numbers}
\label{alg:two-prime}
\begin{algorithmic}[1]
\Require Distinct primes $p,q$, integer $a\geq 1$, degree $h$.
\Ensure $\beta_h(S/I(\Gamma(\mathbb{Z}/p^a q\mathbb{Z})))$.
\State $m\gets \lceil a/2\rceil$, $B\gets 0$.
\For{$j=m$ to $a-1$}
    \State $B\gets B+p^{a-j-1}(p-1)\binom{qp^j-2}{h}$.
\EndFor
\For{$j=0$ to $m-1$}
    \State $B\gets B+\binom{(q-1)p^j+p^{a-j}-1}{h+1}-\binom{(q-1)p^j+p^{a-j-1}-1}{h+1}$.
\EndFor
\State \Return $B-\binom{p^a-1}{h+1}$.
\end{algorithmic}
\end{algorithm}

\medskip

Algorithm \ref{alg:two-prime} runs in $O(a)$ arithmetic steps.  It avoids constructing the graph, whose vertex count is $p^a q-p^{a-1}(p-1)(q-1)-1$ after excluding units and zero.

\medskip

\begin{figure}[H]
\centering
\begin{tikzpicture}[node distance=9mm,>=Latex]
\tikzstyle{box}=[rectangle,draw,rounded corners,align=center,minimum width=32mm,minimum height=8mm]
\node[box] (Bi) {$B_i=[p^i q]$};
\node[box,right=of Bi] (test) {$2i\geq a$?};
\node[box,above right=of test,yshift=4mm, xshift=3mm] (dec) {yes:\ decreasing\ clique block};
\node[box,below right=of test, yshift=-4mm] (const) {no:\ constant\ independent block};
\node[box,right=20mm of test, xshift=-2mm] (sum) {combine and\\ subtract $\binom{p^a-1}{h+1}$};
\draw[->] (Bi)--(test);
\draw[->] (test)|-(dec);
\draw[->] (test)|-(const);
\draw[->] (dec)--(sum);
\draw[->] (const)--(sum);
\end{tikzpicture}
\caption{Decision rule for the $p^a q$ type blocks.}
\label{fig:two-prime-flow}
\end{figure}

\medskip

Figure \ref{fig:two-prime-flow} shows the diagram is the conceptual core of the section.  The same valuation class changes behavior exactly at the threshold $i=\lceil a/2\rceil$.

\medskip

\section{The square-free three-prime case}\label{sec:three-prime}
 The graph $\Gamma(\mathbb{Z}/pqr\mathbb{Z})$ is cochordal, and its projective dimension stated in \cite{DungVu2026} remains correct for $p<q<r$.  However, the type blocks for $[pq]$, $[pr]$, and $[qr]$ are constant blocks, not decreasing blocks, and the full Betti formula (Theorem 1.10 \cite{DungVu2026}) must be corrected accordingly.

\medskip

Let $p<q<r$ be primes and let $n=pqr$.  Denote the classes represented by $p,q,r$ by $A_p,A_q,A_r$, and the classes represented by $pq,pr,qr$ by $B_{pq},B_{pr},B_{qr}$.  Their sizes are
$ |A_p|=(q-1)(r-1), |A_q|=(p-1)(r-1), |A_r|=(p-1)(q-1), 
|B_{pq}|=r-1, |B_{pr}|=q-1,$ and $ |B_{qr}|=p-1.
$

\medskip

\noindent The following theorem gives the corrected type sequence for $pqr$.
\begin{theorem}\label{thm:three-prime-type}
The graph $\Gamma(\mathbb{Z}/pqr\mathbb{Z})$ is cochordal.  A valid constructible system has three constant blocks, 
$ T_{pq}=\bigl((p-1)(q-1),\ldots,(p-1)(q-1)\bigr) $
of length $r-1$, $
T_{pr}=\bigl(pr-p,\ldots,pr-p\bigr) $
of length $q-1$, and $
T_{qr}=\bigl(qr-1,\ldots,qr-1\bigr) $
of length $p-1$.  The type sequence is $T_{qr},T_{pr},T_{pq}$.
\end{theorem}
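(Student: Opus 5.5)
The plan is to exhibit the constructible system explicitly, check the conditions of Definition \ref{def:constructible}, and then invoke Theorem \ref{thm:constructible-characterization}. First I would record the adjacency structure of $\Gamma(\mathbb{Z}/pqr\mathbb{Z})$ in class form, using the valuation criterion from Section \ref{sec:preliminaries}: two vertices are adjacent exactly when the product of their labels contains $p$, $q$ and $r$. This gives the following.
\begin{enumerate}
\item No edges occur among the $A$-classes, since a product of two single primes misses the third.
\item $A_p$ is joined only to $B_{qr}$, $A_q$ only to $B_{pr}$, and $A_r$ only to $B_{pq}$.
\item Distinct $B$-classes are completely joined.
\item Each $B$-class is independent, by Corollary \ref{cor:clique-independent-block}.
\end{enumerate}

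In construction order I would add the centers class by class, first $B_{pq}$, then $B_{pr}$, then $B_{qr}$. No previous same-class vertex is ever placed in a cover, in accordance with Theorem \ref{thm:validity-criterion}.
\begin{enumerate}
\item For each of the $r-1$ centers in $B_{pq}$, the cover is $A_r$, of size $(p-1)(q-1)$.
\item For each of the $q-1$ centers in $B_{pr}$, the cover is $A_q\cup B_{pq}$, of size $(p-1)(r-1)+(r-1)=pr-p$.
\item For each of the $p-1$ centers in $B_{qr}$, the cover is $A_p\cup B_{pq}\cup B_{pr}$, of size $(q-1)(r-1)+(r-1)+(q-1)=qr-1$.
\end{enumerate}
Each cover size is constant within its block, so reversing the construction order gives the type sequence $T_{qr},T_{pr},T_{pq}$ with the stated entries and lengths. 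By the class adjacency list above, every cover consists of true neighbors of its center.

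Next I would verify the two structural conditions. The condition $u_j\notin U_\ell$ for $\ell\le j$ holds because no cover ever contains a vertex of the center's own class or of a later $B$-class. For the vertex-cover condition I would describe the graph built before each step.
\begin{enumerate}
\item During the $B_{pq}$ phase, the earlier edges all join $B_{pq}$ to $A_r$, so they are covered by $A_r$.
\item Before a $B_{pr}$ step, the edges join $B_{pq}$ to $A_r$, or join earlier $B_{pr}$ vertices to $A_q\cup B_{pq}$. Every such edge has an endpoint in $A_q\cup B_{pq}$.
\item Before a $B_{qr}$ step, every earlier edge has an endpoint in $B_{pq}\cup B_{pr}$, or joins an earlier $B_{qr}$ center to its cover $A_p\cup B_{pq}\cup B_{pr}$. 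Either way it is covered.
\end{enumerate}

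Finally I would check that $G_k$ equals the whole graph. Every vertex occurs, because all $A$-classes appear in covers and all $B$-classes occur as centers. Every edge of the graph, whether of the form $A_r$--$B_{pq}$, $A_q$--$B_{pr}$, $A_p$--$B_{qr}$, or between distinct $B$-classes, is created at the step of its later $B$-endpoint. No false edges are created, as noted above.

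I expect the main obstacle to be this last completeness and faithfulness check, particularly the bookkeeping that the $B$--$B$ edges are each produced exactly once and that independence inside each $B$-class is respected. I would handle it by organizing the edge set by the unordered pair of classes involved. Cochordality then follows from Theorem \ref{thm:constructible-characterization}.
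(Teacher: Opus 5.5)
Your proposal is correct and follows the paper's proof essentially step for step. It uses the same construction order $B_{pq}$, $B_{pr}$, $B_{qr}$ and the same covers $A_r$, $A_q\cup B_{pq}$, $A_p\cup B_{pq}\cup B_{pr}$ with the same constant sizes, together with the same true-neighbor and vertex-cover checks. Your added verification of $u_j\notin U_\ell$ for $\ell\le j$ and of $G_k=\Gamma(\mathbb{Z}/pqr\mathbb{Z})$ only makes explicit what the paper leaves implicit.
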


\begin{proof}
We construct the graph $\Gamma(\mathbb{Z}/pqr\mathbb{Z})$ by taking centers first from $B_{pq}$, then from $B_{pr}$, then from $B_{qr}$, or equivalently, the displayed type sequence is written in reverse construction order.  A vertex of $B_{pq}$ is adjacent to all vertices of $A_r$ and to no previous same-class vertex, because $(pq)^2$ is not divisible by $pqr$.  Hence every cover in the first construction block has size $|A_r|=(p-1)(q-1)$.

\medskip

A vertex of $B_{pr}$ is adjacent to all vertices of $A_q$ and all vertices of the previously constructed class $B_{pq}$, but not to previous vertices of $B_{pr}$.  The cover size is
$$
|A_q|+|B_{pq}|=(p-1)(r-1)+(r-1)=pr-p.
$$
A vertex of $B_{qr}$ is adjacent to all vertices of $A_p$, $B_{pq}$, and $B_{pr}$, but not to previous vertices of $B_{qr}$.  The cover size is
$$
|A_p|+|B_{pq}|+|B_{pr}|=(q-1)(r-1)+(r-1)+(q-1)=qr-1.
$$
These covers contain only true neighbors.  They also cover the graph built before each step. In the first block there are no earlier center classes. In the second block the only earlier edges involve $A_r$ and $B_{pq}$, and $B_{pq}$ is included. In the third block all earlier edges have an endpoint in $B_{pq}$, $B_{pr}$, or $A_p$, all of which are included.  Thus the system is valid.
\end{proof}

\medskip

The correction is visible in each block, as none of $B_{pq}$, $B_{pr}$, or $B_{qr}$ is a clique.  Therefore none of the three blocks should decrease by one along same-class vertices.

\medskip

\noindent The following theorem gives the corrected Betti formula for the square-free three-prime case.
\begin{theorem}\label{thm:three-prime-betti}
Let $G=\Gamma(\mathbb{Z}/pqr\mathbb{Z})$ with $p<q<r$, and let $I=I(G)$.  For every $h\geq 1$,
$$
\begin{aligned}
\beta_{h,h+1}(S/I)=\beta_h(S/I)
={}&\binom{qr+p-2}{h+1}-\binom{qr-1}{h+1} +\binom{pr+q-2}{h+1}-\binom{pr-1}{h+1}\\
&+\binom{pq+r-2}{h+1}-\binom{pq-1}{h+1} -\binom{p+q+r-3}{h+1}.
\end{aligned}
$$
\end{theorem}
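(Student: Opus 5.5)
The plan is to feed the constant-block type sequence of Theorem \ref{thm:three-prime-type} directly into the block evaluation rule of Theorem \ref{prop:block-evaluation}, in its second (constant-block) case, and then add the global correction. Linearity of the resolution, i.e.\ $\beta_{h,h+1}(S/I)=\beta_h(S/I)$, needs no separate argument. It follows from cochordality, which Theorem \ref{thm:three-prime-type} provides, together with Theorem \ref{thm:froberg}.

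First I will fix the left-to-right order of the type sequence, which is $T_{qr},T_{pr},T_{pq}$. This is the reverse of the construction order. The positional shifts $K_{>h}$ in Theorem \ref{prop:block-evaluation} count entries lying to the left, so getting this direction right is the one step where an error could slip in.

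Then I will evaluate the three blocks one at a time, writing $N=d+K_{>}$ for each.
\begin{itemize}
\item For $T_{qr}$, we have $d=qr-1$, $K_{>}=0$ and length $p-1$. So $N=qr-1$, and the contribution is $\binom{qr+p-2}{h+1}-\binom{qr-1}{h+1}$.
\item For $T_{pr}$, we have $d=pr-p$, $K_{>}=p-1$ and length $q-1$. So $N=pr-1$, and the contribution is $\binom{pr+q-2}{h+1}-\binom{pr-1}{h+1}$.
\item For $T_{pq}$, we have $d=(p-1)(q-1)=pq-p-q+1$, $K_{>}=(p-1)+(q-1)$ and length $r-1$. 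So $N=pq-1$, and the contribution is $\binom{pq+r-2}{h+1}-\binom{pq-1}{h+1}$.
\end{itemize}
Each of these is the hockey-stick identity, applied exactly as in the proof of Theorem \ref{prop:block-evaluation}.

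Finally, the total number of center steps is $k=(p-1)+(q-1)+(r-1)=p+q+r-3$. Theorem \ref{prop:block-evaluation}, which rests on Theorem \ref{thm:type-formula}, therefore subtracts $\binom{p+q+r-3}{h+1}$. Summing the three block contributions with this correction gives the displayed formula.

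The hypothesis $p<q<r$ is not used in the Betti computation itself. It only affects which shifted value is maximal, and hence the projective dimension. As a sanity check I would test $n=30$ at $h=1$: the formula should return the number of edges of the graph, mirroring Table \ref{tab 4}.
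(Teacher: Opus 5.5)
Your proposal is correct and matches the paper's proof essentially step for step. The paper also applies the constant-block case of Theorem~\ref{prop:block-evaluation} to $T_{qr},T_{pr},T_{pq}$ with left offsets $0$, $p-1$ and $(p-1)+(q-1)$, which gives the bases $qr-1$, $pr-1$, $pq-1$, and then subtracts $\binom{p+q+r-3}{h+1}$. One small slip: your $n=30$ sanity check belongs with Table~\ref{tab:three-prime-beta-one-comparison}, not with the $p^aq$ table you cite.
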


\begin{proof}
We us the constant-block case of Theorem \ref{prop:block-evaluation} to the three blocks in Theorem \ref{thm:three-prime-type}.  For $T_{qr}$ there are no blocks to its left, its repeated value is $qr-1$, and its length is $p-1$, thereby giving
$$
\binom{qr+p-2}{h+1}-\binom{qr-1}{h+1}.
$$
For $T_{pr}$ the left offset is $p-1$, its repeated value is $pr-p$, and its length is $q-1$, thereby giving
$$
\binom{pr+q-2}{h+1}-\binom{pr-1}{h+1}.
$$
For $T_{pq}$ the left offset is $(p-1)+(q-1)$, its repeated value is $(p-1)(q-1)$, and its length is $r-1$, thereby giving
$$
\binom{pq+r-2}{h+1}-\binom{pq-1}{h+1}.
$$
The total number of center steps is $p+q+r-3$, so the global subtraction is $\binom{p+q+r-3}{h+1}$, and the result follows.
\end{proof}

\medskip

The above formula replaces the shorter repeated-binomial expression.  The two expressions have the same largest endpoint when $p<q<r$, but they do not have the same Betti numbers.

\medskip

\noindent The following corollary gives projective dimension.
\begin{corollary}\label{cor:three-prime-pd}
If $p<q<r$, then
$
\operatorname{pd}(S/I(\Gamma(\mathbb{Z}/pqr\mathbb{Z})))=qr+p-3,$ and $ 
\operatorname{reg}(S/I(\Gamma(\mathbb{Z}/pqr\mathbb{Z})))=1. $
\end{corollary}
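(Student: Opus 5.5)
Our plan is to read off the projective dimension from the constructible system of Theorem~\ref{thm:three-prime-type}, using the final clause of Theorem~\ref{thm:type-formula}. That clause says $\operatorname{pd}(S/I(G))$ is the maximum of the shifted type values $a_j+(k-j)$. Equivalently, it is the maximum over left-to-right positions $t$ of $b_t+t$.

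The type sequence is $T_{qr},T_{pr},T_{pq}$. The three constant blocks contribute the following shifted values.
\begin{itemize}
\item $T_{qr}$ has entries $qr-1$ at positions $0,\dots,p-2$, so its shifted values run from $qr-1$ up to $qr+p-3$.
\item $T_{pr}$ has entries $pr-p$ at positions $p-1,\dots,p+q-3$, so its shifted values run from $pr-1$ up to $pr+q-p-3$.
\item $T_{pq}$ has entries $(p-1)(q-1)$ at positions $p+q-2,\dots,p+q+r-4$, so its shifted values run from $pq-1$ up to $pq+r-3$.
\end{itemize}
The block maxima are therefore $qr+p-3$, $pr+q-p-3$ and $pq+r-3$, which are the top endpoints of the hockey-stick differences appearing in Theorem~\ref{thm:three-prime-betti}.

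The only substantive step is comparing these three numbers under $p<q<r$.
\begin{itemize}
\item First, $(qr+p)-(pq+r)=(q-1)(r-p)>0$, since $q\ge 3$ and $r>p$.
\item Second, $(qr+p)-(pr+q-p)=r(q-p)+2p-q$. Because $r>q$, this exceeds $q(q-p)+2p-q=(q-p)(q-1)+p>0$.
\end{itemize}
Hence the maximum is $qr+p-3$. As a consistency check, $\beta_h$ in Theorem~\ref{thm:three-prime-betti} vanishes for $h+1>qr+p-2$. At $h=qr+p-3$ only the term $\binom{qr+p-2}{h+1}=1$ survives, because $pr+q-2$, $pq+r-2$ and $p+q+r-3$ are all smaller than $qr+p-2$ by the inequalities above. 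So $\beta_{qr+p-3}=1\neq 0$, and the projective dimension is exactly $qr+p-3$.

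For the regularity, Theorem~\ref{thm:three-prime-type} shows the graph is cochordal. By Theorem~\ref{thm:froberg}, $I(G)$ then has a $2$-linear resolution, so every nonzero $\beta_{i,j}(S/I)$ with $i\ge1$ has $j=i+1$. The graph has an edge, for instance between $B_{qr}$ and $A_p$, so $\operatorname{reg}(S/I)=1$. The main thing to get right is the bookkeeping of the positional offsets; the inequality $r(q-p)+2p-q>0$ needs care only in the smallest case $p=2$, $q=3$, where it is still clearly positive.
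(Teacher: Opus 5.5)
Your route is the paper's: you read the projective dimension off the three constant blocks of Theorem~\ref{thm:three-prime-type} using the maximum-shifted-value clause of Theorem~\ref{thm:type-formula}, and you get regularity from linearity. Your extra check that $\beta_{qr+p-3}=1$ via Theorem~\ref{thm:three-prime-betti} is a good addition. It confirms directly that the global subtraction $\binom{p+q+r-3}{h+1}$ does not cancel the top term, which the paper leaves implicit.

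However, there is an arithmetic slip in exactly the offset bookkeeping you flagged. The block $T_{pr}$ has entry $pr-p$ at positions $p-1,\dots,p+q-3$. Its shifted values therefore run from $pr-1$ up to $(pr-p)+(p+q-3)=pr+q-3$, not $pr+q-p-3$. You can confirm this in two ways:
\begin{itemize}
\item there are $q-1$ consecutive values starting at $pr-1$;
\item the top term of the corresponding hockey-stick difference, which you cite yourself, is $\binom{pr+q-2}{h+1}$.
\end{itemize}
Since $pr+q-p-3<pr+q-3$, your second inequality $(qr+p)-(pr+q-p)>0$ compares $qr+p-3$ against a value that is too small. It does not show that $qr+p-3$ dominates the $T_{pr}$ block. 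For the same reason, your later claim that $pr+q-2<qr+p-2$ ``by the inequalities above'' is not supported by what you actually proved.

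The repair is immediate and simpler than what you wrote: $(qr+p)-(pr+q)=(q-p)(r-1)>0$. This is the comparison the paper makes. With the corrected endpoint, both your maximum computation and your vanishing/nonvanishing check of $\beta_h$ at and above $h=qr+p-3$ go through.
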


\begin{proof}
The shifted endpoints of the three constant blocks are $qr+p-3$, $pr+q-3$, and $pq+r-3$.  Since $p<q<r$, we have 
$ qr+p-3>pr+q-3 $
and $
qr+p-3>pq+r-3. $
Now, the projective dimension follows from Theorem \ref{thm:type-formula}, and regularity follows from linearity.
\end{proof}

\medskip

Thus the projective dimension alone is not sensitive enough to detect the error in the earlier Betti formula (Theorem 1.10 \cite{DungVu2026}).  The full linear strand is necessary.

\medskip

\noindent The following example gives the smallest square-free three-prime computation.
\begin{example}\label{prop:z30}
	For $n=30$, the class $[6]$ has four vertices, $[10]$ has two vertices, and $[15]$ has one vertex.  These are the three center classes.  No two distinct vertices inside $[6]$ are adjacent, because the product of two multiples of $6$ need not contain the prime factor $5$, see Figure \ref{fig:z30-compressed}. The actual graph $\Gamma(\mathbb{Z}/30\mathbb{Z})$ is show in Figure \ref{fig 1}.
So, for $n=30=2\cdot 3\cdot 5$, and from Theorem \ref{thm:three-prime-betti}, the corrected Betti sequence is
$$
(38,\ 211,\ 654,\ 1441,\ 2457,\ 3332,\ 3597,\ 3058,\ 2013,\ 1002,\ 364,\ 91,\ 14,\ 1).
$$
The projective dimension is $14$.
\end{example}
 Substitute $p=2$, $q=3$, and $r=5$ in Theorem \ref{thm:three-prime-betti}.  The three block contributions are
$$
\binom{15}{h+1}-\binom{14}{h+1},\quad
\binom{11}{h+1}-\binom{9}{h+1},\quad
\binom{9}{h+1}-\binom{5}{h+1},
$$
and the global subtraction is $\binom{7}{h+1}$.  Evaluation for $h=1,\ldots,14$ gives the displayed sequence. 
 The first entry $38$ is the actual number of edges.  However from Theorem 1.10 from \cite{DungVu2026}, the first term is $66$, so it fails already at homological degree one. In fact, by Theorem 1.10 \cite{DungVu2026}, we have
 $$ (1,\ 66,\ 293,\ 828,\ 1701,\ 2730,\ 3535,\ 3704,\ 3097,\ 2022,\ 1003,\ 364,\ 91,\ 14,\ 1).$$

\begin{table}[htbp]
	\centering
	\ttfamily 
	\resizebox{\textwidth}{!}{
		\begin{tabular}{r c c c c c c c c c c c c c c c}
			& 0 & 1  & 2   & 3   & 4    & 5    & 6    & 7    & 8    & 9    & 10   & 11  & 12 & 13 & 14 \\
			total: & 1 & 38 & 211 & 654 & 1441 & 2457 & 3332 & 3597 & 3058 & 2013 & 1002 & 364 & 91 & 14 & 1  \\
			0: & 1 & .  & .   & .   & .    & .    & .    & .    & .    & .    & .    & .   & .  & .  & .  \\
			1: & . & 38 & 211 & 654 & 1441 & 2457 & 3332 & 3597 & 3058 & 2013 & 1002 & 364 & 91 & 14 & 1  \\
		\end{tabular}%
	}
	\normalfont 
	\caption{Graded Betti numbers for the ideal of $\Gamma(\mathbb{Z}/30\mathbb{Z})$.}
	\label{tab:betti_numbers_wide}
\end{table}
Table \ref{tab:betti_numbers_wide} gives the Betti table obtained by Macaulay2 \cite{Macaulay2}, which agrees with our calculation, and discards calculation of Theorem 1.10 \cite{DungVu2026}.
\medskip
The above example explains why the type blocks are constant.  The covers grow only by positional shifts, not by adding previous vertices from the same center class.

\begin{figure}[H]
\centering
\begin{tikzpicture}[scale=0.75, every node/.style={circle,draw,minimum size=7mm,inner sep=1pt}]
\node (A2) at (0,2) {$[2]$};
\node (A3) at (0,0) {$[3]$};
\node (A5) at (0,-2) {$[5]$};
\node (B6) at (4,2) {$[6]$};
\node (B10) at (5.7,0) {$[10]$};
\node (B15) at (4,-2) {$[15]$};
\draw (B6)--(A5);
\draw (B10)--(A3);
\draw (B15)--(A2);
\draw (B6)--(B10);
\draw (B6)--(B15);
\draw (B10)--(B15);
\draw[dashed,red] ($(B6)+(0.35,0.35)$) arc (45:315:0.45);
\end{tikzpicture}
\caption{Compressed class pattern for $\Gamma(\mathbb{Z}/30\mathbb{Z})$.  The dashed loop indicates that no loop or same-class clique is present in the ordinary graph.}
\label{fig:z30-compressed}
\end{figure}
Figure \ref{fig:z30-compressed} shows the compressed adjacency pattern, but the dashed mark warns against replacing a class by a clique.  Lifting from the compressed graph to the ordinary graph is exactly where the correction enters.
\begin{figure}[H]
	\centering
	\begin{tikzpicture}[
		scale=0.95,
		every node/.style={circle, draw, minimum size=5mm, inner sep=1pt, font=\small},
		edge/.style={draw, thin}
		]
		
		
		\node (v2)  at (-6, 3.5) {2};
		\node (v4)  at (-6, 2.5) {4};
		\node (v8)  at (-6, 1.5) {8};
		\node (v14) at (-6, 0.5) {14};
		\node (v16) at (-6,-0.5) {16};
		\node (v22) at (-6,-1.5) {22};
		\node (v26) at (-6,-2.5) {26};
		\node (v28) at (-6,-3.5) {28};
		
		\node (v3)  at (-2, 3) {3};
		\node (v9)  at (-2, 2) {9};
		\node (v21) at (-2, 1) {21};
		\node (v27) at (-2, 0) {27};
		
		\node (v5)  at (2, 2.5) {5};
		\node (v25) at (2, 1.5) {25};
		
		\node (v6)  at (6, 3) {6};
		\node (v12) at (6, 2) {12};
		\node (v18) at (6, 1) {18};
		\node (v24) at (6, 0) {24};
		
		\node (v10) at (2,-1.5) {10};
		\node (v20) at (2,-2.5) {20};
		
		\node (v15) at (-2,-2) {15};
		
		
		\foreach \a in {2,4,8,14,16,22,26,28}
		\draw[edge] (v\a) -- (v15);
		
		\foreach \b in {3,9,21,27}
		\foreach \e in {10,20}
		\draw[edge] (v\b) -- (v\e);
		
		\foreach \c in {5,25}
		\foreach \d in {6,12,18,24}
		\draw[edge] (v\c) -- (v\d);
		
		\foreach \d in {6,12,18,24}
		\foreach \e in {10,20}
		\draw[edge] (v\d) -- (v\e);
		
		\foreach \d in {6,12,18,24}
		\draw[edge] (v\d) -- (v15);
		
		\foreach \e in {10,20}
		\draw[edge] (v\e) -- (v15);
		
	\end{tikzpicture}
	\caption{Zero divisor graph $\Gamma(\mathbb{Z}/30\mathbb{Z})$.}
	\label{fig 1}
\end{figure}
\begin{table}[H]
	\centering
	\small
	\renewcommand{\arraystretch}{1.2}
	\begin{tabularx}{\textwidth}{c c c X}
		\toprule
		$n$
		&
		\makecell{Theorem \ref{thm:three-prime-betti}\\$\beta_1$}
		&
		\makecell{Uncorrected repeated-block value\\
			(Theorem 1.10 \cite{DungVu2026})}
		&
		\makecell{Conclusion}
		\\
		\midrule
		$30$ & $38$  & $66$  & Uncorrected value overcounts edges. \\
		$42$ & $56$  & $108$ & Same-class overcount persists. \\
		$70$ & $106$ & $182$ & Larger $r$ amplifies the error. \\
		\bottomrule
	\end{tabularx}
	\caption{Comparison of the first Betti number of $\Gamma(\mathbb{Z}/pqr\mathbb{Z})$.}
	\label{tab:three-prime-beta-one-comparison}
\end{table}

\medskip

Table \ref{tab:three-prime-beta-one-comparison} compares the first Betti number with the value obtained from treating constant blocks as decreasing blocks.  Since $\beta_1$ must equal the number of edges, the discrepancy is decisive.

\medskip

\noindent The following open problem indicates a natural direction beyond the present complete classification.
\begin{openproblem}\label{op:noncochordal}
For $n$ outside the three cochordal families, determine effective formulae or sharp bounds for the nonlinear Betti numbers of $S/I(\Gamma(\mathbb{Z}/n\mathbb{Z}))$ in terms of the prime factorization of $n$.
\end{openproblem}

\medskip

The problem is open because induced matchings force nonlinear syzygies, and the type-sequence method no longer applies directly.  Hochster's formula remains available, but direct simplicial homology over all induced subgraphs is usually too large for closed arithmetic expressions.

\medskip

\begin{figure}[H]
	\centering
	\begin{tikzpicture}[>=Latex,node distance=10mm]
		\tikzstyle{box}=[rectangle,draw,rounded corners,align=center,minimum width=34mm,minimum height=8mm]
		\node[box] (factor) {factor $n$};
		\node[box,below=of factor] (classes) {valuation classes};
		\node[box,below left=of classes,xshift=-10mm] (edge) {edge inequalities};
		\node[box,below right=of classes,xshift=10mm] (sizes) {class sizes};
		\node[box,below=of classes,yshift=-13mm] (covers) {valid covers};
		\node[box,below=of covers] (type) {corrected type};
		\node[box,below=of type] (betti) {Betti numbers};
		\draw[->] (factor)--(classes);
		\draw[->] (classes)--(edge);
		\draw[->] (classes)--(sizes);
		\draw[->] (edge)--(covers);
		\draw[->] (sizes)--(covers);
		\draw[->] (covers)--(type);
		\draw[->] (type)--(betti);
	\end{tikzpicture}
	\caption{Expanded validation path from arithmetic data to homological data.}
	\label{fig:expanded-validation-path}
\end{figure}

\medskip

The diagram in Figure \ref{fig:expanded-validation-path} makes explicit that the corrected proof has two independent inputs: the edge inequalities and the class-size counts.  Both are needed before the type formula can be applied safely.

\medskip

\begin{table}[H]
	\begin{center}
\begin{tabular}{cccc}
\toprule
Family & block types &  correction term & output degree range\\
\midrule
$p^a$ & decreasing only & $\binom{p^{a-m}-1}{h+1}$ & $1\leq h\leq p^{a-1}-2$\\
$p^a q$ & mixed &  $\binom{p^a-1}{h+1}$ & $1\leq h\leq \operatorname{pd}$\\
$pqr$ & constant only &  $\binom{p+q+r-3}{h+1}$ & $1\leq h\leq qr+p-3$\\
\bottomrule
\end{tabular}
\end{center}
\caption{Correction terms for three cochordal cases of $\Gamma(\mathbb{Z}/n\mathbb{Z})$.}
\label{tab 5}
\end{table}
 
Table \ref{tab 5} compares the three cochordal cases of $\Gamma(\mathbb{Z}/n\mathbb{Z})$.  The correction term is different in each family because the number of center steps is different.

\medskip

\section{Hilbert series of the corrected quotient rings}\label{sec:hilbert-series}

 In this section, we determine the Hilbert series of $\Gamma(\mathbb{Z}/n\mathbb{Z}).$ Let $G=\Gamma(\mathbb{Z}/n\mathbb{Z})$ be one of the cochordal zero-divisor graphs considered above, and as usual $S=\mathbb{K}[x_v:v\in V(G)]$.  Also order of $G$ is $N$.  Since $I(G)$ has a $2$-linear resolution in the cochordal cases, the Hilbert series is determined by the linear Betti numbers.

\medskip

\noindent The following theorem gives the uniform Hilbert-series formula.
\begin{theorem}[Hilbert series from the corrected linear strand]\label{thm:hilbert-uniform}
Let $G=\Gamma(\mathbb{Z}/n\mathbb{Z})$ be cochordal, let $I=I(G)$, and let $B_h(n)=\beta_h(S/I)$ denote the corrected Betti number supplied by Theorems \ref{thm:prime-power-betti}, \ref{thm:two-prime-betti} and \ref{thm:three-prime-betti}.  If $D=\operatorname{pd}(S/I)$ and $N=|V(G)|$, then
$$
\operatorname{Hilb}_{S/I}(t)=\frac{1+\sum_{h=1}^{D}(-1)^h B_h(n)t^{h+1}}{(1-t)^N}.
$$
If $G$ has no edge, the same formula is interpreted as $\operatorname{Hilb}_{S/I}(t)=\frac{1}{(1-t)^N}$.
\end{theorem}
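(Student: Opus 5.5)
The plan is to read the Hilbert series off the minimal graded free resolution, using additivity of Hilbert series along exact sequences of graded modules. By Theorem \ref{thm:classification} together with Theorem \ref{thm:froberg}, $I=I(G)$ has a $2$-linear resolution whenever $G$ is cochordal. Hence, as displayed in Section \ref{sec:preliminaries}, the minimal graded free resolution of $S/I$ has the form
$$
0\to S(-(D+1))^{B_D(n)}\to\cdots\to S(-3)^{B_2(n)}\to S(-2)^{B_1(n)}\to S\to S/I\to 0 .
$$
Here $\beta_{h,j}(S/I)=0$ for $j\neq h+1$ when $h\ge1$, and $\beta_{0,0}=1$. The values $B_h(n)=\beta_h(S/I)=\beta_{h,h+1}(S/I)$ are exactly those given by Theorems \ref{thm:prime-power-betti}, \ref{thm:two-prime-betti} and \ref{thm:three-prime-betti}.

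The key steps, in order, are as follows.
\begin{enumerate}
\item Recall that $S$ is a standard graded polynomial ring in $N=|V(G)|$ variables, so $\operatorname{Hilb}_S(t)=1/(1-t)^N$.
\item Note that a shift satisfies $\operatorname{Hilb}_{S(-j)}(t)=t^j/(1-t)^N$.
\item Observe that each graded piece of the exact resolution is an exact sequence of finite-dimensional $\mathbb K$-vector spaces. Taking the alternating sum of dimensions degreewise therefore gives
$$
\operatorname{Hilb}_{S/I}(t)=\sum_{h\ge0}(-1)^h\sum_j\beta_{h,j}(S/I)\,\frac{t^j}{(1-t)^N}.
$$
\item Substitute the linearity information $\beta_{h,j}=0$ for $j\neq h+1$ (with $h\ge1$) and $\beta_{0,0}=1$. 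Truncate the sum at $h=D=\operatorname{pd}(S/I)$, since all Betti numbers vanish beyond the projective dimension. This yields precisely the numerator $1+\sum_{h=1}^D(-1)^hB_h(n)t^{h+1}$.
\item Treat the edgeless case separately. Here $I=0$, so $S/I=S$ and the formula reduces to $1/(1-t)^N$. This case occurs for $n=p$ (no vertices), for $n=4$, and whenever the graph has no edges; for these, Corollary \ref{cor:prime-power-pd} gives $D=0$ and the sum is empty.
\end{enumerate}

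The main obstacle is not the Hilbert-series computation itself, which is the standard Euler-characteristic argument. The real issue is whether the identification $\beta_h(S/I)=B_h(n)$ holds, that is, whether the corrected formulae really give the total Betti numbers. Two points need checking:
\begin{itemize}
\item The $B_h(n)$ formulae may yield zero or spurious values for $h>D$. This does not matter, because truncating at $D$ is consistent with the projective dimension statements in Corollaries \ref{cor:prime-power-pd}, \ref{cor:two-prime-pd} and \ref{cor:three-prime-pd}.
\item Linearity must hold, so that every Betti number sits in degree $h+1$. This is supplied by Theorem \ref{thm:froberg}.
\end{itemize}

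As a sanity check I would verify the result for $n=8$. The numerator is $1-2t^2+t^3$ with $N=3$. On the other side, the independence complex of the path $2$–$4$–$6$ is an edge plus a vertex, so $\operatorname{Hilb}_{S/I}(t)=(1-t)^{-2}+(1-t)^{-1}-1$. Multiplying by $(1-t)^3$ gives $1-2t^2+t^3$, which matches.
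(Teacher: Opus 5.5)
Your proposal is correct and follows essentially the same route as the paper: linearity of the resolution puts every nonzero Betti number of $S/I$ in bidegree $(h,h+1)$, the alternating sum of $\operatorname{Hilb}_{S(-j)}(t)=t^j/(1-t)^N$ over the resolution gives the numerator, and the case $I=0$ is handled separately. Linearity already follows from Theorem~\ref{thm:froberg} alone, so the appeal to the classification is unnecessary, and your $n=8$ check via the independence complex is correct.
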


\begin{proof}
	Since $I=I(G)$ has a $2$-linear resolution, all nonzero graded Betti numbers
	of $S/I$ occur in bidegrees $(h,h+1)$ for $h\geq 1$.  Thus the minimal graded
	free resolution has the form
	{\footnotesize$$
	0\longrightarrow S(-D-1)^{B_D(n)}
	\longrightarrow \cdots
	\longrightarrow S(-4)^{B_3(n)}
	\longrightarrow S(-3)^{B_2(n)}
	\longrightarrow S(-2)^{B_1(n)}
	\longrightarrow S
	\longrightarrow S/I
	\longrightarrow 0.
	$$}
	Here $D=\operatorname{pd}(S/I)$, and $B_h(n)=\beta_h(S/I)$ denotes the total Betti number in
	homological degree $h$. Hilbert series is additive on short exact sequences and, equivalently,
	alternating on a finite free resolution.  Since
	$ \operatorname{Hilb}_S(t)=\frac{1}{(1-t)^N}$
	and $
	\operatorname{Hilb}_{S(-j)}(t)=\frac{t^j}{(1-t)^N}, $
	the above resolution gives
	$$
	\operatorname{Hilb}_{S/I}(t)
	=
	\frac{1}{(1-t)^N}
	+
	\sum_{h=1}^{D}(-1)^h B_h(n)\frac{t^{h+1}}{(1-t)^N}.
	$$
	Now, combining the terms over the common denominator $(1-t)^N$, we have
	$$
	\operatorname{Hilb}_{S/I}(t)
	=
	\frac{1+\sum_{h=1}^{D}(-1)^hB_h(n)t^{h+1}}{(1-t)^N}.
	$$
	 If $G$ has no edge, then $I(G)=0$.  The minimal free resolution is simply
	$$
	0\longrightarrow S\longrightarrow S/I\longrightarrow 0,
	$$
	so $S/I=S$ and therefore $
	\operatorname{Hilb}_{S/I}(t)=\operatorname{Hilb}_S(t)=\frac{1}{(1-t)^N}. $
\end{proof}
\medskip

The above theorem is often the fastest route from the corrected Betti formulae to enumerative data.  The numerator is sensitive to both corrections made in this paper, as changing a low block or omitting the global subtraction changes the coefficient of $t^2$ and therefore the number of quadratic generators.

\medskip

\noindent The following proposition gives the number of variables in the three arithmetic families.
\begin{proposition}\label{prop:vertex-counts}
The denominator exponent $N=|V(\Gamma(\mathbb{Z}/n\mathbb{Z}))|$ is as follows:
$$
N(p^a)=p^{a-1}-1\quad (a\geq 2),
$$
$$
N(p^a q)=p^{a-1}(p+q-1)-1,
$$
and
$$
N(pqr)=pq+pr+qr-p-q-r.
$$
For $n=p$ prime, $N=0$.
\end{proposition}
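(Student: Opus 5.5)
The count reduces to subtracting, from all nonzero residues, those that are units: every residue of $\mathbb{Z}/n\mathbb{Z}$ is either $0$, a unit, or a nonzero zero-divisor. Hence
$$
N(n)=n-1-\varphi(n).
$$
I will evaluate this identity in each family. As a cross-check I will also sum the class sizes from Lemma~\ref{lem:valuation-annihilator} over all non-maximal valuation vectors with at least one positive coordinate; this is exactly the partition used in Sections~\ref{sec:prime-powers}--\ref{sec:three-prime}.

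For $n=p^a$ we have $\varphi(p^a)=p^{a-1}(p-1)$, so
$$
N=p^a-1-p^a+p^{a-1}=p^{a-1}-1.
$$
Equivalently, $\sum_{i=1}^{a-1}p^{a-i-1}(p-1)$ telescopes to $p^{a-1}-1$. For $n=p$ this gives $N=0$, consistent with the last claim, since a field has no nonzero zero-divisors.

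For $n=p^aq$ we have $\varphi=p^{a-1}(p-1)(q-1)$. Then
$$
N=p^aq-1-p^{a-1}(pq-p-q+1)=p^{a-1}(p+q-1)-1.
$$
This can be cross-checked against the class sizes $|A_a|=q-1$, $\sum_{t=1}^{a-1}|A_t|=(p^{a-1}-1)(q-1)$ and $\sum_i|B_i|=p^a-1$. Their total is $p^{a-1}(q-1)+p^a-1$, which agrees.

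For $n=pqr$ we have $\varphi=(p-1)(q-1)(r-1)$. Expanding gives
$$
N=pqr-1-\bigl[pqr-pq-pr-qr+p+q+r-1\bigr]=pq+pr+qr-p-q-r.
$$
This matches the six class sizes of Section~\ref{sec:three-prime}: $(q-1)(r-1)+(p-1)(r-1)+(p-1)(q-1)+(r-1)+(q-1)+(p-1)$.

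There is no real obstacle here. The only point needing care is the convention for $a\geq 2$ versus $n=p$ in the prime-power case, together with confirming that $n=p^aq$ with $a=1$ is covered by the same expression ($N=p+q-2$).
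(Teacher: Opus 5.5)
Your proof is correct and is essentially the paper's argument: both compute $N=n-1-\varphi(n)$ in each family, and the paper simply counts the $p^{a-1}-1$ nonzero multiples of $p$ directly in the prime-power case. Your class-size cross-checks against Lemma~\ref{lem:valuation-annihilator} are arithmetically right but not needed for the proof.
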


\begin{proof}
The vertices are the nonzero zero-divisors.  For $p^a$, these are the nonzero multiples of $p$, so there are $p^{a-1}-1$.  For $p^a q$, subtract the units and zero from all residues:
$$
p^a q-\varphi(p^a q)-1=p^a q-p^{a-1}(p-1)(q-1)-1=p^{a-1}(p+q-1)-1.
$$
For $pqr$, similarly,
$$
pqr-(p-1)(q-1)(r-1)-1=pq+pr+qr-p-q-r.
$$
If $n=p$ is prime, there are no nonzero zero-divisors.
\end{proof}

\medskip

The proposition supplies the denominator in unreduced form.  Reduction to a denominator of the form $(1-t)^{\dim S/I}$ is possible, but the unreduced expression is the direct output of the minimal free resolution and is the most convenient for computation.

\medskip

\noindent The following corollary writes the three family-specific Hilbert series in closed form.
\begin{corollary}\label{cor:family-hilbert}
In the prime-power, two-prime, and square-free three-prime cases, the Hilbert series are obtained by substituting the corresponding corrected Betti functions into Theorem \ref{thm:hilbert-uniform}.  Explicitly,
$$
\operatorname{Hilb}_{p^a}(t)=\frac{1+\sum_{h=1}^{p^{a-1}-2}(-1)^h\left(\sum_{j=m}^{a-1}p^{a-j-1}(p-1)\binom{p^j-2}{h}-\binom{p^{a-m}-1}{h+1}\right)t^{h+1}}{(1-t)^{p^{a-1}-1}}
$$
for $a\geq 2$, with the evident zero-edge convention.  The corresponding formulae for $p^a q$ and $pqr$ are obtained by replacing the parenthesized coefficient with the right sides of Theorems \ref{thm:two-prime-betti} and \ref{thm:three-prime-betti} and using the denominator exponents from Proposition \ref{prop:vertex-counts}.
\end{corollary}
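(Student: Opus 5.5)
This corollary is a direct substitution of Theorem \ref{thm:prime-power-betti} into Theorem \ref{thm:hilbert-uniform}. Fix $a\geq 2$ and set $m=\lceil a/2\rceil$. Theorem \ref{thm:prime-power-type} makes $G=\Gamma(\mathbb{Z}/p^a\mathbb{Z})$ cochordal, so Theorem \ref{thm:froberg} gives $I(G)$ a $2$-linear resolution and the hypotheses of Theorem \ref{thm:hilbert-uniform} hold. The steps are as follows.
\begin{enumerate}
\item Identify $N$. By Proposition \ref{prop:vertex-counts}, $N=p^{a-1}-1$.
\item Identify $D$. By Corollary \ref{cor:prime-power-pd}, $D=p^{a-1}-2$ whenever $G$ has an edge.
\item Identify $B_h(p^a)$. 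Theorem \ref{thm:prime-power-betti} supplies the coefficient, together with the identity $K_{p,a}=\sum_{j=m}^{a-1}p^{a-j-1}(p-1)=p^{a-m}-1$, which is a telescoping sum.
\end{enumerate}
Inserting these three data into the numerator $1+\sum_{h=1}^{D}(-1)^hB_h(n)t^{h+1}$ gives exactly the displayed expression for $\operatorname{Hilb}_{p^a}(t)$.

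The only delicate point is the summation range. One must check that the upper limit $p^{a-1}-2$ neither omits nonzero Betti numbers nor introduces spurious ones. Taking the upper limit larger is harmless, because any extra terms are Betti numbers beyond the projective dimension and hence zero. Taking it smaller would drop terms, but Corollary \ref{cor:prime-power-pd} rules this out, since the largest shifted type value is $p^{a-1}-2$.

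The edgeless case is $p=2$, $a=2$, where $N=1$ and $D=0$. The sum in the numerator is then empty, so the formula reduces to $1/(1-t)$. This matches the zero-edge convention of Theorem \ref{thm:hilbert-uniform}. A degenerate negative upper limit should likewise be read as an empty sum.

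For $p^aq$ and $pqr$ the argument is identical. Cochordality comes from Theorems \ref{thm:two-prime-type} and \ref{thm:three-prime-type}. The vertex counts $N$ come from Proposition \ref{prop:vertex-counts}. The coefficients $B_h$ are the right-hand sides of Theorems \ref{thm:two-prime-betti} and \ref{thm:three-prime-betti}. The projective dimensions are given by Corollaries \ref{cor:two-prime-pd} and \ref{cor:three-prime-pd}; alternatively, one can sum over all $h\geq 1$, since the terms vanish beyond the projective dimension.

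I expect no real obstacle here. The only care needed is the bookkeeping of the summation range and the degenerate edgeless case.
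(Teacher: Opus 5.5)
Your proposal is correct and follows the paper's route: you substitute the Betti numbers of Theorem~\ref{thm:prime-power-betti} (with $K_{p,a}=p^{a-m}-1$) and $N=p^{a-1}-1$ from Proposition~\ref{prop:vertex-counts} into Theorem~\ref{thm:hilbert-uniform}, take the projective dimension as the upper summation limit, and treat $p^aq$ and $pqr$ the same way. Your extra checks on the summation range and on the edgeless case $p=2$, $a=2$ are sound, and they spell out what the paper covers by its zero-edge convention.
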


\begin{proof}
	The assertion is obtained by combining the uniform Hilbert-series formula
	with the corrected Betti formulae already proved for the three arithmetic
	families. For the prime-power case, let
	$ m=\left\lceil \frac{a}{2}\right\rceil . $ Then by Theorem \ref{thm:prime-power-betti}, the corrected Betti number is
	$$
	B_h(p^a)
	=
	\sum_{j=m}^{a-1}p^{a-j-1}(p-1)\binom{p^j-2}{h}
	-
	\binom{p^{a-m}-1}{h+1}.
	$$
	Also, by Proposition \ref{prop:vertex-counts},
	$ N=p^{a-1}-1. $
	Substituting these two expressions into Theorem \ref{thm:hilbert-uniform} gives
	$$
	\operatorname{Hilb}_{p^a}(t)=
	\frac{
		1+\sum_{h=1}^{p^{a-1}-2}(-1)^h
		\left(
		\sum_{j=m}^{a-1}p^{a-j-1}(p-1)\binom{p^j-2}{h}
		-
		\binom{p^{a-m}-1}{h+1}
		\right)t^{h+1}
	}
	{(1-t)^{p^{a-1}-1}}.
	$$
	 The same argument applies to the two-prime and three-prime cases.  Namely,
	one replaces $B_h(p^a)$ by the corrected coefficient supplied by Theorem
	\ref{thm:two-prime-betti} or by Theorem \ref{thm:three-prime-betti}, and we use
	the denominator exponent from Proposition \ref{prop:vertex-counts}.  The upper summation
	limit is the corresponding projective dimension, as recorded in Corollaries
	\ref{cor:two-prime-pd} and \ref{cor:three-prime-pd}.  The zero-edge cases are covered by the convention in
	Theorem \ref{thm:hilbert-uniform}.
\end{proof}
\medskip

The above corollary is intentionally written with the same corrected coefficients as the Betti theorems.  This prevents a common error, using corrected projective dimension but an uncorrected numerator.

\medskip

\noindent The following proposition records concrete Hilbert series for the examples computed earlier.\\
For $n=8$, Corollary \ref{cor:prime-power-a3} gives $B_1=2$ and $B_2=1$, while Corollary \ref{prop:vertex-counts} gives $N=3$.  Thus, we have
$$
\operatorname{Hilb}_{\Gamma(\mathbb{Z}/8\mathbb{Z})}(t)=\frac{1-2t^2+t^3}{(1-t)^3},
$$
 For $n=12$, Exercise  \ref{ex:z12} gives $(B_1,B_2,B_3,B_4)=(8,14,9,2)$ and $N=7$.   Thus, we have
$$
\operatorname{Hilb}_{\Gamma(\mathbb{Z}/12\mathbb{Z})}(t)=\frac{1-8t^2+14t^3-9t^4+2t^5}{(1-t)^7},
$$
For $n=30$, Proposition \ref{prop:z30} gives the listed Betti sequence and Proposition \ref{prop:vertex-counts} gives $N=21$.  Substitution into Theorem \ref{thm:hilbert-uniform}, we have
$$
\operatorname{Hilb}_{\Gamma(\mathbb{Z}/30\mathbb{Z})}(t)=\frac{1-38t^2+211t^3-654t^4+1441t^5-2457t^6+\cdots+t^{15}}{(1-t)^{21}},
$$
where the omitted middle coefficients are determined by Proposition \ref{prop:z30}.

\medskip

These examples show how the Hilbert numerator alternates according to homological degree.  In particular, the coefficient of $t^2$ is always $-|E(G)|$, so the Hilbert series retains the same edge-count diagnostic used earlier.

\medskip

\begin{table}[H]
	\begin{center}
\begin{tabular}{cccc}
\toprule
$n$ & $N$ & corrected Betti sequence & Hilbert numerator\\
\midrule
$8$ & $3$ & $(2,1)$ & $1-2t^2+t^3$\\
$12$ & $7$ & $(8,14,9,2)$ & $1-8t^2+14t^3-9t^4+2t^5$\\
$16$ & $7$ & $(7,16,20,15,6,1)$ & $1-7t^2+16t^3-20t^4+15t^5-6t^6+t^7$\\
\bottomrule
\end{tabular}
\end{center}
\caption{Betti table and Hilbert series for some values of $\Gamma(\mathbb{Z}/n\mathbb{Z}).$}
\label{tab 6}
\end{table}

\medskip

Table \ref{tab 6} compares denominator size, Betti data, and numerator data.  The examples also show that rings with the same number of variables can have very different Hilbert numerators.

\medskip

\noindent The following proposition extracts the Hilbert function from the same numerator.
\begin{proposition}\label{prop:hilbert-function-coefficients}
Let $G$ be one of the cochordal zero-divisor graphs considered in this paper, let $N=|V(G)|$, and let $\beta_i=\beta_i(S/I(G))$.  For every $d\geq0$,
$$
\dim_{\mathbb{K}}(S/I(G))_d=\binom{N+d-1}{d}+\sum_{\substack{i\geq1\\ d\geq i+1}}(-1)^i\beta_i\binom{N+d-i-2}{d-i-1}.
$$
In particular,
$ \dim_{\mathbb{K}}(S/I(G))_0=1,
\dim_{\mathbb{K}}(S/I(G))_1=N, $
and $
\dim_{\mathbb{K}}(S/I(G))_2=\binom{N+1}{2}-|E(G)|. $
\end{proposition}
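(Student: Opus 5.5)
The plan is to read the Hilbert function off the rational form of the Hilbert series in Theorem \ref{thm:hilbert-uniform}. The minimal free resolution is linear, with free modules $S(-(i+1))^{\beta_i}$ in homological degree $i$, and the Hilbert function is additive along exact sequences. Hence, in each degree $d$,
$$
\dim_{\mathbb{K}}(S/I)_d=\dim_{\mathbb{K}}S_d+\sum_{i\geq1}(-1)^i\beta_i\dim_{\mathbb{K}}S(-(i+1))_d .
$$
This degreewise alternating sum is the Euler characteristic of the exact complex in degree $d$. It is equivalent to expanding
$$
\frac{t^{i+1}}{(1-t)^N}=\sum_{e\geq0}\binom{N+e-1}{e}t^{e+i+1}
$$
and collecting the coefficient of $t^d$.

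Next I would use the standard count $\dim_{\mathbb{K}}S_d=\binom{N+d-1}{d}$ for a polynomial ring in $N$ variables. The shifted module then gives $\dim_{\mathbb{K}}S(-(i+1))_d=\dim S_{d-i-1}=\binom{N+d-i-2}{d-i-1}$ when $d\geq i+1$, and $0$ otherwise. This explains the restriction $d\geq i+1$ under the summation sign. The sum is finite because $\beta_i=0$ for $i>\operatorname{pd}(S/I)$.

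For the particular values, I would argue degree by degree.
\begin{itemize}
\item When $d=0$ or $d=1$, no term with $i\geq1$ satisfies $d\geq i+1$. This gives $1$ and $\binom{N}{1}=N$.
\item When $d=2$, only the term $i=1$ survives, with coefficient $\binom{N-1}{0}=1$. This gives $\binom{N+1}{2}-\beta_1$.
\end{itemize}

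The only point needing justification is $\beta_1=|E(G)|$. The edge monomials $x_ux_v$ minimally generate $I(G)$, since distinct squarefree quadratic monomials do not divide one another. So $\beta_1$ equals the number of edges.

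The main (mild) obstacle is the edgeless case $N>0$, $I=0$, together with the degenerate case $N=0$. In both, the formula must still hold with all $\beta_i=0$. For $N=0$ I would read the binomials via the convention $\binom{d-1}{d}=[d=0]$, and I would check this case separately.
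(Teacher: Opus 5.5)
Your proposal is correct and follows essentially the paper's argument: the paper expands $(1-t)^{-N}=\sum_{d\ge0}\binom{N+d-1}{d}t^d$, multiplies it by the numerator $1+\sum_{i\ge1}(-1)^i\beta_i t^{i+1}$ from Theorem~\ref{thm:hilbert-uniform}, and reads off the coefficient of $t^d$, which is the same computation as your degreewise Euler characteristic. Your justification that $\beta_1=|E(G)|$ (the edge monomials minimally generate $I(G)$) and your check of the degenerate cases $N=0$ and $I=0$ supply details the paper leaves implicit.
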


\begin{proof}
By Theorem \ref{thm:hilbert-uniform}, the Hilbert series is
$$
\frac{1+\sum_{i\geq1}(-1)^i\beta_i t^{i+1}}{(1-t)^N}.
$$
Since $(1-t)^{-N}=\sum_{d\geq0}\binom{N+d-1}{d}t^d$, multiplying the numerator by this power series and reading the coefficient of $t^d$ gives the required formula.
\end{proof}

\medskip

The above proposition gives a direct way to check the Hilbert series without recomputing a resolution.  The degree-two coefficient is especially useful because it counts all quadratic monomials except the edge monomials.

\medskip

\noindent The next proposition records the dimension interpretation used when comparing Hilbert series and Cohen--Macaulayness.
\begin{proposition}\label{prop:dimension-independent-sets}
For any finite simple graph $G$,
$$
\dim S/I(G)=\alpha(G),
$$
where $\alpha(G)$ is the independence number of $G$.  Equivalently, the height of $I(G)$ is the minimum size of a vertex cover of $G$.
\end{proposition}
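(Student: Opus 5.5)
The plan is to use the Stanley--Reisner correspondence. Write $\Delta=\operatorname{Ind}(G)$ for the independence complex. A squarefree monomial $x_F=\prod_{v\in F}x_v$ lies outside $I(G)$ exactly when $F$ contains no edge of $G$, that is, when $F$ is an independent set. Hence $I(G)=I_\Delta$, the Stanley--Reisner ideal of $\Delta$, and $S/I(G)=\mathbb{K}[\Delta]$. I will then cite the standard fact (see \cite{HerzogHibi2011,Villarreal2015}) that $I_\Delta=\bigcap_{F}P_{\overline F}$, where $F$ runs over the facets of $\Delta$ and $P_{\overline F}=(x_v:v\notin F)$. Since $\dim S/P_{\overline F}=|F|$, the Krull dimension of a finite intersection of primes is the maximum over the components, so $\dim \mathbb{K}[\Delta]=\max_F|F|=\dim\Delta+1=\alpha(G)$.

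For the height statement, I will identify the minimal primes directly. A monomial prime $P_C=(x_v:v\in C)$ contains $I(G)$ if and only if $C$ meets every edge, that is, if and only if $C$ is a vertex cover. Every minimal prime of a monomial ideal is a monomial prime, so the minimal primes of $I(G)$ are exactly the $P_C$ with $C$ a minimal vertex cover. Therefore $\operatorname{height} I(G)=\min_C|C|=\tau(G)$.

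The two statements are consistent via $\dim S=N$ and $\dim S/I=N-\operatorname{height}I$ in the polynomial ring. This gives $\alpha(G)=N-\tau(G)$, which is the usual Gallai complementation: $C$ is a vertex cover iff $V\setminus C$ is independent. I will state the equivalence this way so the two formulations visibly match.

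The only point needing some care is the primary decomposition of a squarefree monomial ideal into monomial primes, together with the fact that minimal primes of monomial ideals are monomial. I will not reprove these but quote them as standard. If a self-contained argument is wanted, I would show $I_\Delta\subseteq P_{\overline F}$ for each facet $F$, and conversely show that a monomial lying in all $P_{\overline F}$ has a support not contained in any facet, hence is non-independent and so lies in $I(G)$.
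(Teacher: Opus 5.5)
Your proposal is correct and takes essentially the same route as the paper: you identify the minimal primes of $I(G)$ as the primes $P_C$ with $C$ a minimal vertex cover (equivalently, generated by the complements of maximal independent sets), and then read off $\dim S/I(G)=\alpha(G)$ as the maximum and $\operatorname{height} I(G)=\tau(G)$ as the minimum over these components. Your version additionally spells out the identification $I(G)=I_{\operatorname{Ind}(G)}$ and the facet decomposition, which the paper uses implicitly.
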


\begin{proof}
The minimal primes of $I(G)$ are precisely the ideals generated by complements of maximal independent sets, or equivalently by minimal vertex covers.  Therefore the largest dimension of a quotient by a minimal prime is the largest size of an independent set.  This gives $\dim S/I(G)=\alpha(G)$, and the equivalent height statement follows from $\operatorname{ht} I(G)=|V(G)|-\alpha(G)$.
\end{proof}

\medskip

The proposition is standard, but it clarifies why the denominator $(1-t)^N$ in Theorem \ref{thm:hilbert-uniform} is not the final Krull dimension.  Cancellation in the rational function may reduce the denominator to $(1-t)^{\alpha(G)}$ after simplification.

\medskip

\noindent The following corollary gives a simplified denominator after cancellation.
\begin{corollary}\label{cor:reduced-hilbert-denominator}
After cancelling the largest possible power of $1-t$ from the Hilbert numerator in Theorem \ref{thm:hilbert-uniform}, the Hilbert series of $S/I(G)$ has denominator $(1-t)^{\alpha(G)}$.
\end{corollary}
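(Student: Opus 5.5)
The plan is to identify $S/I(G)$ with the Stanley--Reisner ring $\mathbb{K}[\Delta]$ of the independence complex $\Delta=\operatorname{Ind}(G)$ and to use the $f$-vector form of its Hilbert series. With $f_{r-1}$ the number of independent sets of size $r$ and $\alpha=\alpha(G)$, the standard Stanley--Reisner computation gives
$$
\operatorname{Hilb}_{S/I(G)}(t)=\sum_{r=0}^{\alpha}\frac{f_{r-1}t^r}{(1-t)^r}=\frac{\sum_{r=0}^{\alpha}f_{r-1}t^r(1-t)^{\alpha-r}}{(1-t)^{\alpha}}=\frac{h(t)}{(1-t)^{\alpha}}.
$$
The reason is that the nonzero monomials of $S/I(G)$ are exactly the monomials whose support is an independent set, and a fixed support of size $r$ contributes $t^r/(1-t)^r$. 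This needs no cochordality, so it applies uniformly to the graphs of Theorem \ref{thm:hilbert-uniform}.

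Next I would compare this with the unreduced form. By Theorem \ref{thm:hilbert-uniform} the same series equals $Q(t)/(1-t)^N$, where $Q(t)=1+\sum_h(-1)^hB_h(n)t^{h+1}$. Since the two rational functions are equal, $Q(t)=(1-t)^{N-\alpha}h(t)$. So $(1-t)^{N-\alpha}$ divides $Q(t)$, and cancelling it yields exactly $h(t)/(1-t)^{\alpha}$.

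It then remains to show that no further factor of $1-t$ can be cancelled, i.e.\ $h(1)\neq 0$. Evaluating at $t=1$ kills every term with $r<\alpha$, so $h(1)=f_{\alpha-1}$, the number of maximum independent sets. This is at least $1$ because $\alpha$ is attained; when $G$ is edgeless the whole vertex set is independent and the formula still holds. Hence the largest power of $1-t$ dividing $Q(t)$ is exactly $(1-t)^{N-\alpha}$, and the reduced denominator is $(1-t)^{\alpha(G)}$. This also agrees with Proposition \ref{prop:dimension-independent-sets}, since the pole order at $t=1$ is the Krull dimension.

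I expect the main obstacle to be the justification of the $f$-vector formula for the Hilbert series, and specifically the decomposition of the monomial basis of $S/I(G)$ by support. I would state it directly: a monomial lies outside $I(G)$ exactly when its support contains no edge. Monomials with a fixed support $A$ are counted by $\prod_{v\in A}t/(1-t)$. Once this is in place, the rest is the one-line evaluation $h(1)=f_{\alpha-1}>0$.
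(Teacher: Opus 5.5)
Your argument is correct, but it takes a different route from the paper. The paper's proof has two steps. It invokes the general Hilbert--Serre fact that the pole order at $t=1$ of the Hilbert series of a standard graded algebra equals its Krull dimension. It then applies Proposition~\ref{prop:dimension-independent-sets}, which gives $\dim S/I(G)=\alpha(G)$ because the minimal primes are generated by minimal vertex covers.

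You instead compute the pole order directly from the Stanley--Reisner $f$-vector expansion:
\begin{itemize}
\item you write the series as $h(t)/(1-t)^{\alpha}$;
\item you deduce $Q(t)=(1-t)^{N-\alpha}h(t)$ by comparing with the unreduced form;
\item you rule out further cancellation because $h(1)=f_{\alpha-1}\geq 1$.
\end{itemize}
This avoids both cited facts; combined with Hilbert--Serre, it actually reproves $\dim S/I(G)=\alpha(G)$. It also makes explicit that exactly $(1-t)^{N-\alpha}$ divides the numerator $Q(t)$. Finally, it identifies the reduced numerator and its value at $t=1$: the number of maximum independent sets, which is the multiplicity of $S/I(G)$. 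For example, this value is $2^{m-1}$ in the Gaussian case by Corollary~\ref{cor:gaussian-facets}.

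The paper's route is shorter and places the statement in its general commutative-algebra setting. However, it leaves the divisibility of $Q(t)$ by $(1-t)^{N-\alpha}$ and the non-vanishing of the reduced numerator at $t=1$ implicit in the cited theorems.
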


\begin{proof}
For every standard graded algebra, the order of the pole of its Hilbert series at $t=1$ is the Krull dimension.  By Proposition \ref{prop:dimension-independent-sets}, this dimension is $\alpha(G)$ for an edge ideal.  Hence the reduced denominator has the stated exponent.
\end{proof}

\medskip
The above form is sometimes preferable for asymptotic questions.  The unreduced form is better for computation from Betti numbers, while the reduced form displays the dimension.

\medskip

\section{Cohen--Macaulay properties}\label{sec:cohen-macaulay}
 The quotient rings considered in this paper have linear resolutions, but they are almost never Cohen--Macaulay.  The result below separates linearity from Cohen--Macaulayness and gives a complete classification inside the cochordal residue-class families.

\medskip

We use two standard facts from Stanley--Reisner theory.  First, $S/I(G)$ is Cohen--Macaulay only if the independence complex of $G$ is pure, or equivalently, $G$ must be well-covered.  Second, by the Auslander--Buchsbaum formula, $\operatorname{depth}(S/I(G))=N-\operatorname{pd}(S/I(G))$ for $N=|V(G)|$, see \cite{BrunsHerzog1993,Villarreal2015}.

\medskip

\noindent The following theorem gives the complete Cohen--Macaulay classification for the cochordal families.
\begin{theorem}\label{thm:cm-classification}
Let $n\geq 2$ and suppose that $\Gamma(\mathbb{Z}/n\mathbb{Z})$ is cochordal.  Then $S/I(\Gamma(\mathbb{Z}/n\mathbb{Z}))$ is Cohen--Macaulay if and only if $n$ is prime or $n=p^2$ for some prime $p$.
\end{theorem}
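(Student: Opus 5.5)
The plan is to prove sufficiency by direct inspection and necessity by exhibiting two maximal independent sets of different sizes. By the first Stanley--Reisner fact quoted above, non-purity of $\operatorname{Ind}(G)$ (failure of well-coveredness) rules out Cohen--Macaulayness. By Theorem \ref{thm:classification} we only need to treat $n=p$, $n=p^a$ ($a\geq 2$), $n=p^aq$ and $n=pqr$.

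\emph{Sufficiency.} If $n=p$ is prime there are no vertices, so $S=\mathbb{K}$ and $S/I=\mathbb{K}$ is trivially Cohen--Macaulay. If $n=p^2$, Proposition \ref{prop:prime-power-a2} gives $G=K_{p-1}$. The case $p=2$ is one vertex with no edges, so $S/I=\mathbb{K}[x]$. For $p\geq 3$ we have $N=p-1$ and $\operatorname{pd}(S/I)=p-2$ by Corollary \ref{cor:prime-power-pd}, so Auslander--Buchsbaum gives $\operatorname{depth}=1$. Also $\dim S/I=\alpha(K_{p-1})=1$ by Proposition \ref{prop:dimension-independent-sets}. Depth equals dimension, hence $S/I$ is Cohen--Macaulay.

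\emph{Necessity, $n=p^a$ with $a\geq 3$.} Every vertex $x\in V_{a-1}$ is adjacent to all other vertices, since $(a-1)+i\geq a$ for $i\geq 1$ and $2(a-1)\geq a$. Hence $\{x\}$ is a maximal independent set of size $1$. On the other hand $V_1$ is independent by Corollary \ref{cor:clique-independent-block}, because $2<a$, and it has size $p^{a-2}(p-1)\geq 2$. So $\operatorname{Ind}(G)$ is not pure.

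\emph{Necessity, $n=p^aq$.} The union $\mathcal A=A_1\cup\cdots\cup A_a$ is independent, since $A_tA_u\notin E$. It is maximal, because every $B_i$-vertex is adjacent to $A_a\neq\emptyset$. Its size is $N-(p^a-1)$. A second candidate is $\mathcal A'=B_0\cup A_1\cup\cdots\cup A_{a-1}$. It is independent, since $B_0$ meets only $A_a$ among the $A$-classes and $B_0$ is independent. It is maximal: each $A_a$-vertex is adjacent to $B_0$, and each $B_i$ with $i\geq 1$ is adjacent to $A_{a-i}\subseteq\mathcal A'$. The two sizes differ by $|B_0|-|A_a|=p^{a-1}(p-1)-(q-1)$. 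When $a=1$ this is $p-q\neq 0$.

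The main obstacle is the coincidental case $q-1=p^{a-1}(p-1)$ with $a\geq 2$, for example $n=12$. There I will use a third maximal independent set containing a high class, such as $B_{a-1}\cup B_0\cup(\text{the }A_t\text{ not adjacent to either})$. Since $B_{a-1}$ kills every $A_t$ with $t\geq 1$, this set is $B_{a-1}\cup B_0$ together with whichever $B_i$ are compatible. Its size is much smaller than $|\mathcal A|$; for $n=12$ one gets $\{6,3,9\}$ of size $3$ against $|\mathcal A|=4$. I would verify the general size comparison by the class-size formulas of Lemma \ref{lem:valuation-annihilator}.

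\emph{Necessity, $n=pqr$.} Compare $A_p\cup A_q\cup A_r$ with $B_{pq}\cup A_p\cup A_q$. Both are maximal independent sets: each $B$-class is adjacent to exactly one $A$-class, and $B_{pq}$ is independent while $A_r$ is joined to it. They differ in size by $(p-1)(q-1)-(r-1)$. Equality would force $r=pq-p-q+2$. For $p=2$ this gives $r=q$, which is impossible. For $p,q$ odd it makes $r$ even and larger than $2$, also impossible. Hence $\operatorname{Ind}(G)$ is never pure, and $S/I$ is not Cohen--Macaulay in any remaining case.
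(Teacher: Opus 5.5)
Your sufficiency argument and the prime-power case $a\geq 3$ match the paper. Both remaining necessity cases, however, have a genuine gap.

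\textbf{The $pqr$ case.} The parity step is false. If $p,q$ are odd, then $(p-1)(q-1)$ is even, so $r=(p-1)(q-1)+1$ is \emph{odd}, and it can be prime. For $(p,q,r)=(3,7,13)$, i.e.\ $n=273$, one has $|A_r|=(p-1)(q-1)=12=r-1=|B_{pq}|$. The same happens for $(5,11,41)$ and $(7,11,61)$. For these $n$ your two facets $A_p\cup A_q\cup A_r$ and $B_{pq}\cup A_p\cup A_q$ have the same size, so the comparison proves nothing. The repair, which is the paper's choice, is to exchange the $A$-class of the \emph{smallest} prime. With $p<q<r$, compare $A_p\cup A_q\cup A_r$ with $B_{qr}\cup A_q\cup A_r$. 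Their sizes differ by $(q-1)(r-1)-(p-1)>0$, with no coincidence possible.

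\textbf{The $p^aq$ case.} Your fallback set for the coincidence $q-1=p^{a-1}(p-1)$ is not independent in general. For $a\geq 2$ the class $B_{a-1}$ is a clique (Corollary \ref{cor:clique-independent-block}, since $2(a-1)\geq a$) of size $p-1$, so for odd $p$ it contains edges. The coincidence does occur with $p$ odd. For $n=63=3^2\cdot 7$ one has $|A_2|=|B_0|=6$, so $|\mathcal A|=|\mathcal A'|=18$, and $B_1=\{21,42\}$ with $21\cdot 42\equiv 0\pmod{63}$.

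Take instead a single vertex $b\in B_{a-1}$. Then $B_0\cup\{b\}$ is a maximal independent set: $b$ is adjacent to every $A_t$ and to every $B_i$ with $i\geq 1$, but not to $B_0$. With this set you need neither $\mathcal A'$ nor any case split. The sizes $|\mathcal A|=(q-1)p^{a-1}$ and $|B_0\cup\{b\}|=p^{a-1}(p-1)+1$ can agree only if $(q-p)p^{a-1}=1$, which is impossible for $a\geq 2$. This is exactly the paper's argument. It also supplies the size comparison you left as ``I would verify,'' and note that your claim that the third set is ``much smaller'' than $\mathcal A$ fails outside the coincidence (e.g.\ $n=18$ gives $3$ versus $7$).
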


\begin{proof}
	In order to prove that $S/I(G)$ is not Cohen--Macaulay, it
	is enough to exhibit two maximal independent sets of different sizes.
	 First suppose that $n=p$ is prime.  Then $\mathbb{Z}/p\mathbb{Z}$ is a field, so it has no
	nonzero zero-divisors.  Hence $\Gamma(\mathbb{Z}/p\mathbb{Z})$ has no vertices and
	$I(\Gamma(\mathbb{Z}/p\mathbb{Z}))=0$.  The quotient is just the ground field $\mathbb{K}$, and is
	therefore Cohen--Macaulay. Now suppose that $n=p^2$.  The nonzero zero-divisors modulo $p^2$ are precisely
	the nonzero multiples of $p$,   and therefore
	$ \Gamma(\mathbb{Z}/p^2\mathbb{Z})=K_{p-1}. $ For $N=p-1$, the minimal vertex covers of $K_N$ are exactly the sets obtained by deleting
	one vertex from $V(K_N)$.  Hence every minimal vertex cover has size $N-1=p-2$,
	so $
	\operatorname{ht} I(K_N)=p-2. $
	By Corollary \ref{cor:prime-power-pd},   we also have
	$
	\operatorname{pd}(S/I(K_N))=p-2.
	$
	The Auslander--Buchsbaum formula gives
	$$
	\operatorname{depth}(S/I(K_N))
	=
	N-\operatorname{pd}(S/I(K_N))
	=
	(p-1)-(p-2)
	=
	1.
	$$
	On the other hand,
	$$
	\dim(S/I(K_N))
	=
	N-\operatorname{ht} I(K_N)
	=
	(p-1)-(p-2)
	=
	1.
	$$
	Thus $
	\operatorname{depth}(S/I(K_N))=\dim(S/I(K_N)), $
	and $S/I(K_N)$ is Cohen--Macaulay.

\medskip

Now assume $n=p^a$ with $a\geq 3$.  Any vertex in $V_{a-1}$ is adjacent to every other vertex of $\Gamma(\mathbb{Z}/p^a\mathbb{Z})$, so a singleton from $V_{a-1}$ is a maximal independent set.  On the other hand, $V_1$ contains at least two vertices and is independent, since $2<a$.  Extending $V_1$ to a maximal independent set gives a maximal independent set of size at least two.  Thus the graph is not well-covered, and $S/I$ is not Cohen--Macaulay.

\medskip

Next let $n=p^a q$ with distinct primes.  If $a=1$, then $\Gamma(\mathbb{Z}/pq\mathbb{Z})=K_{q-1,p-1}$, whose two bipartition classes are maximal independent sets of different sizes because $p\neq q$.  Hence the graph is not well-covered.  If $a\geq 2$, the union $A_1\cup\cdots\cup A_a$ is a maximal independent set of size $(q-1)p^{a-1}$.  Also, $B_0\cup\{b\}$, where $b$ is one fixed vertex of $B_{a-1}$, is a maximal independent set of size $p^{a-1}(p-1)+1$.  These two sizes are different, since equality would imply $(q-p)p^{a-1}=1$, impossible for $a\geq 2$ and distinct primes.  Therefore $S/I$ is not Cohen--Macaulay.

\medskip

Finally let $n=pqr$ with $p<q<r$.  The set $A_p\cup A_q\cup A_r$ is a maximal independent set.  The set $B_{qr}\cup A_q\cup A_r$ is also maximal independent, as every excluded vertex in $A_p$, $B_{pq}$, or $B_{pr}$ is adjacent to an included vertex.  Their sizes differ by
$$
|A_p|-|B_{qr}|=(q-1)(r-1)-(p-1)>0.
$$
Thus the independence complex is not pure, so $S/I$ is not Cohen--Macaulay.   The cochordal zero-divisor graphs are exactly the graphs arising from $n=p$, $n=p^a$, $n=p^aq$, and $n=pqr$ in the classified cases earlier.  Among
	them, the only Cohen--Macaulay quotients occur for $n=p$ and $n=p^2$.  
\end{proof}
\medskip

The above theorem shows that the linearity obtained from cochordality is much weaker than Cohen--Macaulayness.  Except for the empty graph and complete graphs arising from $p^2$, the independence complexes have facets of different dimensions.

\medskip

\noindent The following corollary records the depth in the Cohen--Macaulay cases and the general depth formula used for comparison.
\begin{corollary}\label{cor:depth-defect}
For every cochordal case with $N$ vertices,
$$
\operatorname{depth}(S/I(\Gamma(\mathbb{Z}/n\mathbb{Z})))=N-\operatorname{pd}(S/I(\Gamma(\mathbb{Z}/n\mathbb{Z}))).
$$
In particular, for $n=p^2$ this depth equals $1$, while for the non-Cohen--Macaulay cases the Cohen--Macaulay defect is
$ \dim(S/I)-\operatorname{depth}(S/I)>0. $
\end{corollary}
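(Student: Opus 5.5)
The depth formula is the Auslander--Buchsbaum formula applied over the polynomial ring $S$, so the first step is to invoke it. The ring $S=\mathbb{K}[x_v:v\in V(G)]$ is a regular standard graded ring of dimension $N$, with homogeneous maximal ideal $\mathfrak m$. Since $S/I(G)$ is a finitely generated graded $S$-module, it has finite projective dimension (Hilbert syzygy theorem). The graded Auslander--Buchsbaum formula then gives $\operatorname{depth}(S/I)+\operatorname{pd}(S/I)=\operatorname{depth} S=N$. This holds for every graph; cochordality is used only because the projective dimension is known explicitly in these cases, by Corollaries \ref{cor:prime-power-pd}, \ref{cor:two-prime-pd} and \ref{cor:three-prime-pd}.

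For $n=p^2$ we substitute the data already computed in the proof of Theorem \ref{thm:cm-classification}. There $G=K_{p-1}$, $N=p-1$ and $\operatorname{pd}=p-2$, so the depth is $1$. Some boundary cases need a convention. When $p=2$, the graph $K_1$ has one vertex and no edges, so $S/I=S$ has depth $N=1$ and the answer is still $1$. When $p=3$, the graph is $K_2$, the edge ideal is principal, and the depth is $2-1=1$.

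For the defect statement we argue as follows. For any finitely generated graded module, $\operatorname{depth}\le\dim$, with equality exactly when the module is Cohen--Macaulay. Theorem \ref{thm:cm-classification} shows that in every remaining cochordal case ($p^a$ with $a\ge3$, $p^aq$, $pqr$) the quotient is not Cohen--Macaulay. Hence $\dim(S/I)-\operatorname{depth}(S/I)>0$ in all of them. If one wants the defect explicitly, Proposition \ref{prop:dimension-independent-sets} gives $\dim S/I=\alpha(G)$, so the defect equals $\alpha(G)-N+\operatorname{pd}(S/I)$.

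The argument has no substantial obstacle. The only care needed is in the degenerate cases: for $n=p$ we have $N=0$ and $S/I=\mathbb{K}$, and for $n=4$ the graph is edgeless. In these cases the formula must be read with $\operatorname{pd}=0$, and one should check that the statement for $n=p^2$ matches the paper's conventions for the edgeless graph.
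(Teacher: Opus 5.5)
Your proposal is correct and follows the paper's proof. The depth formula is Auslander--Buchsbaum over $S$, the case $n=p^2$ comes from $N=p-1$ and $\operatorname{pd}=p-2$ for $K_{p-1}$, and the positive defect follows from Theorem~\ref{thm:cm-classification} together with $\operatorname{depth}\le\dim$ (your remarks on the degenerate cases $p=2$ and $n=p$ are consistent with this).
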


\begin{proof}
The first equality is the Auslander--Buchsbaum formula applied to the standard graded polynomial ring $S$.  In the case $n=p^2$, $N=p-1$ and $\operatorname{pd}=p-2$, so the depth is $1$, equal to the dimension.  In all remaining cases, Theorem \ref{thm:cm-classification} says that the quotient is not Cohen--Macaulay, and therefore the dimension is strictly larger than the depth.
\end{proof}

\medskip

Corollary \ref{cor:depth-defect} is useful computationally because the projective dimension has already been computed.  It also quantifies the failure of Cohen--Macaulayness without requiring the full independence complex.

\medskip

\begin{example}\label{ex:cm-checks}
For $n=9$, the graph is a single edge $K_2$, so $S/I$ is the hypersurface $\mathbb{K}[x_3,x_6]/(x_3x_6)$ and is Cohen--Macaulay.  For $n=8$, the graph is a two-edge star, the singleton consisting of the center is maximal independent, while the two leaves form another maximal independent set, so the quotient is not Cohen--Macaulay.  For $n=15$, the graph is $K_{4,2}$, whose two parts have different sizes, so the quotient is not Cohen--Macaulay.
\end{example}

\medskip

\noindent The next result records the corresponding unmixedness statement.  It is weaker than Cohen--Macaulayness in general, but in the present families the same exceptions occur.
\begin{proposition}\label{prop:unmixed-classification}
Let $n\geq2$ and suppose that $\Gamma(\mathbb{Z}/n\mathbb{Z})$ is cochordal.  Then $I(\Gamma(\mathbb{Z}/n\mathbb{Z}))$ is unmixed if and only if $n$ is prime or $n=p^2$ for some prime $p$.
\end{proposition}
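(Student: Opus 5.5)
We reduce unmixedness to a purely graph-theoretic statement and then reuse the maximal independent sets already exhibited in the proof of Theorem \ref{thm:cm-classification}. The minimal primes of $I(G)$ are the ideals $(x_v : v\in C)$ with $C$ a minimal vertex cover of $G$. Their complements are exactly the maximal independent sets. For a squarefree monomial ideal there are no embedded primes, so $I(G)$ is unmixed if and only if all minimal vertex covers have the same size. Equivalently, all maximal independent sets of $G$ have the same cardinality, i.e.\ $\operatorname{Ind}(G)$ is pure and $G$ is well-covered. This is the first step, and we take it from the standard correspondence recalled in Proposition \ref{prop:dimension-independent-sets}.

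\emph{Sufficiency.} If $n=p$ is prime, the graph has no vertices and $I=0$, which is trivially unmixed; the zero ideal of a domain has the single associated prime $0$. If $n=p^2$, the graph is $K_{p-1}$ by Proposition \ref{prop:prime-power-a2}. Every maximal independent set of a complete graph is a singleton, so all minimal vertex covers have size $p-2$ and $I$ is unmixed. (When $p=2$ the graph is a single vertex with no edge, and again $I=0$.) Alternatively, one may simply note that Cohen--Macaulay implies unmixed and invoke Theorem \ref{thm:cm-classification}.

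\emph{Necessity.} By Theorem \ref{thm:classification}, the remaining cochordal cases are $p^a$ with $a\ge 3$, $p^aq$, and $pqr$. For each of these we quote the pair of maximal independent sets of different sizes constructed in the proof of Theorem \ref{thm:cm-classification}:
\begin{enumerate}
\item for $p^a$ with $a\ge3$, a singleton from $V_{a-1}$ versus a maximal extension of $V_1$;
\item for $pq$, the two bipartition classes, of sizes $q-1\neq p-1$;
\item for $p^aq$ with $a\ge2$, the set $A_1\cup\cdots\cup A_a$ versus $B_0\cup\{b\}$;
\item for $pqr$, the set $A_p\cup A_q\cup A_r$ versus $B_{qr}\cup A_q\cup A_r$.
\end{enumerate}
Unequal sizes mean that the complementary minimal vertex covers have unequal sizes. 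Hence the minimal primes have different heights, and $I$ is not unmixed.

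The only delicate point is the maximality claims in the $p^aq$ ($a\ge2$) case, which that proof asserts somewhat briskly. We would recheck them via the adjacency rules: $A_t$--$B_i$ is an edge iff $t+i\ge a$, and there are no $A$--$A$ edges. For $\bigcup_t A_t$, every $B_i$ is adjacent to $A_a$, so no $B$-vertex can be added. For $B_0\cup\{b\}$ with $b\in B_{a-1}$, independence holds because $0+(a-1)<a$ and $2\cdot 0<a$. For maximality, every $A_t$ with $t\ge1$ is adjacent to $b$ since $t+a-1\ge a$. Every $B_i$ with $i\ge1$ and $i\ne a-1$ is adjacent to $b$ since $i+a-1\ge a$. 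Other vertices of $B_{a-1}$ are adjacent to $b$ because $2(a-1)\ge a$ when $a\ge2$. The size inequality $(q-1)p^{a-1}\ne p^{a-1}(p-1)+1$ follows by reducing modulo $p$, since $p\mid p^{a-1}$ when $a\ge2$.
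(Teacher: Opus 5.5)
Your proposal is correct and follows the paper's proof essentially verbatim. Like the paper, you reduce unmixedness of the (radical) edge ideal to all maximal independent sets having the same size, treat $n=p$ and $n=p^2$ as the empty and complete graphs, and reuse the pairs of unequal-size maximal independent sets from the proof of Theorem~\ref{thm:cm-classification}; your explicit re-verification of maximality and of the size inequality (via reduction mod $p$) in the $p^aq$, $a\ge 2$ case is a sound supplement that the paper leaves implicit.
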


\begin{proof}
For an edge ideal, unmixedness is equivalent to all minimal vertex covers having the same cardinality, or equivalently to all maximal independent sets having the same cardinality.  The cases $n=p$ and $n=p^2$ are respectively the empty graph and a complete graph, so they are unmixed.  In every other cochordal case, Theorem \ref{thm:cm-classification} gives two maximal independent sets of different sizes.  Hence the corresponding minimal vertex covers have different sizes, and the edge ideal is not unmixed.
\end{proof}

\medskip

Proposition \ref{prop:unmixed-classification} strengthens the negative part of Theorem \ref{thm:cm-classification}.  The failure is not a subtle depth phenomenon, as it is already visible at the level of minimal primes.

\medskip

\begin{example}\label{ex:minimal-primes-expanded}
For $n=8$, the graph $\Gamma(\mathbb{Z}/n\mathbb{Z})$ is the star with center $4$ and leaves $2,6$.  The maximal independent sets are $\{4\}$ and $\{2,6\}$, so the minimal primes are $(x_2,x_6)$ and $(x_4)$, of heights $2$ and $1$.

\medskip

For $n=15$, the graph $\Gamma(\mathbb{Z}/n\mathbb{Z})$ is $K_{4,2}$.  The two bipartition classes give minimal primes of heights $2$ and $4$.  Thus the quotient has a linear resolution but is not equidimensional.
\end{example}

\medskip

The above examples display the obstruction in the simplest possible algebraic language.  Different minimal-prime heights rule out Cohen--Macaulayness before any depth computation is needed.

\medskip

\begin{table}[H]
	\begin{center}
\begin{tabular}{ccccc}
\toprule
$n$ & graph type & linear? & unmixed? & Cohen--Macaulay?\\
\midrule
$p$ & empty & yes & yes & yes\\
$p^2$ & complete & yes & yes & yes\\
$8$ & star $K_{1,2}$ & yes & no & no\\
$15$ & complete bipartite $K_{4,2}$ & yes & no & no\\
$30$ & three-block cochordal graph & yes & no & no\\
\bottomrule
\end{tabular}
\end{center}
\caption{Homological data for values of $n$ in $\Gamma(\mathbb{Z}/n\mathbb{Z})$.}
\label{tab 7}
\end{table}

\medskip

Table \ref{tab 7} emphasizes the main qualitative conclusion.  Cochordality is a resolution property, whereas Cohen--Macaulayness is a purity and depth property.

\section{A chain-ring structure for the graphs}\label{sec:chain-template}
 The purpose of this section is to separate the homological computation from the particular presentation of a finite chain ring.  The same layered graph occurs for $\mathbb{Z}/p^L\mathbb{Z}$, for $\mathbb F_q[x]/(x^L)$ when $q$ is a prime power, and for other finite chain rings with residue field of size $q$.  The construction below keeps the Dung and Vu \cite{DungVu2026} type method, but it evaluates the global correction term explicitly, this is the point at which the corrected formula differs from an uncorrected summation of block contributions.

\medskip

It is useful to isolate the graph-theoretic structure shared by the two families studied later.  Throughout this section $q\geq 2$ is an integer and $L\geq 2$.  In applications, $q$ will be the cardinality of a residue field and $L$ will be the nilpotency index of the maximal ideal.

\medskip

\begin{definition}[Layered chain graph]\label{def:chain-graph}
	Let \(q\ge2\) be an integer and let \(L\ge2\). Define a layered graph $C(q,L)$ as follows.  Its vertex set is a disjoint union
	$ V(C(q,L))=V_1\sqcup V_2\sqcup\cdots\sqcup V_{L-1}, $
	where $
	|V_k|=(q-1)q^{L-k-1} $ for $1\leq k\leq L-1. $
	Two distinct vertices $x\in V_k$ and $y\in V_\ell$ are adjacent in $C(q,L)$ if and only if $k+\ell\geq L$. Figure \ref{fig 1} shows $C(q,L)$ with $q=3$ and $L=4$,  where the blue layer has $18$ vertices, the green layer has $6$ vertices, and the red layer has $2$ vertices.  The edge rule is $k+\ell\geq 4$.
\end{definition}
\begin{figure}[H]
	\centering
	\begin{tikzpicture}[
		scale=0.86,
		v1/.style={circle, draw=black!80, fill=blue!20, minimum size=4mm, inner sep=0pt, font=\small},
		v2/.style={circle, draw=black!80, fill=green!20, minimum size=4mm, inner sep=0pt, font=\small},
		v3/.style={circle, draw=black!80, fill=red!20, minimum size=4mm, inner sep=0pt, font=\small},
		edge13/.style={draw=blue!40, thin, opacity=0.5},
		edge23/.style={draw=orange!80, thick, opacity=0.8},
		edge22/.style={draw=green!60!black, thick, opacity=0.7},
		edge33/.style={draw=red, ultra thick}
		]
		\def\R{5.5}
		\foreach \i/\ang in {1/95,2/85,3/75,4/64,5/52,6/40,7/30,8/20,9/10,10/-5,11/-18,12/-30,13/-40,14/-50,15/-60,16/-70,17/-80,18/-90}{
			\coordinate (c1-\i) at ({\R*cos(\ang)},{\R*sin(\ang)});
		}
		\foreach \i/\ang in {1/130,2/155,3/175,4/195,5/215,6/235}{
			\coordinate (c2-\i) at ({\R*cos(\ang)},{\R*sin(\ang)});
		}
		\coordinate (c3-1) at (-1.8,5.5);
		\coordinate (c3-2) at (-1.8,-5.5);
		\foreach \i in {1,...,18}{\foreach \j in {1,2}{\draw[edge13] (c1-\i)--(c3-\j);}}
		\foreach \i in {1,...,6}{\foreach \j in {1,2}{\draw[edge23] (c2-\i)--(c3-\j);}}
		\foreach \i in {1,...,5}{
			\pgfmathtruncatemacro{\next}{\i+1}
			\foreach \j in {\next,...,6}{\draw[edge22] (c2-\i)--(c2-\j);}
		}
		\draw[edge33] (c3-1)--(c3-2);
		\node[left, font=\Large\bfseries] at (6.85,0) {$V_1$};
		\node[left, font=\Large\bfseries] at (-5.95,0) {$V_2$};
		\node[left, font=\Large\bfseries] at (-1,6) {$V_3$};
		\foreach \i in {1,...,18}{\node[v1] at (c1-\i) {};}
		\foreach \i in {1,...,6}{\node[v2] at (c2-\i) {};}
		\node[v3] at (c3-1) {};
		\node[v3] at (c3-2) {};
	\end{tikzpicture}
	\caption{The graph $C(3,4)$.}
	\label{fig:chain-C34}
\end{figure}
This definition is modeled on a finite chain ring with residue field of size $q$ and nilpotency index $L$.  In that setting $V_k$ consists of the nonzero elements of valuation $k$, and the product of elements of valuations $k$ and $\ell$ is zero precisely when $k+\ell\geq L$. The number of vertices is
$$
|V(C(q,L))|=\sum_{k=1}^{L-1}(q-1)q^{L-k-1}=q^{L-1}-1.
$$
This identity will be used repeatedly when projective dimension and depth are computed from the Betti formula.

\medskip

\medskip

The next lemma gives the explicit cochordal constructible system for $C(q,L)$.  The statement is written in the same language as as in Definition \ref{def:constructible}, but the proof spells out the cover condition because this is the step at which false same-layer edges can otherwise enter the calculation.

\medskip

\begin{lemma}\label{lem:chain-cochordal-system}
	Let $G=C(q,L)$ with $h=\lceil L/2\rceil$, and for $h\leq i\leq L-1$, let
	$ n_i=|V_i|=(q-1)q^{L-i-1} $
	and
	$ M_i=\sum_{s=L-i}^{i-1}|V_s|=q^i-q^{L-i}, $
	where the sum is interpreted as $0$ when $L-i>i-1$.  With vertex ordering $
	V_i=\{v_{i,1},\ldots,v_{i,n_i}\},$ and for $1\leq j\leq n_i$, let
	$$
	U_{i,j}=\left(\bigcup_{s=L-i}^{i-1}V_s\right)\cup\{v_{i,1},\ldots,v_{i,j-1}\}.
	$$
	Then the ordered construction with centers
	$$
	(i,j)=(h,1),(h,2),\ldots,(h,n_h),(h+1,1),\ldots,(L-1,n_{L-1})
	$$
	and covers $U_{i,j}$ is a cochordal constructible system for $G$, and $C(q,L)$ is cochordal.
\end{lemma}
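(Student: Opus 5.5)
We check the three requirements of Definition \ref{def:constructible} in turn: the cover edges are genuine edges, each $U_{i,j}$ covers the graph built so far, and the final graph $G_k$ is exactly $C(q,L)$. Cochordality then follows from Theorem \ref{thm:constructible-characterization}.

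\emph{Bookkeeping.} Compute $M_i=\sum_{s=L-i}^{i-1}(q-1)q^{L-s-1}$ as a telescoping geometric sum, which gives $q^{i}-q^{L-i}$. Since $i\ge h$ forces $L-i\le i$, the range is empty exactly when $i=L-i$, and then the formula gives $0$, consistent with the convention in the statement. The ordering condition $u_j\notin U_\ell$ for $\ell\le j$ holds because every $U_{i,j}$ uses only layers $V_s$ with $s\le i$, together with earlier vertices of $V_i$. A center $v_{i,j}$ is never in an earlier cover: earlier covers with the same $i$ contain only $v_{i,1},\dots,v_{i,j'-1}$ with $j'\le j$, and covers for smaller $i'$ involve only layers $s\le i'<i$.

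\emph{Edges.} For $v_{i,j}$ and $y\in V_s$ with $L-i\le s\le i-1$, we have $i+s\ge L$, so the pair is an edge. For $y=v_{i,j'}$ with $j'<j$, we have $2i\ge 2h\ge L$. So every star edge lies in $C(q,L)$.

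\emph{Exhaustiveness.} Take any edge $\{x,y\}$ of $C(q,L)$ with $x\in V_k$, $y\in V_\ell$, $k\le \ell$. Then $2\ell\ge k+\ell\ge L$, so $\ell\ge h$ and $y$ is some center $v_{\ell,j}$. If $k<\ell$, then $k\ge L-\ell$ gives $x\in U_{\ell,j}$. If $k=\ell$, the later of the two vertices has the earlier one in its cover. Conversely, every vertex actually appears in the construction: layers $V_s$ with $s<h$ occur inside $U_{L-1,j}$, which contains all of $V_1,\dots,V_{L-2}$. This holds as long as $L\ge 2$ gives $h\le L-1$; for $L=2$ the graph is the clique $V_1$, and $h=1$ handles it.

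\emph{Cover condition (main step).} Fix a step $(i,j)$ and an edge $\{x,y\}$ of $G_{\text{prev}}$. Every such edge comes from an earlier star, so one endpoint is an earlier center $c\in V_{i'}$ with $h\le i'\le i$, and the other endpoint $z$ lies in a layer $V_s$ with $s\ge L-i'\ge L-i$.
\begin{itemize}
\item If $c$ lies in $V_i$, it is some $v_{i,j'}$ with $j'<j$, and therefore belongs to $U_{i,j}$.
\item Otherwise $i'\le i-1$, and $c$ lies in $V_{i'}$ with $L-i\le i'\le i-1$, which is inside $U_{i,j}$.
\end{itemize}
In either case the edge has an endpoint in $U_{i,j}$. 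Checking this layer inequality is the only real obstacle; the rest is routine verification.
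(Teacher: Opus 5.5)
Your proposal is correct and follows essentially the same route as the paper. Both arguments check that every star edge is genuine (using $i+s\ge L$ and $2i\ge 2h\ge L$), verify the vertex-cover condition by a layer inequality, check that the construction reproduces $C(q,L)$, and then invoke Theorem~\ref{thm:constructible-characterization}.

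The differences are minor. You cover each earlier edge through its center endpoint, whereas the paper uses the smaller endpoint $a\ge L-i$. An earlier center in $V_{i'}$ with $i'<i$ lies in $U_{i,j}$ because $i+i'\ge 2h\ge L$; you should state this inequality rather than leave it implicit. You also verify the ordering condition $u_j\notin U_\ell$ and the equality of vertex sets, which the paper leaves tacit.
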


\begin{proof}
	For fixed $i\geq h$ and let the current center be $v_{i,j}$.  A vertex in $V_i$ is adjacent exactly to the vertices in layers $V_s$ satisfying $s\geq L-i$.  If $2i\geq L$, it is also adjacent to the other vertices in $V_i$.  Since $i\geq h$, the inequality $2i\geq L$ holds, so the same-layer vertices that have already appeared are true neighbors of $v_{i,j}$.  Hence every vertex listed in $U_{i,j}$ is adjacent to $v_{i,j}$, and the star with center $v_{i,j}$ and leaf set $U_{i,j}$ introduces only genuine edges of $G$.
	
	\medskip
	
	It remains to verify the vertex-cover condition.  Suppose the construction has reached the pair $(i,j)$, and let $e=\{x,y\}$ be an edge already constructed.  If one endpoint of $e$ lies in the current layer $V_i$, then it must be one of $v_{i,1},\ldots,v_{i,j-1}$, and hence it belongs to $U_{i,j}$.  Otherwise both endpoints lie in layers that occurred before $V_i$, or in lower layers connected to an earlier center.  If $x\in V_a$ and $y\in V_b$ with $a\leq b$.  Since $e$ is an edge, $a+b\geq L$.  Because the larger layer already used as a center is at most $i$, we have $b\leq i$.  The inequality $a+b\geq L$ then gives $a\geq L-b\geq L-i$.  Thus at least one endpoint, indeed the smaller endpoint $x$ unless $b=i$ is the current same layer, lies in $\bigcup_{s=L-i}^{i-1}V_s$ or among the earlier vertices of $V_i$.  Therefore every previously constructed edge meets $U_{i,j}$.
	
	\medskip
	
	Finally, every edge of $G$ appears exactly once, as if an edge joins $V_a$ and $V_b$ with $a<b$, it is introduced when the center in $V_b$ is processed. If it joins two vertices in the same layer $V_i$, it is introduced when the later of the two vertices in the fixed order is processed.  Hence the construction produces precisely $G$.  The cochordality follows from the constructible characterization in Theorem \ref{thm:constructible-characterization}.
\end{proof}

\medskip

The next result records the type sequence determined by the construction.

\medskip

\begin{corollary}\label{cor:type-chain}
	With notation as in Theorem \ref{lem:chain-cochordal-system}, the corresponding type list is the concatenation
	$ (s_{L-1},s_{L-2},\ldots,s_h), $ where
	$$
	s_i=\bigl(q^i-q^{L-i-1}-1,
	q^i-q^{L-i-1}-2,
	\ldots,
	q^i-q^{L-i}\bigr).
	$$
	The length of $s_i$ is $n_i=(q-1)q^{L-i-1}$.
\end{corollary}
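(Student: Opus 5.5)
The type list is read off directly from the construction in Lemma~\ref{lem:chain-cochordal-system}. Only two facts are needed: the exact size of each cover $U_{i,j}$, and the reversal convention of Definition~\ref{def:constructible}. That convention writes the type sequence $(a_k,\ldots,a_1)$ in the order opposite to the construction order, so the last center processed contributes the first entry.

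First I will compute $|U_{i,j}|$. The layers $V_s$ with $L-i\le s\le i-1$ are pairwise disjoint. The vertices $v_{i,1},\ldots,v_{i,j-1}$ lie in $V_i$, which is disjoint from these layers. Hence
$$
|U_{i,j}|=M_i+(j-1), \qquad M_i=\sum_{s=L-i}^{i-1}(q-1)q^{L-s-1}.
$$
I will evaluate $M_i$ as a telescoping geometric sum: $(q-1)q^{L-s-1}=q^{L-s}-q^{L-s-1}$. Summing over $s$ from $L-i$ to $i-1$ gives $q^{i}-q^{L-i}$. When $i=h$ and $L$ is odd, $L-i>i-1$, so the range is empty. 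In that case $q^i-q^{L-i}=q^h-q^{h-1}$ is not zero. I will handle this degenerate case explicitly before anything else, since it is the only delicate point: either the lemma's convention $M_i=0$ applies, or the formula holds because the range is nonempty. For odd $L=2h-1$ we have $L-h=h-1\le h-1$, so the range $\{h-1\}$ is nonempty. Thus the range is never empty for $i\ge h$, and the closed form $M_i=q^i-q^{L-i}$ holds for all $h\le i\le L-1$.

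Next I will list the entries within each layer. As $j$ runs from $1$ to $n_i$, the cover size runs from $q^i-q^{L-i}$ up to $q^i-q^{L-i}+n_i-1$. Here
$$
n_i-1=(q-1)q^{L-i-1}-1=q^{L-i}-q^{L-i-1}-1,
$$
so the top value is $q^i-q^{L-i-1}-1$.

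Finally I will apply the reversal. The construction processes the layers in the order $h,h+1,\ldots,L-1$, and within each layer in increasing $j$. Writing the type list backwards therefore puts the layers in the order $L-1,\ldots,h$. Within each layer the entries appear from $j=n_i$ down to $j=1$, which gives
$$
s_i=\bigl(q^i-q^{L-i-1}-1,\,q^i-q^{L-i-1}-2,\,\ldots,\,q^i-q^{L-i}\bigr),
$$
a decreasing block of length $n_i$.
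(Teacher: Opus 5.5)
Your proposal is correct and follows the paper's proof: you compute $|U_{i,j}|=M_i+j-1=q^i-q^{L-i}+j-1$ and reverse the construction order, so the layers appear as $s_{L-1},\ldots,s_h$, with entries running from $M_i+n_i-1=q^i-q^{L-i-1}-1$ down to $M_i=q^i-q^{L-i}$. One slip in your degenerate-case remark: the range $L-i\le s\le i-1$ is empty exactly when $L$ is even and $i=h=L/2$ (for odd $L$ it is $\{h-1\}$, as you eventually note), so it is not ``never empty''; in that case $q^i-q^{L-i}=q^h-q^h=0$ matches the empty-sum convention, so your closed form for $M_i$ still holds for all $h\le i\le L-1$.
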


\begin{proof}
	For fixed $i$, the size of $U_{i,j}$ is
	$$
	|U_{i,j}|=M_i+j-1=q^i-q^{L-i}+j-1.
	$$
	The type sequence is written in reverse order from the construction order.  Hence the block contributed by $V_i$ is the decreasing sequence
	$$
	M_i+n_i-1,\ M_i+n_i-2,\ \ldots,\ M_i.
	$$
	Since $n_i=(q-1)q^{L-i-1}$, its first term is
	$$
	q^i-q^{L-i}+(q-1)q^{L-i-1}-1=q^i-q^{L-i-1}-1,
	$$
	and its last term is $q^i-q^{L-i}$. 
\end{proof}

\medskip

For the graph $C(q,L),$ there is, however, a second structural point which is worth recording.  The same graph is a threshold graph.  Thus the valuation layers used in the construction are not only algebraic layers, as they are also threshold-weight classes.

\medskip

We recall the standard definition.  A finite simple graph $H$ is called a threshold graph if there are real numbers $w(v)$ for $v\in V(H)$ and a real number $T$ such that, for any two distinct vertices $u,v$, we have
$$
\{u,v\}\in E(H)\quad\Longleftrightarrow\quad w(u)+w(v)\ge T.
$$
Equivalently, threshold graphs are precisely the graphs that can be built from one vertex by repeatedly adding either an isolated vertex or a dominating vertex.  They are also characterized as the graphs with no induced $P_4$, $C_4$, or $2K_2$, see \cite{ChvatalHammer,maPeledThreshold}.

\medskip

The following result gives the promised threshold representation of $C(q,L)$.

\medskip

\begin{theorem}\label{thm:CqL-threshold}
	For $q\ge2$ and $L\ge2$, the graph $C(q,L)$ is a threshold graph.  More precisely, if $x\in V_k$, with $w(x)=k$, and take the threshold $T=L$.  Then, for distinct vertices $x\in V_k$ and $y\in V_\ell$,
	$$
	\{x,y\}\in E(C(q,L))\quad\Longleftrightarrow\quad w(x)+w(y)\ge T.
	$$
\end{theorem}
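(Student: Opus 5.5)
The statement is essentially a restatement of Definition \ref{def:chain-graph} in threshold language, so the proof will be a direct verification followed by an optional structural confirmation. First I will fix the weight function $w(x)=k$ for $x\in V_k$. This is well defined because the layers $V_1,\ldots,V_{L-1}$ are disjoint, so every vertex lies in exactly one layer. I will also fix the threshold $T=L$.

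For distinct $x\in V_k$ and $y\in V_\ell$, the definition says $\{x,y\}\in E(C(q,L))$ if and only if $k+\ell\ge L$. This is literally $w(x)+w(y)\ge T$. The equivalence therefore holds, including the same-layer case $k=\ell$: there the condition becomes $2k\ge L$, which matches the clique/independent dichotomy used in Lemma \ref{lem:chain-cochordal-system}. By the definition of threshold graph recalled before the theorem, this already proves that $C(q,L)$ is a threshold graph.

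As a consistency check, and to connect with the equivalent characterization of threshold graphs, I will also give the isolated/dominating construction. I will list the vertices by alternately taking layers from the extremes. Layer $V_{L-1}$ is adjacent to every other vertex, since $(L-1)+\ell\ge L$ for all $\ell\ge1$, so its vertices are dominating. Layer $V_1$ is adjacent only to $V_{L-1}$, so after $V_{L-1}$ is deleted its vertices are isolated. Peeling off $V_{L-1}$ and then $V_1$ leaves the subgraph induced on $V_2,\ldots,V_{L-2}$, which has the same adjacency rule $k+\ell\ge L$. Repeating the argument inductively produces a dominating/isolated elimination order. Read in reverse, this order builds $C(q,L)$ from a single vertex.

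The only mild obstacle is bookkeeping in this inductive peeling. When $L$ is even, the middle layer $V_{L/2}$ is a clique, with all its vertices dominating among themselves. When $L$ is odd, the process ends with the two layers $V_{(L-1)/2}$ and $V_{(L+1)/2}$, where the higher one is a clique dominating the lower one. Both cases are handled by the same inequality. Alternatively, I will note that $2K_2$, $P_4$ and $C_4$ cannot occur: each of them contains two nonadjacent pairs whose weight sums, combined with the edges present, contradict the additivity of $w$. For example, in $2K_2$ with edges $ab$ and $cd$, we would have $w(a)+w(b)\ge L$ and $w(c)+w(d)\ge L$ but $w(a)+w(c)<L$ and $w(b)+w(d)<L$, which is impossible after summing.
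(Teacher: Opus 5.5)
Your proposal is correct and follows the paper's proof: setting $w(x)=k$ for $x\in V_k$ and $T=L$ turns the defining adjacency rule $k+\ell\ge L$ of $C(q,L)$ directly into $w(x)+w(y)\ge T$, and that is the whole of the paper's argument. Your dominating/isolated peeling order and your exclusion of $2K_2$, $P_4$, $C_4$ by summing weights are also correct, but they are extra confirmations rather than needed steps.
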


\begin{proof}
	By definition, the vertex set of $C(q,L)$ is the disjoint union
	$ V(C(q,L))=V_1\sqcup V_2\sqcup\cdots\sqcup V_{L-1}, $
	where $|V_k|=(q-1)q^{L-k-1}$.  The adjacency rule is also part of the definition of threshold, as if $x\in V_k$ and $y\in V_\ell$ are distinct, then
	$$
	\{x,y\}\in E(C(q,L))\quad\Longleftrightarrow\quad k+\ell\ge L.
	$$
	Now assign to every vertex in the $k$-th layer the weight $k$.  Thus $w(x)=k$ for $x\in V_k$ and $w(y)=\ell$ for $y\in V_\ell$.  Thus,
	$ w(x)+w(y)=k+\ell,$ and therefore
	$$
	\{x,y\}\in E(C(q,L))
	\quad\Longleftrightarrow\quad
	k+\ell\ge L
	\quad\Longleftrightarrow\quad
	w(x)+w(y)\ge T,
	$$
	where $T=L$.  This is exactly the weighted definition of a threshold graph.  Hence $C(q,L)$ is threshold.
\end{proof}

\medskip

The proof shows that no artificial weights are being introduced.  The threshold weight of a vertex is precisely the index of the valuation layer in which the vertex lies.  In the chain-ring examples, this is the same integer that measures the order of divisibility by the chosen generator of the maximal ideal.

\medskip

It is also useful to record the corresponding statement for the complement.

\medskip

\begin{lemma}\label{lem:CqL-complement-threshold}
	The complement $\overline{C(q,L)}$ is a threshold graph.  More explicitly, if $x\in V_k$, then the weights
	$ w'(x)=-k $
	and the threshold $T'=1-L$ give a threshold representation of $\overline{C(q,L)}$.
\end{lemma}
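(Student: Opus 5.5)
The plan is to verify the weighted criterion directly. We check that the proposed weights and threshold reproduce exactly the edge set of the complement. Adjacency in $\overline{C(q,L)}$ is non-adjacency in $C(q,L)$, so we start from Definition~\ref{def:chain-graph} (equivalently, Theorem~\ref{thm:CqL-threshold}). Take distinct vertices $x\in V_k$ and $y\in V_\ell$. They are adjacent in $C(q,L)$ exactly when $k+\ell\ge L$. Hence they are adjacent in $\overline{C(q,L)}$ exactly when $k+\ell\le L-1$, which uses that $k,\ell$ are integers.

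The key step is rewriting this inequality with the new weights. Multiplying by $-1$, the condition $k+\ell\le L-1$ is equivalent to $(-k)+(-\ell)\ge 1-L$. In the notation of the statement this reads $w'(x)+w'(y)\ge T'$. Therefore
$$
\{x,y\}\in E(\overline{C(q,L)})\iff w'(x)+w'(y)\ge T',
$$
which is precisely the definition of a threshold graph recalled before Theorem~\ref{thm:CqL-threshold}.

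Two small points need care, and I expect them to be the only ones.

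\emph{Same-layer pairs.} The case $k=\ell$ is covered uniformly, since the rule depends only on the layer indices of two distinct vertices. So same-layer pairs with $2k<L$ are correctly adjacent in the complement, and those with $2k\ge L$ are correctly nonadjacent.

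\emph{Strict versus non-strict inequality.} Passing from the strict inequality $k+\ell<L$ to $k+\ell\le L-1$ uses integrality of the weights. This is why the threshold is $1-L$ rather than $-L$.

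As a cross-check, one could invoke the standard fact that threshold graphs are closed under complementation: the forbidden set $\{P_4,C_4,2K_2\}$ is mapped to itself, since $\overline{P_4}=P_4$ and $\overline{C_4}=2K_2$. This gives an alternative proof via Theorem~\ref{thm:CqL-threshold}. The explicit weight computation above is nevertheless the intended argument.
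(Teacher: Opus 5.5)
Your proposal is correct and matches the paper's proof: you read complement-adjacency as $k+\ell<L$, use integrality to rewrite it as $k+\ell\le L-1$, and negate to obtain $w'(x)+w'(y)\ge T'$. Your remarks on same-layer pairs and on the alternative route via closure of threshold graphs under complementation ($\overline{P_4}=P_4$, $\overline{C_4}=2K_2$) are also correct, though the paper does not need them.
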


\begin{proof}
	Let $x\in V_k$ and $y\in V_\ell$ be distinct vertices.  In the complement, $x$ and $y$ are adjacent exactly when they are not adjacent in $C(q,L)$.  By the defining rule of $C(q,L)$, this is equivalent to
	$ k+\ell<L. $
	Since $k+\ell$ and $L$ are integers, the preceding strict inequality is the same as
	$ k+\ell\le L-1$ or 
	$ -k-\ell\ge 1-L. $
	With $w'(x)=-k$, $w'(y)=-\ell$, and $T'=1-L$, the last inequality becomes
	$ w'(x)+w'(y)\ge T'. $
	Thus two distinct vertices are adjacent in $\overline{C(q,L)}$ exactly when the sum of their new weights is at least $T'$.  This proves that the complement is threshold.
\end{proof}

\medskip

The next corollary explains why the threshold viewpoint is compatible with the cochordal theory used earlier.

\medskip

\begin{corollary}\label{cor:CqL-threshold-cochordal}
	For all $q\ge2$ and $L\ge2$, the graph $C(q,L)$ is chordal and cochordal.  Consequently, whenever $I(C(q,L))\ne0$, the edge ideal $I(C(q,L))$ has a $2$-linear resolution.
\end{corollary}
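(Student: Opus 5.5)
The plan is to reduce everything to the threshold structure established in Theorem \ref{thm:CqL-threshold} and Lemma \ref{lem:CqL-complement-threshold}, and then apply Fröberg's theorem (Theorem \ref{thm:froberg}).

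First I will show that every threshold graph is chordal. Take an induced cycle $x_1x_2\cdots x_r$ with $r\ge 4$ in a threshold graph with weights $w$ and threshold $T$. Choose a vertex of maximal weight, say $x_1$. Its cycle neighbours $x_2$ and $x_r$ are distinct and nonadjacent, so $w(x_2)+w(x_r)<T$. One of the two opposite cycle edges, $x_2x_3$ or $x_{r-1}x_r$, must avoid the vertex $x_1$. It gives $w(x_2)+w(x_3)\ge T$ or $w(x_{r-1})+w(x_r)\ge T$. By maximality, $w(x_1)\ge w(x_3)$ and $w(x_1)\ge w(x_{r-1})$, so this forces $w(x_1)+w(x_2)\ge T$ or $w(x_1)+w(x_r)\ge T$. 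That alone is consistent, so the cleaner route is the following.

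Use the pair $x_2,x_r$ directly. Since $r\ge4$, the vertex $x_3$ differs from $x_r$ and is nonadjacent to $x_1$, so $w(x_1)+w(x_3)<T$. Combined with $w(x_2)+w(x_3)\ge T$, this gives $w(x_2)>w(x_1)$, contradicting maximality. Hence there is no induced cycle of length at least $4$, and threshold graphs are chordal. Alternatively, one can cite the forbidden-subgraph characterization ($P_4$, $C_4$, $2K_2$-free), since any induced cycle of length $\ge 5$ contains an induced $P_4$ and $C_4$ is excluded directly. I would present the weight argument because it is self-contained.

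Applying this to $C(q,L)$ via Theorem \ref{thm:CqL-threshold} gives chordality. Applying it to $\overline{C(q,L)}$ via Lemma \ref{lem:CqL-complement-threshold} gives chordality of the complement, that is, cochordality. As a consistency check, the latter also follows from Lemma \ref{lem:chain-cochordal-system} together with Theorem \ref{thm:constructible-characterization}. Finally, when $I(C(q,L))\neq 0$, Theorem \ref{thm:froberg} gives a $2$-linear resolution. The edgeless case, $q=2$ with $L=2$ or $L=3$ (for $L=3$ with $q=2$, $V_2$ has one vertex and $V_1$ has two, and $V_1V_2$ are adjacent since $1+2\ge 3$, so in fact only $L=2$), is excluded by the hypothesis.

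The only genuine step is the threshold-implies-chordal argument. I expect the main obstacle to be choosing the right vertex, namely the maximum-weight vertex together with its second-neighbour $x_3$, so that the contradiction is immediate.
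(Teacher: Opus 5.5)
Your proof is correct and follows the paper's route: threshold representations of $C(q,L)$ and $\overline{C(q,L)}$ (Theorem~\ref{thm:CqL-threshold} and Lemma~\ref{lem:CqL-complement-threshold}), then threshold $\Rightarrow$ chordal, then Fr\"oberg's theorem. The one difference is that the paper gets threshold $\Rightarrow$ chordal from the forbidden-subgraph characterization ($P_4$, $C_4$, $2K_2$-free), while your argument proves it directly from the weights: you take a maximum-weight vertex $x_1$ and its second neighbour $x_3$, which forces $w(x_2)>w(x_1)$. That argument is self-contained and valid. In a write-up, delete the abandoned first attempt and simplify the edgeless aside: only $(q,L)=(2,2)$ is edgeless, and the hypothesis $I(C(q,L))\neq 0$ excludes it anyway.
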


\begin{proof}
	A threshold graph has no induced $P_4$, $C_4$, or $2K_2$.  In particular, it has no induced $C_4$.  It also has no induced cycle of length at least $5$, because any induced cycle with at least five vertices contains four consecutive vertices inducing a $P_4$.  Hence a threshold graph has no induced cycle of length at least $4$, and therefore it is chordal.
	
	\medskip
	
	By Theorem~\ref{thm:CqL-threshold}, the graph $C(q,L)$ is threshold, so it is chordal.  By Lemma~\ref{lem:CqL-complement-threshold}, the complement $\overline{C(q,L)}$ is also threshold, so it is chordal as well.  Thus $C(q,L)$ is cochordal.
	
	\medskip
	
	Finally, from Theorem \ref{thm:froberg}, the edge ideal of a graph has a $2$-linear resolution if and only if the complement of the graph is chordal.  Since $\overline{C(q,L)}$ is chordal, the edge ideal $I(C(q,L))$ has a $2$-linear resolution when it is nonzero.
\end{proof}

\medskip

We next spell out the nested-neighborhood structure.  This is often the most convenient internal test for recognizing the threshold property in a layered graph.

\medskip

\begin{proposition}\label{prop:CqL-split-nested}
	Let $h=\lceil L/2\rceil$ and let 
	$ A=V_1\sqcup\cdots\sqcup V_{h-1},$ and $B=V_h\sqcup\cdots\sqcup V_{L-1}. $
	Then $A$ is an independent set and $B$ is a clique.  Moreover, the neighborhoods are nested by layer,  if $1\le i<i'<h$, then
	$ N(V_i)\cap B\subseteq N(V_{i'})\cap B, $
	and if $h\le j<j'\le L-1$, then the adjacency range of $V_{j'}$ contains the adjacency range of $V_j$.
\end{proposition}

\begin{proof}
	First suppose $x\in V_i$ and $y\in V_j$ with $i,j<h$.  If $L=2a$, then $h=a$, so $i+j\le2a-2<L$.  If $L=2a+1$, then $h=a+1$, so $i+j\le2a<L$.  In both cases $i+j<L$, and hence $x$ and $y$ are not adjacent.  Therefore $A$ is independent.
	Now suppose $x\in V_i$ and $y\in V_j$ are distinct vertices with $i,j\ge h$.  Then $i+j\ge2h\ge L$, so $x$ and $y$ are adjacent.  Therefore $B$ is a clique.
	For the first nesting assertion, take $1\le i<i'<h$ and let $z\in B$ be adjacent to a vertex of $V_i$.  If $z\in V_s$, then $i+s\ge L$.  Since $i'>i$, we have $i'+s\ge i+s+1>L$, and in particular $i'+s\ge L$.  Thus $z$ is adjacent to every vertex in $V_{i'}$, and hence $N(V_i)\cap B\subseteq N(V_{i'})\cap B$.
	
	\medskip
	
	For the second assertion, assume $h\le j<j'\le L-1$.  If a vertex $z\in V_s$ is adjacent to a vertex in $V_j$, then $s+j\ge L$.  Since $j'>j$, it follows that $s+j'\ge L$.  Therefore every layer that is adjacent to $V_j$ is also adjacent to $V_{j'}$, apart from the ordinary convention that a vertex is not adjacent to itself.  Thus the neighborhood ranges increase with the layer index.
\end{proof}

\medskip

\medskip

\begin{example}\label{ex:threshold-C34-short}
	For $q=3$ and $L=4$, the graph $C(3,4)$ has layer sizes
	$ |V_1|=18, 
	|V_2|=6, $ and $
	|V_3|=2. $
	The threshold weights are $1$, $2$, and $3$, and the threshold is $T=4$.  Thus vertices in $V_1$ are adjacent precisely to vertices in $V_3$, the layer $V_2$ is a clique, and the layer $V_3$ is adjacent to every other vertex.  This is the same graph displayed earlier in Figure~\ref{fig 1}, the present interpretation only changes the language from valuation layers to threshold weights.
\end{example}

\medskip

\begin{table}[H]
	\centering
	\begin{tabular}{c c c c c}
		\toprule
		$q$ & $L$ & layer sizes & threshold weights & threshold $T$ \\
		\midrule
		$2$ & $2$ & $1$ & $1$ & $2$ \\
		$2$ & $3$ & $2,1$ & $1,2$ & $3$ \\
		$2$ & $4$ & $4,2,1$ & $1,2,3$ & $4$ \\
		$3$ & $4$ & $18,6,2$ & $1,2,3$ & $4$ \\
		$q$ & $L$ & $(q-1)q^{L-k-1}$ & $k$ on $V_k$ & $L$ \\
		\bottomrule
	\end{tabular}
	\caption{Threshold behaviour of graphs of the form $C(q,L)$.}
	\label{tab:threshold-data-only}
\end{table}
Table \ref{tab:threshold-data-only} gives the threshold data for small graphs of the form $C(q,L)$.  The table records only the new threshold description; the homological formulae are those already proved above.
\medskip

\begin{figure}[H]
	\centering
	\begin{tikzpicture}[
		box/.style={draw, rounded corners, align=center, inner sep=6pt, minimum width=22mm},
		arr/.style={-Latex, thick}
		]
		\node[box] (v1) {$V_1$\\ weight $1$};
		\node[box, right=8mm of v1] (v2) {$V_2$\\ weight $2$};
		\node[right=7mm of v2] (dots) {$\cdots$};
		\node[box, right=7mm of dots] (vl) {$V_{L-1}$\\ weight $L-1$};
		\node[box, below=12mm of dots, minimum width=45mm] (rule) {edge rule\\ $w(x)+w(y)\ge L$};
		\draw[arr] (v1) -- (rule);
		\draw[arr] (v2) -- (rule);
		\draw[arr] (vl) -- (rule);
		\node[below=8mm of rule, align=center] (conclusion) {$C(q,L)$ is threshold, and hence $\overline{C(q,L)}$ is chordal};
		\draw[arr] (rule) -- (conclusion);
	\end{tikzpicture}
	\caption{The threshold interpretation of $C(q,L)$.}
	\label{fig:threshold-block-diagram-reduced}
\end{figure}

\medskip

Figure~\ref{fig:threshold-block-diagram-reduced} summarizes the new threshold representation of $C(q,L)$. The valuation layer is the graph weight, and the nilpotency index $L$ is the threshold..  The same integer that indexes the layer $V_k$ is used as the threshold weight, so the algebraic filtration and the threshold-graph structure coincide.

\medskip
\begin{remark}\label{rem:zero-type}
	When $L$ is even and $i=L/2$, the last entry of $s_i$ is $0$.  This entry corresponds to an empty star and does not change the graph.  If one insists that type sequences contain only positive entries, this terminal zero may be deleted.  The Betti numbers are unchanged because adding a final zero adds $\binom{k-1}{r}$ to the shifted type sum and changes the global correction from $\binom{k-1}{r+1}$ to $\binom{k}{r+1}$, since Pascal's identity shows that the two changes cancel.
\end{remark}

\medskip

The next theorem is the  closed Betti formula for the universal chain-layer graph.  It is the formula that will be specialized in the two ring families below.

\medskip

\begin{theorem}\label{thm:chain-betti}
	Let $G=C(q,L)$ be a graph with order $N=q^{L-1}-1$, and let $I=I(G)\subseteq S=K[x_1,\ldots,x_N]$.  Then, for every $r\geq 1$,
	$$
	\beta_r(S/I)=\beta_{r,r+1}(S/I)=
	\sum_{j=\lceil L/2\rceil}^{L-1}(q-1)q^{L-j-1}\binom{q^j-2}{r}
	-\binom{q^{\lfloor L/2\rfloor}-1}{r+1}.
	$$
	All other graded Betti numbers of $S/I$, except $\beta_{0,0}(S/I)=1$, are zero.  In particular,
	$ \operatorname{pd}_S(S/I)=q^{L-1}-2.$
	If $I\neq 0$, then $I$ has a $2$-linear resolution and $\operatorname{reg}(S/I)=1$, equivalently $\operatorname{reg}(I)=2$.
\end{theorem}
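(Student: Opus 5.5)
The plan is to feed the explicit constructible system of Lemma~\ref{lem:chain-cochordal-system} into the block calculus of Theorem~\ref{prop:block-evaluation}, exactly as in the proof of Theorem~\ref{thm:prime-power-betti}. By Corollary~\ref{cor:type-chain}, the type sequence is the concatenation $s_{L-1},s_{L-2},\ldots,s_h$ with $h=\lceil L/2\rceil$. Each $s_j$ is a \emph{decreasing} block of length $n_j=(q-1)q^{L-j-1}$ with first entry $c_j=q^j-q^{L-j-1}-1$. Hence only the first (clique) case of Theorem~\ref{prop:block-evaluation} is needed, and no constant blocks occur. Remark~\ref{rem:zero-type} lets me keep the possible terminal $0$ entry when $L$ is even without affecting anything.

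The first step is to compute the left offset of $s_j$:
$$K_{>j}=\sum_{\ell=j+1}^{L-1}(q-1)q^{L-\ell-1}=q^{L-j-1}-1,$$
a telescoping geometric sum. The shifted value is then $c_j+K_{>j}=q^j-2$. So the block $s_j$ contributes $n_j\binom{q^j-2}{r}$, and summing over $j=h,\ldots,L-1$ gives the main sum. The second step is to count the center steps:
$$k=\sum_{j=h}^{L-1}n_j=q^{L-h}-1=q^{\lfloor L/2\rfloor}-1,$$
using $L-\lceil L/2\rceil=\lfloor L/2\rfloor$. This yields the global subtraction $\binom{q^{\lfloor L/2\rfloor}-1}{r+1}$ via Theorem~\ref{thm:type-formula}. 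Linearity, the vanishing of all off-strand graded Betti numbers, and $\operatorname{reg}(S/I)=1$ (equivalently $\operatorname{reg}(I)=2$) follow from Corollary~\ref{cor:CqL-threshold-cochordal} together with Theorem~\ref{thm:froberg}.

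For the projective dimension, I would invoke the second part of Theorem~\ref{thm:type-formula}. The shifted values $q^j-2$ are increasing in $j$, so the maximum is $q^{L-1}-2$, attained by the block $s_{L-1}$. I also intend to double-check nonvanishing directly from the formula at $r=q^{L-1}-2$. There, only $j=L-1$ contributes, giving $q-1$. The correction $\binom{q^{\lfloor L/2\rfloor}-1}{q^{L-1}-1}$ vanishes unless $\lfloor L/2\rfloor=L-1$, that is, unless $L=2$; in that case it equals $1$, so $\beta_{q-1}$... more precisely the top value is $q-2$.

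The main obstacle I expect is exactly this degenerate boundary. For $L=2$ and $q=2$ the graph is a single vertex, so $I=0$ and the stated projective dimension $q^{L-1}-2=0$ is consistent only with the edgeless convention. I would treat this case separately, as in Corollary~\ref{cor:prime-power-pd}. For $L=2$, $q\ge3$, the graph is $K_{q-1}$ and the top Betti number $q-2$ is positive. For $L\ge3$ the top Betti number is $q-1>0$. With these cases handled, all remaining steps are routine geometric-sum identities.
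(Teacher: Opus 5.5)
Your proposal is correct and is essentially the paper's own proof. Both arguments take the type list of Corollary~\ref{cor:type-chain} and use the left offsets $q^{L-j-1}-1$ to get the constant shifted value $q^j-2$ on each decreasing block. Both subtract the global term with $k=q^{\lfloor L/2\rfloor}-1$. Both check that this term cannot cancel the top Betti number; the paper phrases this as the correction's upper argument being smaller when $L\ge 3$, with $L=2$ handled directly.

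One cosmetic point: for $(q,L)=(2,2)$ no edgeless convention is needed. There $I=0$, so $S/I=S$ and $\operatorname{pd}=0=q^{L-1}-2$ holds exactly.
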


\begin{proof}
	Let $h=\lceil L/2\rceil$, and let $K$ be the total number of entries in the type list of Corollary \ref{cor:type-chain}.  Then
	$$
	K=\sum_{j=h}^{L-1}(q-1)q^{L-j-1}=q^{L-h}-1=q^{\lfloor L/2\rfloor}-1.
	$$
	For a fixed block indexed by $j$, the number of type entries lying to its left is
	$$
	K_j=\sum_{s=j+1}^{L-1}(q-1)q^{L-s-1}=q^{L-j-1}-1.
	$$
	The entries in the block are
	$$
	M_j+n_j-1,\ M_j+n_j-2,\ldots,\ M_j,
	$$
	where $M_j=q^j-q^{L-j}$ and $n_j=(q-1)q^{L-j-1}$.
	
	\medskip
	
	In the  formula of Theorem \ref{thm:type-formula}, the offset increases by one as one moves from left to right through the type list.  Therefore the $u$th entry in the $j$th block, counted from the left inside that block, is
	$ M_j+n_j-u $ 	and its offset is $K_j+u-1$.  The upper argument of the corresponding binomial coefficient is
	$$
	(M_j+n_j-u)+(K_j+u-1)=M_j+n_j+K_j-1=q^j-q^{L-j}+(q-1)q^{L-j-1}-1+q^{L-j-1}-1=q^j-2.
	$$
	Thus every entry in the same block contributes the same binomial coefficient, and the whole block contributes
	$$
	n_j\binom{q^j-2}{r}=(q-1)q^{L-j-1}\binom{q^j-2}{r}.
	$$
	Summing over $j=h,\ldots,L-1$ and subtracting the global correction $\binom K{r+1}$ gives the required formula.
	
	\medskip
	
	The largest possible homological degree comes from the block $j=L-1$, where the upper binomial argument is $q^{L-1}-2=N-1$.  The correction term has upper argument $q^{\lfloor L/2\rfloor}-1$, which is strictly smaller when $L\geq 3$, and the case $L=2$ is immediate.  Hence the last nonzero total Betti number occurs in homological degree $q^{L-1}-2$.  The graded positions and the linearity statement follow from cochordality and Theorem \ref{thm:type-formula}.
\end{proof}

\medskip

The next proposition computes the independence number and the height of the edge ideal.  It will later give the Krull dimension of the quotient rings.

\medskip

\begin{proposition}\label{prop:chain-alpha}
	Let $G=C(q,L)$.  Then
	$$
	\alpha(G)=
	\begin{cases}
		q^{2a-1}-q^a+1, & L=2a,\\[2pt]
		q^{2a}-q^a, & L=2a+1.
	\end{cases}
	$$
	Consequently,
	$$
	\operatorname{height} I(G)=|V(G)|-\alpha(G)=
	\begin{cases}
		q^a-2, & L=2a,\\[2pt]
		q^a-1, & L=2a+1.
	\end{cases}
	$$
\end{proposition}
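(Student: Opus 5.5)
The plan is to use the split structure of Proposition \ref{prop:CqL-split-nested}. Put $h=\lceil L/2\rceil$, $A=V_1\sqcup\cdots\sqcup V_{h-1}$ (independent) and $B=V_h\sqcup\cdots\sqcup V_{L-1}$ (a clique). Since $B$ is a clique, every independent set $X$ satisfies $|X\cap B|\le 1$. So it suffices to compare two kinds of independent sets:
\begin{itemize}
\item sets contained in $A$, the largest of which is $A$ itself;
\item sets of the form $\{b\}\cup A_b$, where $b\in V_j$ with $j\ge h$ and $A_b$ is the set of vertices of $A$ not adjacent to $b$.
\end{itemize}

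By the adjacency rule, a vertex of $V_s$ is non-adjacent to $b\in V_j$ exactly when $s+j<L$, that is, $s\le L-j-1$. Hence $A_b=V_1\sqcup\cdots\sqcup V_{\min(h-1,\,L-j-1)}$. This set shrinks as $j$ grows, so the best candidate of the second kind uses $j=h$. The layer $V_h$ is nonempty because $q\ge2$. Layer sizes are summed with the geometric identity $\sum_{k=1}^{t}(q-1)q^{L-k-1}=q^{L-1}-q^{L-t-1}$.

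\emph{Case $L=2a$.} Here $h=a$, so $L-h-1=a-1=h-1$. A vertex $b\in V_a$ is therefore non-adjacent to all of $A$, and $\{b\}\cup A$ is independent. Its size is
\[
|A|+1=(q^{2a-1}-q^{a})+1 .
\]
No independent set can be larger: it meets $B$ in at most one vertex and $A$ in at most $|A|$ vertices. This gives $\alpha=q^{2a-1}-q^a+1$.

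\emph{Case $L=2a+1$.} Here $h=a+1$ and $|A|=q^{2a}-q^{a}$. For $b\in V_j$ with $j\ge a+1$, we have $L-j-1\le a-1$, so $b$ is adjacent to all of $V_a$. The set $\{b\}\cup A_b$ therefore has size at most
\[
|A|-|V_a|+1=|A|-(q-1)q^{a}+1\le |A|,
\]
since $(q-1)q^a\ge 2$. This gives $\alpha=|A|=q^{2a}-q^a$.

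The height statement then follows from $\operatorname{height} I(G)=|V(G)|-\alpha(G)$, as recalled in Proposition \ref{prop:dimension-independent-sets}, together with $|V(G)|=q^{L-1}-1$:
\[
(q^{2a-1}-1)-(q^{2a-1}-q^a+1)=q^a-2,\qquad (q^{2a}-1)-(q^{2a}-q^a)=q^a-1.
\]

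The only step needing care is the odd case. There one must check that trading the whole layer $V_a$ for a single clique vertex never helps, which is the inequality $(q-1)q^a\ge 2$. The remaining work is bookkeeping of the geometric sums.
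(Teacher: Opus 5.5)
Your proof is correct and is essentially the paper's argument. The paper organizes the case analysis by the largest layer index represented in an independent set rather than by the split $A\sqcup B$, but both rest on the same facts: at most one vertex can come from the high (clique) layers, the compatible low layers shrink as that layer index grows, and one compares $|A|+1$ in the even case with $|A|$ versus $|A|-|V_a|+1$ in the odd case. Incidentally, your count $|V_a|=(q-1)q^{a}$ in the odd case is more accurate than the paper's remark that $V_a$ has $q-1$ vertices, though either lower bound suffices.
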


\begin{proof}
	Let $F$ be an independent set and let $s$ be the largest layer index represented in $F$.  If $F$ contains a vertex from $V_s$, then every other layer represented in $F$ must be among $V_1,\ldots,V_{L-s-1}$, because a vertex in $V_k$ with $k+s\geq L$ would be adjacent to the chosen vertex in $V_s$.  Moreover, if $2s\geq L$, then $F$ contains at most one vertex from $V_s$.
	
	\medskip
	
	Assume first that $L=2a$.  The set
	$ V_1\cup V_2\cup\cdots\cup V_{a-1} $ 	is independent, and one may add one vertex from $V_a$.  Hence
	$$
	\alpha(G)\geq \sum_{k=1}^{a-1}(q-1)q^{2a-k-1}+1=q^{2a-1}-q^a+1.
	$$
	If an independent set uses no layer $V_s$ with $s\geq a$, it is contained in $V_1\cup\cdots\cup V_{a-1}$ and is smaller by one.  If it uses some $V_s$ with $s\geq a$, then it contains at most one vertex from $V_s$ and may use only the lower layers $V_1,\ldots,V_{2a-s-1}$; this number is maximized at $s=a$.  Therefore the displayed lower bound is sharp.
	
	\medskip
	
	Now assume that $L=2a+1$.  The union $V_1\cup\cdots\cup V_a$ is independent because $a+a<2a+1$, and its size is
	$$
	\sum_{k=1}^{a}(q-1)q^{2a+1-k-1}=q^{2a}-q^a.
	$$
	If an independent set uses a layer $V_s$ with $s\geq a+1$, then it loses all layers $V_{2a+1-s},\ldots,V_a$ and gains at most one vertex from $V_s$.  This cannot increase the size, since the omitted layer $V_a$ alone has $q-1\geq 1$ vertices and the lower layers have nonnegative size.  Thus the asserted value of $\alpha(G)$ follows.  The height formula is the standard equality between the height of an edge ideal and the vertex-cover number.
\end{proof}

\medskip

The independence polynomial ($ F_G(y)=\sum_{F\in \operatorname{Ind}(G)}y^{|F|} $, where $\operatorname{Ind}(G)$ represents the set of all independent sets of the graph $G$) gives an independent check on the Hilbert series and on the dimension obtained from Proposition \ref{prop:chain-alpha}.

\medskip

\begin{proposition}\label{prop:chain-independence-polynomial}
	Let $G=C(q,L)$ and let $n_k=(q-1)q^{L-k-1}$.  The independence polynomial 
	is
	$$
	F_G(y)=1+\sum_{s=1}^{L-1}A_s(y)\prod_{k=1}^{\min\{s-1,L-s-1\}}(1+y)^{n_k},
	$$
	where
	$$
	A_s(y)=
	\begin{cases}
		(1+y)^{n_s}-1, & 2s<L,\\[2pt]
		n_s y, & 2s\geq L.
	\end{cases}
	$$
	Consequently,
	$ \operatorname{Hilb}_{S/I(G)}(t)=F_G\left(\frac{t}{1-t}\right),$ or equivalently, if $F_G(y)=\sum_{r\geq 0}f_{r-1}y^r$, then
	$$
	\operatorname{Hilb}_{S/I(G)}(t)=\sum_{r\geq 0}f_{r-1}\frac{t^r}{(1-t)^r}
	=\frac{1}{(1-t)^N}\sum_{r=0}^{\alpha(G)}f_{r-1}t^r(1-t)^{N-r}.
	$$
\end{proposition}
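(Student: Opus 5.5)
We classify each nonempty independent set $F$ of $C(q,L)$ by the largest layer index $s$ it meets. The empty set gives the leading $1$. For the nonempty sets, the plan is to show that those with top layer exactly $s$ have generating function equal to the $s$-th summand. The formula then follows by summing over $s$.

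Fix $s$, and let $F$ be an independent set whose top layer is $V_s$.

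\begin{enumerate}
\item \emph{Which lower layers are allowed.} By the threshold rule (Theorem \ref{thm:CqL-threshold}), a vertex of $V_k$ with $k<s$ may lie in $F$ only if $k+s<L$, that is, $k\le L-s-1$. Together with $k\le s-1$, the admissible lower layers are $V_1,\dots,V_{\min\{s-1,L-s-1\}}$.
\item \emph{No conflicts among lower layers.} For any two such layers $k,k'$ we have $k+k'\le (s-1)+(L-s-1)<L$. Hence there are no edges among or within these layers, and every subset of their union may be added freely. This contributes the factor $\prod_k(1+y)^{n_k}$.
\item \emph{The part of $F$ in $V_s$.} This part must be nonempty. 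If $2s<L$, the layer $V_s$ is independent, so any nonempty subset is allowed, giving $(1+y)^{n_s}-1$. If $2s\ge L$, then $V_s$ is a clique (Proposition \ref{prop:CqL-split-nested}), so exactly one vertex is chosen, giving $n_sy$.
\item \emph{Compatibility of the top layer with the lower layers.} Every vertex of $V_s$ is non-adjacent to every vertex of the admissible lower layers, since $k+s\le L-1$. So the choice in $V_s$ and the choice in the lower layers are independent of each other.
\end{enumerate}

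Multiplying the two generating functions and summing over $s=1,\dots,L-1$ gives $F_G(y)$. The decomposition is a disjoint partition of the nonempty independent sets, because each such set has a unique maximal layer.

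For the Hilbert series, we use that $S/I(G)$ is the Stanley--Reisner ring of $\operatorname{Ind}(G)$. Its $\mathbb{K}$-basis consists of the monomials whose support is an independent set. A face $F$ with $|F|=r$ contributes the monomials with support exactly $F$, whose series is $\bigl(t/(1-t)\bigr)^r$. Summing over faces gives $\sum_r f_{r-1}t^r/(1-t)^r=F_G\bigl(t/(1-t)\bigr)$. Multiplying numerator and denominator by $(1-t)^N$ gives the last expression, and the sum stops at $r=\alpha(G)$ by definition of $\alpha(G)$.

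The only delicate point is the boundary case where $\min\{s-1,L-s-1\}$ is $0$, so that the product is empty and equal to $1$. This happens for $s=1$ and for $s=L-1$, and the sign conditions on $2s$ versus $L$ must be checked to be consistent there. As a sanity check, setting $y=1$ should count all independent sets, and the degree of $F_G$ should equal $\alpha(G)$ from Proposition \ref{prop:chain-alpha}.
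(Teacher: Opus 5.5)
Your proposal is correct and is essentially the paper's argument: you partition the nonempty independent sets by their top layer $V_s$, count the part in $V_s$ as $(1+y)^{n_s}-1$ or $n_s y$ according as $2s<L$ or $2s\ge L$, multiply by the free choices in the layers $k\le\min\{s-1,L-s-1\}$, and read off the Hilbert series from the Stanley--Reisner basis of monomials supported on independent sets. Your step~2 shows that $k+k'\le L-2$ rules out edges among the admissible lower layers, which makes explicit a point the paper only asserts.
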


\begin{proof}
	Every nonempty independent set has a unique largest valuation layer, say $V_s$.  If $2s<L$, then $V_s$ has no internal edges, so any nonempty subset of $V_s$ may be chosen, this contributes $(1+y)^{n_s}-1$.  If $2s\geq L$, then $V_s$ is a clique, so at most one vertex may be chosen from $V_s$, this contributes $n_s y$.
	
	After a choice in the largest layer $V_s$ has been made, lower layers may be chosen freely exactly from those $V_k$ with $k<s$ and $k+s<L$.  These are precisely the layers with
	$ k\leq \min\{s-1,L-s-1\}. $
	The product in the formula records these free choices.  Summing over $s$ and adding the empty independent set proves the formula for $F_G(y)$.
	
	\medskip
	
	The Hilbert-series identity is the usual Stanley--Reisner formula for the independence complex.  Since $S/I(G)\cong K[\operatorname{Ind}(G)]$, each independent set $F$ contributes $t^{|F|}/(1-t)^{|F|}$ to the Hilbert series.  This gives $F_G(t/(1-t))$, and rewriting with the common denominator $(1-t)^N$ gives the final expression.
\end{proof}

\medskip

\begin{table}[H]
	\centering
	\begin{tabular}{c c c c c}
		\toprule
		$L$ & $q$ & $N=q^{L-1}-1$ & $\alpha(C(q,L))$ & $\operatorname{pd}(S/I)$ \\
		\midrule
		$3$ & $2$ & $3$ & $2$ & $2$ \\
		$4$ & $2$ & $7$ & $5$ & $6$ \\
		$4$ & $3$ & $26$ & $19$ & $25$ \\
		$5$ & $2$ & $15$ & $12$ & $14$ \\
		$5$ & $3$ & $80$ & $72$ & $79$ \\
		\bottomrule
	\end{tabular}
	\caption{Some homological invariants of $C(q,L)$.}
	\label{tab:chain-samples}
\end{table}
Table \ref{tab:chain-samples} gives the sample numerical values for the universal chain graph $C(q,L)$.  The projective dimension is always one less than the number of vertices, while the dimension of the quotient is the independence number.

\section{The ring \texorpdfstring{$\mathbb{Z}_{2^m}[i]$}{Z_2^m[i]} and its zero-divisor graph}\label{sec:gaussian}
 This section records that the Gaussian residue ring $\mathbb{Z}_{2^m}[i]$ has the same zero-divisor graph as the integer residue ring $\mathbb{Z}/2^{2m}\mathbb{Z}$.  The observation is elementary, but it is useful because it lets the corrected type calculus be applied without recomputing a new family of Betti splittings.

\medskip

Let \(\mathbb{Z}[i]\) be the ring of Gaussian integers.  It is a Euclidean domain and hence a principal ideal domain.  The rational prime \(2\) factors as $2=-i(1+i)^2.$ 
If \(\pi=1+i\), then $(2^m)=(\pi^{2m})$ up to a unit, and therefore
\[
R_m=\mathbb{Z}_{2^m}[i]\cong \mathbb{Z}[i]/(2^m)\cong \mathbb{Z}[i]/(\pi^{2m}).
\]
Thus \(R_m\) is a finite local principal ideal ring, indeed a finite chain ring, with maximal ideal
$\mathfrak{m}=(\overline\pi),$ and $\mathfrak{m}^{2m}=0\ne \mathfrak{m}^{2m-1}.$ Thus, every nonzero element of \(R_m\) has the form \(u\pi^k\), where \(u\in R_m^\times\) and \(0\le k\le 2m-1\).  Now, we define
\[
v(x)=
\begin{cases}
	k, & x=u\pi^k\ne0\text{ with }u\in R_m^\times,\\[2pt]
	+\infty, & x=0.
\end{cases}
\]
Then \(x\) is a unit if and only if \(v(x)=0\), and \(x\in Z^*(R_m)\) if and only if \(1\le v(x)\le 2m-1\).

Thus, an element $x\in R_m$ is a unit if and only if $v(x)=0$, and a nonzero
zero divisor if and only if $1\le v(x)\le 2m-1$. For $k=0,1,\dots,2m$, we define the ideals $\mathfrak m^k = (\pi^k)\subseteq R_m.$  The standard facts about finite chain rings (or a direct counting argument)
gives $|\mathfrak m^k| = 2^{2m-k}$ for $0\le k\le 2m.$ We have the following lemma.	

\begin{lemma}\label{lem:Vk-gaussian}
	For $k=0,1,\dots,2m$ consider $V_k = \{x\in R_m \mid v(x)=k\}
	= \mathfrak m^k\setminus \mathfrak m^{k+1}.$ 
	Then $|V_0|=2^{2m-1}$ and, for $1\le k\le 2m-1$, $|V_k| = 2^{2m-k-1}.$ In particular, the set of nonzero zero divisors of $R_m$ is $Z^{*}(R_m) = \bigcup_{k=1}^{2m-1} V_k$ and its cardinality
	\[
	|Z^{*}(R_m)| = \sum_{k=1}^{2m-1} 2^{2m-k-1}
	= 2^{2m-1}-1.
	\]
\end{lemma}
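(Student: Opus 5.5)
The plan is to compute $|\mathfrak m^k|$ first and then take successive differences. Since $R_m$ is a chain ring with maximal ideal $\mathfrak m=(\overline\pi)$ and $\mathfrak m^{2m}=0\neq\mathfrak m^{2m-1}$, the ideals $\mathfrak m^k=(\pi^k)$ form a strictly decreasing chain
\[
R_m=\mathfrak m^0\supsetneq\mathfrak m^1\supsetneq\cdots\supsetneq\mathfrak m^{2m}=0 .
\]
Each successive quotient $\mathfrak m^k/\mathfrak m^{k+1}$ is a one-dimensional vector space over the residue field $R_m/\mathfrak m$, because it is generated by the image of $\pi^k$. The residue field is $\mathbb Z[i]/(1+i)\cong\mathbb F_2$, since $|\mathbb Z[i]/(\pi)|=N(\pi)=2$. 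Hence each step in the chain halves the cardinality. Starting from $|R_m|=|\mathbb Z[i]/(2^m)|=N(2^m)=4^m=2^{2m}$, induction down the chain gives $|\mathfrak m^k|=2^{2m-k}$ for $0\le k\le 2m$. This is the counting fact the paper quotes. To be self-contained I would justify the one-dimensionality by the surjection $R_m\to\mathfrak m^k/\mathfrak m^{k+1}$, $r\mapsto r\pi^k$, whose kernel contains $\mathfrak m$ and is proper because $\pi^k\notin\mathfrak m^{k+1}$ for $k<2m$. The last point holds since $\pi^k=s\pi^{k+1}$ would give $\pi^k(1-s\pi)=0$ with $1-s\pi$ a unit, forcing $\pi^k=0$ and contradicting $\mathfrak m^{2m-1}\ne0$.

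Next, from the description $x=u\pi^k$ with $u$ a unit, one has $v(x)=k$ exactly when $x\in\mathfrak m^k\setminus\mathfrak m^{k+1}$. The valuation is well defined for the same reason as above: if $u\pi^k=u'\pi^{k'}$ with $k<k'$, then $\pi^k\in\mathfrak m^{k+1}$, which is impossible. Therefore
\[
|V_k|=2^{2m-k}-2^{2m-k-1}=2^{2m-k-1}
\]
for $0\le k\le 2m-1$. In particular $|V_0|=2^{2m-1}$.

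Finally, the units of $R_m$ are exactly $V_0=R_m\setminus\mathfrak m$, since $R_m$ is local. Every element of $\mathfrak m$ is nilpotent, because $\pi^{2m}=0$, so every nonzero element of $\mathfrak m$ is a zero divisor. Hence $Z^*(R_m)=\mathfrak m\setminus\{0\}=\bigcup_{k=1}^{2m-1}V_k$. Its size is $|\mathfrak m|-1=2^{2m-1}-1$, which agrees with the geometric sum $\sum_{k=1}^{2m-1}2^{2m-k-1}$.

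The only step needing care is the halving $|\mathfrak m^k/\mathfrak m^{k+1}|=2$, that is, strictness of the chain together with the residue field being $\mathbb F_2$. Everything else is bookkeeping.
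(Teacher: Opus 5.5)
Your proposal is correct and follows the same route as the paper: obtain $|\mathfrak m^k|=2^{2m-k}$, take the successive differences $|\mathfrak m^k|-|\mathfrak m^{k+1}|$, and sum the geometric series. The only difference is that you actually prove the counting fact $|\mathfrak m^k|=2^{2m-k}$ (via $\mathfrak m^k/\mathfrak m^{k+1}\cong R_m/\mathfrak m\cong\mathbb F_2$) and the identification $Z^{*}(R_m)=\mathfrak m\setminus\{0\}$, whereas the paper quotes both as standard.
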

\begin{proof}
	As $|\mathfrak m^k|=2^{2m-k}$ for all $k$. So, for $0\le k\le 2m-1$, we have
	\[
	|V_k| = |\mathfrak m^k|-|\mathfrak m^{k+1}|
	= 2^{2m-k}-2^{2m-k-1}
	= 2^{2m-k-1}.
	\]
	Now, summing from $k=1$ to $2m-1$, we obtain
	\[
	|Z^{*}(R_m)|
	= \sum_{k=1}^{2m-1} 2^{2m-k-1}
	= \sum_{j=0}^{2m-2} 2^{j}
	= 2^{2m-1}-1,
	\]
	where we used the substitution $j=2m-k-1$.
\end{proof}

\medskip

The next lemma gives the zero-product rule between layers.

\medskip

\begin{lemma}\label{lem:adjacency-gaussian}
	Let $x\in V_k$ and $y\in V_\ell$, where $1\leq k,\ell\leq 2m-1$ and $x\neq y$.  Then
	$$
	xy=0\quad\Longleftrightarrow\quad k+\ell\geq 2m.
	$$
	Thus
	$$
	E(\Gamma(R_m))=\bigl\{\{x,y\}\mid x\in V_k,\ y\in V_\ell,\ x\neq y,\ k+\ell\geq 2m\bigr\}.
	$$
\end{lemma}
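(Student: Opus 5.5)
The proof works entirely inside the chain ring $R_m\cong\mathbb{Z}[i]/(\pi^{2m})$, using the unit-times-power description of its elements. Write $x=u\pi^k$ and $y=w\pi^\ell$ with $u,w\in R_m^\times$, which is possible by the definition of $v$. Then
\[
xy=(uw)\pi^{k+\ell},
\]
and $uw$ is again a unit. So $xy=0$ holds if and only if $\pi^{k+\ell}=0$ in $R_m$, because multiplication by a unit is injective.

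The key step is therefore to show that $\pi^{s}=0$ in $R_m$ if and only if $s\ge 2m$. One direction is immediate: if $s\ge 2m$, then $\pi^s\in(\pi^{2m})=(2^m)$, which is zero in $R_m$. For the converse, suppose $s\le 2m-1$. Then $\pi^s$ generates $\mathfrak m^s$, and by the counting fact $|\mathfrak m^s|=2^{2m-s}\ge 2$ recalled before Lemma~\ref{lem:Vk-gaussian}, this ideal is nonzero. Hence $\pi^s\ne0$.

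An equivalent route, if one prefers not to rely on the counting, uses unique factorization in the PID $\mathbb{Z}[i]$. Lifting to $\mathbb{Z}[i]$, we have $\pi^{2m}\mid\pi^s$ if and only if $s\ge 2m$, since $\pi$ is a prime element and $2^m$ is a unit times $\pi^{2m}$.

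Combining these gives $xy=0$ if and only if $k+\ell\ge 2m$. The description of $E(\Gamma(R_m))$ then follows directly. By Lemma~\ref{lem:Vk-gaussian}, the vertex set is the disjoint union $V_1\sqcup\cdots\sqcup V_{2m-1}$, and edges are exactly the pairs of distinct vertices with zero product.

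The only point needing care is the well-definedness of the valuation, namely that $k$ is independent of how $x$ is written as $u\pi^k$. This follows because $V_k=\mathfrak m^k\setminus\mathfrak m^{k+1}$ and the chain $\mathfrak m^0\supsetneq\mathfrak m^1\supsetneq\cdots$ is strict, which is again guaranteed by the cardinality count. I therefore expect no real obstacle; the main step is just justifying $\pi^{2m-1}\neq0$.

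Finally, I would remark that the rule is exactly the adjacency of $C(2,2m)$ in Definition~\ref{def:chain-graph}, with $|V_k|=2^{2m-k-1}=(q-1)q^{L-k-1}$ for $q=2$ and $L=2m$.
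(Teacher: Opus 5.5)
Your proof is correct and follows essentially the same route as the paper: write $x=u\pi^k$ and $y=w\pi^\ell$, note that $xy=(uw)\pi^{k+\ell}$ with $uw$ a unit, and reduce to whether $\pi^{2m}$ divides $\pi^{k+\ell}$ in $\mathbb{Z}[i]$. Your extra justification that $\pi^s\neq 0$ for $s\le 2m-1$ (via $|\mathfrak m^s|=2^{2m-s}$ or unique factorization), together with your remark on well-definedness of the valuation, makes explicit steps that the paper leaves implicit.
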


\begin{proof}
	Let $x=u\pi^k$ and $y=w\pi^\ell$ with $u,w\in R_m^\times$.  Then
	$ xy=uw\pi^{k+\ell}. $
	The product $uw$ is a unit.  Hence $xy$ is zero in $\mathbb{Z}[i]/(\pi^{2m})$ if and only if $\pi^{2m}$ divides $\pi^{k+\ell}$, and this is equivalent to $k+\ell\geq 2m$.
\end{proof}

\medskip

In particular, for $k\geq m$, the induced subgraph on $V_k$ is complete, because two distinct vertices in $V_k$ have valuation sum $2k\geq 2m$.

\medskip

We now give the basic example that will be used again to check the Betti and Hilbert formulas.

\medskip

\begin{example}\label{ex:gaussian-m2-layers}
	 If $m=2$, then	$R_2=\mathbb Z[i]/(4)\ \cong\ \mathbb Z[i]/(\pi^4),$ $|R_2|=2^{2m}=16,$ and $|Z^*(R_2)|=2^{2m-1}-1=7.$	The chain of ideals is $R_2=\mathfrak m^0 \supset \mathfrak m^1 \supset \mathfrak m^2 \supset \mathfrak m^3 \supset \mathfrak m^4=0,$ and	$|\mathfrak m^k|=2^{4-k}$ for $k=0,1,2,3,4$. Concretely (writing elements as residue classes $a+bi\pmod 4$), divisibility by $\pi=1+i$ is equivalent to
	$a\equiv b\pmod 2$. So, we have $V_1=\mathfrak m\setminus \mathfrak m^2=\{\,\pm 1\pm i\,\},$ with $ |V_1|=4,$
	$V_2=\mathfrak m^2\setminus \mathfrak m^3=\{\,2,\ 2i\,\},$ with $|V_2|=2,$ and $V_3=\mathfrak m^3\setminus \mathfrak m^4=\{\,2+2i\,\},$ with $|V_3|=1,$ (see Figure \ref{chain}). For example: $1+i=\pi$ so $v(1+i)=1$; $2=(-i)\pi^2$ so $v(2)=2$; and $2+2i=(-i)\pi^3$ so $v(2+2i)=3$.  As $2m=4$, so two distinct vertices $x\in V_k$, $y\in V_\ell$ are adjacent if and only if $k+\ell\ge 4$. Thus, $V_1$ has no internal edges, as $1+1<4$, $V_2$ is a clique ($2+2=4$), so $\{2,2i\}$ is an edge, and every vertex in $V_3$ is adjacent to every vertex in $V_1$ and $V_2$ ($3+1=4$, $3+2>4$), see Figure \ref{zero div graph 1}. 
\end{example}

\medskip

\begin{figure}[H]
	\centering
	\begin{tikzpicture}[
		box/.style={draw, rounded corners, inner sep=6pt, align=center},
		arr/.style={-Latex, thick}
		]
		\node[box] (m0) {$\mathfrak m^0=R_2$\\$|\mathfrak m^0|=2^{4}=16$};
		\node[box, below=10mm of m0] (m1) {$\mathfrak m^1=(\pi)$\\$|\mathfrak m^1|=2^{3}=8$};
		\node[box, below=10mm of m1] (m2) {$\mathfrak m^2=(\pi^2)=(2)$\\$|\mathfrak m^2|=2^{2}=4$};
		\node[box, below=10mm of m2] (m3) {$\mathfrak m^3=(\pi^3)$\\$|\mathfrak m^3|=2^{1}=2$};
		\node[box, below=10mm of m3] (m4) {$\mathfrak m^4=(\pi^4)=0$\\$|\mathfrak m^4|=1$};
		\draw[arr] (m0)--(m1);
		\draw[arr] (m1)--(m2);
		\draw[arr] (m2)--(m3);
		\draw[arr] (m3)--(m4);
		\node[right=12mm of m1, align=left] {\small
			$V_k=\mathfrak m^k\setminus\mathfrak m^{k+1}$\\[2pt]
			$|V_0|=8,\ |V_1|=4,\ |V_2|=2,\ |V_3|=1$};
	\end{tikzpicture}
	\caption{The ideal chain of $R_2=\mathbb{Z}[i]/(4)\cong \mathbb{Z}[i]/(\pi^4)$ and the sizes $|\mathfrak m^k|=2^{4-k}$.}
	\label{chain}
\end{figure}
\begin{figure}[H]
	\centering
	\begin{tikzpicture}[
		v1/.style={circle, draw, thick, minimum size=9mm},
		v2/.style={circle, draw, thick, minimum size=9mm},
		v3/.style={circle, draw, thick, minimum size=9mm},
		lab/.style={font=\small},
		ed/.style={thick},
		grp/.style={draw, rounded corners, dashed, inner sep=6pt}
		]
		\node[v3] (a) at (0,4) {$2+2i$};
		\node[lab, above=2mm of a] {$V_3\ (v=3)$};
		\node[v2] (b1) at (-1.6,2) {$2$};
		\node[v2] (b2) at (1.6,2) {$2i$};
		\node[lab, below=1mm of $(b1)!0.5!(b2)$] {$V_2\ (v=2)$};
		\node[v1] (c1) at (-4.9,0) {$1+i$};
		\node[v1] (c2) at (-1.9,0) {$1-i$};
		\node[v1] (c3) at (1.9,0) {$-1+i$};
		\node[v1] (c4) at (4.9,0) {$-1-i$};
		\node[lab, below=2mm of $(c1)!0.5!(c4)$] {$V_1\ (v=1)$};
		\node[grp, fit=(a)] {};
		\node[grp, fit=(b1)(b2)] {};
		\node[grp, fit=(c1)(c2)(c3)(c4)] {};
		\draw[ed] (a)--(b1);
		\draw[ed] (a)--(b2);
		\draw[ed] (a)--(c1);
		\draw[ed] (a)--(c2);
		\draw[ed] (a)--(c3);
		\draw[ed] (a)--(c4);
		\draw[ed] (b1)--(b2);
		\node[align=center, font=\small] at (0,6.5) {$\{x,y\}\in E(\Gamma(R_2))$ if and only if $v(x)+v(y)\geq 4$. Here $V_2$ is a clique, $V_1$ is independent,\\ and $V_3$ connects to all.};
	\end{tikzpicture}
	\caption{The zero-divisor graph $\Gamma(R_2)$ on $Z^*(R_2)=V_1\sqcup V_2\sqcup V_3$.}
	\label{zero div graph 1}
\end{figure}

\medskip

If $A = \mathbb Z/2^{2m}\mathbb Z$, then in $A$ every nonzero element can be written 	uniquely as $u2^k$ with $u$ a unit and $0\le k\le 2m-1$. Define the $2$–adic valuation $w$ by $w(u2^k)=k$ and $w(0)=+\infty$. Then the zero divisors of $A$ are the elements with $1\le w(a)\le 2m-1$. Consider the sets
\[
W_k = \{a\in A \mid w(a)=k\},\qquad\text{for}\qquad 1\le k\le 2m-1.
\]
Then it is clear that $|W_k| = 2^{2m-k-1}$, and for
$a\in W_k$, $b\in W_\ell$ with $a\neq b$, we get  $ab=0$ if and only if $k+\ell\ge 2m.$	Thus, $\Gamma(A)$ has the same valuation layer sizes and the same adjacency
rule as $\Gamma(R_m)$. We consider, it more formally in the following result.	

\begin{proposition}\label{prop:gaussian-iso}
	For every $m\geq 1$, there is a graph isomorphism
	$$
	\Phi_m:\Gamma(R_m)\longrightarrow \Gamma(\mathbb{Z}/2^{2m}\mathbb{Z}).
	$$
\end{proposition}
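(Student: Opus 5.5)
We build $\Phi_m$ layer by layer, using only the two facts established above: the layer sizes and the adjacency rule. By Lemma \ref{lem:Vk-gaussian}, the vertex set of $\Gamma(R_m)$ is the disjoint union $V_1\sqcup\cdots\sqcup V_{2m-1}$ with $|V_k|=2^{2m-k-1}$. The discussion preceding the proposition gives the vertex set of $\Gamma(\mathbb{Z}/2^{2m}\mathbb{Z})$ as $W_1\sqcup\cdots\sqcup W_{2m-1}$ with $|W_k|=2^{2m-k-1}$. For each $k$, choose any bijection $\phi_k:V_k\to W_k$. Then define $\Phi_m$ as the union of these maps, so $\Phi_m(x)=\phi_k(x)$ for $x\in V_k$. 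Since the layers on each side are pairwise disjoint and exhaust the vertex sets, $\Phi_m$ is a bijection. By construction it preserves valuation: $w(\Phi_m(x))=v(x)$.

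Next we check that edges are preserved in both directions. Take distinct vertices $x\in V_k$ and $y\in V_\ell$. Then $\Phi_m(x)\in W_k$ and $\Phi_m(y)\in W_\ell$ are distinct, because $\Phi_m$ is injective. By Lemma \ref{lem:adjacency-gaussian}, $\{x,y\}$ is an edge exactly when $k+\ell\ge 2m$. By the analogous valuation rule in $\mathbb{Z}/2^{2m}\mathbb{Z}$, $\{\Phi_m(x),\Phi_m(y)\}$ is an edge exactly when $k+\ell\ge 2m$. So the two adjacency conditions are identical, and $\Phi_m$ is a graph isomorphism.

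For completeness, we justify the integer-side facts that were only asserted as ``clear''. Every nonzero $a\in\mathbb{Z}/2^{2m}\mathbb{Z}$ can be written as $u2^k$ with $u$ odd, and the exponent $k$ is well defined. The number of residues with $w(a)=k$ is $2^{2m-k}-2^{2m-k-1}$; this is also the case $s=1$ of Lemma \ref{lem:valuation-annihilator}. Finally, $(u2^k)(u'2^\ell)=uu'2^{k+\ell}$ with $uu'$ a unit, so the product vanishes iff $k+\ell\ge 2m$. This is the same computation as in the proof of Lemma \ref{lem:adjacency-gaussian}.

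The only delicate point is that the bijection is not canonical. Both graphs are instances of $C(2,2m)$ from Definition \ref{def:chain-graph}, and within each layer the vertices are structurally interchangeable, which is why any choice of the $\phi_k$ works. The case $m=1$ should still be noted separately: there is a single layer $V_1$ of size $1$, and both graphs consist of one isolated vertex.
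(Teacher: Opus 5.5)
Your proposal is correct and follows the paper's proof essentially verbatim. Like the paper, you choose arbitrary bijections $V_k\to W_k$ between valuation layers of equal size $2^{2m-k-1}$, and observe that adjacency on both sides is governed by the same rule $k+\ell\ge 2m$. Your added justification of the integer-side facts is a small gain in rigor over the paper's wording: you note only that the exponent $k$ is well defined, since the unit $u$ in $u2^k$ is not unique in $\mathbb{Z}/2^{2m}\mathbb{Z}$, and you check the layer count and the product rule explicitly.
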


\begin{proof}
	Let $A=\mathbb{Z}/2^{2m}\mathbb{Z}$.  Every nonzero element of $A$ has a unique form $u2^k$ with $u$ a unit and $0\leq k\leq 2m-1$.  Let
	$ W_k=\{a\in A\mid w(a)=k\},$ for $ 1\leq k\leq 2m-1, $
	where $w$ is the $2$-adic valuation.  Then
	$ |W_k|=2^{2m-k-1}=|V_k|. $
	Choose a bijection $\varphi_k:V_k\to W_k$ for each $k$, and define $\Phi_m(x)=\varphi_k(x)$ when $x\in V_k$.
	
	\medskip
	
	The map $\Phi_m$ is a bijection from $Z^*(R_m)$ to $Z^*(A)$.  If $x\in V_k$ and $y\in V_\ell$ with $x\neq y$, then Lemma \ref{lem:adjacency-gaussian} and the identical valuation rule in $A$ give
	$$
	xy=0\quad\Longleftrightarrow\quad k+\ell\geq 2m\quad\Longleftrightarrow\quad \Phi_m(x)\Phi_m(y)=0.
	$$
	Thus adjacency is preserved and reflected, so $\Phi_m$ is a graph isomorphism.
\end{proof}

\medskip

\section{Cochordality, type sequence, and Betti numbers for \texorpdfstring{$\mathbb{Z}_{2^m}[i]$}{Z_2^m[i]}}\label{sec:gaussian-betti}
 The formulas in this section are not new because of a different graph construction, rather, they are new as corrected evaluations of the chain-type formula with $q=2$ and $L=2m$.  The global correction term $-\binom{2^m-1}{r+1}$ is essential and is retained throughout.

\medskip

The preceding section identifies $\Gamma(R_m)$ with the layered graph $C(2,2m)$.  Hence the general type list in Corollary \ref{cor:type-chain} specializes immediately.

\medskip

\begin{theorem}\label{thm:gaussian-type}
	Let $G_m=\Gamma(R_m)$, where $R_m=\mathbb{Z}_{2^m}[i]$ and $m\geq 1$.  For $m\leq i\leq 2m-1$, define
	$$
	s_i=\bigl(2^i-2^{2m-i-1}-1,
	2^i-2^{2m-i-1}-2,
	\ldots,
	2^i-2^{2m-i}\bigr).
	$$
	Then $G_m$ is cochordal and has type list
	$ (s_{2m-1},s_{2m-2},\ldots,s_m). $
	If the last entry is $0$, it may be deleted without changing the graph or the Betti numbers.
\end{theorem}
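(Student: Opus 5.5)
The plan is to reduce Theorem~\ref{thm:gaussian-type} entirely to the general chain-graph results by identifying $\Gamma(R_m)$ with $C(2,2m)$.

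\textbf{Step 1: identify the graph.} By Lemma~\ref{lem:Vk-gaussian}, the vertex set $Z^*(R_m)$ splits into layers $V_1,\ldots,V_{2m-1}$. Their sizes are $|V_k|=2^{2m-k-1}$, which equals $(q-1)q^{L-k-1}$ for $q=2$ and $L=2m$. By Lemma~\ref{lem:adjacency-gaussian}, two distinct vertices $x\in V_k$, $y\in V_\ell$ are adjacent exactly when $k+\ell\ge 2m$. This is precisely the defining adjacency rule of Definition~\ref{def:chain-graph}. Hence the identity on vertices, read layer by layer, is a graph isomorphism $G_m\cong C(2,2m)$. Proposition~\ref{prop:gaussian-iso} can serve as a cross-check, but it is not needed.

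\textbf{Step 2: cochordality and the type list.} Lemma~\ref{lem:chain-cochordal-system}, applied with $q=2$, $L=2m$ and $h=\lceil L/2\rceil=m$, gives an explicit cochordal constructible system. So $G_m$ is cochordal, either by Theorem~\ref{thm:constructible-characterization} or directly by Corollary~\ref{cor:CqL-threshold-cochordal}.

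Corollary~\ref{cor:type-chain} then gives the type list $(s_{2m-1},\ldots,s_m)$, with
\[
s_i=\bigl(q^i-q^{L-i-1}-1,\ \ldots,\ q^i-q^{L-i}\bigr).
\]
Substituting $q=2$ and $L=2m$ yields exactly the displayed blocks.

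The one point to check is that the block length matches. We need $(2^i-2^{2m-i-1}-1)-(2^i-2^{2m-i})+1=2^{2m-i-1}=|V_i|$, and indeed $2^{2m-i}-2^{2m-i-1}=2^{2m-i-1}$.

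\textbf{Step 3: the possible zero entry.} The last entry of $s_m$ is $2^m-2^{m}=0$. This corresponds to the first center $v_{m,1}$, whose cover is empty, so it is an empty star. Deleting it leaves the graph unchanged, since $v_{m,1}$ still appears as a leaf in later covers when $|V_m|\ge 2$, or in covers from higher layers. Remark~\ref{rem:zero-type} shows the Betti numbers are also unchanged, by Pascal's identity.

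\textbf{Main obstacle: the degenerate case $m=1$.} Here $G_1$ is a single vertex $\pi$ with no edges, and the type list is just $(0)$. After deletion the list is empty, so I would treat this case as the edgeless convention. In this case cochordality is trivial.

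Apart from that edge case, the proof is a direct specialization, and no new combinatorics is required.
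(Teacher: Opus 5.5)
Your proposal is correct and follows essentially the same route as the paper: identify $G_m$ with $C(2,2m)$ via Lemmas \ref{lem:Vk-gaussian} and \ref{lem:adjacency-gaussian}, then specialize Corollary \ref{cor:type-chain} with $q=2$ and $L=2m$, and invoke Remark \ref{rem:zero-type} for the terminal zero. Your additional checks are correct details that the paper leaves implicit: the block-length match, the fact that the zero entry is the empty star at $v_{m,1}$ (a vertex still covered later when $m\ge 2$), and the separate handling of the degenerate case $m=1$.
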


\begin{proof}
	By Lemmas \ref{lem:Vk-gaussian} and \ref{lem:adjacency-gaussian}, the graph $G_m$ is exactly $C(2,2m)$: the layer $V_k$ has size $2^{2m-k-1}$ and the edge relation is $k+\ell\geq 2m$.  Applying Corollary \ref{cor:type-chain} with $q=2$ and $L=2m$ gives the stated type list.  The statement about the terminal zero is Remark \ref{rem:zero-type}.
\end{proof}

\medskip

The next theorem gives all graded Betti numbers of the quotient by the edge ideal.

\medskip

\begin{theorem}\label{thm:gaussian-betti}
	Let $G_m=\Gamma(\mathbb{Z}_{2^m}[i])$, let $N=|V(G_m)|=2^{2m-1}-1$, and let $I_m=I(G_m)\subseteq S=K[x_1,\ldots,x_N]$.  For every $r\geq 1$,
	$$
	\beta_r(S/I_m)=\beta_{r,r+1}(S/I_m)=
	\sum_{j=m}^{2m-1}2^{2m-j-1}\binom{2^j-2}{r}
	-\binom{2^m-1}{r+1}.
	$$
	Moreover,
	$ \operatorname{pd}_S(S/I_m)=2^{2m-1}-2. $
	If $m\geq 2$, then $I_m\neq 0$, $I_m$ has a $2$-linear resolution, and $\operatorname{reg}(S/I_m)=1$.  For $m=1$, the graph has one vertex and no edge, so $I_1=(0)$ and $S/I_1\cong K[x]$.
\end{theorem}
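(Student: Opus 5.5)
The plan is to deduce the statement as the specialization $q=2$, $L=2m$ of Theorem \ref{thm:chain-betti}, using the identification of $G_m$ with $C(2,2m)$ established in the proof of Theorem \ref{thm:gaussian-type}. By Lemmas \ref{lem:Vk-gaussian} and \ref{lem:adjacency-gaussian}, the layers $V_k$ of $Z^*(R_m)$ have sizes $2^{2m-k-1}=(q-1)q^{L-k-1}$ with $q=2$, $L=2m$. Adjacency is governed by $k+\ell\geq 2m$. So $G_m$ is literally $C(2,2m)$, with $N=q^{L-1}-1=2^{2m-1}-1$. Substituting into Theorem \ref{thm:chain-betti} then settles the main formula. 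One has $\lceil L/2\rceil=\lfloor L/2\rfloor=m$ and $(q-1)q^{L-j-1}=2^{2m-j-1}$. Also $\binom{q^j-2}{r}=\binom{2^j-2}{r}$, and the global correction becomes $\binom{2^m-1}{r+1}$. The linear-strand statement $\beta_r=\beta_{r,r+1}$ is part of the same theorem.

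For the projective dimension, I would argue directly rather than quote the chain theorem blindly, since its justification for $L=2$ is terse. For $m\geq 2$ the block $j=2m-1$ contributes $\binom{2^{2m-1}-2}{r}$, which is nonzero at $r=2^{2m-1}-2$. Every other block has smaller upper argument $2^j-2$. The correction term $\binom{2^m-1}{r+1}$ vanishes there because $2^m-1<2^{2m-1}-1$ when $m\geq 2$. Hence $\beta_{2^{2m-1}-2}=1\neq 0$, and all higher Betti numbers vanish. This gives $\operatorname{pd}=2^{2m-1}-2$. Nonvanishing of $I_m$ for $m\geq 2$ comes from the edge $\{2,2i\}$-type pair in $V_m$ (a clique of size $2^{m-1}\geq 2$), or from $V_{2m-1}$ being adjacent to $V_1$. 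The $2$-linear resolution and $\operatorname{reg}(S/I_m)=1$ then follow from Corollary \ref{cor:CqL-threshold-cochordal} and Theorem \ref{thm:froberg}.

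The degenerate case $m=1$ is checked by hand. Here $R_1=\mathbb{Z}[i]/(\pi^2)$ has a single nonzero zero divisor $\pi$, so $N=1$, the graph has no edges, $I_1=0$ and $S/I_1\cong K[x]$. The formula is consistent with this: for $m=1$ the sum is $\binom{0}{r}$ and the correction is $\binom{1}{r+1}$, both zero for $r\geq 1$. Likewise $\operatorname{pd}=2^{1}-2=0$ matches.

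The only real obstacle is this boundary bookkeeping. One must confirm that the terminal zero entry of $s_m$ (Remark \ref{rem:zero-type}) does not affect the count $K=2^m-1$ used in the correction. It must also be confirmed that the top Betti number is not cancelled by the correction term; the inequality $2^m-1<2^{2m-1}-1$ for $m\geq2$ handles the latter.
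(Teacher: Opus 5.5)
Your proposal is correct and follows the same route as the paper: identify $G_m$ with $C(2,2m)$ via Lemmas \ref{lem:Vk-gaussian} and \ref{lem:adjacency-gaussian}, then specialize Theorem \ref{thm:chain-betti} at $q=2$, $L=2m$. Your direct check that $\beta_{2^{2m-1}-2}=1$ is not cancelled by the correction term, and your hand check of $m=1$, spell out boundary bookkeeping that the paper leaves implicit. The terminal zero also causes no trouble: $K=2^m-1$ counts it, and Remark \ref{rem:zero-type} shows that deleting it leaves the Betti numbers unchanged.
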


\begin{proof}
	Apply Theorem \ref{thm:chain-betti} with $q=2$ and $L=2m$.  Then $N=2^{2m-1}-1$ and $\lfloor L/2\rfloor=m$, giving the displayed Betti formula and projective dimension.  The graph is edgeless exactly for $m=1$, since then $Z^*(R_1)$ has one element.  For $m\geq 2$, the graph is nonempty and cochordal, so the edge ideal has a $2$-linear resolution by Theorem \ref{thm:type-formula}.
\end{proof}

\medskip

The next corollary translates the quotient Betti numbers into the Betti table of the edge ideal itself.

\medskip

\begin{corollary}\label{cor:gaussian-graded-betti-ideal}
	For $m\geq 2$, the graded Betti numbers of the edge ideal $I_m$, viewed as an $S$-module, are determined by
	$ \beta_{r,r+2}(I_m)=\beta_{r+1,r+2}(S/I_m)\quad\text{for }r\geq 0, $
	and all other graded Betti numbers of $I_m$ vanish.  Thus the minimal free resolution of $I_m$ is $2$-linear.
\end{corollary}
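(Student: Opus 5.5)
The plan is to derive the statement from \cref{thm:gaussian-betti} together with the standard shift between the resolutions of $I_m$ and $S/I_m$.

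First I fix a minimal graded free resolution of $S/I_m$,
\[
0\to F_p\to\cdots\to F_1\to F_0=S\to S/I_m\to 0 .
\]
Here the image of $F_1\to S$ is exactly $I_m$. Truncating gives a complex
\[
0\to F_p\to\cdots\to F_1\to I_m\to 0 ,
\]
which is exact and still minimal: the differentials are unchanged, so their entries remain in the homogeneous maximal ideal. Reindexing with $G_r=F_{r+1}$ makes this a minimal graded free resolution of $I_m$. Uniqueness of minimal resolutions up to isomorphism, or equivalently the long exact $\operatorname{Tor}$ sequence for
\[
0\to I_m\to S\to S/I_m\to 0
\]
with $\operatorname{Tor}_i^S(\mathbb{K},S)=0$ for $i\geq 1$, then gives $\beta_{r,j}(I_m)=\beta_{r+1,j}(S/I_m)$ for all $r\geq 0$ and all $j$.

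Next, since $m\geq 2$, \cref{thm:gaussian-betti} says $I_m\neq 0$ and that every nonzero graded Betti number of $S/I_m$ with $i\geq 1$ lies in bidegree $(i,i+1)$. Setting $i=r+1$, the only possibly nonzero $\beta_{r,j}(I_m)$ is the one with $j=r+2$, and it equals $\beta_{r+1,r+2}(S/I_m)$. All other graded Betti numbers of $I_m$ therefore vanish. Because $\beta_{0,2}(I_m)=\beta_1(S/I_m)=|E(G_m)|>0$, the resolution of $I_m$ is $2$-linear, which agrees with \cref{thm:froberg} and the cochordality established in \cref{thm:gaussian-type}.

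The main point to check is the dimension-shift identification itself. This is routine once I note that the inclusion $I_m\hookrightarrow S$ induces the zero map on $\operatorname{Tor}$ in positive degrees. The only case needing care is the bottom one, $\operatorname{Tor}_1^S(\mathbb{K},S/I_m)\cong\operatorname{Tor}_0^S(\mathbb{K},I_m)$. This holds because $\operatorname{Tor}_0^S(\mathbb{K},I_m)=I_m/\mathfrak{m}I_m\to\mathbb{K}\otimes S=\mathbb{K}$ is zero, as $I_m\subseteq\mathfrak{m}$.
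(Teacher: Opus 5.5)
Your proof is correct and takes the same route as the paper: the dimension shift along $0\to I_m\to S\to S/I_m\to 0$, combined with the linearity of the resolution of $S/I_m$ from Theorem~\ref{thm:gaussian-betti}. You also spell out the bottom-degree identification $\operatorname{Tor}_1^S(\mathbb{K},S/I_m)\cong\operatorname{Tor}_0^S(\mathbb{K},I_m)$ more carefully than the paper does; it rests on $I_m$ lying in the homogeneous maximal ideal of $S$.
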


\begin{proof}
	The short exact sequence
	$$
	0\longrightarrow I_m\longrightarrow S\longrightarrow S/I_m\longrightarrow 0
	$$
	identifies the positive homological Betti numbers of $S/I_m$ with the Betti numbers of $I_m$ shifted by one homological degree.  Since the quotient has nonzero graded Betti numbers only in positions $(r,r+1)$, the ideal has nonzero graded Betti numbers only in positions $(r,r+2)$.
\end{proof}

\medskip

\begin{example}\label{ex:m2-betti}
	For $m=2$, the graph has $N=7$ vertices.  Theorem \ref{thm:gaussian-betti} gives
	$$
	\beta_r(S/I_2)=2\binom{2}{r}+\binom{6}{r}-\binom{3}{r+1}.
	$$
	Therefore
	$$
	(\beta_1,\beta_2,\beta_3,\beta_4,\beta_5,\beta_6)=(7,16,20,15,6,1).
	$$
\end{example}

\medskip

\begin{table}[H]
	\centering
	\begin{tabular}{c c c c c}
		\toprule
		$m$ & $N$ & type blocks & $\operatorname{pd}(S/I_m)$ & first Betti numbers \\
		\midrule
		$1$ & $1$ & $(0)$ & $0$ & none \\
		$2$ & $7$ & $(6),(1,0)$ & $6$ & $7,16,20$ \\
		$3$ & $31$ & $(30),(14,13),(4,3,2,1)$ & $30$ & $91,1320,12090$ \\
		\bottomrule
	\end{tabular}
	\caption{The first values of the Betti formula.}
	\label{tab:gaussian-betti-values}
\end{table}
Table \ref{tab:gaussian-betti-values} gives the Gaussian Betti formula.  The displayed type lists include the harmless terminal zero when it occurs.
\medskip

\section{Cohen--Macaulayness and dimension for \texorpdfstring{$\mathbb{Z}_{2^m}[i]$}{Z_2^m[i]}}\label{sec:gaussian-CM}
 The projective dimension obtained from the corrected Betti formula almost reaches the number of variables.  This forces depth one in every nontrivial Gaussian case, and therefore separates the homological linearity of the edge ideal from the Cohen--Macaulay property of the quotient.

\medskip

In this section we examine the Cohen--Macaulayness of $S/I_m$, where $I_m=I(G_m)$ and $G_m=\Gamma(\mathbb{Z}_{2^m}[i])$.  We use the standard graded convention: a quotient is Cohen--Macaulay precisely when its depth equals its Krull dimension, see, for example, \cite{BrunsHerzog1993,HerzogHibi2011}.

\medskip

The polynomial ring $S$ has
$ N=|V(G_m)|=2^{2m-1}-1 $
variables.  From Theorem \ref{thm:gaussian-betti},
$ \operatorname{pd}_S(S/I_m)=2^{2m-1}-2 $
for every $m\geq 1$, with the value $0$ in the degenerate case $m=1$.  Auslander--Buchsbaum \cite{AuslanderBuchsbaum1957}
\begin{equation}\label{eq 1}
	\operatorname{depth}_S(S/I_m)
	= n - \operatorname{pd}_S(S/I_m)
	= (2^{2m-1}-1) - (2^{2m-1}-2) = 1.
\end{equation}
Thus depth alone does not detect the exceptional case; the dimension must also be computed.

\medskip

Next, we compute the Krull dimension of $S/I_m$.  As $I_m$ is a squarefree	monomial ideal generated by quadratic monomials corresponding to the edges of	$G_m$. So, its height equals the minimal size of a vertex cover of $G_m$, and hence $\dim(S/I_m) = n - \operatorname{ht}(I_m)\geq 2$  as soon as $G_m$ admits an independent set of size~$2$. For $m\ge2$, it follows immediately.  Take any two distinct vertices	$x,y\in V_1$.  Then $v(x)=v(y)=1$, so $v(x) + v(y) = 1+1 = 2 < 2m,$ and by the adjacency rule $x$ and $y$ are not adjacent.  Thus $\{x,y\}$ is an
independent set, and so there exists a vertex cover of size at most $n-2$, and whence for all $m\geq 2$, we obtain
\[
\dim(S/I_m) = n - \operatorname{ht}(I_m) \;\ge\; n-(n-2) = 2.
\]
For $m=1$, the situation is degenerate, $G_1$ has a single vertex and no	edges, so $I_1=(0)$ and $S/I_1\cong K[x]$ is a polynomial ring in one	variable, of dimension~$1$. Therefore, we have
\[
\dim(S/I_m)=
\begin{cases}
	1, & m=1,\\[2pt]
	\ge 2, & m\ge2.
\end{cases}
\]

We recall that a standard graded $K$–algebra $R$ is Cohen--Macaulay if $\operatorname{depth}(R)=\dim(R).$ For $R=S/I_m$, by Equation \eqref{eq 1}, $\operatorname{depth}(S/I_m)=1$, for all
$m\ge1$, while
\[
\dim(S/I_m)=1 \quad\text{if } m=1,\qquad\text{and}\qquad
\dim(S/I_m)\ge2 \quad\text{if } m\ge2.
\]

The next theorem gives the exact height and dimension.

\medskip

\begin{theorem}\label{thm:gaussian-dim-height}
	Let $G_m=\Gamma(\mathbb{Z}_{2^m}[i])$, and let $I_m=I(G_m)\subseteq S$.  Then
	$ \dim(S/I_m)=\alpha(G_m)=2^{2m-1}-2^m+1, $
	and
	$ \operatorname{height}(I_m)=2^m-2. $
\end{theorem}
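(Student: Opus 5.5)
The plan is to reduce everything to the universal chain graph and then read off the invariants from Proposition \ref{prop:chain-alpha}. By Lemmas \ref{lem:Vk-gaussian} and \ref{lem:adjacency-gaussian}, the graph $G_m$ is exactly $C(2,2m)$. Indeed, the layer $V_k$ has size $2^{2m-k-1}=(q-1)q^{L-k-1}$ with $q=2$ and $L=2m$, and two distinct vertices are adjacent precisely when the valuation sum is at least $2m$. This identification was already used in the proof of Theorem \ref{thm:gaussian-type}. Consequently $\alpha(G_m)=\alpha(C(2,2m))$, and we are in the even case $L=2a$ with $a=m$.

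Next, apply the even case of Proposition \ref{prop:chain-alpha} with $q=2$. This gives $\alpha(G_m)=2^{2m-1}-2^m+1$. For an edge ideal, Proposition \ref{prop:dimension-independent-sets} gives $\dim S/I_m=\alpha(G_m)$. The height then follows from $\operatorname{height}(I_m)=N-\alpha(G_m)$, where $N=2^{2m-1}-1$ by Lemma \ref{lem:Vk-gaussian}. This yields $(2^{2m-1}-1)-(2^{2m-1}-2^m+1)=2^m-2$.

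For transparency, I would also re-verify the extremal independent set directly. The set $V_1\cup\cdots\cup V_{m-1}$ is independent because any two of its layers satisfy $k+\ell\le 2m-2$. Its size is $\sum_{k=1}^{m-1}2^{2m-k-1}=2^{2m-1}-2^m$. Adding one vertex of $V_m$ keeps the set independent, since that vertex meets only layers $V_\ell$ with $\ell\ge m$, and $V_m$ itself is a clique. This gives the lower bound $2^{2m-1}-2^m+1$.

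For the upper bound, let $s\ge m$ be the largest layer used by an independent set. The set then contains at most one vertex of $V_s$, together with vertices from layers of index at most $2m-s-1\le m-1$. The lower-layer total is largest at $s=m$, so no independent set exceeds $2^{2m-1}-2^m+1$. If the set uses no layer $V_s$ with $s\ge m$, it lies inside $V_1\cup\cdots\cup V_{m-1}$ and is smaller by one.

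The main obstacle is the degenerate case $m=1$. There $G_1$ is a single vertex, the formula gives $\alpha=2-2+1=1$ and height $0$, and these agree with $S/I_1\cong K[x]$. I would check this case separately, because Proposition \ref{prop:chain-alpha} is stated for $L\ge 2$, and the case $L=2$, $a=1$ involves an empty union of lower layers. With the empty-sum convention, that case still goes through.
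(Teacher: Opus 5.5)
Your proposal is correct and follows the paper's proof. Like the paper, you identify $G_m$ with $C(2,2m)$, apply the even case of Proposition~\ref{prop:chain-alpha} with $q=2$ and $a=m$, and then use $\dim S/I_m=\alpha(G_m)$ and $\operatorname{height}(I_m)=N-\alpha(G_m)$ with $N=2^{2m-1}-1$. Your extra direct verification of the extremal independent sets and of the case $m=1$ is accurate but not needed; note that $m=1$ gives $L=2$, which Proposition~\ref{prop:chain-alpha} already covers.
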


\begin{proof}
	The graph $G_m$ is $C(2,2m)$.  Applying the even case of Proposition \ref{prop:chain-alpha} with $q=2$ and $L=2m$ gives
	$$
	\alpha(G_m)=2^{2m-1}-2^m+1.
	$$
	For an edge ideal, the Krull dimension of the Stanley--Reisner quotient equals the independence number of the graph.  Since $|V(G_m)|=2^{2m-1}-1$, so the height is
	$$
	\operatorname{height}(I_m)=|V(G_m)|-\alpha(G_m)=2^m-2.
	$$
\end{proof}

\medskip

The next theorem determines the Cohen--Macaulay cases.

\medskip

\begin{theorem}\label{thm:gaussian-CM}
	The quotient $S/I_m$ is Cohen--Macaulay if and only if $m=1$.
\end{theorem}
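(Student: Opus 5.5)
The plan is to compare depth and Krull dimension directly, using the two quantities already computed in this section. By the Auslander--Buchsbaum computation in \eqref{eq 1}, $\operatorname{depth}_S(S/I_m)=N-\operatorname{pd}_S(S/I_m)$, where $N=2^{2m-1}-1$. For $m\ge 2$ we have $\operatorname{pd}_S(S/I_m)=2^{2m-1}-2$ by Theorem \ref{thm:gaussian-betti}, so the depth equals $1$. By Theorem \ref{thm:gaussian-dim-height}, $\dim(S/I_m)=\alpha(G_m)=2^{2m-1}-2^m+1$. Cohen--Macaulayness therefore amounts to the numerical identity $2^{2m-1}-2^m+1=1$, that is, $2^{m-1}(2^m-2)=0$. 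This holds exactly when $m=1$.

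The case $m=1$ will be treated separately and directly, since the projective-dimension formula there is degenerate. For $m=1$, $G_1$ has a single vertex and no edges, so $I_1=(0)$ and $S/I_1\cong K[x]$. A polynomial ring is Cohen--Macaulay, and indeed here depth and dimension are both $1$. Note that formally $2^{2m-1}-2=0$ for $m=1$, so \eqref{eq 1} still gives depth $1$; in any case the direct description suffices.

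For $m\ge2$, I would also record an independent reason as a check, in the style of Theorem \ref{thm:cm-classification}: the graph is not well-covered. A single vertex of $V_{2m-1}$ is adjacent to every other vertex, so it is a maximal independent set of size $1$. By contrast, $V_1$ is independent with $|V_1|=2^{2m-2}\ge 2$. Hence the independence complex is not pure, and $S/I_m$ is not Cohen--Macaulay. Even without this check, the earlier observation $\dim(S/I_m)\ge 2>1=\operatorname{depth}$ already suffices.

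The only delicate point is making sure the depth formula is applied with the correct projective dimension for $m\ge2$, which is supplied by Theorem \ref{thm:gaussian-betti}. This rests on the corrected Betti formula having its top nonzero entry in degree $N-1$. I expect no real obstacle, since everything reduces to results already established.
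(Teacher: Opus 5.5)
Your proof is correct and follows essentially the same route as the paper. You treat $m=1$ directly via $S/I_1\cong K[x]$; for $m\ge 2$ you compare $\operatorname{depth}(S/I_m)=1$ (Auslander--Buchsbaum with $\operatorname{pd}_S(S/I_m)=2^{2m-1}-2$) against $\dim(S/I_m)=2^{2m-1}-2^m+1>1$ from Theorem~\ref{thm:gaussian-dim-height}. Your extra non-purity check, the maximal independent set formed by the single dominating vertex of $V_{2m-1}$ versus the independent layer $V_1$, is not in the paper, but it is a valid confirmation that does not even require the projective dimension.
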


\begin{proof}
	If $m=1$, then $G_1$ has one vertex and no edge.  Hence $I_1=(0)$ and $S/I_1\cong K[x]$, which is Cohen--Macaulay.
	 Assume $m\geq 2$. Then by Theorem \ref{thm:gaussian-betti},
	$$
	\operatorname{pd}_S(S/I_m)=2^{2m-1}-2.
	$$
	Since $S$ has $N=2^{2m-1}-1$ variables, so Auslander--Buchsbaum implies
	$ \operatorname{depth}(S/I_m)=1. $
	On the other hand, Theprem \ref{thm:gaussian-dim-height} gives
	$$
	\dim(S/I_m)=2^{2m-1}-2^m+1.
	$$
	For $m\geq 2$, this dimension is at least $5$, and we have
	$$
	\operatorname{depth}(S/I_m)=1<\dim(S/I_m),
	$$
	so $S/I_m$ is not Cohen--Macaulay.
\end{proof}

\medskip

The following corollary describes the largest facets of the independence complex.

\medskip

\begin{corollary}\label{cor:gaussian-facets}
	For $m\geq 1$, the maximum independent sets of $G_m$ are precisely the sets
	$$
	V_1\cup V_2\cup\cdots\cup V_{m-1}\cup\{z\},\qquad z\in V_m.
	$$
	Consequently the number of facets of $\operatorname{Ind}(G_m)$ of maximum dimension is
	$ |V_m|=2^{m-1}. $
\end{corollary}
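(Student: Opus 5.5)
We work inside $G_m=C(2,2m)$, using the layer description of Lemmas \ref{lem:Vk-gaussian} and \ref{lem:adjacency-gaussian}: $|V_k|=2^{2m-k-1}$, and two vertices are adjacent exactly when their layer indices sum to at least $2m$. From Theorem \ref{thm:gaussian-dim-height} we already know $\alpha(G_m)=2^{2m-1}-2^m+1$. The argument has three steps.

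\textbf{Step 1: the displayed sets are independent of size $\alpha(G_m)$.} Take $F_z=V_1\cup\cdots\cup V_{m-1}\cup\{z\}$ with $z\in V_m$.
\begin{itemize}
\item Two vertices in layers $k,\ell\le m-1$ satisfy $k+\ell\le 2m-2<2m$, so they are not adjacent.
\item A vertex of $V_k$ with $k\le m-1$ and $z\in V_m$ give $k+m\le 2m-1<2m$, so they are not adjacent either.
\end{itemize}
Hence $F_z$ is independent. Its size is
$$\sum_{k=1}^{m-1}2^{2m-k-1}+1=2^{2m-1}-2^{m}+1=\alpha(G_m),$$
so each $F_z$ is a maximum independent set, and in particular a facet of maximum dimension.

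\textbf{Step 2: every maximum independent set $F$ equals some $F_z$.} This is the substantive step. Let $s$ be the largest layer index met by $F$.
\begin{itemize}
\item If $s\le m-1$, then $F\subseteq V_1\cup\cdots\cup V_{m-1}$, whose size is $\alpha(G_m)-1$. This is too small.
\item If $s\ge m$, then $V_s$ is a clique since $2s\ge 2m$, so $F$ contains exactly one vertex of $V_s$. Every other vertex of $F$ lies in a layer $V_k$ with $k+s<2m$, that is $k\le 2m-1-s$. Note $2m-1-s<s$, so these are the only admissible layers. Thus
$$|F|\le 1+\sum_{k=1}^{2m-1-s}2^{2m-k-1}.$$
\end{itemize}
For $s>m$ this bound is strictly smaller than the bound at $s=m$, because the omitted layer $V_{m-1}$ is nonempty when $m\ge2$. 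So equality $|F|=\alpha(G_m)$ forces $s=m$, and forces $F$ to contain all of $V_1\cup\cdots\cup V_{m-1}$ together with one vertex $z\in V_m$. That is, $F=F_z$.

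The degenerate case $m=1$ needs separate mention. Here the only vertex set is $V_1$, which has one vertex, and the formula reads $\{z\}$ with $z\in V_1$; this is consistent.

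\textbf{Step 3: counting.} Distinct $z\in V_m$ give distinct sets $F_z$, so the number of maximum-dimensional facets is $|V_m|=2^{2m-m-1}=2^{m-1}$.

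The main obstacle is the strictness in Step 2. Once it is shown that only $s=m$ attains the maximum, and that the bound is attained only when all lower layers are fully included, the identification of the maximum independent sets and their count follow immediately.
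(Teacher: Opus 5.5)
Your proof is correct and takes essentially the paper's approach: the paper also obtains the statement as the equality case of the largest-layer argument in the even case of Proposition \ref{prop:chain-alpha}. Your remark that the strict loss for $s>m$ comes from the nonempty layer $V_{m-1}$ is the detail needed to justify ``precisely''; the paper points only to the loss of $V_m$, and that loss is exactly offset by the one vertex gained in $V_s$, so the paper's wording leaves this step implicit.
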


\begin{proof}
	This is the equality case in the even part of the proof of Proposition \ref{prop:chain-alpha}.  The layers $V_1,\ldots,V_{m-1}$ may all be chosen, and exactly one vertex from $V_m$ may be added because $V_m$ is a clique.  A layer of valuation larger than $m$ excludes at least all of $V_m$ and cannot compensate for that loss, so it cannot produce a larger independent set.
\end{proof}

\medskip

\begin{table}[H]
	\centering
	\begin{tabular}{c c c c c c}
		\toprule
		$m$ & $N$ & $\operatorname{height}(I_m)$ & $\dim(S/I_m)$ & $\operatorname{depth}(S/I_m)$ & CM? \\
		\midrule
		$1$ & $1$ & $0$ & $1$ & $1$ & yes \\
		$2$ & $7$ & $2$ & $5$ & $1$ & no \\
		$3$ & $31$ & $6$ & $25$ & $1$ & no \\
		$4$ & $127$ & $14$ & $113$ & $1$ & no \\
		\bottomrule
	\end{tabular}
	\caption{Depth, dimension, and Cohen--Macaulay behavior for the Gaussian family $\Gamma(\mathbb{Z}_{2^m}[i])$.}
	\label{tab:gaussian-CM}
\end{table}
Table \ref{tab:gaussian-CM} gives the numerical data for the depth, dimension, and Cohen--Macaulay behavior for the Gaussian family $\Gamma(\mathbb{Z}_{2^m}[i])$.  The quotient has a linear resolution in the nontrivial cases, but it is not Cohen--Macaulay once $m\geq 2$.
\medskip

\section{Hilbert series for \texorpdfstring{$\mathbb{Z}_{2^m}[i]$}{Z_2^m[i]}}\label{sec:gaussian-hilbert}
 The Hilbert series here is obtained twice, first from the independence polynomial and then from the corrected Betti numerator.  Agreement of these two expressions gives a useful validation of the corrected global subtraction term in Theorem \ref{thm:gaussian-betti}.

\medskip

Let $\Delta_m=\operatorname{Ind}(G_m)$ be the independence complex of $G_m$.  Since $I_m=I(G_m)$, the quotient $S/I_m$ is the Stanley--Reisner ring $K[\Delta_m]$. Let $ n_k=|V_k|=2^{2m-k-1},$ for $ 1\leq k\leq 2m-1. $
By Proposition \ref{prop:chain-independence-polynomial}, the independence polynomial of $G_m$ is
$$
F_{G_m}(y)=1+\sum_{s=1}^{2m-1}A_s(y)\prod_{k=1}^{\min\{s-1,2m-s-1\}}(1+y)^{2^{2m-k-1}},
$$
where
$$
A_s(y)=
\begin{cases}
	(1+y)^{2^{2m-s-1}}-1, & 2s<2m,\\[2pt]
	2^{2m-s-1}y, & 2s\geq 2m.
\end{cases}
$$
Therefore
$ \operatorname{Hilb}_{S/I_m}(t)=F_{G_m}\left(\frac{t}{1-t}\right).$ Equivalently, if
$ F_{G_m}(y)=\sum_{r=0}^{\alpha(G_m)}f_{r-1}y^r, $
then
$$
\operatorname{Hilb}_{S/I_m}(t)=\frac{1}{(1-t)^N}\sum_{r=0}^{\alpha(G_m)}f_{r-1}t^r(1-t)^{N-r},\qquad N=2^{2m-1}-1.
$$

\medskip

The coefficients $f_{r-1}$ can also be written directly by valuation-layer choices.  If $a_k$ is the number of chosen vertices from $V_k$, then
$$
f_{r-1}=\sum\prod_{k=1}^{2m-1}\binom{2^{2m-k-1}}{a_k},
$$
where the sum runs over all vectors $(a_1,\ldots,a_{2m-1})$ satisfying
$$
\sum_{k=1}^{2m-1}a_k=r,
\qquad
0\leq a_k\leq 2^{2m-k-1}, k+\ell<2m\quad\text{whenever }a_k>0\text{ and }a_\ell>0.
$$
In particular, $a_k\leq 1$ whenever $k\geq m$.

\medskip

\begin{remark}\label{rem:gaussian-hilbert-betti-check}
	The Hilbert series may also be recovered from the minimal free resolution:
	$$
	\operatorname{Hilb}_{S/I_m}(t)=
	\frac{1+\sum_{r\geq 1}(-1)^r\beta_r(S/I_m)t^{r+1}}{(1-t)^N}.
	$$
	The shift is $r+1$, not $r+2$, because this formula is for the quotient $S/I_m$.  This expression is often the fastest way to check a numerical Hilbert numerator once the corrected Betti numbers are known.
\end{remark}

\medskip

\begin{example}\label{ex:m2-hilbert}
	For $m=2$, we have $|V_1|=4$, $|V_2|=2$, and $|V_3|=1$.  The independence polynomial is
	$$
	F_{G_2}(y)=1+\bigl((1+y)^4-1\bigr)+2y(1+y)^4+y=1+7y+14y^2+16y^3+9y^4+2y^5.
	$$
	Thus
	$$
	\operatorname{Hilb}_{S/I_2}(t)=\frac{1-7t^2+16t^3-20t^4+15t^5-6t^6+t^7}{(1-t)^7}.
	$$
	This agrees with the Betti numerator obtained from Example \ref{ex:m2-betti}:
	$$
	1-7t^2+16t^3-20t^4+15t^5-6t^6+t^7.
	$$
\end{example}

\medskip

\begin{table}[H]
	\centering
	\resizebox{\textwidth}{!}{%
		\begin{tabular}{c c c c}
			\toprule
			$m$ & independence polynomial beginning & denominator exponent & numerator beginning \\
			\midrule
			$1$ & $1+y$ & $1$ & $1$ \\
			$2$ & $1+7y+14y^2+16y^3+\cdots$ & $7$ & $1-7t^2+16t^3-20t^4+\cdots$ \\
			$3$ & $1+31y+374y^2+\cdots$ & $31$ & $1-91t^2+1320t^3-12090t^4+\cdots$ \\
			\bottomrule
		\end{tabular}}
	\caption{Initial Hilbert-series data for $\Gamma(\mathbb{Z}_{2^m}[i])$.}
	\label{tab:gaussian-hilbert-values}
\end{table}

Table \ref{tab:gaussian-hilbert-values} gives the initial Hilbert-series data for $\Gamma(\mathbb{Z}_{2^m}[i])$.  The independence-polynomial expansion and the Betti numerator give the same Hilbert series.
\medskip

\section{Zero-divisor graphs of \texorpdfstring{$\mathbb{Z}_p[x]/(x^c)$}{Z_p[x]/(x^c)}}\label{sec:poly-ring}
 This section shows that the truncated polynomial ring $\mathbb F_p[x]/(x^c)$ yields exactly the chain graph $C(p,c)$.  Thus the corrected formula for prime powers is not special to integer residue rings, as it is a valuation-layer phenomenon for finite principal chain rings.

\medskip
Let $p$ be a prime number and $c\in\mathbb N$ with $c\ge 2$.  Consider the ring $R_{p,c} = \mathbb Z_p[x]/\langle x^c\rangle \cong \mathbb F_p[x]/(x^c),$ where $\mathbb Z_p\cong\mathbb F_p$ is the field with $p$ elements, and let its zero divisor graph be $G_{p,c}=\Gamma(R_{p,c}).$  The ring $R_{p,c}$ is a standard example of a finite local principal ideal
(chain) ring.  Every element $f(x)\in R_{p,c}$ can be written uniquely as a polynomial of degree $<c$, that is,
\[
f(x) = a_0 + a_1 x + \cdots + a_{c-1}x^{c-1},\qquad a_i\in\mathbb F_p.
\]
The ideal generated by the image of $x$, $\mathfrak m = (x) \subset R_{p,c},$ 	is the unique maximal ideal, and $\mathfrak m^c=(0)\neq\mathfrak m^{c-1}$.  In	particular, $R_{p,c}$ is a local ring of length $c$ with residue field
$R_{p,c}/\mathfrak m\cong\mathbb F_p$.  Every nonzero $f\in R_{p,c}$ can be written uniquely in the form 	$f(x) = x^k u(x),$ where $0\le k\le c-1$ and $u(x)\in R_{p,c}$ is a unit (that is, $u(0)\ne 0$).  We
define the $x$--adic valuation by
\[
v(f) =
\begin{cases}
	k & \text{if } f(x)=x^k u(x)\text{ with }u\text{ a unit } (\text{or } u\in R_{p,c}^\times),\\[2pt]
	+\infty & \text{if }f=0.
\end{cases}
\]
Then $f$ is a unit if and only if $v(f)=0$, and $f$ is a nonzero zero divisor if and only if
$1\le v(f)\le c-1$.	For $k=0,1,\dots,c$, the powers of the maximal ideal are $\mathfrak m^k = (x^k),$ for $0\le k\le c,$	and each $\mathfrak m^k$ consists of all equivalence classes of polynomials 	divisible by $x^k$. 

\medskip
The following lemma counts the valuation layers.
\begin{lemma}\label{lem:pc-mk-revised}
	For $0\le k\le c$, $|\mathfrak m^k| = p^{c-k}$, and for $0\le k\le c-1$, the set $V_k = \{f\in R_{p,c} \mid v(f)=k\}
	= \mathfrak m^k\setminus \mathfrak m^{k+1}$ has cardinality	$|V_k| = p^{c-k-1}(p-1)$.  In particular, the nonzero zero divisors of $R_{p,c}$ are $Z^{*}(R_{p,c})
	= \bigcup_{k=1}^{c-1} V_k$ and $|Z^{*}(R_{p,c})|
	= \sum_{k=1}^{c-1} p^{c-k-1}(p-1)
	= p^{c-1}-1.$ 
\end{lemma}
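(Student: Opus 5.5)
The plan is to prove $|\mathfrak m^k|=p^{c-k}$ first, by describing $\mathfrak m^k$ explicitly through the unique polynomial representatives of degree $<c$. Every element of $R_{p,c}$ is uniquely $a_0+a_1x+\cdots+a_{c-1}x^{c-1}$ with $a_i\in\mathbb F_p$. The first step is to show that such an element lies in $\mathfrak m^k=(x^k)$ if and only if $a_0=\cdots=a_{k-1}=0$.

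For the forward direction, any multiple $x^kg(x)$, reduced modulo $x^c$, has no monomials of degree $<k$, because reduction modulo $x^c$ only deletes terms of degree $\ge c$. Conversely, if $a_0=\cdots=a_{k-1}=0$, then the element equals $x^k(a_k+\cdots+a_{c-1}x^{c-1-k})$ and so lies in $(x^k)$. Hence $\mathfrak m^k$ is in bijection with the free choices of $(a_k,\ldots,a_{c-1})\in\mathbb F_p^{c-k}$, which gives $|\mathfrak m^k|=p^{c-k}$ for $0\le k\le c$. The case $k=c$ yields the zero ideal, of size $p^0=1$, consistent with $\mathfrak m^c=(0)$.

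Next I identify $V_k$ with $\mathfrak m^k\setminus\mathfrak m^{k+1}$. By the unique factorization $f=x^ku$ with $u(0)\neq 0$ recalled above, $v(f)=k$ means exactly that $f\in(x^k)$ and that the coefficient $a_k$ is nonzero, that is, $f\notin(x^{k+1})$. Since $\mathfrak m^{k+1}\subseteq\mathfrak m^k$, we get $|V_k|=p^{c-k}-p^{c-k-1}=p^{c-k-1}(p-1)$ for $0\le k\le c-1$. The same count follows directly by choosing $a_k\in\mathbb F_p^\times$ and $a_{k+1},\ldots,a_{c-1}$ freely.

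For the last claim, units are precisely the elements with $v=0$: if $a_0\neq0$, then $f$ is invertible, since $f=a_0(1-n)$ with $n$ nilpotent and $(1-n)^{-1}=\sum n^j$. An element $f=x^ku$ with $1\le k\le c-1$ is a nonzero zero divisor, because $f\cdot x^{c-k}=0$. Hence $Z^*(R_{p,c})=\bigcup_{k=1}^{c-1}V_k$, this union is disjoint, and summing gives the telescoping total
\[
\sum_{k=1}^{c-1}\bigl(p^{c-k}-p^{c-k-1}\bigr)=p^{c-1}-1.
\]

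There is no real obstacle here. The only point needing care is the first step, checking that membership in $(x^k)$ is read off from the vanishing of the low coefficients in the reduced representative, since every count afterwards relies on it.
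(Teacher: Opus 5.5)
Your proof is correct and follows the paper's argument: the paper also identifies $\mathfrak m^k$ with the classes $x^k g(x)$, $\deg g\le c-k-1$, and counts $p^{c-k}$. It then gets $|V_k|$ as $|\mathfrak m^k|-|\mathfrak m^{k+1}|$ and sums over $k$, and your extra check that units are exactly the elements with $a_0\neq 0$ and that each $V_k$ ($1\le k\le c-1$) consists of nonzero zero divisors fills in a point the paper only asserts before the lemma.
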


\begin{proof}
	The ideal $\mathfrak m^k=(x^k)$ consists precisely of the classes of polynomials
	of the form $x^k g(x)$ with $\deg g\le c-k-1$. So, $g(x)$ has $c-k$ coefficients,	each with $p$ choices, and hence $|\mathfrak m^k|=p^{c-k}$.  Therefore, we have
	\[
	|V_k| = |\mathfrak m^k|-|\mathfrak m^{k+1}|
	= p^{c-k}-p^{c-k-1}
	= p^{c-k-1}(p-1).
	\]
	Summing from $k=1$ to $c-1$, and then with substitution $j=c-k-1$,  we obtain
	\[
	|Z^{*}(R_{p,c})|
	= \sum_{k=1}^{c-1} p^{c-k-1}(p-1)
	= (p-1)\sum_{j=0}^{c-2} p^j
	= p^{c-1}-1.
	\]
\end{proof}

\medskip
Let $f,g\in R_{p,c}$ be nonzero zero divisors with $v(f)=k$ and $v(g)=\ell$, for $1\le k,\ell\le c-1$.  The valuation is additive on	products up to the nilpotency index $c$, so we have
\[
v(fg) =
\begin{cases}
	k+\ell & \text{if }k+\ell\le c-1,\\
	\ge c  & \text{if }k+\ell\ge c.
\end{cases}
\]
Also, in the latter case $x^c\mid fg$, so $fg=0$ in $R_{p,c}$.  Equivalently,  the following result gives the description of adjacency in $G_{p,c}$.

\medskip

\begin{lemma}\label{lem:pc-adj-revised}
	Let $1\leq k,\ell\leq c-1$ and let $f\in V_k$, $g\in V_\ell$ with $f\neq g$.  Then
	$ fg=0$ if and only if $ k+\ell\geq c. $
	Equivalently,
	$$
	E(G_{p,c})=\bigl\{\{f,g\}\mid f\in V_k,\ g\in V_\ell,\ f\neq g,\ k+\ell\geq c\bigr\}.
	$$
\end{lemma}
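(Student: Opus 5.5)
The plan is to write both elements in the unit–times–power form recorded just before the statement and reduce the zero-product condition to a comparison of exponents of $x$. Write $f=x^k u(x)$ and $g=x^\ell w(x)$ with $u,w\in R_{p,c}^\times$, which is possible because $f\in V_k$ and $g\in V_\ell$. Then
$$
fg=x^{k+\ell}\,u(x)w(x),
$$
and $uw$ is again a unit. The proof therefore reduces to deciding when $x^{k+\ell}$ times a unit vanishes in $\mathbb F_p[x]/(x^c)$.

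If $k+\ell\geq c$, then $x^{k+\ell}=x^c\cdot x^{k+\ell-c}=0$ in $R_{p,c}$, so $fg=0$. Conversely, suppose $k+\ell\leq c-1$. Then $x^{k+\ell}\neq 0$, because $\mathfrak m^{k+\ell}\neq 0$ for $k+\ell<c$ (Lemma \ref{lem:pc-mk-revised} gives $|\mathfrak m^{k+\ell}|=p^{c-k-\ell}>1$). Multiplying a nonzero element by the unit $uw$ keeps it nonzero, so $fg\neq0$.

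A more concrete check of the converse: write $uw=b_0+b_1x+\cdots$ with $b_0=u(0)w(0)\neq0$, since $\mathbb F_p$ is a field. The coefficient of $x^{k+\ell}$ in the reduced representative of $fg$ is then $b_0\neq 0$, and since $k+\ell\le c-1$ this monomial survives reduction modulo $x^c$. In valuation terms, $v(fg)=k+\ell$ whenever $k+\ell\le c-1$, which matches the displayed case split preceding the lemma.

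Finally, since $V(G_{p,c})=Z^*(R_{p,c})=\bigcup_{k=1}^{c-1}V_k$ by Lemma \ref{lem:pc-mk-revised}, and edges of $\Gamma(R_{p,c})$ are exactly the pairs of distinct vertices with zero product, the stated description of $E(G_{p,c})$ follows directly. There is no real obstacle. The only point needing care is the converse direction, which relies on the uniqueness of the form $x^k u(x)$ (so that $v$ is well defined) and on $u(0)w(0)\neq 0$ in the field $\mathbb F_p$. Together these show that a product of valuation less than $c$ cannot vanish.
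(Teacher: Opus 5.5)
Your proposal is correct and is essentially the paper's proof: both write $f=x^k u$ and $g=x^\ell w$ with $u,w$ units, observe that $fg=x^{k+\ell}uw$ with $uw$ a unit, and reduce the question to whether $x^{k+\ell}$ vanishes modulo $x^c$. You additionally justify the converse direction that the paper leaves implicit, namely that $x^{k+\ell}\neq 0$ when $k+\ell<c$, either via $|\mathfrak m^{k+\ell}|>1$ or via the nonzero coefficient $u(0)w(0)$.
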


\begin{proof}
	Let $f=x^k u$ and $g=x^\ell w$ with $u,w\in R_{p,c}^\times$.  Then
	$ fg=x^{k+\ell}uw. $
	Since $uw$ is a unit, this product is zero in $\mathbb F_p[x]/(x^c)$ if and only if $x^c$ divides $x^{k+\ell}$, which is equivalent to $k+\ell\geq c$.
\end{proof}

\medskip
In particular, for $k\ge\lceil c/2\rceil$ the induced subgraph on $V_k$ is
complete, because if $f,g\in V_k$ with $f\neq g$, then $k+\ell=2k\ge c$ and
$fg=0$. Consider the prime power ring $A=\mathbb Z/p^c\mathbb Z$.  Every nonzero element
$a\in A$ can be written uniquely as $a=u p^k$ with $u$ a unit and
$0\le k\le c-1$.  Define the $p$--adic valuation $w$ by $w(up^k)=k$ and
$w(0)=+\infty$.  The nonzero zero divisors in $A$ are the elements with
$1\le w(a)\le c-1$. For $1\le k\le c-1$, consider
\[
W_k = \{a\in A \mid w(a)=k\}.
\]
Then it is clear  that $|W_k| = p^{c-k-1}(p-1), 1\le k\le c-1,$ and for $a\in W_k$, $b\in W_\ell$ with $a\neq b$, we obtain $ab=0$  if and only if $k+\ell\ge c.$	In other words, the zero divisor graph $\Gamma(A)$ has the same valuation layer sizes and the same adjacency rule as $G_{p,c}$.

\medskip

The same layer sizes and edge rule of $\mathbb{Z}_p[x]/\langle x^c\rangle$ occur in $\mathbb{Z}/p^c\mathbb{Z}$.

\medskip

\begin{proposition}\label{prop:pc-iso-revised}
	For every prime $p$ and every integer $c\geq 2$, there is a graph isomorphism
	$$
	\Psi_{p,c}:\Gamma(\mathbb{Z}_p[x]/\langle x^c\rangle)\longrightarrow \Gamma(\mathbb{Z}/p^c\mathbb{Z}).
	$$
\end{proposition}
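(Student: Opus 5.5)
The plan is to copy the proof of Proposition \ref{prop:gaussian-iso}, with the $x$-adic valuation on $R_{p,c}$ in place of the $\pi$-adic one and the $p$-adic valuation on $A=\mathbb{Z}/p^c\mathbb{Z}$ on the target side. Any bijection between the vertex sets that preserves valuation layers automatically preserves and reflects the zero-product relation, because adjacency on both sides depends only on the two layer indices. So the argument has three steps: match the vertex sets layer by layer, check that the layer sizes agree, and transfer the edge rule.

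\emph{Layers and sizes.} By Lemma \ref{lem:pc-mk-revised}, $Z^*(R_{p,c})=\bigsqcup_{k=1}^{c-1}V_k$ with $|V_k|=p^{c-k-1}(p-1)$. On the integer side, $Z^*(A)=\bigsqcup_{k=1}^{c-1}W_k$, where $W_k$ is the set of residues of exact $p$-adic valuation $k$. The count $|W_k|=p^{c-k-1}(p-1)$ is the $s=1$ case of Lemma \ref{lem:valuation-annihilator}: the multiples of $p^k$ modulo $p^c$ that are not multiples of $p^{k+1}$. Hence $|V_k|=|W_k|$ for each $k$. For each $k$ we fix a bijection $\psi_k:V_k\to W_k$ and define $\Psi_{p,c}(f)=\psi_k(f)$ for $f\in V_k$. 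Since the layers partition both vertex sets, $\Psi_{p,c}$ is a bijection $Z^*(R_{p,c})\to Z^*(A)$.

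\emph{Edges.} Take distinct $f\in V_k$ and $g\in V_\ell$. Lemma \ref{lem:pc-adj-revised} gives $fg=0$ if and only if $k+\ell\ge c$. On the other side, $\Psi_{p,c}(f)=u p^k$ and $\Psi_{p,c}(g)=w p^\ell$ with $u,w$ units of $A$. Their product $uw\,p^{k+\ell}$ is zero in $A$ exactly when $p^c\mid p^{k+\ell}$, that is, when $k+\ell\ge c$; this is also the $p^a$ description recalled in the preliminaries. Injectivity guarantees that $\Psi_{p,c}(f)\neq\Psi_{p,c}(g)$, so the comparison is always between distinct vertices. The two adjacency conditions coincide, so $\Psi_{p,c}$ preserves and reflects edges and is therefore a graph isomorphism.

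The only point that needs care is the same-layer case $k=\ell$, since the map may send two distinct vertices of one clique layer to two distinct vertices of the corresponding layer. Because the rule $2k\ge c$ does not depend on which vertices of the layer are chosen, this case needs nothing further. As a consistency check, both graphs are identified with $C(p,c)$ of Definition \ref{def:chain-graph}, so the isomorphism can equally be obtained by composing the two identifications with that model.
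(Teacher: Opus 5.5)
Your proposal is correct and follows essentially the same route as the paper: you match the valuation layers $V_k$ and $W_k$ by cardinality, glue arbitrary layerwise bijections, and transfer the edge rule $k+\ell\ge c$ via Lemma \ref{lem:pc-adj-revised} and the corresponding rule in $\mathbb{Z}/p^c\mathbb{Z}$. Your explicit derivation of $|W_k|$ from Lemma \ref{lem:valuation-annihilator} and your remark on the same-layer case are small additions that the paper leaves implicit.
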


\begin{proof}
	Let $A=\mathbb{Z}/p^c\mathbb{Z}$, and let $W_k$ be the set of nonzero elements of $A$ with $p$-adic valuation $k$.  Then
	$ |W_k|=p^{c-k-1}(p-1)=|V_k| $
	for $1\leq k\leq c-1$.  Choose a bijection $\psi_k:V_k\to W_k$ for each $k$, and define
	$$
	\Psi_{p,c}(f)=\psi_k(f)\quad\text{if }f\in V_k.
	$$
	This is a bijection on the nonzero zero divisors.
	
	\medskip
	
	If $f\in V_k$ and $g\in V_\ell$ with $f\neq g$, then Lemma \ref{lem:pc-adj-revised} and the usual valuation rule in $\mathbb{Z}/p^c\mathbb{Z}$ give
	$$
	fg=0\quad\Longleftrightarrow\quad k+\ell\geq c\quad\Longleftrightarrow\quad \Psi_{p,c}(f)\Psi_{p,c}(g)=0.
	$$
	Thus $\{f,g\}$ is an edge of $G_{p,c}$ if and only if
	$\{\Psi_{p,c}(f),\Psi_{p,c}(g)\}$ is an edge of $\Gamma(A)$, thereby it shows that
	$\Psi_{p,c}$ is a graph isomorphism.
\end{proof}

\medskip

\begin{example}\label{ex:pc-23}
	Let $p=2$ and $c=3$.  Then $R_{2,3}=\mathbb F_2[x]/(x^3)$ has two elements of valuation $1$, namely $x$ and $x+x^2$, and one element of valuation $2$, namely $x^2$.  The graph is the path $P_3$, with $x^2$ adjacent to the two valuation-one vertices and no edge between them.
\end{example}

\medskip

\section{Cochordality and Betti numbers for \texorpdfstring{$\mathbb{Z}_p[x]/(x^c)$}{Z_p[x]/(x^c)}}\label{sec:pc-betti-section}
 The following results are the corrected chain-ring specialization of the type formula.  They agree with the prime-power case of the corrected paper after replacing $q$ by the prime $p$ and the nilpotency index by $c$.

\medskip

The graph $\Gamma(R_{p,c})$ is the layered graph $C(p,c)$.  The following statements are therefore immediate specializations of Section \ref{sec:chain-template}.

\medskip

\begin{theorem}\label{thm:pc-type-revised}
	Let $G_{p,c}=\Gamma(\mathbb{Z}_p[x]/(x^c))$, where $p$ is prime and $c\geq 2$.  For $\lceil c/2\rceil\leq i\leq c-1$, define
	$$
	s_i=\bigl(p^i-p^{c-i-1}-1,
	p^i-p^{c-i-1}-2,
	\ldots,
	p^i-p^{c-i}\bigr).
	$$
	Then $G_{p,c}$ is cochordal and has type list
	$$
	(s_{c-1},s_{c-2},\ldots,s_{\lceil c/2\rceil}).
	$$
	If a terminal zero occurs, it may be deleted without changing the graph or the Betti numbers.
\end{theorem}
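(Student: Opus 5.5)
The plan mirrors the proof of Theorem~\ref{thm:gaussian-type}: identify $G_{p,c}$ with the layered graph $C(p,c)$, then read the type list off Section~\ref{sec:chain-template}.

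First I will show that $G_{p,c}\cong C(p,c)$. By Lemma~\ref{lem:pc-mk-revised}, the vertex set $Z^*(R_{p,c})$ splits into the valuation layers $V_1,\ldots,V_{c-1}$ with $|V_k|=(p-1)p^{c-k-1}$. These are exactly the layer sizes in Definition~\ref{def:chain-graph} with $q=p$ and $L=c$. By Lemma~\ref{lem:pc-adj-revised}, two distinct vertices $f\in V_k$ and $g\in V_\ell$ are adjacent precisely when $k+\ell\geq c$, which is the defining edge rule of $C(p,c)$. Any layer-preserving bijection is therefore a graph isomorphism; no choice of ordering inside a layer is needed, since adjacency depends only on layer indices. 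One could alternatively pass through Proposition~\ref{prop:pc-iso-revised} and Theorem~\ref{thm:prime-power-type}, but the direct identification is cleaner.

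Second, I will apply Lemma~\ref{lem:chain-cochordal-system} with $q=p$, $L=c$ and $h=\lceil c/2\rceil$. This produces an explicit cochordal constructible system, with covers $U_{i,j}=\bigcup_{s=c-i}^{i-1}V_s\cup\{v_{i,1},\ldots,v_{i,j-1}\}$, and hence gives cochordality of $G_{p,c}$ via Theorem~\ref{thm:constructible-characterization}. Corollary~\ref{cor:type-chain} then gives the type list $(s_{c-1},\ldots,s_h)$. Substituting $q=p$ and $L=c$ yields
\[
s_i=\bigl(p^i-p^{c-i-1}-1,\ \ldots,\ p^i-p^{c-i}\bigr),
\]
which is exactly the statement. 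As a sanity check on the endpoints, $|U_{i,j}|=M_i+j-1$ with $M_i=p^i-p^{c-i}$ and $n_i=(p-1)p^{c-i-1}$, so the first entry of the reversed block is $M_i+n_i-1=p^i-p^{c-i-1}-1$.

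Finally, a terminal zero occurs only when $c$ is even and $i=c/2$, where $M_i=0$. This zero is handled by Remark~\ref{rem:zero-type}: it corresponds to an empty star, so the graph is unchanged, and by Pascal's identity the shifted type sum and the global correction $\binom{k}{r+1}$ change by the same amount, so the Betti numbers are unchanged.

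The only real obstacle is confirming that the ring-theoretic valuation layers match Definition~\ref{def:chain-graph} exactly, including the convention that same-layer adjacency also follows $2k\geq c$. Lemma~\ref{lem:pc-adj-revised} covers this, since it is stated for arbitrary distinct $f,g$, including $k=\ell$.
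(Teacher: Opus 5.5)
Your proposal is correct and follows the paper's proof: identify $G_{p,c}$ with $C(p,c)$ using Lemmas~\ref{lem:pc-mk-revised} and~\ref{lem:pc-adj-revised}, then specialize Lemma~\ref{lem:chain-cochordal-system}, Corollary~\ref{cor:type-chain} and Remark~\ref{rem:zero-type} to $q=p$, $L=c$. Your extra checks are accurate and only spell out what the paper leaves implicit: the endpoint computation $M_i+n_i-1=p^i-p^{c-i-1}-1$, and the observation that a zero entry can occur only as the terminal entry when $c$ is even.
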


\begin{proof}
	By Lemmas \ref{lem:pc-mk-revised} and \ref{lem:pc-adj-revised}, the graph $G_{p,c}$ is $C(p,c)$.  Therefore Corollary \ref{cor:type-chain} gives the stated type list, and Lemma \ref{rem:zero-type} gives the terminal-zero convention.
\end{proof}

\medskip

The next theorem gives the Betti numbers of the quotient by the edge ideal.

\medskip

\begin{theorem}\label{thm:pc-betti-revised}
	Let $G_{p,c}=\Gamma(\mathbb{Z}_p[x]/(x^c))$, let $N=|V(G_{p,c})|=p^{c-1}-1$, and let $I_{p,c}=I(G_{p,c})\subseteq S=K[x_1,\ldots,x_N]$.  For every $r\geq 1$,
	$$
	\beta_r(S/I_{p,c})=\beta_{r,r+1}(S/I_{p,c})=
	\sum_{j=\lceil c/2\rceil}^{c-1}p^{c-j-1}(p-1)\binom{p^j-2}{r}
	-\binom{p^{\lfloor c/2\rfloor}-1}{r+1}.
	$$
	Moreover, 	$ 	\operatorname{pd}_S(S/I_{p,c})=p^{c-1}-2. $
	If $I_{p,c}\neq 0$, then $I_{p,c}$ has a $2$-linear resolution and $\operatorname{reg}(S/I_{p,c})=1$.  The only case with $I_{p,c}=0$ and $c\geq 2$ is $(p,c)=(2,2)$.
\end{theorem}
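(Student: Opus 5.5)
The plan is to reduce everything to the universal chain-layer graph and apply Theorem \ref{thm:chain-betti} with $q=p$ and $L=c$. First, Lemmas \ref{lem:pc-mk-revised} and \ref{lem:pc-adj-revised} identify $G_{p,c}$ with $C(p,c)$. The valuation layers $V_1,\ldots,V_{c-1}$ of $R_{p,c}$ have the sizes $(p-1)p^{c-k-1}$ required in Definition \ref{def:chain-graph}. Two distinct vertices in layers $k$ and $\ell$ are adjacent exactly when $k+\ell\ge c$. Hence the identity map on layers, together with any bijections inside each layer, is a graph isomorphism $G_{p,c}\cong C(p,c)$. Alternatively one can cite Proposition \ref{prop:pc-iso-revised} together with the prime-power case. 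Since graded Betti numbers of edge ideals depend only on the isomorphism class of the graph, the Betti numbers of $S/I_{p,c}$ coincide with those of $S/I(C(p,c))$. Here $N=p^{c-1}-1$ by Lemma \ref{lem:pc-mk-revised}.

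Next, substitute $q=p$ and $L=c$ into Theorem \ref{thm:chain-betti}. This directly gives
$$\beta_r(S/I_{p,c})=\sum_{j=\lceil c/2\rceil}^{c-1}(p-1)p^{c-j-1}\binom{p^j-2}{r}-\binom{p^{\lfloor c/2\rfloor}-1}{r+1}.$$
The same substitution gives $\operatorname{pd}=p^{c-1}-2$ and concentration in degrees $(r,r+1)$. Linearity and $\operatorname{reg}(S/I_{p,c})=1$ when $I_{p,c}\neq0$ follow from cochordality, via Corollary \ref{cor:CqL-threshold-cochordal} or Theorem \ref{thm:pc-type-revised}, combined with Theorem \ref{thm:froberg}. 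The regularity is exactly $1$ and not $0$ because some $\beta_{1,2}\neq 0$.

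Finally, I would treat the degenerate cases, which is the step needing the most care. The edge ideal vanishes precisely when no two distinct vertices satisfy $k+\ell\ge c$. The layer $V_{c-1}$ is adjacent to every vertex of $V_1$, so an edge exists unless the graph has only one vertex. If $c\ge3$, then $V_{c-1}$ and $V_1$ are distinct nonempty layers, so an edge exists. If $c=2$, the graph is $K_{p-1}$, which has an edge iff $p\ge3$. Thus $I_{p,c}=0$ only for $(p,c)=(2,2)$.

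In that edgeless case I would check that the stated formula remains consistent. The sum gives $\binom{0}{r}=0$ and the correction is $\binom{1}{r+1}=0$ for $r\ge1$, so all $\beta_r$ vanish. The projective dimension is $p^{c-1}-2=0$, which agrees with $S/I=S$.

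The main obstacle is the claim $\operatorname{pd}=p^{c-1}-2$ from Theorem \ref{thm:chain-betti}. One must confirm that the top Betti number does not cancel against the correction term. At $r=p^{c-1}-2$ the $j=c-1$ term contributes $(p-1)\binom{p^{c-1}-2}{p^{c-1}-2}=p-1>0$. The correction $\binom{p^{\lfloor c/2\rfloor}-1}{r+1}$ vanishes there, because $p^{\lfloor c/2\rfloor}-1<p^{c-1}-1$ whenever $c\ge3$. The case $c=2$ is instead handled by Proposition \ref{prop:prime-power-a2}, which gives $\beta_{p-2}=(p-2)\binom{p-1}{p-1}>0$ for $p\ge3$. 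For $r>p^{c-1}-2$ every binomial term vanishes, which completes the argument.
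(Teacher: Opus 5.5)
Your proposal is correct and follows the paper's own route. You identify $G_{p,c}$ with $C(p,c)$ via Lemmas~\ref{lem:pc-mk-revised} and~\ref{lem:pc-adj-revised}, specialize Theorem~\ref{thm:chain-betti} at $q=p$, $L=c$, and use that the top layer $V_{c-1}$ is adjacent to every other vertex to single out $(p,c)=(2,2)$ as the only edgeless case. Your explicit check that $\beta_{p^{c-1}-2}$ is not cancelled by the correction term, with $c=2$ handled separately, reproduces the argument already inside the proof of Theorem~\ref{thm:chain-betti}.
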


\begin{proof}
	Apply Theorem \ref{thm:chain-betti} with $q=p$ and $L=c$.  This gives the Betti formula, the projective dimension, and the linearity statement.  The graph has no edge only if it has one vertex, because for $c\geq 2$ the top layer $V_{c-1}$ is adjacent to every other nonzero zero divisor and is a clique when it has more than one vertex.  The equality $p^{c-1}-1=1$ holds exactly when $(p,c)=(2,2)$.
\end{proof}

\medskip

The following consequence is immediate.
\begin{corollary}\label{cor:pc-ideal-betti}
	When $I_{p,c}\neq 0$, the graded Betti numbers of the edge ideal itself satisfy
	$$
	\beta_{r,r+2}(I_{p,c})=\beta_{r+1,r+2}(S/I_{p,c})\quad\text{for }r\geq 0,
	$$
	and all other graded Betti numbers of $I_{p,c}$ vanish.
\end{corollary}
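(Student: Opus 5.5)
The plan is to obtain the statement from the short exact sequence
$$
0\longrightarrow I_{p,c}\longrightarrow S\longrightarrow S/I_{p,c}\longrightarrow 0,
$$
combined with the linearity established in Theorem \ref{thm:pc-betti-revised}, following the argument of Corollary \ref{cor:gaussian-graded-betti-ideal} essentially verbatim.

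First, take a minimal graded free resolution $F_\bullet$ of $S/I_{p,c}$ with $F_0=S$. The map $F_1\to F_0=S$ has image exactly $I_{p,c}$. Hence the truncated complex $F_{\bullet\geq 1}$, shifted down by one homological degree, is a graded free resolution of $I_{p,c}$. It is still minimal, because its differentials are the differentials of $F_\bullet$ and these have entries in the homogeneous maximal ideal. Reading off the free modules then gives
$$
\beta_{r,j}(I_{p,c})=\beta_{r+1,j}(S/I_{p,c})\qquad\text{for all } r\geq 0 \text{ and all } j.
$$
Equivalently, this follows from the long exact sequence of $\operatorname{Tor}^S(\mathbb K,-)$, since $\operatorname{Tor}_i^S(\mathbb K,S)=0$ for $i\geq1$.

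Next, assume $I_{p,c}\neq 0$. By Theorem \ref{thm:pc-betti-revised}, whose linearity comes from the cochordality of $C(p,c)$ together with Theorem \ref{thm:froberg}, the only nonzero graded Betti numbers of $S/I_{p,c}$ in positive homological degree are the $\beta_{r+1,r+2}(S/I_{p,c})$. Substituting $j=r+2$ into the displayed identity gives the stated equality $\beta_{r,r+2}(I_{p,c})=\beta_{r+1,r+2}(S/I_{p,c})$. For every $j\neq r+2$, the identity together with linearity gives $\beta_{r,j}(I_{p,c})=0$.

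The main point that needs care is the minimality of the shifted resolution, which is what justifies reading Betti numbers directly off it; this holds because minimality is a property of the differentials alone. The explicit values are then given by the formula of Theorem \ref{thm:pc-betti-revised} with $r$ replaced by $r+1$.
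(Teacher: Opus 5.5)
Your proof is correct and follows the same route as the paper: the short exact sequence $0\to I_{p,c}\to S\to S/I_{p,c}\to 0$ gives $\beta_{r,j}(I_{p,c})=\beta_{r+1,j}(S/I_{p,c})$, equivalently obtained by truncating the minimal resolution of $S/I_{p,c}$, and the linearity supplied by Theorem~\ref{thm:pc-betti-revised} forces vanishing for $j\neq r+2$. The only difference is that you spell out why the truncated resolution remains minimal, which the paper leaves implicit.
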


\begin{proof}
	The result follows from the short exact sequence $0\to I_{p,c}\to S\to S/I_{p,c}\to 0$ and the fact that $S/I_{p,c}$ has nonzero graded Betti numbers only in bidegrees $(r,r+1)$.
\end{proof}

\medskip

\begin{example}\label{ex:complete-c2}
	Let $c=2$.  Then all nonzero zero divisors have valuation $1$, and any two distinct such elements multiply to zero.  Hence
	$ G_{p,2}\cong K_{p-1}. $
	The formula in Theorem \ref{thm:pc-betti-revised} becomes
	$$
	\beta_r(S/I_{p,2})=(p-1)\binom{p-2}{r}-\binom{p-1}{r+1}=r\binom{p-1}{r+1},
	$$
	which is the usual Betti sequence of the quotient by the edge ideal of a complete graph.
\end{example}

\medskip

\begin{example}\label{ex:pc-24-betti}
	Let $p=2$ and $c=4$.  Then $G_{2,4}$ is the same graph as $\Gamma(\mathbb{Z}_{2^2}[i])$ in Example \ref{ex:m2-betti}.  The Betti formula gives
	$$
	\beta_r(S/I_{2,4})=2\binom{2}{r}+\binom{6}{r}-\binom{3}{r+1},
	$$
	so
	$$
	(\beta_1,\beta_2,\beta_3,\beta_4,\beta_5,\beta_6)=(7,16,20,15,6,1).
	$$
	This confirms the graph isomorphism between the two chain-ring presentations in this numerical case.
\end{example}

\section{Cohen--Macaulayness and Hilbert series for \texorpdfstring{$\mathbb{Z}_p[x]/(x^c)$}{Z_p[x]/(x^c)}}\label{sec:pc-CM-Hilbert}
 This final section completes the parallel with the Gaussian family.  The quotient is Cohen--Macaulay exactly in the complete-graph case $c=2$, and the Hilbert series is governed by the independence polynomial of the chain graph $C(p,c)$.
With these notations, the valuation layers or subsets $V_k$ ($1\le k\le c-1$) are defined as in
Lemma~\ref{lem:pc-mk-revised}, with $|V_k| = p^{c-k-1}(p-1),$ and	$|V(G_{p,c})| = p^{c-1}-1.$ 
\medskip

Let $I_{p,c}=I(G_{p,c})\subseteq S$, where $G_{p,c}=\Gamma(\mathbb{Z}_p[x]/(x^c))$ and $S$ has
$ N=p^{c-1}-1 $
variables.  We first compute the exact dimension and height.

\medskip

\begin{theorem}\label{thm:pc-dim-height}
	Let $G_{p,c}=\Gamma(\mathbb{Z}_p[x]/(x^c))$, and let $I_{p,c}=I(G_{p,c})$.  If $c=2a$, then
	$$
	\dim(S/I_{p,c})=\alpha(G_{p,c})=p^{2a-1}-p^a+1,
	\qquad
	\operatorname{height}(I_{p,c})=p^a-2.
	$$
	If $c=2a+1$, then
	$$
	\dim(S/I_{p,c})=\alpha(G_{p,c})=p^{2a}-p^a,
	\qquad
	\operatorname{height}(I_{p,c})=p^a-1.
	$$
\end{theorem}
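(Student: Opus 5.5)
The statement is a direct specialization of the chain template, so I would reduce it to Proposition \ref{prop:chain-alpha} rather than recount independent sets from scratch. By Lemmas \ref{lem:pc-mk-revised} and \ref{lem:pc-adj-revised}, $G_{p,c}$ has layers $V_1,\dots,V_{c-1}$ with $|V_k|=(p-1)p^{c-k-1}$ and adjacency rule $k+\ell\geq c$. So $G_{p,c}$ is literally the layered graph $C(p,c)$ of Definition \ref{def:chain-graph} with $q=p$ and $L=c$. Next I would invoke Proposition \ref{prop:dimension-independent-sets}, which gives $\dim(S/I_{p,c})=\alpha(G_{p,c})$ and $\operatorname{height}(I_{p,c})=|V(G_{p,c})|-\alpha(G_{p,c})$. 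The vertex count $N=p^{c-1}-1$ is already in Lemma \ref{lem:pc-mk-revised}.

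The two parity cases then follow by substituting $q=p$, $L=2a$ or $L=2a+1$ into Proposition \ref{prop:chain-alpha}. For $c=2a$ this gives $\alpha=p^{2a-1}-p^a+1$, hence $\operatorname{height}=(p^{2a-1}-1)-(p^{2a-1}-p^a+1)=p^a-2$. For $c=2a+1$ it gives $\alpha=p^{2a}-p^a$, hence $\operatorname{height}=(p^{2a}-1)-(p^{2a}-p^a)=p^a-1$.

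The substantive step is the optimality argument in Proposition \ref{prop:chain-alpha}, which I would recheck rather than cite blindly. Let $F$ be an independent set whose largest layer index is $s$. If $2s\geq c$, then $F$ contains exactly one vertex of $V_s$ (since $V_s$ is a clique), and all other vertices lie in the layers $V_1,\dots,V_{c-s-1}$. Its size is therefore at most $1+\sum_{k=1}^{c-s-1}|V_k|=p^{c-1}-p^{s}+1$, which decreases in $s$. If $2s<c$, then $F\subseteq V_1\cup\dots\cup V_s$, and the largest such set is $V_1\cup\cdots\cup V_{\lfloor (c-1)/2\rfloor}$, of size $p^{c-1}-p^{c-1-\lfloor (c-1)/2\rfloor}$. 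Comparing the two options settles the cases. For $c=2a$, taking $s=a$ gives $p^{2a-1}-p^a+1$, which beats $p^{2a-1}-p^{a}$. For $c=2a+1$, taking $s=a+1$ gives $p^{2a}-p^{a+1}+1$, which is at most $p^{2a}-p^a$ because $p^{a+1}-p^a\geq 1$.

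The main obstacle is this $2s\geq c$ versus $2s<c$ comparison, in particular the edge cases $a=1$ and $c=2$. For $c=2$ the graph is $K_{p-1}$, so $\alpha=1$ and $\operatorname{height}=p-2$, which matches the formula. I would check these cases separately, and also note that for $(p,c)=(2,2)$ the edgeless one-vertex graph correctly gives height $0$.
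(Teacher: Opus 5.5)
Your proposal is correct and follows the paper's proof. You identify $G_{p,c}$ with $C(p,c)$ via Lemmas \ref{lem:pc-mk-revised} and \ref{lem:pc-adj-revised}, specialize Proposition \ref{prop:chain-alpha} with $q=p$, $L=c$, and use the Stanley--Reisner identification $\dim(S/I_{p,c})=\alpha(G_{p,c})$. Your recheck of optimality bounds an independent set with top layer $s$ by $p^{c-1}-p^{s}+1$ when $2s\geq c$, and by $p^{c-1}-p^{c-1-\lfloor (c-1)/2\rfloor}$ when $2s<c$. This is a cleaner explicit form of the argument already in Proposition \ref{prop:chain-alpha}, and it is correct, including the edge cases $c=2$ and $(p,c)=(2,2)$.
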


\begin{proof}
	The graph $G_{p,c}$ is $C(p,c)$ by Lemmas \ref{lem:pc-mk-revised} and \ref{lem:pc-adj-revised}.  Therefore Proposition \ref{prop:chain-alpha} gives the displayed independence number and height.  The equality $\dim(S/I_{p,c})=\alpha(G_{p,c})$ follows because $S/I_{p,c}$ is the Stanley--Reisner ring of the independence complex of $G_{p,c}$.
\end{proof}

\medskip

The Cohen--Macaulay property is now determined exactly.

\medskip

\begin{theorem}\label{thm:pc-CM-revised}
	For $c\geq 2$, the quotient $S/I_{p,c}$ is Cohen--Macaulay if and only if $c=2$.  If one also includes $c=1$, then $R_{p,1}\cong\mathbb F_p$ has no nonzero zero divisors and the corresponding quotient is trivially Cohen--Macaulay.
\end{theorem}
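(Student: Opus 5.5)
The plan is to compare depth and Krull dimension of $S/I_{p,c}$ directly, exactly as was done for the Gaussian family in Theorem \ref{thm:gaussian-CM}. First, by Theorem \ref{thm:pc-betti-revised} we have $\operatorname{pd}_S(S/I_{p,c})=p^{c-1}-2$. Since $S$ has $N=p^{c-1}-1$ variables, the Auslander--Buchsbaum formula gives $\operatorname{depth}(S/I_{p,c})=1$ for every $c\geq 2$. In the degenerate case $(p,c)=(2,2)$, where $I=0$ and $N=1$, the depth is still $1$, because $S/I\cong K[x]$.

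Second, the dimension comes from Theorem \ref{thm:pc-dim-height}, and the task is to decide when it equals $1$. For $c=2$ (so $a=1$) it gives $\dim=p-p+1=1$, matching the depth, so $S/I_{p,2}$ is Cohen--Macaulay. This is consistent with Example \ref{ex:complete-c2}, where $G_{p,2}\cong K_{p-1}$: every maximal independent set of $K_{p-1}$ is a single vertex. When $p=2$ the graph has one vertex and no edges, and the quotient is a polynomial ring.

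For $c\geq 3$ I will show $\dim\geq 2$, and I see two routes. The direct route is that $V_1$ has $(p-1)p^{c-2}\geq 2$ vertices and is independent, because $1+1<c$; hence $\alpha(G_{p,c})\geq 2>1=\operatorname{depth}$. The formula route uses the explicit values: for $c=2a$ with $a\geq2$, $p^{2a-1}-p^a+1\geq p^a(p^{a-1}-1)+1\geq 3$; for $c=2a+1$ with $a\geq1$, $p^{2a}-p^a=p^a(p^a-1)\geq 2$. Either way depth $<$ dimension, so the quotient is not Cohen--Macaulay.

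Finally, when $c=1$ the ring $\mathbb F_p$ is a field, so the vertex set is empty, the polynomial ring is $K$ itself, and the quotient $K$ is trivially Cohen--Macaulay. No step here is a real obstacle. The only care needed is the degenerate case $(p,c)=(2,2)$, where the edge ideal is zero: there I will verify Cohen--Macaulayness directly rather than through the Betti formula, whose stated projective dimension $p^{c-1}-2=0$ still agrees.
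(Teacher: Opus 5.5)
Your proposal is correct and follows the paper's proof. Depth $1$ comes from Auslander--Buchsbaum with $\operatorname{pd}=p^{c-1}-2$ from Theorem \ref{thm:pc-betti-revised}, and the dimension comes from Theorem \ref{thm:pc-dim-height}, which equals $1$ exactly when $c=2$. Your separate check of $(p,c)=(2,2)$ and the direct observation that $V_1$ is an independent set of size at least $2$ when $c\geq 3$ are sound, minor refinements of the same argument.
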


\begin{proof}
	Assume first that $c=2$, then $G_{p,2}\cong K_{p-1}$. So its independence number is $1$, and $\dim(S/I_{p,2})=1$.  Also Theorem \ref{thm:pc-betti-revised} gives
	$ \operatorname{pd}_S(S/I_{p,2})=p-2. $
	Since $S$ has $p-1$ variables, so Auslander--Buchsbaum gives
	$$
	\operatorname{depth}(S/I_{p,2})=(p-1)-(p-2)=1.
	$$
	Thus depth equals dimension, and $S/I_{p,2}$ is Cohen--Macaulay.
	
	 Now assume $c\geq 3$.  Again by Theorem \ref{thm:pc-betti-revised}, we get
	$
	\operatorname{pd}_S(S/I_{p,c})=p^{c-1}-2.
	$
	As $S$ has $N=p^{c-1}-1$ variables, we obtain
	$$
	\operatorname{depth}(S/I_{p,c})=N-\operatorname{pd}_S(S/I_{p,c})=1.
	$$
	On the other hand, Theorem \ref{thm:pc-dim-height} gives $\dim(S/I_{p,c})\geq 2$ for every $c\geq 3$.  Hence depth is strictly smaller than dimension, so $S/I_{p,c}$ is not Cohen--Macaulay.
\end{proof}

\medskip

We now compute the Hilbert series.  Let $\Delta_{p,c}=\operatorname{Ind}(G_{p,c})$.  Then $S/I_{p,c}=K[\Delta_{p,c}]$.  Also, set
$ n_k=|V_k|=p^{c-k-1}(p-1),$ for $1\leq k\leq c-1. $ By Proposition \ref{prop:chain-independence-polynomial}, the independence polynomial is
$$
F_{G_{p,c}}(y)=1+\sum_{s=1}^{c-1}A_s(y)\prod_{k=1}^{\min\{s-1,c-s-1\}}(1+y)^{p^{c-k-1}(p-1)},
$$
where
$$
A_s(y)=
\begin{cases}
	(1+y)^{p^{c-s-1}(p-1)}-1, & 2s<c,\\[2pt]
	p^{c-s-1}(p-1)y, & 2s\geq c.
\end{cases}
$$
Therefore, 
$
\operatorname{Hilb}_{S/I_{p,c}}(t)=F_{G_{p,c}}\left(\frac{t}{1-t}\right).
$
If
$
F_{G_{p,c}}(y)=\sum_{r=0}^{\alpha(G_{p,c})}f_{r-1}y^r,
$
then
$$
\operatorname{Hilb}_{S/I_{p,c}}(t)=\frac{1}{(1-t)^N}\sum_{r=0}^{\alpha(G_{p,c})}f_{r-1}t^r(1-t)^{N-r},\qquad N=p^{c-1}-1.
$$
Equivalently, for $r\geq 0$, the coefficient $f_{r-1}$ is
$$
f_{r-1}=\sum\prod_{k=1}^{c-1}\binom{p^{c-k-1}(p-1)}{a_k},
$$
where the sum is over all vectors $(a_1,\ldots,a_{c-1})$ satisfying
$$
\sum_{k=1}^{c-1}a_k=r,
\qquad
0\leq a_k\leq p^{c-k-1}(p-1), ~k+\ell<c\quad\text{whenever }a_k>0\text{ and }a_\ell>0.
$$
In particular, if $k\geq \lceil c/2\rceil$, then automatically $a_k\leq 1$.

\medskip

\begin{remark}\label{rem:pc-hilbert-betti-check}
	The Betti-number expression for the Hilbert series is
	$$
	\operatorname{Hilb}_{S/I_{p,c}}(t)=
	\frac{1+\sum_{r\geq 1}(-1)^r\beta_r(S/I_{p,c})t^{r+1}}{(1-t)^N}.
	$$
	This gives a useful check on computations obtained from the independence polynomial.  The agreement depends on using the corrected Betti formula with the global subtraction term.
\end{remark}

\medskip
\begin{example}[The Cohen--Macaulay case \(c=2\)]
	For \(c=2\), the graph is \(K_{p-1}\).  Its independent sets are the empty set and the singletons, so
	\[
	F_{G_{p,2}}(y)=1+(p-1)y.
	\]
	Thus
	\[
	H_{S/I_{p,2}}(t)=1+(p-1)\frac{t}{1-t}
	=\frac{1+(p-2)t}{1-t}.
	\]
	For \(p=3\), this gives
	\[
	H_{S/I_{3,2}}(t)=\frac{1+t}{1-t}
	=\frac{1-t^2}{(1-t)^2},
	\]
	which agrees with \(I_{3,2}=(x_1x_2)\subset K[x_1,x_2]\).
\end{example}

\begin{example}[A non-Cohen--Macaulay case: \(p=2,c=3\)]
	Let \(R_{2,3}=\mathbb{Z}_2[x]/(x^3)\).  Then \(|V_1|=2\) and \(|V_2|=1\).  The graph is a path of length two: the unique vertex in \(V_2\) is adjacent to both vertices in \(V_1\), while the two vertices in \(V_1\) are not adjacent.  Thus
	\[
	F_{G_{2,3}}(y)=1+3y+y^2.
	\]
	Consequently
	\[
	H_{S/I_{2,3}}(t)
	=1+3\frac{t}{1-t}+\left(\frac{t}{1-t}\right)^2
	=\frac{1-2t^2+t^3}{(1-t)^3}.
	\]
	The Betti formula gives \(\beta_1=2\) and \(\beta_2=1\), hence the same numerator
	\[
	1-2t^2+t^3.
	\]
	Moreover \(\operatorname{depth}(S/I_{2,3})=1\) while \(\dim(S/I_{2,3})=2\), so this quotient is not Cohen--Macaulay.
\end{example}

\medskip

\begin{table}[H]
	\centering
	\begin{tabular}{c c c c c}
		\toprule
		$(p,c)$ & $N$ & $\dim(S/I_{p,c})$ & $\operatorname{depth}(S/I_{p,c})$ & CM? \\
		\midrule
		$(2,2)$ & $1$ & $1$ & $1$ & yes \\
		$(3,2)$ & $2$ & $1$ & $1$ & yes \\
		$(2,3)$ & $3$ & $2$ & $1$ & no \\
		$(2,4)$ & $7$ & $5$ & $1$ & no \\
		$(3,3)$ & $8$ & $6$ & $1$ & no \\
		\bottomrule
	\end{tabular}
	\caption{Cohen--Macaulay behavior for the truncated polynomial family.}
	\label{tab:pc-CM-values}
\end{table}
Table \ref{tab:pc-CM-values} gives the numerical values of Cohen--Macaulay behavior for the truncated polynomial family.  Only the complete-graph case $c=2$ has depth equal to dimension.

\section{Conclusion and future work}\label{sec:conclusion} 
In this paper, we developed a uniform chain-ring framework for studying zero-divisor graphs through their valuation layers.  The central object was the layered graph $C(q,L)$, whose adjacency relation is determined by the inequality $k+\ell\ge L$.  This simple rule captures the zero-divisor graph of a finite chain ring with residue field of order $q$ and nilpotency index $L$, and it allows the graph-theoretic and homological computations to be carried out without treating each ring element separately.

\medskip

Using the cochordal constructible-system method, we proved that $C(q,L)$ is cochordal and obtained an explicit type sequence.  Substituting this type sequence into the corrected Dung and Vu \cite{DungVu2026} formula gave a closed expression for all graded Betti numbers of the quotient by the edge ideal.  In particular, for $I=I(C(q,L))\subseteq S$, we obtained a $2$-linear resolution whenever $I\ne0$, together with the projective dimension $ \operatorname{pd}_S(S/I)=q^{L-1}-2. $
The correction term in the Betti formula is essential, as it records the global overlap created by the ordered star construction and prevents overcounting.

\medskip

The general theory was applied first to the Gaussian quotient $ R_m=\mathbb Z_{2^m}[i]\cong \mathbb Z[i]/(1+i)^{2m}. $
Since $R_m$ is a finite chain ring with residue field of size $2$ and nilpotency index $2m$, its zero-divisor graph is precisely $C(2,2m)$.  This identification gives the type sequence, the complete Betti table, the projective dimension, and the regularity of $S/I(\Gamma(R_m))$.  We also computed the independence number $\alpha(\Gamma(R_m))=2^{2m-1}-2^m+1$ and height $\operatorname{height} I(\Gamma(R_m))=2^m-2.$ Consequently, $S/I(\Gamma(R_m))$ is Cohen--Macaulay only for $m=1$.
 The same method was then applied to the truncated polynomial rings $ R_{p,c}=\mathbb Z_p[x]/(x^c). $
Here the valuation layers are the sets $\mathfrak m^k\setminus\mathfrak m^{k+1}$, and their sizes are $p^{c-k-1}(p-1)$.  The adjacency condition is again exactly $k+\ell\ge c$, so $\Gamma(R_{p,c})\cong C(p,c)$.  This gives closed formulae for the graded Betti numbers, projective dimension, independence number, height, and Hilbert series.  The Cohen--Macaulay classification is especially clean: for $c\ge2$, the quotient $S/I(\Gamma(R_{p,c}))$ is Cohen--Macaulay if and only if $c=2$.
The graph $\Gamma(\mathbb Z_{2^m}[i])$ is $C(2,2m)$, while $\Gamma(\mathbb Z_p[x]/(x^c))$ is $C(p,c)$.  Thus both are threshold graphs with valuation weights, and their previously computed invariants are threshold-graph invariants as well.
\medskip

The Hilbert-series computations provide an additional check on the Betti-number formulae.  On one hand, the Hilbert series is obtained from the independence polynomial of the graph as 
$ H_{S/I(G)}(t)=F_G\left(\frac{t}{1-t}\right). $
On the other hand, the same series is recovered from the minimal free resolution:
$$
H_{S/I(G)}(t)=
\frac{1+\sum_{r\ge1}(-1)^r\beta_r(S/I(G))t^{r+1}}{(1-t)^{|V(G)|}}.
$$
The agreement between these two descriptions confirms the consistency of the corrected Betti expressions and the combinatorial description of the independence complex.

\medskip

Several directions remain open.  First, it would be useful to extend the method beyond chain rings to finite principal ideal rings with more than one maximal ideal, where the valuation-layer picture is replaced by a multi-indexed divisor structure.  Second, one may study finer invariants of the independence complexes arising here, such as shellability, sequential Cohen--Macaulayness, and levelness.  Third, the explicit type sequences suggest possible algorithms for computing Betti tables of larger classes of cochordal zero-divisor graphs without using Hochster's formula directly.  Finally, the comparison between graph isomorphism classes of zero-divisor graphs and ring-theoretic isomorphism classes remains a natural problem: the examples in this paper show that different rings may produce the same zero-divisor graph, but a systematic description of this phenomenon is still incomplete.

\medskip

\section*{Declarations}
\noindent \textbf{Data Availability:} There is no data associated with this article.

\medskip

\noindent \textbf{Funding:} The authors did not receive support from any organization for the submitted work.

\medskip

\noindent \textbf{Conflict of interest:} The authors have no competing interests to declare that are relevant to the content of this article.

\medskip

\noindent\textbf{Note:} For any comments and suggestions regarding this article, please feel free to contact at \href{mailto:bilalahmadrr@gmail.com}{bilalahmadrr@gmail.com}.

\end{document}